\documentclass[11pt]{scrartcl}

\usepackage[utf8]{inputenc}
\usepackage[T1]{fontenc}
\usepackage{lmodern}
\usepackage{alphabeta}

\usepackage{titling}
\usepackage{xspace}
\usepackage{soul} 

\usepackage[letterpaper, top=25.4mm, bottom=25.4mm, left=25.4mm, right=25.4mm, includefoot]{geometry}

\DeclareSectionCommand[%
level=4,
indent=0pt,
beforeskip=1ex plus 1ex minus .2ex,
afterskip=-1em,
font={},
tocindent=7em,
tocnumwidth=4.1em,
counterwithin=subsubsection
]{paragraph}

\usepackage[inline]{enumitem}

\usepackage{dsfont}
\usepackage{setspace}

\usepackage{bm}
\usepackage{microtype}
\usepackage{amsmath}
\usepackage{amssymb}
\usepackage{amsfonts}
\usepackage{mathtools}
\usepackage[mathscr]{euscript}

\usepackage[hyphens]{url}
\usepackage{amsthm}

\usepackage{tikz}
\usepackage{xcolor}
\usepackage{subcaption}
\usepackage{graphicx}
\usepackage{wrapfig}

\usepackage{hyperref}
\usepackage{zref-clever}

\usepackage{nicefrac}
\usepackage[textwidth=1.5in]{todonotes}

\usepackage{thmtools}
\usepackage{thm-restate}

\colorlet{myGreen}{green!50!black}
\colorlet{myLightgreen}{green}
\colorlet{myRed}{red!90!black}
\definecolor{myBlue}{rgb}{0.25, 0.0, 1.0}
\definecolor{myLightBlue}{rgb}{0.39, 0.58, 0.93}
\colorlet{myViolet}{myBlue!55!myRed}
\definecolor{myOrange}{rgb}{1.0, 0.66, 0.07}

\definecolor{CornflowerBlue}{rgb}{0.39, 0.58, 0.93}
\definecolor{DarkGoldenrod}{rgb}{0.72, 0.53, 0.04}
\definecolor{BritishRacingGreen}{rgb}{0.0, 0.26, 0.15}
\definecolor{DarkMagenta}{rgb}{0.55, 0.0, 0.55}
\definecolor{AO}{rgb}{0.0, 0.5, 0.0}
\definecolor{BostonUniversityRed}{rgb}{0.8, 0.0, 0.0}
\definecolor{myRed}{rgb}{0.8, 0.0, 0.0}
\definecolor{DarkMidnightBlue}{rgb}{0.0, 0.2, 0.4}
\definecolor{DarkTangerine}{rgb}{1.0, 0.66, 0.07}
\definecolor{AppleGreen}{rgb}{0.55, 0.71, 0.0}
\definecolor{BrightUbe}{rgb}{0.82, 0.62, 0.91}
\definecolor{Amethyst}{rgb}{0.6, 0.4, 0.8}
\definecolor{DarkGray}{rgb}{0.52, 0.52, 0.51}
\definecolor{Gray}{rgb}{0.66, 0.66, 0.66}
\definecolor{BananaYellow}{rgb}{1.0, 0.88, 0.21}
\definecolor{Amber}{rgb}{1.0, 0.75, 0.0}
\definecolor{LightGray}{rgb}{0.83, 0.83, 0.83}
\definecolor{PrincetonOrange}{rgb}{1.0, 0.56, 0.0}
\definecolor{DeepCarrotOrange}{rgb}{0.91, 0.41, 0.17}
\definecolor{CarrotOrange}{rgb}{0.93, 0.57, 0.13}
\definecolor{MidnightBlue}{rgb}{0.1, 0.1, 0.44}
\definecolor{Magenta}{rgb}{0.50, 0.0, 0.50}
\definecolor{BrightPink}{rgb}{1.0, 0.0, 0.5}
\definecolor{BrilliantRose}{rgb}{1.0, 0.33, 0.64}
\definecolor{ChromeYellow}{rgb}{1.0, 0.65, 0.0}
\definecolor{HotMagenta}{rgb}{1.0, 0.11, 0.81}
\definecolor{Auburn}{rgb}{0.43, 0.21, 0.1}
\definecolor{BrightTurquoise}{rgb}{0.03, 0.91, 0.87}
\definecolor{DarkCyan}{rgb}{0.0, 0.55, 0.55}

\vfuzz \hfuzz
\setlist[itemize]{topsep=0pt,partopsep=0pt,itemsep=0pt,parsep=0pt}
\setlist[itemize,1]{label={\small\textbullet}}
\setlist[itemize,2]{label={\tiny\textbullet}}
\setlist[itemize,3]{label=$\cdot$}
\setlist[enumerate]{topsep=0pt,partopsep=0pt,itemsep=0pt,parsep=0pt}
\setlist[enumerate,1]{label=\roman*)}
\setlist[enumerate,2]{label=\alph*)}
\setlist[enumerate,3]{label=\arabic*)}

\hypersetup{
colorlinks=true,
linkcolor=AO!65!black,
citecolor=AO!65!black,
urlcolor=AppleGreen!65!black,
bookmarksopen=true,
bookmarksnumbered,
bookmarksopenlevel=2,
bookmarksdepth=3
}

\theoremstyle{definition}

\newtheorem{environment}{Environment}[section]

\newtheorem{lemma}[environment]{Lemma}
\AddToHook{env/lemma/begin}{\zcsetup{countertype={environment=lemma}}}
\zcRefTypeSetup{lemma}{
Name-sg = Lemma ,
name-sg = Lemma ,
Name-pl = Lemmas ,
name-pl = Lemmas ,
}

\newtheorem*{lemma*}{Lemma}
\AddToHook{env/lemma*/begin}{\zcsetup{countertype={environment=lemma*}}}
\zcRefTypeSetup{lemma*}{
Name-sg = Lemma ,
name-sg = Lemma ,
Name-pl = Lemmas ,
name-pl = Lemmas ,
}

\newtheorem{proposition}[environment]{Proposition}
\AddToHook{env/proposition/begin}{\zcsetup{countertype={environment=proposition}}}
\zcRefTypeSetup{proposition}{
Name-sg = Proposition ,
name-sg = Proposition ,
Name-pl = Propositions ,
name-pl = Propositions ,
}

\AddToHook{env/corollary/begin}{\zcsetup{countertype={environment=corollary}}}
\zcRefTypeSetup{corollary}{
Name-sg = Corollary ,
name-sg = Corollary ,
Name-pl = Corollaries ,
name-pl = Corollaries ,
}

\newtheorem{theorem}[environment]{Theorem}
\AddToHook{env/theorem/begin}{\zcsetup{countertype={environment=theorem}}}
\zcRefTypeSetup{theorem}{
Name-sg = Theorem ,
name-sg = Theorem ,
Name-pl = Theorems ,
name-pl = Theorems ,
}

\newtheorem*{theorem*}{Theorem}
\AddToHook{env/theorem*/begin}{\zcsetup{countertype={environment=theorem*}}}
\zcRefTypeSetup{theorem*}{
Name-sg = Theorem ,
name-sg = Theorem ,
Name-pl = Theorems ,
name-pl = Theorems ,
}

\AddToHook{env/conjecture/begin}{\zcsetup{countertype={environment=conjecture}}}
\zcRefTypeSetup{conjecture}{
Name-sg = Conjecture ,
name-sg = Conjecture ,
Name-pl = Conjectures ,
name-pl = Conjectures ,
}

\newtheorem*{hypothesis*}{Hypothesis}
\AddToHook{env/hypothesis*/begin}{\zcsetup{countertype={environment=hypothesis*}}}
\zcRefTypeSetup{hypothesis*}{
Name-sg = Hypothesis ,
name-sg = Hypothesis ,
Name-pl = Hypotheses ,
name-pl = Hypotheses ,
}

\newtheorem{observation}[environment]{Observation}
\AddToHook{env/observation/begin}{\zcsetup{countertype={environment=observation}}}
\zcRefTypeSetup{observation}{
Name-sg = Observation ,
name-sg = Observation ,
Name-pl = Observations ,
name-pl = Observations ,
}

\AddToHook{env/example/begin}{\zcsetup{countertype={environment=example}}}
\zcRefTypeSetup{example}{
Name-sg = Example ,
name-sg = Example ,
Name-pl = Examples ,
name-pl = Examples ,
}

\AddToHook{env/remark/begin}{\zcsetup{countertype={environment=remark}}}
\zcRefTypeSetup{remark}{
Name-sg = Remark ,
name-sg = Remark ,
Name-pl = Remarks ,
name-pl = Remarks ,
}

\zcRefTypeSetup{equation}{
Name-sg = Equation ,
name-sg = Equation ,
Name-pl = Equations ,
name-pl = Equations ,
}

\zcRefTypeSetup{chapter}{
Name-sg = Chapter ,
name-sg = Chapter ,
Name-pl = Chapters ,
name-pl = Chapters ,
}

\zcRefTypeSetup{section}{
Name-sg = Section ,
name-sg = Section ,
Name-pl = Sections ,
name-pl = Sections ,
}

\zcRefTypeSetup{algorithm}{
Name-sg = Algorithm ,
name-sg = Algorithm ,
Name-pl = Algorithms ,
name-pl = Algorithms ,
}

\AddToHook{env/notation/begin}{\zcsetup{countertype={environment=notation}}}
\zcRefTypeSetup{notation}{
Name-sg = Notation ,
name-sg = Notation ,
Name-pl = Notations ,
name-pl = Notations ,
}

\newtheorem{question}[environment]{Question}
\AddToHook{env/question/begin}{\zcsetup{countertype={environment=question}}}
\zcRefTypeSetup{question}{
Name-sg = Question ,
name-sg = Question ,
Name-pl = Questions ,
name-pl = Questions ,
}

\AddToHook{env/problem/begin}{\zcsetup{countertype={environment=problem}}}
\zcRefTypeSetup{problem}{
Name-sg = Problem ,
name-sg = Problem ,
Name-pl = Problems ,
name-pl = Problems ,
}

\AddToHook{env/claim/begin}{\zcsetup{countertype={environment=claim}}}
\zcRefTypeSetup{claim}{
Name-sg = Claim ,
name-sg = Claim ,
Name-pl = Claims ,
name-pl = Claims ,
}

\AddToHook{env/definition/begin}{\zcsetup{countertype={environment=definition}}}
\zcRefTypeSetup{definition}{
Name-sg = Definition ,
name-sg = Definition ,
Name-pl = Definitions ,
name-pl = Definitions ,
}

\zcRefTypeSetup{figure}{
Name-sg = Figure ,
name-sg = Figure ,
Name-pl = Figures ,
name-pl = Figures ,
}

\usetikzlibrary{calc}
\usetikzlibrary{fit}
\usetikzlibrary{decorations}
\usetikzlibrary{decorations.pathmorphing}
\usetikzlibrary{decorations.text}
\usetikzlibrary{shapes,hobby}

\tikzset{
	position/.style args={#1:#2 from #3}{
		at=($(#3)+(#1:#2)$)
	}
}

\tikzset{
  v:main/.style = {draw, circle, scale=0.8, thick,fill=black,inner sep=0.7mm},
  v:ghost/.style = {inner sep=0pt,scale=1},
  >={latex},
  e:marker/.style = {line width=8.5pt,line cap=round,opacity=0.35,color=DarkGoldenrod},
  e:main/.style = {line width=1pt},
}

\title{Quickly excluding an annotated planar graph}
\predate{}
\date{}
\postdate{}

\preauthor{}
\DeclareRobustCommand{\authorthing}{
	\begin{center}
		Maximilian Gorsky\footnote{\href{mailto:m.gorsky@pm.me}{m.gorsky@pm.me}}~~\!\thanks{Supported by the Institute for Basic Science (IBS-R029-C1).} \\
        {\small Discrete Mathematics Group, Institute for Basic Science (IBS), Daejeon, South Korea} \\
		  \medskip
        Evangelos Protopapas\thanks{Supported by the ERC project BUKA (n°\! 101126229).}~~\!\footnote{\href{mailto:eprotopapas@mimuw.edu.pl}{eprotopapas@mimuw.edu.pl}} \\
		{\small Faculty of Mathematics, Informatics and Mechanics, University of Warsaw, Poland} \\
		  \medskip
		Sebastian Wiederrecht\footnote{\href{mailto:sebastian.wiederrecht@gmail.com}{wiederrecht@kaist.ac.kr}} \\
		{\small School of Computing, KAIST, Daejeon, South Korea}
\end{center}}
\author{\authorthing}
\postauthor{}

\begin{document}
\maketitle

\begin{abstract}
We provide proofs certifying that the structure theorem for vertex sets of bounded bidimensionality holds with polynomial bounds.
The bidimensionality of vertex sets is a common generalisation of both treewidth and the face-cover-number of vertex sets in planar graphs.
As such, it plays a crucial role in extensions of Courcelle's Theorem to $H$-minor-free graphs.
Recently, bidimensionality and similar parameters have emerged as key for extensions of known parameterized algorithms for problems defined on a terminal set $R$.
A prominent example for such a problem is Steiner Tree, which admits efficient algorithms on planar graphs whenever $R$ can be covered with few faces.

Key to the algorithmic applications of bidimensionality is a structure theorem that explains how a graph $G$ can be decomposed into pieces where the behaviour of $R$ is highly controlled.
One may see this structure theorem as a rooted analogue of Robertson and Seymour's celebrated Grid Theorem.
Combining recent advances in obtaining polynomial bounds in the Graph Minors framework with new techniques for handling annotated vertex sets, we show that all parameters in the structure theorem above admit polynomial bounds.
As an application, we also provide a sketch showing how our techniques imply polynomial bounds for the structure theorem for graphs excluding an apex minor.
\end{abstract}

\textbf{Keywords:} Structural Graph Theory, Graph Minors, Annotated Graphs, Rooted Minors, Colorful Minors, Bidimensionality.

\let\sc\itshape
\thispagestyle{empty}

\newpage

\newpage
\thispagestyle{empty}
\tableofcontents

\newpage

\setcounter{page}{1}


\section{Introduction}\label{sec:intro}
A central approach to dealing with computational intractability in graph problems is offered by \textsl{structural graph theory}.
Through a wide range of structural notions and results, it supports the systematic design of efficient algorithms for hard problems on well-behaved graph classes.
A particularly powerful toolkit from structural graph theory is the design of \emph{graph parameters} capturing key features that facilitate the design of efficient algorithms.
The algorithmic study of such graph parameters makes up a rich subfield of \textsl{parameterized algorithms}.
A prime example of this interplay between structural and algorithmic graph theory is the parameter \textsl{treewidth} popularised by Robertson and Seymour \cite{RobertsonS1986Graphb} (see for example \cite{Bodlaender1986Classes,Courcelle1990Monadic,CyganFKLMPPS2015Parameterized,Korhonen2023SingleExponential}).
While treewidth is a powerful tool for the design of parameterized algorithms for a wide range of problems, it also naturally gives rise to a problem:
For which classes of problems exist structural parameters that allow for the design of parameterized algorithms \textsl{beyond} the regime of treewidth?

A \emph{annotated graph} is a pair $(G,R)$ where $G$ is a graph and $R\subseteq V(G)$ is the set of \emph{annotated vertices}.
In the following, we will refer to $R$ as the set of \emph{red} vertices.
Recently, a family of parameters has emerged that aims to target problems defined on annotated graphs such as the \textsc{Steiner Tree} problem \cite{Frederickson1991Planar,Curticapean2016Counting,KrauthgamerLRr2019FlowCut,PandeyvL2022PlanarMultiway}.
The theme of such parameters is to restrict the structural properties of the red vertices instead of the global structure of the graph.
This is motivated by the observation that for many such problems, treewidth already defines their tractability horizon within minor-closed graph classes.
To be more precise, the seminal Grid Theorem of Robertson and Seymour \cite{RobertsonS1986Grapha} says that a minor-closed graph class $\mathcal{G}$ has bounded treewidth if and only if $\mathcal{G}$ does not contain all planar graphs.
Often problems like \textsc{Steiner Tree} are \textsf{NP}-hard already on planar graphs \cite{DreyfusW1971Steiner,EricksonMV1987SendAndSplit,KisfaludiBakNvL2020NearlyETH}, implying that such problems are tractable on a minor-closed graph class $\mathcal{G}$ if and only if $\mathcal{G}$ has bounded treewidth.
In the emerging theory of \textsl{colorful minors}, sometimes called \textsl{rooted minors}, several positive algorithmic results \cite{Frederickson1991Planar,Curticapean2016Counting,KrauthgamerLRr2019FlowCut,PandeyvL2022PlanarMultiway,GroendlandNK2024Polynomial,JansenS2024SteinerTree} hint at an paradigm providing a way to escape such dichotomies:
\vspace{-8pt}
\begin{center}
\textsl{Define parameters to restrict the structure relative to $R$ instead or \\ in addition to restricting the structure of $G$ as a whole.}
\end{center}
\vspace{-8pt}
To better capture the situation where a planar graph ``rooted'' on a fixed set $R$ of vertices is excluded, Thilikos and Wiederrecht defined the notion of \textsl{bidimensionality} \cite{ThilikosW2024Bidimensionality}.
The \emph{bidimensionality} of an annotated graph $(G,R)$ is the largest integer $k$ such that there exists a minor-model\footnote{A \emph{minor-model} of a graph $H$ is a graph $G$ is a collection $\{ G_v\}_{v\in V(H)}$ of pairwise vertex-disjoint connected subgraphs of $G$, called the \emph{branch sets}, such that for all $uv\in E(H)$ there is an edge in $G$ between $V(G_u)$ and $V(G_v)$.} $\mathcal{X}$ of the $(k \times k)$-grid where $R\cap V(X) \neq \emptyset$ for all branch sets $X\in \mathcal{X}$.
We call such a grid minor a \emph{red grid}.
See \zcref{fig:IntroGridModel} for an illustration.

The notion of bidimensionality has since found key applications in the algorithmic theory of model checking in $H$-minor-free graphs \cite{SiebertzV2024Avances,SauGT2025Parameterizing}.
On the structural side, Protopapas, Thilikos, and Wiederrecht \cite{ProtopapasTW2025Colorful} proved a structure theorem providing an approximate description of annotated graphs of small bidimensionality akin to Robertson and Seymour's duality between grid minors and treewidth.
However, the bounds provided by Protopapas et al.\@ for their structure theorem depend exponentially on the bidimensionality.

\textbf{Our main contribution.}
We give an independent proof for this structure theorem providing, for the first time, \textsl{polynomial bounds} for it.
Our proof is constructive, yielding a fixed-parameter tractable algorithm that either decides that the bidimensionality of $(G,R)$ is at least $k$ or finds the structural decomposition from \cite{ProtopapasTW2025Colorful}, where all involved parameters are in $k^{\mathbf{O}(1)}$.

\subsection{Our result}\label{subsec:Result}
To state our main theorem, we require some additional definitions from Robertson and Seymour's theory of graph minors.
This is because in certain situations, the number of red vertices may be unbounded \textsl{and} they might be highly connected to each other, but the way they attach to the rest of the graph is restricted in other ways.
To see this, consider for example a large grid where only the very first column is red (see \zcref{fig:IntroNegativeExamples}).
In this annotated graph the number of red vertices is unbounded, at the same time it is impossible to decompose it along small-order separations in a way that distributes the red vertices into smaller pieces.
However, the entire graph is planar and all red vertices sit on a single face.
In general the structure emerging from bounding the bidimensionality may be more complicated, but this example illustrates that in order to grasp the structural aspects of bidimensionality, one must account for certain topological conditions.
To describe the separation into well-behaved pieces, we make use of the notion of tree-decompositions.

\begin{figure}[ht]
    \centering
    \begin{tikzpicture}

        \pgfdeclarelayer{background}
		\pgfdeclarelayer{foreground}
			
		\pgfsetlayers{background,main,foreground}
			
        \begin{pgfonlayer}{main}
        \node (C) [v:ghost] {};

        \node(L) [v:ghost,position=180:5cm from C] {
            \begin{tikzpicture}

                \pgfdeclarelayer{background}
		          \pgfdeclarelayer{foreground}
			
		          \pgfsetlayers{background,main,foreground}

                \begin{pgfonlayer}{background}
                    \pgftext{\includegraphics[width=4cm]{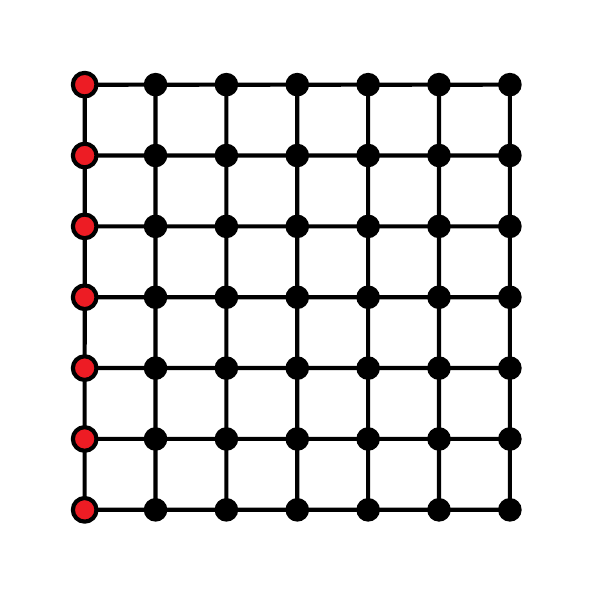}} at (C.center);
                \end{pgfonlayer}{background}
			
                \begin{pgfonlayer}{main}
                    \node (C) [v:ghost] {};
            
                \end{pgfonlayer}{main}
        
                \begin{pgfonlayer}{foreground}
                \end{pgfonlayer}{foreground}

            \end{tikzpicture}
        };

        \node (ghost) [v:ghost,position=0:0mm from C] {};

        \node(M) [v:ghost,position=90:3mm from ghost] {
            \begin{tikzpicture}

                \pgfdeclarelayer{background}
		          \pgfdeclarelayer{foreground}
			
		          \pgfsetlayers{background,main,foreground}

                \begin{pgfonlayer}{background}
                    \pgftext{\includegraphics[width=4cm]{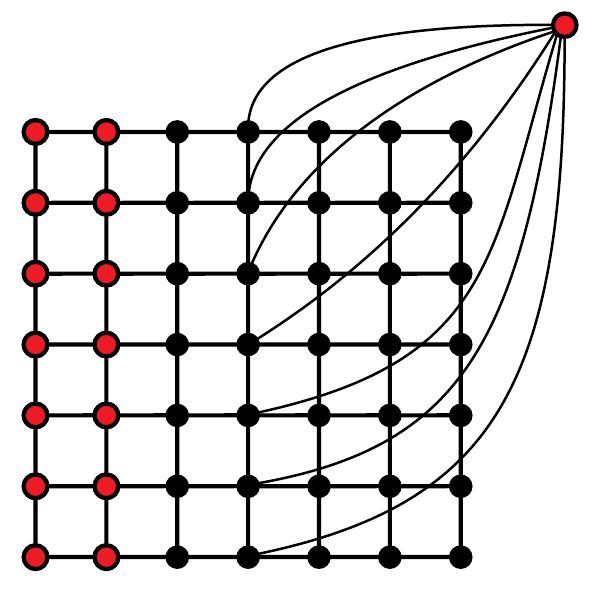}} at (C.center);
                \end{pgfonlayer}{background}
			
                \begin{pgfonlayer}{main}
                    \node (C) [v:ghost] {};
            
                \end{pgfonlayer}{main}
        
                \begin{pgfonlayer}{foreground}
                \end{pgfonlayer}{foreground}

            \end{tikzpicture}
        };

        \node (Mlabel) [v:ghost,position=270:2.3cm from M] {(iii)};

        \node (Llabel) [v:ghost,position=180:5cm from Mlabel] {(ii)};

        \node(LL) [v:ghost,position=180:10cm from C] {
            \begin{tikzpicture}

                \pgfdeclarelayer{background}
		          \pgfdeclarelayer{foreground}
			
		          \pgfsetlayers{background,main,foreground}

                \begin{pgfonlayer}{background}
                    \pgftext{\includegraphics[width=4cm]{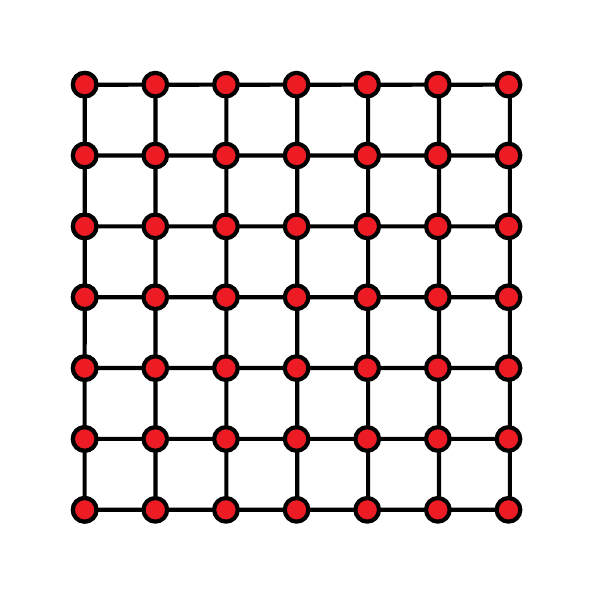}} at (C.center);
                \end{pgfonlayer}{background}
			
                \begin{pgfonlayer}{main}
                    \node (C) [v:ghost] {};
            
                \end{pgfonlayer}{main}
        
                \begin{pgfonlayer}{foreground}
                \end{pgfonlayer}{foreground}

            \end{tikzpicture}
        };

        \node (LLlabel) [v:ghost,position=180:5cm from Llabel] {(i)};

        \end{pgfonlayer}{main}
        
        \begin{pgfonlayer}{foreground}
        \end{pgfonlayer}{foreground}

    \end{tikzpicture}
    \caption{Diagrams of (i) the red $(7 \times 7)$-grid and two examples illustrating that the structure emerging from excluding a red grid resembles the structure of excluding an arbitrary minor. (ii) A grid with its outermost column being red: no large red grid exists but at the same time the red vertices cannot be easily removed from the grid. (iii) a more complicated example where embeddability can only be achieved by deleting an apex vertex and few faces do not suffice to describe the structure of the red vertices.}
    \label{fig:IntroNegativeExamples}
\end{figure}

\smallskip
A \emph{tree-decomposition} for a graph $G$ is a pair $(T,\beta)$ such that $T$ is a tree, $\beta\colon V(T)\to 2^{V(G)}$ assigns to each node of $T$ a subset of the vertices of $G$ known as a \emph{bag}, $\bigcup_{t\in V(T)}G[\beta(t)] = G$, and for each $v\in V(G)$, the set $\{ t\in V(T) \colon v\in\beta(t) \}$ is connected.
The \emph{adhesion} of $(T,\beta)$ is $0$ if $T$ has only one node and $\max_{dt \in E(T)}|\beta(d) \cap \beta(t)|$ otherwise.
The \emph{width} of $(T,\beta)$ is defined as $\max_{t\in V(T)} |\beta(t)|-1$.
\smallskip

The topological condition we require is more complicated.
Notice that in the example above, if we would also colour the second column in red, the bidimensionality of the resulting annotated graph would still be bounded by a constant, independent of the order of the grid.
Moreover, even if we add one additional red vertex and make it adjacent to the entire centre column of the grid, we would only increase the bidimensionality by $1$ for all such grids.
But the resulting graph cannot be embedded into a surface with reasonable Euler-genus.
Thus we must allow for the deletion of a small vertex set in order to reveal topological behaviour and we need to relax our condition on what it means to cover the red vertices with few faces.
See \zcref{fig:IntroNegativeExamples} for an illustration.

\smallskip
We say that a graph $G$ has a \emph{$k$-near embedding} in a surface $\Sigma$ if there exists $A\subseteq V(G)$ with $|A|\leq k$, such that $G-A=G_0\cup G_1 \cup \dots \cup G_{\ell}$, $\ell\leq k$, such that $G_0$ has an embedding into $\Sigma$ with $\ell$ pairwise vertex-disjoint faces $F_i$ where for all $i\in \{ 1,\dots,\ell\}$, $V(G_0)\cap V(G_i)= V(F_i)$, for $i\in\{ 1,\dots,\ell\}$, the $G_i$'s are pairwise vertex-disjoint and each such $G_{i}$ has a path-decomposition $(P_i,\beta_i)$\footnote{A \emph{path-decomposition} of a graph is a tree-decomposition $(T,\beta)$ where $T$ is a path.} of width at most $k$ such that $V(P_i)=V(F_i)$, the vertices of $P_{i}$ appear in agreement to their cyclic ordering on the boundary of $F_{i}$, and $v\in \beta_i(v)$, for all $v\in V(F_i)$.
The set $A$ is called the \emph{apex set}, the graphs $G_i$, $i\in\{1,\dots,\ell\}$, are called the \emph{vortices}, and the sets $V(G_i)\setminus V(F_i)$ are the \emph{interiors} of the vortices.

In the context of annotated graphs, we need some additional information regarding the red vertices.
Let $(T,\beta)$ be a tree decomposition of an annotated graph $(G,R)$.
The \emph{annotated torso} $(G_t,R_t)$ of $(G,R)$ at a node $t\in V(T)$ is the annotated graph obtained from $(G[\beta(t)],R)$ by turning, for every $dt\in E(T)$, the set $\beta(d)\cap\beta(t)$ into a clique and turning $v$ red for all $v\in\beta(d)\cap\beta(t)$ such that $\bigcup_{d'\in V(T_d)}\beta(d')\cap R \neq \emptyset$ where $T_d$ is the unique component of $T-dt$ that contains $d$.

With these definitions, our main theorem reads as follows.

\begin{theorem}\label{thm:BidimensionalityIntro}
There exists a function $f_{\ref{thm:BidimensionalityIntro}}\colon\mathbb{N}\to\mathbb{N}$ such that for all non-negative integers $k$, and all annotated graphs $(G,R)$ one of the following holds:
\begin{enumerate}
\item $(G,R)$ has bidimensionality at least $k$, or
\item $(G,R)$ has a tree-decomposition $(T,\beta)$ of adhesion at most $f_{\ref{thm:BidimensionalityIntro}}(k)$ such that for all $t\in V(T),$ either $t$ is a leaf with the parent $d$ and $(\beta(t)\setminus\beta(d)) \cap R = \emptyset$, or the annotated torso $(G_t,R_t)$ of $(G,R)$ at $t$ has an $f_{\ref{thm:BidimensionalityIntro}}(k)$-near embedding in a surface of Euler-genus at most $f_{\ref{thm:BidimensionalityIntro}}(k)$ and every vertex $v\in\beta(t)\cap R$ belongs to the apex set or the interior of some vortex.
\end{enumerate}
Moreover, $f_{\ref{thm:BidimensionalityIntro}}(k)\in k^{\mathbf{O}(1)}$ and there exists an algorithm that finds either a $(k\times k)$-grid-minor-model witnessing that $(G,R)$ has bidimensionality at least $k$, or a tree-decomposition as above in time $2^{k^{\mathbf{O}(1)}} \cdot |E(G)|^{3}|V(G)|\log(|V(G)|).$
\end{theorem}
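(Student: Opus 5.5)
We prove \zcref{thm:BidimensionalityIntro} by a recursive decomposition driven by red-well-linked sets, in the style of the standard proof of the Graph Minor Structure Theorem. We rely on the recent polynomial local structure theorem (the polynomial-bounds version of the Flat Wall / GMST) as a black box. Fix $k$ and a threshold $\theta\in k^{\mathbf{O}(1)}$.

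The first step establishes a rooted grid theorem: either $(G,R)$ has a red $(k\times k)$-grid, or for every set $X$ with $|X|\le 3\theta$ there is a separation of order at most $\theta$ separating most red vertices of $X$ from the rest. This amounts to deciding whether the red vertices are ``highly linked''. We will show that if $R$ contains a large red-well-linked set, then, using polynomial grid theorems (Chekuri--Chuzhoy) applied to the linked set, we obtain a large wall $W$ with many disjoint paths from red vertices to the wall's boundary. We then pass to a flat-wall or clique-minor configuration. A clique minor $K_{t}$ with $t\in k^{\mathbf{O}(1)}$ whose branch sets are each linked to distinct red vertices yields a red grid by rerouting. In the flat-wall case, the red vertices attached through the linkage land in the flat part, and a red grid arises unless they can be absorbed into few vortices or apices.

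The core of the argument is the local structure step. Given a tangle $\mathcal{T}$ of order $\theta$, we apply the polynomial GMST local structure theorem to obtain an apex set $A$ and a $\Sigma$-decomposition of $G-A$ with few vortices, all respecting $\mathcal{T}$. We then refine it with respect to $R$. Consider red vertices of the surface part lying far from the vortices and not separated off by small separations. If there are $k^{\mathbf{O}(1)}$ of them that are pairwise far apart in the surface and joined to a homogeneous subwall by disjoint paths, then a red $(k\times k)$-grid minor exists inside the surface part. The reason is that in a large planar-like grid region, one can route a grid whose branch sets each absorb one red vertex via its private path. Otherwise a bounded number of ``red faces'' covers all remaining red surface vertices, and we convert these into new vortices. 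Their depth is bounded by the covering argument combined with the linkage bound through each added vortex, since a large transaction carrying red vertices again yields a red grid. Red vertices that sit in small-separation flaps are pushed into the children of the decomposition. This step is the main obstacle: we must bound the number and width of the new vortices polynomially, which requires a red analogue of the ``transaction/rendition'' lemmas and careful control of red vertices in flaps.

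Finally we assemble the tree-decomposition inductively in the standard way. Starting from a set $Z$ of size at most $3\theta$, we either split along a small separation that balances red vertices of $Z$ (recursing, and making leaves red-free when a side contains no red vertices beyond the adhesion), or find a tangle and apply the local step. The near embedding of the torso is obtained after adding the adhesion cliques, which become red when their side contains red vertices. These cliques are absorbed into vortices or apices, since each has size at most $\theta$ and attaches to a bounded region. All parameters stay in $k^{\mathbf{O}(1)}$. For the running time, each local step costs $2^{k^{\mathbf{O}(1)}}\cdot |E(G)|^2|V(G)|$-type bounds inherited from the algorithmic GMST, and the recursion depth contributes the remaining factors $|E(G)|\log|V(G)|$.
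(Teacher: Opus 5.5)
\textbf{There is a genuine gap: the local step is missing.}

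Your outer loop matches the paper's local-to-global argument in \zcref{thm:BidimensionalityInduction}: recurse on a bounded set $Z$, split along balanced separators, turn a red-free big side into a leaf, and otherwise call a local structure theorem at the tangle of a well-linked set. That part is routine. The local step is the actual content of the theorem, and you explicitly leave it open.

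A black-box application of the uncoloured polynomial local structure theorem gives a near embedding in which red vertices may sit anywhere in the surface part. They can lie on nodes, inside flaps, or in handle and crosscap regions. A red flap turns its attachment set red in the annotated torso, so it must be swallowed by a vortex, not just pushed into a child. Post-processing such a rendition is what the paper does, but only inside a disk surrounded by a nest, and all the work lies in how.

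Your cleanup mechanism does not work as stated.
\begin{enumerate}
\item Faces are the wrong covering objects. Take a ring of red bricks around a large face of a flat wall. It has no large red grid, since all red vertices are within bounded radial distance of one face, yet covering it needs unboundedly many faces. It does fit into one vortex of bounded depth. So you must cover by disks, and everything then hinges on bounding their number and depth.
\item Your depth argument, that a large transaction ``carrying red vertices'' gives a red grid, covers only the red case. Even there it needs a nest around the disk to which the transaction is orthogonal.
\item A large transaction may be entirely blank, for example one separating two red clusters. Then there is no red grid, and the disk must be split along it.
\end{enumerate}

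Keeping the number of such splits, and the loss of infrastructure, polynomial is exactly what the paper's nest trees achieve:
\begin{enumerate}
\item A large transaction in a leaf society is made $\rho$-flat and exposed, or else the leaf nest is tightened (\zcref{lemma:exposedOrTighten}).
\item It is then made orthogonal to the leaf nest and homogenised; a red outcome gives a red mesh (\zcref{lemma:blankTrans}).
\item A blank transaction splits the leaf into two children. These are attached to their parent by radial linkages cut from the splitting transaction itself, so no linkage is lost from level to level (\zcref{lemma:SplitLeafs}).
\item The process terminates because roughly $r^{2}$ leaves containing red vertices already yield a red $r$-mesh (\zcref{lemma:SurfaceWallToRedMesh}).
\end{enumerate}
The paper also never lets red vertices reach the surface part. \zcref{thm:FlatRedMesh} produces a blank flat mesh with all red vertices inside the vortex. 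Every crosscap or handle transaction is homogenised before it is added to the surface, and a red one gives a red mesh (\zcref{lemma:RedTransactionGivesRedMesh}).

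\textbf{Further errors.}

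The dichotomy in your first step is false. Consider the $(n\times n)$-grid whose first column is red. A new vertex adjacent to the whole outer face keeps the graph planar and touches every branch set of a red grid model, so any red grid must be outerplanar. Hence the bidimensionality is at most $2$. Yet the red set is highly linked: for large $n$, $3\theta+1$ red vertices spaced two apart on that column admit no $2/3$-balanced separator of size $\theta$. The third alternative, all red vertices confined to a vortex, cannot be avoided. That is why the paper's first step outputs a blank flat mesh rather than a linkedness statement.

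Annotated graphs of bounded bidimensionality can have arbitrarily large clique minors controlled by the tangle. Each clique outcome of the uncoloured theorems must therefore pass through \zcref{prop:redClique}. That yields either a red $K_t$, or a separation of order less than $t$ whose big side is red-free and becomes a leaf.

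Finally, in the assembly you must balance all of $Z$, not only its red vertices. Otherwise the size bound on the sets handed to the recursive calls is lost.
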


\subsection{Related work, consequences, and applications}\label{subsec:applications}

\paragraph{Generalising treewidth through annotation.}
Notice that by our definition, a minor-model of a red $(k \times k)$-grid does not have to be a \textsl{minimal} minor-model of a $(k \times k)$-grid.
That is, we explicitly allow for the red vertices to be connected to the grid in the form of dangling paths.
See \zcref{fig:IntroGridModel} for an example.
This notion of ``red minor'' has already been studied by a variety of authors, often under the name of ``rooted minors''.
However, depending on the context, rooted minors are sometimes required to respect a fixed bijection between the coloured vertices of $(G,R_G)$ and the coloured vertices of $(H,R_H)$.
For better distinction between the two concepts, we say that $(H,R_H)$ is a \emph{red minor} of $(H,R_G)$ if there exists a minor-model $\{ G_v\}_{v\in V(H)}$ of $H$ in $G$ such that $R_G \cap V(G_v) \neq \emptyset$ for all $v\in R_H$.
This is in reference to the term ``colorful minor'' introduced by Protopapas, Thilikos, and Wiederrecht \cite{ProtopapasTW2025Colorful} which allows for vertices to carry more than one colour.

\begin{figure}[ht]
    \centering
    \begin{tikzpicture}

        \pgfdeclarelayer{background}
		\pgfdeclarelayer{foreground}
			
		\pgfsetlayers{background,main,foreground}

        \begin{pgfonlayer}{background}
            \pgftext{\includegraphics[width=11cm]{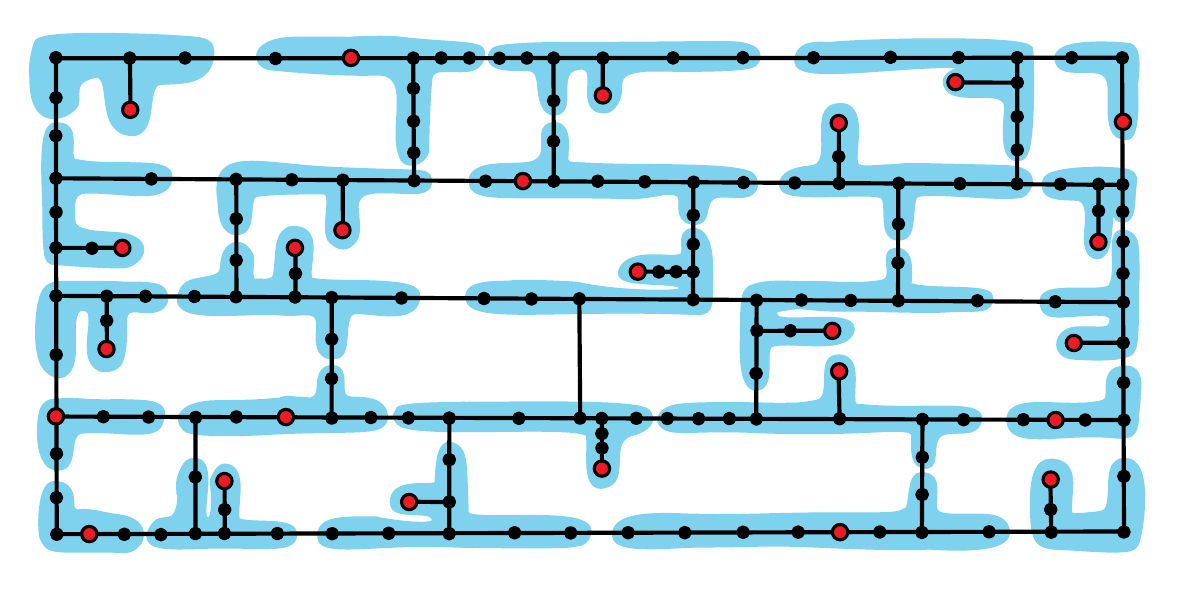}} at (C.center);
        \end{pgfonlayer}{background}
			
        \begin{pgfonlayer}{main}
        \node (C) [v:ghost] {};
            
        \end{pgfonlayer}{main}
        
        \begin{pgfonlayer}{foreground}
        \end{pgfonlayer}{foreground}

    \end{tikzpicture}
    \caption{A model of a red $(5 \times 5)$-grid.}
    \label{fig:IntroGridModel}
\end{figure}

The work of Protopapas et al.\@ provides several structural theorems for annotated graphs excluding fundamental patterns, including a variant of \zcref{thm:BidimensionalityIntro} with exponential bounds, a structure theorem for excluding a red clique, and one for the exclusion of an \emph{outer red grid}, i.e.\@ an annotated graph $(G,R)$ where $G$ is a $(k \times k)$-grid for some $k$ and $R$ is the vertex set of its first column.
A small outer red grid can be found in \zcref{fig:IntroNegativeExamples}.
While Protopapas et al.\@ investigate a variant of the outer red grid with more than one colour, the first result describing the structure of graphs excluding an outer red grid is due to Marx, Seymour, and Wollan \cite{MarxSW2017Rooted}.
It was later observed by Hodor, La, Micek, and Rambaud \cite{HodorLMR2025Quickly} that the result of Marx et al.\@ provides a min-max characterisation of a type of modulation parameter that falls into a family of structural parametrisations started by Bulian and Dawar with their introduction of \textsl{elimination distance} \cite{BulianD2014Isomorphsm,BulianD2017Fixed}.
\smallskip

The \emph{treewidth} of a graph $G$, denoted by $\mathsf{tw}(G)$, is the smallest integer $k$ such that $G$ has a tree-decomposition of width at most $k$.

Let $G$ be a graph and $X\subseteq V(G)$ be a vertex set.
The \emph{torso} of $G$ at $X$ is the graph $G_X$ obtained from $G[X]$ by turning the set $N_G(J) \subseteq X$ into a clique for every component $J$ of $G-X$.

The \emph{torso treewidth} of an annotated graph $(G,R)$, denoted by $\mathsf{ttw}(G,R)$, is the smallest integer $k$ such that there exists a set $X\subseteq V(G)$ with $R \subseteq X$ and $\mathsf{tw}(G_X) \leq k$.
\smallskip

As observed by Hodor et al.\@ \cite{HodorLMR2025Quickly}, the result of Marx et al.\@ \cite{MarxSW2017Rooted} implies that every annotated graph with large torso treewidth contains a big outer red grid as a red minor.
One may observe a hierarchy of parameters as follows:
\begin{enumerate}
     \item The Grid Theorem of Robertson and Seymour explains the structure of graphs excluding a planar graph as minor,
     \item the theorem of Marx et al.\@ explains the structure of annotated graphs excluding a planar graph with a single red face as a red minor, and
     \item \zcref{thm:BidimensionalityIntro} explains the structure of an annotated graph excluding an arbitrary annotated planar graph as a red minor.
\end{enumerate}
Among these \zcref{thm:BidimensionalityIntro} is the most general, as it in particular implies Robertson and Seymour's Grid Theorem, since the treewidth of any graph $G$ equals the bidimensionality of $(G,V(G))$.
Moreover, due to torso treewidth being sandwiched between treewidth and bidimensionality, in annotated graphs where all vertices are red all three parameters collapse into one.

\paragraph{An application: Excluding an apex minor.}
A graph $H$ is said to be an \emph{apex graph} if there exists some vertex $v\in V(H)$ such that $H-v$ is planar.
Apex graphs themselves are very close to planar graphs.
However, apex-minor-free graph classes generalise planar graphs and any graph class of bounded Euler-genus.
Graphs in these classes allow for a range of interesting algorithmic results \cite{Eppstein1999Subgraph,Eppstein2000Diameter,Grohe2003Local,DemaineH2004Equivalence,DraganFG2008PTAS,DemaineHK2009Approximation,FominLPPS2016Subexponential,KorhonenNPS2024Fully}.
Many of these algorithmic applications are in the realm of approximation algorithms.
Indeed, as proposed by Eppstein \cite{Eppstein1999Subgraph,Eppstein2000Diameter} and later built upon by Grohe \cite{Grohe2003Local} and Demain and Hajiaghayi \cite{DemaineH2004Equivalence}, apex-minor-free graphs have bounded ``local treewidth'' which allows for an application of a variant of Baker's Technique \cite{Baker1994Approximation}, facilitating the design of polynomial approximation schemes (PTAS) for problems which are hard to approximate on general graphs.

Indeed, while the structure theorem for apex-minor-free graphs was known in the community, the first written proof for it can be found in the appendix of a paper by Dvo\v{r}{\'a}k and Thomas \cite{DvorakT2016Listcoloring} on approximating list colourings.
Since their proof builds on the original results of Robertson and Seymour, they do not come with any estimates on the bounds for the constants describing the running times of their algorithm.
This issue is shared among many of the applications mentioned above and gets explicitly mentioned by Grohe \cite{Grohe2003Local}.
Our result shows that, in the case of apex-minor-free graphs, it is possible to give explicit polynomials bounds directly through the use of modern graph minor theory.

By interpreting the red vertices in each step of the proof as the combined neighbourhood of the apex vertices produced, our techniques show that one may either find any fixed apex graph as a minor, or create a region in the surface-embedded part which avoids the neighbourhood of all apex vertices.
This yields a polynomial version of Dvo\v{r}{\'a}k and Thomas' \textsl{local} structure theorem.
The term ``local'' here refers to the fact that the structure theorem is stated with respect to a large wall.
A local version of \zcref{thm:BidimensionalityIntro} can be found in the form of \zcref{thm:strongest_localstructure}.

We say that a graph $G$ containing a wall $W\subseteq G$ has a \emph{weak $k$-near embedding centred at $W$} in a surface $\Sigma$ if there exists $A\subseteq V(G)$ with $|A|\leq k$, such that $G-A=G_0\cup G_1 \cup \dots \cup G_{\ell} \cup J_1 \dots, J_{h}$, $\ell\leq k$ and $h\in \mathbb{N}$, such that $G_0$ has an embedding into $\Sigma$ with $\ell+h$ faces $F_i$ where 
\begin{itemize}
    \item the graphs $G_i-V(G_0)$ and $J_j-V(G_0)$ are vertex-disjoint for all $i\in \{1,\dots,\ell \}$ and $j\in\{ 1,\dots,h\}$,
    \item for all $j\in\{ 1,\dots, h\}$, the face $F_j$ contains a set $S_j$ of at most three vertices such that all $S_j$'s are pairwise disjoint and $V(J_j) \cap V(F_j) = S_j = V(J_j) \cap V(G_0)$, and such that, if $|S_j|=3$, $V(F_j)=S_j$ and if $|S_j|=2$ then the vertices of $S_j$ are adjacent on $F_j$,
    \item for all $i\in \{ h+1,\dots,h+\ell\}$, $V(G_0)\cap V(G_i)= V(F_i)$, for $i\in\{ 1,\dots,\ell\}$, the $G_i$'s are pairwise vertex-disjoint and each such $G_{i}$ has a path-decomposition $(P_i,\beta_i)$ of adhesion at most $k$ such that $V(P_i)=V(F_i)$, the vertices of $P_{i}$ appear in agreement to their cyclic ordering on the boundary of $F_{i}$, and $v\in \beta_i(v)$, for all $v\in V(F_i)$, and
    \item $W$ is vertex-disjoint from $\bigcup_{i=1}^{\ell}G_i$ and for each $j\in\{ 1,\dots,h\}$, $J_j$ contains at most one vertex of degree $3$ from $W$.
\end{itemize}
The set $A$ is called the \emph{apex set}, the graphs $G_i$ are called the \emph{vortices}, and the sets $V(G_i)\setminus V(F_i)$ are the \emph{interiors} of the vortices and the graphs $J_j$ are called the \emph{flaps}.

\begin{theorem}\label{thm:ApexMinorIntro}
There exist functions $g_{\ref{thm:ApexMinorIntro}}\colon\mathbb{N}^2 \to \mathbb{N}$ and $f_{\ref{thm:ApexMinorIntro}} \colon \mathbb{N} \to \mathbb{N}$ such that for all non-negative integers $r \geq 3$ and $k$, every apex graph $H$ on at most $k$ vertices, every graph $G$, and every $w$-wall $W \subseteq G$ with $w \geq g_{\ref{thm:ApexMinorIntro}}(k,r)$ one of the following holds:
\begin{enumerate}
    \item $G$ contains $H$ as a minor, or
    \item there exists an $r$-subwall $W'\subseteq W$ such that $G$ has a weak $f_{\ref{thm:ApexMinorIntro}}(k)$-near embedding centred at $W'$ such that all neighbours of the apex set are contained in the interiors of the vortices.
\end{enumerate}
Moreover, $g_{\ref{thm:ApexMinorIntro}}(r,k) \in (r + k)^{\mathbf{O}(1)}$, $f_{\ref{thm:ApexMinorIntro}}(k) \in k^{\mathbf{O}(1)}$, and there exists an algorithm that takes as input, $G$, $W$, $H$, and $r$ as above and finds either a minor model of $H$, or an $r$-subwall $W'$ together with a weak $f_{\ref{thm:ApexMinorIntro}}(k)$-near embedding centred at $W'$ for $G$ in time $\mathbf{poly}(|V(H)|+r+k)|E(G)|^{3}$.
\end{theorem}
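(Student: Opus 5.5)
The plan is to derive \zcref{thm:ApexMinorIntro} from the local version of the annotated structure theorem, \zcref{thm:strongest_localstructure}, which I may take as proved later in the paper. Red vertices are reinterpreted as neighbours of apex vertices. Write $H$ as an apex graph with apex vertex $a$ and planar $H-a$ on fewer than $k$ vertices.

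\textbf{Step 1: polynomial local structure.} I first apply the polynomial-bound local structure theorem for excluding a $K_t$-minor with $t\in\mathbf{O}(k)$. If $G$ has a $K_t$-minor, then it has an $H$-minor and we are done. Otherwise there is a subwall $W_1\subseteq W$ of order polynomial in $r+k$ with a weak near embedding centred at $W_1$. Its apex set $A_1$ and its vortices have size and depth $k^{\mathbf{O}(1)}$, and it lives in a surface of Euler-genus $k^{\mathbf{O}(1)}$.

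\textbf{Step 2: annotate and apply the local annotated theorem.} Set $R:=N_G(A_1)\setminus A_1$ and apply \zcref{thm:strongest_localstructure} to $(G-A_1,R)$, which is the local annotated theorem relative to a wall, with grid parameter $k$.

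In the first outcome there is a large red grid centred on (a subwall of) $W_1$. Because $|A_1|\le k^{\mathbf{O}(1)}$, pigeonhole and a standard refinement give a red grid of order still $\Omega(k)$ whose branch sets all meet $N(v)$ for a single apex vertex $v\in A_1$. The planar graph $H-a$ is a minor of the $(\mathbf{O}(k)\times\mathbf{O}(k))$-grid. Choosing the model so that every branch set is a union of grid branch sets, every branch set contains a neighbour of $v$, and sending $a\mapsto\{v\}$ yields an $H$-minor.

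\textbf{Step 3: the refining step, which is the main obstacle.} To make the pigeonhole step legitimate, I would instead iterate. At each round I take the current apex set $A$, set $R=N(A)$ and apply the local annotated theorem. Either a single apex vertex sees a red grid, which gives $H$ as above. Or I obtain a new near embedding centred at a subwall in which $R\setminus A'$ lies in vortex interiors or in the new apex set $A'$.

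The subtlety is that new apex vertices create new neighbourhoods, so the process must terminate. I would argue this by a potential function: each round either the apex set grows by at most $k^{\mathbf{O}(1)}$ while a fixed measure such as genus plus number of vortices increases, which can happen only $k^{\mathbf{O}(1)}$ times before a $K_t$-minor appears, or the process stabilises. Once it stabilises, every neighbour of the final apex set lies in vortex interiors. Alternatively, I would handle all apex vertices at once by colouring $N(v)$ with colour $v$ and using a colourful version, then keeping only vertices with red grids of a single colour.

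\textbf{Step 4: cleaning and the running time.} Flaps containing apex neighbours must be absorbed into vortices, or have their attachment vertices placed in vortices. This keeps $W'$ disjoint from the vortices at the cost of shrinking the subwall polynomially, so the bounds $g\in(r+k)^{\mathbf{O}(1)}$ and $f\in k^{\mathbf{O}(1)}$ compose polynomially over $k^{\mathbf{O}(1)}$ rounds. The running time follows from the algorithmic versions of the invoked theorems, applied $\mathbf{poly}(k)$ times.
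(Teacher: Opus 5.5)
There is a genuine gap, and it sits exactly where you place your ``main obstacle''. Using \zcref{thm:strongest_localstructure} as a black box cannot work. Every application of it to $(G-A,N(A))$ returns a layout with its own apex set $A'$, produced by the flat wall theorem and the unannotated society classification inside its proof, and nothing controls $N(A')$.

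Your termination argument does not repair this. Successive applications produce unrelated near embeddings, so there is no reason for genus plus breadth to increase from round to round; both are bounded a priori anyway. ``Stabilising'' would have to mean $A'=\emptyset$, and nothing forces that.

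The colourful alternative is not available either. With $|A_1|=k^{\mathbf{O}(1)}$ colours, the only known multi-colour version has bounds of the form $2^{k^{\mathbf{O}(1)}2^{2^{\mathbf{O}(q)}}}$, and the paper stresses that a polynomial $q$-colour version is out of reach. You also leave outcome i) of \zcref{thm:strongest_localstructure} unhandled, namely a small separation whose big side contains no red vertex. Turning its separator into new apices just restarts the same problem.

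The missing idea is to open up the proof rather than iterate the theorem, so that no apex set survives an intermediate step.
\begin{itemize}
\item In the flat wall step, apply \zcref{prop:FlatMesh} to obtain $Z$, colour $N(Z)$ red, and homogenise via \zcref{lemma:flatMeshToHomegeneousMesh}. A red mesh means many bricks are seen by a single $v\in Z$, which yields $H$. A blank mesh is flat in $G$ itself, so $Z$ is simply discarded.
\item In outcomes ii) and iii) of \zcref{thm:societyclassification}, colour $N(A)$ red and homogenise the transaction with \zcref{lemma:homoTransactions}. A red transaction gives a red mesh by \zcref{lemma:RedTransactionGivesRedMesh}, hence $H$. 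A blank transaction is flat and isolated in $G'$ without deleting $A$.
\item Only in outcome iv) is the apex set kept. There the nest-tree machinery of \zcref{sec:societyclassification} pushes every neighbour of $A$ into a vortex interior without adding a single new apex vertex; see \zcref{thm:apexsocietyclassification}.
\end{itemize}
Any $K_t$-minor outcome gives $H$ directly, so the separation outcome never arises. Because apices are created exactly once, at the very end of the local-structure induction, no iteration and no potential function are needed.
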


It is important to stress that \zcref{thm:ApexMinorIntro} is a consequence of our \textsl{methods} and the lemmas we develop along the way, not directly of the main statements that eventually give rise to \zcref{thm:BidimensionalityIntro} -- or more directly to \zcref{thm:strongest_localstructure}.
In \zcref{subsec:ProofOverview} we provide some additional insight and in \zcref{sec:ApexExclusion} we explain the derivation of \zcref{thm:ApexMinorIntro} from our methods in more depth.
\smallskip

Moreover, the reason why we only state the local variant of Dvo\v{r}{\'a}k and Thomas' structure theorem for apex-minor-free graphs here is three-fold.

First, almost all Robertson-Seymour-style structure theorems are proven in this way: One first proves a local variant with respect to a wall or ``tangle'' and then applies a well-known technique to turn the local structure theorem into a global one based on tree decompositions -- see \zcref{sec:localtoglobal} for this local-to-global step proving \zcref{thm:BidimensionalityIntro}.
Novel arguments are usually only necessary to prove the local theorems, with the local-to-global step seeing few innovations over the years.

Second, while the global theorems are better known, the local structure theorems tend to be the more important ones.
Robertson and Seymour point this out in their own proof of the Graph Minor Structure Theorem \cite{RobertsonS2003Grapha}, declaring the global structure theorem to be a ``red herring''.

Finally, the third reason is of a technical nature.
While the process of going from local to global is by now routine, its approximate nature comes with the drawback of introducing additional vertices to the apex set.
Dvo\v{r}{\'a}k and Thomas have laid out a technique to capture the neighbourhoods of these new apices inside additional vortices \cite{DvorakT2016Listcoloring} (see also \cite{MorellePTW2025Excluding} for a more recent take on the technique).
However, this step introduces additional technicalities which we believe to be outside the scope of this paper and so we leave this part to the interested readers.

\paragraph{The algorithmic potential of annotated graph parameters.}
As described above, the notion of bidimensionality for annotated vertex sets may be seen as a vast structural generalisation of the face-cover-number of annotated vertex sets in planar graphs.
A wide range of problems defined on annotated graphs are known to be tractable on planar annotated graphs whenever the face-cover-number is bounded.
Interesting problems that fall into this framework are the \textsc{Multiway Cut} problem \cite{PandeyvL2022PlanarMultiway} and the problem of counting perfect matchings with defects \cite{Curticapean2016Counting}.
Even the \textsc{Disjoint Paths} problem on planar graphs exhibits strong algorithmic properties when the face-cover-number of its terminals is considered \cite{Mazoit2013Single,Verbeek2022Disjoint}.
A leading question in this newly arising theory of structural parameters for annotated graphs is the following:

\begin{question}\label{quest1}
Given an \textsf{NP}-hard computational problem $\Pi$ defined on annotated graphs which is tractable on planar graphs where the red vertices can be covered by a constant number of faces, which proper red-minor-closed classes of annotated graphs admit polynomial-time algorithms for $\Pi$? 
\end{question}

A prime example of such a behaviour is the \textsc{Steiner Tree} problem \cite{KisfaludiBakNvL2020NearlyETH}.
Recently, \textsc{Steiner Tree} has become the subject of a streamlined investigation into the power of structural parameters for annotated graphs.
Jansen and Swennenhuis \cite{JansenS2024SteinerTree} showed that \textsc{Steiner Tree} is fixed-parameter tractable when parameterized by the torso treewidth of the annotated input instance.
Groenland, Nederlof, and Koana \cite{GroendlandNK2024Polynomial} have shown that excluding a red $K_4$-minor also gives rise to a class of polynomially solvable instances of \textsc{Steiner Tree}.
Notice that, due to the structural result of Marx et.\@ al, annotated graphs excluding a red $K_4$-minor may have unbounded torso treewidth, however, since the red $K_4$ is a minor of the red $(3 \times 3)$-grid, excluding $K_4$ as a red minor implies bounded bidimensionality.
Given that \textsc{Steiner Tree} is \textsf{NP}-hard on the class of all annotated planar graphs \cite{GareyJ1977Rectilinear}, it follows that it is also \textsf{NP}-hard on all red-minor-closed classes of unbounded bidimensionality.
However, there is strong evidence, that bidimensionality might be what precisely delineates the tractability of \textsc{Steiner Tree} in red-minor-closed classes.

\begin{question}\label{quest2}
Are there computable functions $f$ and $g$ such that \textsc{Steiner Tree} can be solved in time $f(k) n^{g(k)}$ on annotated graphs of bidimensionality at most $k$?
\end{question}

Due to a result of Krisfaludi-Bak, Nederlof, and Leeuwen \cite{KisfaludiBakNvL2020NearlyETH}, the dependency on $k$ in the exponent of $n$ in \zcref{quest2} is unavoidable assuming the Exponential Time Hypothesis, a feature that is shared by many problems of the same flavour.
Key towards providing a positive answer to \zcref{quest2} is a full understanding of the structural properties of annotated graphs of low bidimensionality, which is one of the main motivations behind our work.
Indeed, in case the answer to \zcref{quest2} is ``yes'', the functions $f$ and $g$ are likely to depend on the function from \zcref{thm:BidimensionalityIntro}, further emphasising the need for constructive and good bounds.
\smallskip

Apart from its focal role in understanding structural parameters for annotated graphs, bidimensionality has also been observed to be a key player in recent extensions of Courcelle's Theorem \cite{Courcelle1990Monadic}.
Courcelle's Theorem states that the model checking problem for MSO$_2$-formulas -- naively speaking, formulas where one is allowed to quantify over vertex sets and edge sets -- is fixed-parameter tractable on graphs of bounded treewidth.
In a vast generalisation of this result, Sau, Stamoulis, and Thilikos \cite{SauGT2025Parameterizing} have shown that in the space of minor-closed graph classes one may extend the applicability of Courcelle's Theorem beyond the scope of graphs of bounded treewidth by restricting the quantifications allowed in the formulas to vertex sets of bounded bidimensionality and certain disjoint-paths queries.
Strengthenings of this ``meta-algorithmic'' result have been proven for annotated graphs of bounded torso treewidth and annotated graphs excluding a red clique minor by Protopapas, Thilikos, and Wiederrecht \cite{ProtopapasTW2025Colorful}.

\paragraph{Layered parameters and the exclusion of ``apex-$\mathcal{X}$'' graphs.}
As manifested in \zcref{thm:ApexMinorIntro}, structure theory for excluding red minors in annotated graphs has direct implications for the exclusion of certain types of graphs $H$ in the setting of graph minors.
Let $\mathcal{X}$ be a minor-closed graph class.
We say that a graph $H$ is \emph{apex-$\mathcal{X}$} if there exists a vertex $v\in V(H)$ such that $H-v \in \mathcal{X}$.
In this terminology, an apex graph may also be called an apex-planar graph.

A \emph{layering} of a graph $G$ is a sequence $\langle L_i\rangle_{i\in\mathbb{N}}$ such that $\bigcup_{i\in\mathbb{N}}L_i=V(G)$ and for every edge $uv\in E(G)$ there is $i\in\mathbb{N}$ such that $u,v \in L_i \cup L_{i+1}$.

A layered parameter is a generalisation of a decomposition-based graph parameter evaluated over all possible decompositions and layerings for a given graph.
For example, the \emph{layered treewidth} of a graph $G$ is the minimum value of $  \max_{i\in\mathbb{N}} \max_{t\in V(T)} |L_i \cap \beta(t)|$ taken over all layerings $\langle L_i\rangle_{i\in\mathbb{N}}$ and all tree-decompositions $(T,\beta)$ of $G$.

Recently, layered graph decompositions have emerged as a powerful tool for solving a number of long-standing open problems even beyond the scope of minor-closed graph classes and including key results in the theory of the \textsl{clustered chromatic number} \cite{DujmovicMW2017Layered,DujmovicF2018Stack,DujmovicEJMW2020MinorClosed,ScottW2020BetterBounds,DujmovicMY2021TwoResults,DujmovicEMWW2022Clustered,BonamyBEGLPS2023Asymptotic,LiuW2024Clustered,DallardMMY2025Layered,HodorLMR2025Quickly}.
In many cases, the existence of such layered decompositions of small width is implied by the absence of some apex-$\mathcal{X}$ graph for a carefully chosen class $\mathcal{X}$.
Indeed, Dujmovi{\'c}, Morin, and Wood \cite{DujmovicMW2017Layered} showed that a minor-closed graph class has bounded layered treewidth if and only if it excludes some apex-planar graph and their proof makes use of the structure theorem of Dvo\v{r}{\'a}k and Thomas.
Hence, \zcref{thm:ApexMinorIntro} has direct implications for the bounds found by Dujmovi{\'c} et al.\@ \cite{DujmovicMW2017Layered}.
To obtain a better understanding for the structure of excluding an apex-$\mathcal{X}$ graph as a minor, it is often easier to understand the structure of excluding an annotated graph $(H,R)$ with $H\in\mathcal{X}$ as a red minor \cite{DujmovicEJMW2020MinorClosed,HodorLMR2025Quickly,ClausHJM2026Excluding}.
Our main theorem shows that, whenever $\mathcal{X}$ is a class of planar graphs, all involved bounds are polynomial.

\paragraph{Towards colorful minors of $q$-colorful graphs.}
The original structure theorem for annotated graphs of bounded bidimensionality due to Protopapas, Thilikos, and Wiederrecht \cite{ProtopapasTW2025Colorful} had one additional powerful feature which our polynomial version is lacking:
Instead of considering only one colour, Protopapas et al.\@ allowed for the presence of up to $q$ colours and considered the situation where they exclude a $(k\times k)$-grid in which every vertex carries all $q$ colours.
As a result of this generality, their bound is of the form $2^{k^{\mathbf{O}(1)}2^{2^{\mathbf{O}(q)}}}$.
The main reason for this super-exponential dependency on $q$ is the lack of tools which are able to ``homogenise'' a given flat wall with respect to $q$ distinct colours within polynomial bounds.
It should be mentioned that very recently Gorsky, Seweryn, and Wiederrecht \cite{GorskySW2026Price} introduced a technique that provides precisely such a homogenisation procedure.
However, as Gorsky et al.\@ explain in the conclusion of their paper, a variant of \zcref{thm:BidimensionalityIntro} for $q$ colours realising polynomial bounds is still out of reach for now.
The reason is that homogenising flat walls is not enough.
As explained in \zcref{subsec:ProofOverview}, a crucial step is the homogenisation of so-called ``transactions'' -- a notion from deep within the proof of the Graph Minor Structure Theorem (see \cite{RobertsonS1990Graph,KawarabayashiTW2021Quickly,GorskySW2025Polynomial}).
A tool for efficiently dealing with the homogenisation of transactions is still missing at the time of writing.

\subsection{Overview of our proof}\label{subsec:ProofOverview}
The proof of \zcref{thm:BidimensionalityIntro} is divided into four main steps as follows.
\begin{description}
    \item[Step 1:] A variant of the Flat Wall Theorem confining all red vertices to the outside of the flat wall, which for purpose of presentation we call the \textsl{Red Flat Wall Theorem}.
    \item[Step 2:] A version of the Society Classification Theorem ensuring that all new additions to the weak near embedding are free of red vertices, which we call the \textsl{Red Society Classification Theorem}.
    \item[Step 3:] The local version of \zcref{thm:BidimensionalityIntro} with respect to a large wall, which we call the \textsl{Red Local Structure Theorem}.
    \item[Step 4:] The globalisation of the outcome of \textbf{Step 3}, yielding \zcref{thm:BidimensionalityIntro}.
\end{description}
The first three steps combined make up the proof of the local version of \zcref{thm:BidimensionalityIntro}, that is \zcref{thm:strongest_localstructure}, while \textbf{Step 4} utilises the red local structure theorem in order to construct the tree-decomposition mentioned in \zcref{thm:BidimensionalityIntro}.
The proof of \zcref{thm:strongest_localstructure} inductively generates a near embedding.
The proof uses the outcome of the red flat wall theorem to enter the base case of the induction and then repeatedly applies the outcome of the red society classification theorem in order to either conclude or increase the Euler-genus of the underlying surface in the near embedding.

\paragraph{Step 1: A flat wall.}
The celebrated Flat Wall Theorem of Robertson and Seymour \cite{RobertsonS1995Graph} states that given any large enough wall $W$ in a graph $G$, either there exists a large clique-minor in $G$ whose model is highly connected to $W$, or there exists a weak near embedding of $G$ centred at $W$ in the sphere with a single vortex, without any bound on the adhesion of the path decomposition of the vortex.
The outcome of this first step is, that in the absence of a big red grid, one may ensure that all red vertices are confined into the vortex and the apex set.

\begin{figure}[ht]
    \centering
    \begin{tikzpicture}

        \pgfdeclarelayer{background}
		\pgfdeclarelayer{foreground}
			
		\pgfsetlayers{background,main,foreground}
			
        \begin{pgfonlayer}{main}
        \node (C) [v:ghost] {};

        \node (Mpos) [v:ghost,position=180:50mm from C] {};

        \node(M) [v:ghost,position=0:0mm from Mpos] {
            \begin{tikzpicture}

                \pgfdeclarelayer{background}
		          \pgfdeclarelayer{foreground}
			
		          \pgfsetlayers{background,main,foreground}

                \begin{pgfonlayer}{background}
                    \pgftext{\includegraphics[width=4cm]{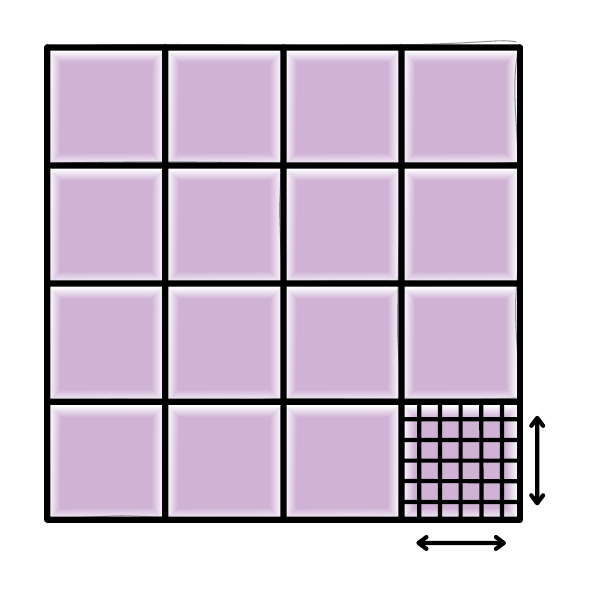}} at (C.center);
                \end{pgfonlayer}{background}
			
                \begin{pgfonlayer}{main}
                    \node (C) [v:ghost] {};
            
                \end{pgfonlayer}{main}
        
                \begin{pgfonlayer}{foreground}
                \end{pgfonlayer}{foreground}

            \end{tikzpicture}
        };

        \node (Rpos) [v:ghost,position=0:0mm from C] {};

        \node(R) [v:ghost,position=0:0mm from Rpos] {
            \begin{tikzpicture}

                \pgfdeclarelayer{background}
		          \pgfdeclarelayer{foreground}
			
		          \pgfsetlayers{background,main,foreground}

                \begin{pgfonlayer}{background}
                    \pgftext{\includegraphics[width=4cm]{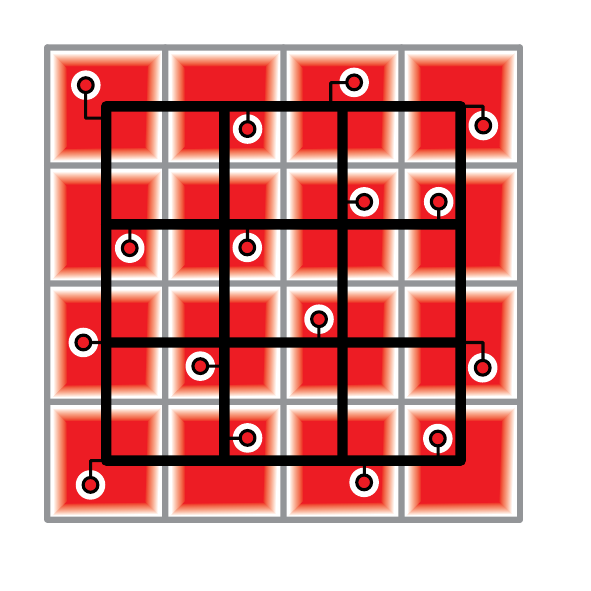}} at (C.center);
                \end{pgfonlayer}{background}
			
                \begin{pgfonlayer}{main}
                    \node (C) [v:ghost] {};
            
                \end{pgfonlayer}{main}
        
                \begin{pgfonlayer}{foreground}
                \end{pgfonlayer}{foreground}

            \end{tikzpicture}
        };

        \node (Rlabel) [v:ghost,position=270:2.3cm from R] {(ii)};

        \node (Mlabel) [v:ghost,position=180:5cm from Rlabel] {(i)};

        \node (g1) [v:ghost,position=0:11.2mm from Mlabel] {};
        \node (g2) [v:ghost,position=90:11.8mm from g1] {};

        \node (l1) [v:ghost,position=0:7mm from g2] {$r$};
        \node (l2) [v:ghost,position=270:7.5mm from g2] {$r$};

        \end{pgfonlayer}{main}
        
        \begin{pgfonlayer}{foreground}
        \end{pgfonlayer}{foreground}

    \end{tikzpicture}
    \caption{Diagrams of (i) a division of a $(5+4r)$-wall to either find a an $r$-subwall without any red vertices, or a red $(4\times 4)$-grid minor and (ii) a $(4 \times 4)$-grid minor in the second outcome.}
    \label{fig:BlankFlatWallIntro}
\end{figure}

This part is relatively straightforward.
We start with a large wall $W_0$ and apply a polynomial variant of the Flat Wall Theorem -- we use the one of Gorsky et al.\@ \cite{GorskySW2025Polynomial} -- to obtain either a large clique or a small apex set $A_1$ together with a big flat subwall $W_1$ of $W_0$.
A result of Protopapas, Thilikos, and Wiederrecht \cite{ProtopapasTW2025Colorful} allows us to dismiss the case where we find the clique.
Otherwise we subdivide $W_1$ into a large number of pairwise disjoint subwalls arranged in a grid-like shape.
If each of them contains a red vertex in its interior, we find a large red grid, otherwise one of them is the desired flat wall $W_2$ without any red vertices.
See \zcref{fig:BlankFlatWallIntro} for an illustration.

Towards deducing \zcref{thm:apexlocalstructure}, after the initial application of the Flat Wall Theorem, declare the neighbourhood of $A_1$ to be the set of red vertices and set up the numbers to obtain a red $(|A_1|k \times |A_1|k)$-grid as one possible outcome.
In case this red grid is found, the pigeonhole principle yields a $(k \times k)$-grid rooted in the neighbourhood of a single member of $A_1$.
Otherwise, $W_2$ must be flat without deleting a single apex vertex.

\paragraph{Step 2: Classifying a red society.}

The Society Classification Theorem -- first proven explicitly by Kawarabayashi, Thomas, and Wollan \cite{KawarabayashiTW2021Quickly} -- may be seen as an extension of the Flat Wall Theorem as follows:
    Given a vortex surrounded by a large number of concentric cycles in the embedded part -- called the \emph{nest} -- contained within a disk $\Delta$ of the surface, one may find one of four possible outcomes, see \zcref{fig:SocietyClassificationIntro} for an illustration:
    \begin{enumerate}
        \item A large clique-minor in $G$ whose model is highly connected to the nest.
        \item Subject to removing a small set of apices $A,$ either 
        \begin{enumerate}
        \item a large flat crosscap transaction in $\Delta$ that traverses the interior of the vortex, and moreover is orthogonal\footnote{A \emph{transaction} is a set of disjoint paths linking two disjoint boundary segments of $\Delta$. We say that a transaction escaping the embedded part of our graph is \emph{orthogonal} to a nest, if the intersection of each path in the transaction with each cycle in the nest consists of two paths -- one causes by escaping the embedded part and the other by entering it again (see \zcref{fig:SocietyClassificationIntro} as a reference).} to most of the starting nest,
        \item a large flat handle transaction in $\Delta$ that traverses the interior of the vortex and moreover is orthogonal to most of the starting nest, or
        \item a weak near embedding of the part of the graph drawn in $\Delta,$ centred at the nest, with a bounded number of vortices each with a bounded adhesion path decomposition.
        Moreover, each of these vortices is surrounded by a large nest which is linked back to the starting nest via many disjoint paths which are orthogonal to the starting nest.
        \end{enumerate}
    \end{enumerate}

The proof of the red society classification theorem (see \zcref{thm:blanksocietyclassification}) utilizes the society classification theorem above as a departure point and further refines each of its outcomes as outlined below.

\begin{figure}[ht]
    \centering
    \begin{tikzpicture}

        \pgfdeclarelayer{background}
		\pgfdeclarelayer{foreground}
			
		\pgfsetlayers{background,main,foreground}
			
        \begin{pgfonlayer}{main}
        \node (C) [v:ghost] {};

        \node (Lpos) [v:ghost,position=180:100mm from C] {};

        \node(L) [v:ghost,position=0:0mm from Lpos] {
            \begin{tikzpicture}

                \pgfdeclarelayer{background}
		          \pgfdeclarelayer{foreground}
			
		          \pgfsetlayers{background,main,foreground}

                \begin{pgfonlayer}{background}
                    \pgftext{\includegraphics[width=3.5cm]{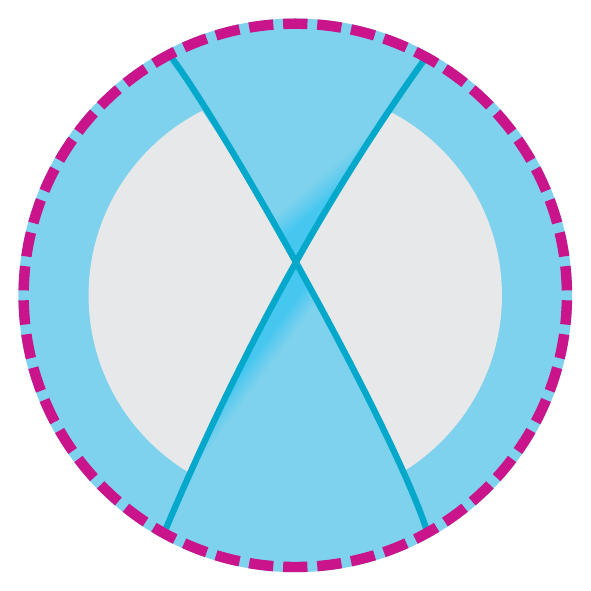}} at (C.center);
                \end{pgfonlayer}{background}
			
                \begin{pgfonlayer}{main}
                    \node (C) [v:ghost] {};
            
                \end{pgfonlayer}{main}
        
                \begin{pgfonlayer}{foreground}
                \end{pgfonlayer}{foreground}

            \end{tikzpicture}
        };

        \node (Mpos) [v:ghost,position=180:50mm from C] {};

        \node(M) [v:ghost,position=0:0mm from Mpos] {
            \begin{tikzpicture}

                \pgfdeclarelayer{background}
		          \pgfdeclarelayer{foreground}
			
		          \pgfsetlayers{background,main,foreground}

                \begin{pgfonlayer}{background}
                    \pgftext{\includegraphics[width=3.5cm]{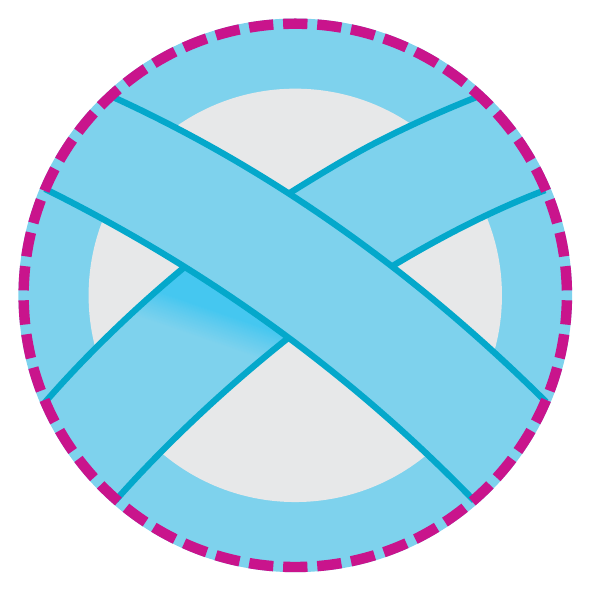}} at (C.center);
                \end{pgfonlayer}{background}
			
                \begin{pgfonlayer}{main}
                    \node (C) [v:ghost] {};
            
                \end{pgfonlayer}{main}
        
                \begin{pgfonlayer}{foreground}
                \end{pgfonlayer}{foreground}

            \end{tikzpicture}
        };

        \node (Rpos) [v:ghost,position=0:0mm from C] {};

        \node(R) [v:ghost,position=0:0mm from Rpos] {
            \begin{tikzpicture}

                \pgfdeclarelayer{background}
		          \pgfdeclarelayer{foreground}
			
		          \pgfsetlayers{background,main,foreground}

                \begin{pgfonlayer}{background}
                    \pgftext{\includegraphics[width=3.5cm]{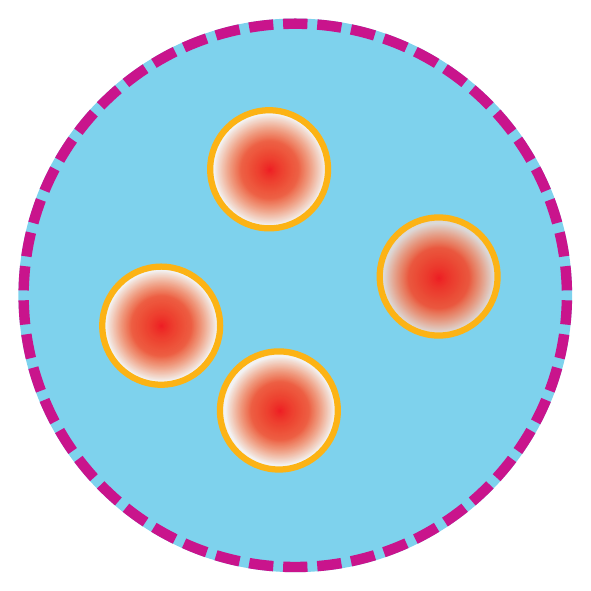}} at (C.center);
                \end{pgfonlayer}{background}
			
                \begin{pgfonlayer}{main}
                    \node (C) [v:ghost] {};
            
                \end{pgfonlayer}{main}
        
                \begin{pgfonlayer}{foreground}
                \end{pgfonlayer}{foreground}

            \end{tikzpicture}
        };

        \node (Rlabel) [v:ghost,position=270:2cm from R] {(iii)};

        \node (Mlabel) [v:ghost,position=180:5cm from Rlabel] {(ii)};

        \node (Llabel) [v:ghost,position=180:5cm from Mlabel] {(i)}; 
            
        \end{pgfonlayer}{main}
        
        \begin{pgfonlayer}{foreground}
        \end{pgfonlayer}{foreground}

    \end{tikzpicture}
    \caption{Diagrams of (i) a crosscap transaction, (ii) a handle transaction, and (iii) a weak near embedding in a disk with a small number of vortices.}
    \label{fig:SocietyClassificationIntro}
\end{figure}

Similarly to before, case (i), where a clique minor is given, is delegated to the result of \cite{ProtopapasTW2025Colorful}.

To explain cases (ii.a) and (ii.b), we briefly recall the notion of a \textsl{flat transaction}.
The goal of this outcome eventually is to augment the weak near embedding using structure extracted from the interior of the vortex.
We say that a transaction is \emph{monotone} if the order of its endpoints on one of the two segments of $\Delta$ its endpoints are placed on is mirrored by the order of its endpoints on the other segment.
We define its strip of a monotone transaction as the union of those components of the graph drawn in $\Delta$ that remain after deleting the transaction and the boundary of $\Delta,$ and that have neighbours on an interior path of the transaction.
A transaction is \emph{flat (under $A$)} if its strip admits a weak near embedding without vortices (after deleting a set of vertices $A$).

As the given flat transaction is orthogonal to most of the nest, arguments analogous to those used in \textbf{Step 1} yield a flat subtransaction such that all regions between consecutive paths in the weak near embedding of the strip are homogeneous, meaning all contain a red vertex or none does.
In the former case we obtain a large red grid; otherwise, the transaction is entirely free of red vertices.
See \zcref{fig:CleaningTransaction} for an illustration of the situation.

\begin{figure}[ht]
    \centering
    \begin{tikzpicture}

        \pgfdeclarelayer{background}
		\pgfdeclarelayer{foreground}
			
		\pgfsetlayers{background,main,foreground}
			
        \begin{pgfonlayer}{main}
        \node (C) [v:ghost] {};

        \node (Mpos) [v:ghost,position=0:0mm from C] {};

        \node(M) [v:ghost,position=0:0mm from Mpos] {
            \begin{tikzpicture}

                \pgfdeclarelayer{background}
		          \pgfdeclarelayer{foreground}
			
		          \pgfsetlayers{background,main,foreground}

                \begin{pgfonlayer}{background}
                    \pgftext{\includegraphics[width=14cm]{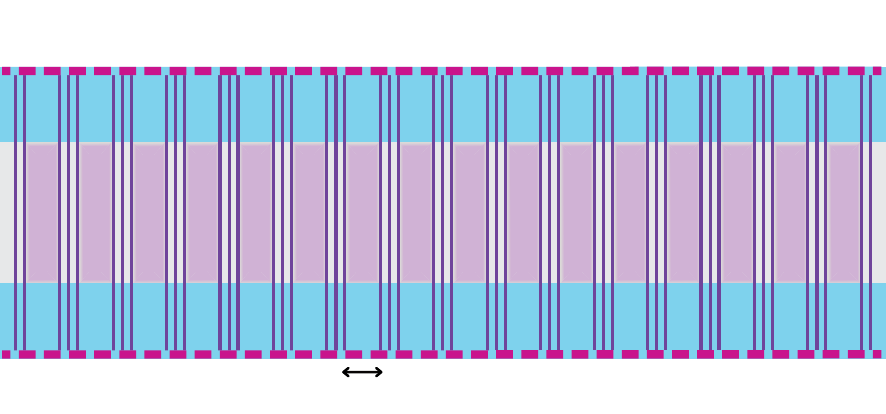}} at (C.center);
                \end{pgfonlayer}{background}
			
                \begin{pgfonlayer}{main}
                    \node (C) [v:ghost] {};
            
                \end{pgfonlayer}{main}
        
                \begin{pgfonlayer}{foreground}
                \end{pgfonlayer}{foreground}

            \end{tikzpicture}
        };

        \node (Rpos) [v:ghost,position=270:60mm from C] {};

        \node(R) [v:ghost,position=0:0mm from Rpos] {
            \begin{tikzpicture}

                \pgfdeclarelayer{background}
		          \pgfdeclarelayer{foreground}
			
		          \pgfsetlayers{background,main,foreground}

                \begin{pgfonlayer}{background}
                    \pgftext{\includegraphics[width=14cm]{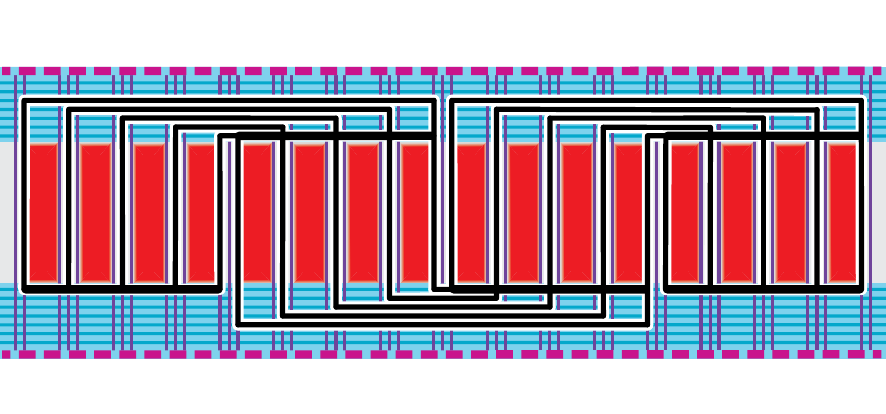}} at (C.center);
                \end{pgfonlayer}{background}
			
                \begin{pgfonlayer}{main}
                    \node (C) [v:ghost] {};
            
                \end{pgfonlayer}{main}
        
                \begin{pgfonlayer}{foreground}
                \end{pgfonlayer}{foreground}

            \end{tikzpicture}
        };

        \node (Rlabel) [v:ghost,position=180:75mm from R] {(ii)};

        \node (Mlabel) [v:ghost,position=180:75mm from M] {(i)};

        \node (g1) [v:ghost,position=270:28mm from Mlabel] {};
        \node (g2) [v:ghost,position=0:62.3mm from g1] {};

        \node (l1) [v:ghost,position=0:0mm from g2] {$r$};

        \end{pgfonlayer}{main}
        
        \begin{pgfonlayer}{foreground}
        \end{pgfonlayer}{foreground}

    \end{tikzpicture}
    \caption{Diagrams of (i) a strip of a transaction divided into $16$ pairwise disjoint substrips, each containing a transaction of order $r$ and (ii) a $(4 \times 4)$-grid minor in the case where all strips in (i) contain a red vertex.}
    \label{fig:CleaningTransaction}
\end{figure}

Outcome (ii.c) of the society classification theorem constitutes the main technical challenge.
In existing proofs of the GMST (e.g. \cite{RobertsonS1990Graph, RobertsonS2003Grapha, KawarabayashiTW2021Quickly, GorskySW2025Polynomial}), refining a weak near embedding by repeatedly splitting vortices requires sacrificing part of the nest, and termination is ensured by using so-called \textsl{crooked transactions} \cite{RobertsonS1990Graphb}.
For technical reasons, this tool is unavailable here.
However, the existence of a weak near embedding as in (ii.c) allows us to avoid sacrificing cycles.

Using similar techniques as for the proof of the red flat wall theorem on the wall-like structure from (ii.c), we either find a large red grid or obtain a large nest together with many radially traversing paths orthogonal to it, such that every vortex and every red vertex is enclosed by the nest.
We then view the interior of the nest as a single vortex.
Either this vortex admits a path decomposition of bounded adhesion, or there exists a large transaction traversing the nest.
A sequence of lemmas that follow shows that, such a transaction can always be chosen to traverse the nest interior, be orthogonal to the nest, and avoid all red vertices, similarly to cases (ii.a) and (ii.b).
This relies on adapting a technique originating in \cite{ThilikosW2024Killing} and further developed in \cite{PaulPTW2024Obstructionsa, PaulPTW2025Local}.
Crucially, this allows us to discard one side of a nest when splitting it, provided that side contains no red vertex or vortex, without reducing the nest size.
It is noteworthy that this case never shows up in the actual proof of the Graph Minor Structure Theorem.

To ensure polynomial bounds, we must also control how new nests connect to previous ones. Naively splitting radial paths at each step would lead to exponential loss.
Instead, we inductively maintain a \textsl{nest tree}, a hierarchical tree-like structure in which each nest is connected to its two children nests by sets of radial paths extracted from the splitting transactions, rather than from the parent nest’s own radial paths.
See \zcref{fig:NestTree} for an illustration.
This guarantees that no loss accumulates.

Finally, termination is ensured using the nest tree itself.
If the number of leaves in the nest tree exceeds a certain quadratic threshold (in terms of the target grid size), a simple argument using Menger's Theorem and the rich structure of the nest tree yields a large red grid minor.
Otherwise, we obtain only a bounded number of leaves for our nest tree, in total containing all original vortices and red vertices.
Each of these leaves can now be seen as vortices admitting a path decomposition of bounded adhesion, as no large transaction traversing them exists.
Again using Menger's Theorem and the structure of the nest tree we can link the nests in the leaves back to (part of) the original nest as desired.

\paragraph{Step 3: The local structure theorem.}

Using the red society classification theorem, we now inductively prove the red local structure theorem with respect to a large wall.
The statement essentially reads as follows: Given an annotated graph $(G, R)$ with a large wall $W,$ one may find one of the four possible outcomes:
\begin{enumerate}
\item A small set of vertices $S$ such that the component of $G - S$ that contains the majority of $W$ is free of red vertices.
\item A large red clique-minor in $G$ whose model is highly connected to $W$.
\item A large red grid-minor in $G$ whose model is highly connected to $W.$
\item A weak near embedding of $G$ centred at $W$ with a bounded number of vortices each with a bounded adhesion path decomposition such that all red vertices are confined in the interior of the vortices.
\end{enumerate}

\begin{figure}[ht]
    \centering
    \begin{tikzpicture}

        \pgfdeclarelayer{background}
		\pgfdeclarelayer{foreground}
			
		\pgfsetlayers{background,main,foreground}

        \begin{pgfonlayer}{background}
            \pgftext{\includegraphics[width=8cm]{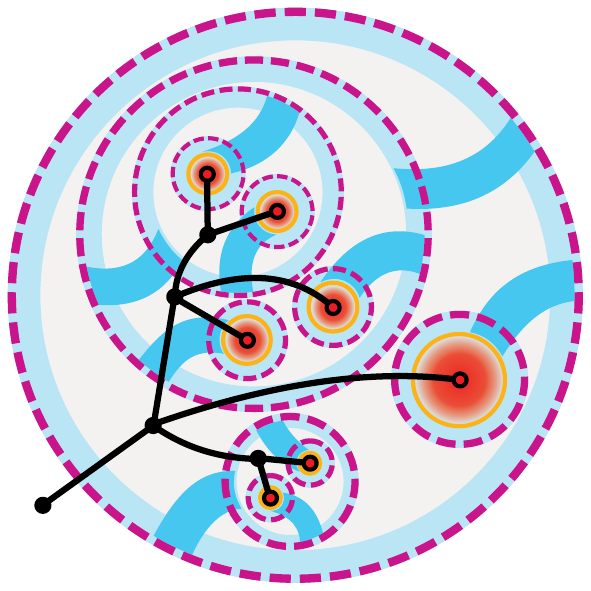}} at (C.center);
        \end{pgfonlayer}{background}
			
        \begin{pgfonlayer}{main}
        \node (C) [v:ghost] {};
            
        \end{pgfonlayer}{main}
        
        \begin{pgfonlayer}{foreground}
        \end{pgfonlayer}{foreground}

    \end{tikzpicture}
    \caption{A diagram of the tree-like structure of nests and vortices.}
    \label{fig:NestTree}
\end{figure}

The proof of the red local structure theorem proceeds by induction, with the red flat wall theorem as the base case. In the base case, the red flat wall theorem yields either a large red grid, in which case we are done, or a weak near-embedding with a single vortex that contains all red vertices in its interior.
This embedding serves as the starting point of the refinement procedure.

Using the wall infrastructure provided by the flat wall theorem, we first construct the required nest and then iteratively apply the red society classification theorem to the current weak near-embedding.
At each step of this process, if the outcome is a red clique or a red grid, we are done.

If instead a flat and homogeneous crosscap or handle transaction is produced, we proceed as follows.
If the transaction is red, we again obtain a red grid.
Otherwise, we apply tools from \cite{KawarabayashiTW2021Quickly} to extend the working surface by attaching a crosscap or handle and routing most of the transaction through it.
This yields a single new vortex cell for further refinement.
The new vortex is disjoint from the added crosscap or handle, contains all red vertices in its interior, and contains a sufficiently large nest which is defined by combining a part of the crosscap or handle transaction with the original nest, allowing the induction to continue.

In the remaining case, the red society classification theorem produces a weak near-embedding in which the vortex is replaced by a bounded number of vortices, each admitting a path decomposition of bounded adhesion and together containing all red vertices in their interiors.
By stitching this embedding along the boundary of the vortex to the already embedded part of the graph, we obtain the final outcome and conclude the proof.

\paragraph{Step 4: Local to global.}
Finally, we are able to use the local structure theorem of \textbf{Step 3} that in the absence of a large red grid, finds the desired tree-decomposition of \zcref{thm:BidimensionalityIntro}.

As we have previously discussed this part follows in a straightforward manner, by employing a fairly standard technique originating from \cite{RobertsonS1991Graph} that allows to turn the red local structure theorem into the desired global theorem based on tree-decompositions.

\section{Preliminaries}\label{sec:preliminaries}

In this section we present a collection of concepts and definitions from graph theory and graph minor structure theory essential to the results of this work.
Most of these are derived directly from \cite{KawarabayashiTW2018New,KawarabayashiTW2021Quickly,GorskySW2025Polynomial}, with only minor aesthetic changes applied.
The only definitions original to this work contained in this section are several variants of standard definitions in the current realm of graph minor structure theory in which we want no red vertices to appear or red vertices to be omnipresent.

\subsection{Basics}
First, we introduce notation for commonly used concepts.
By $\mathbb{N}$ we denote the set of non-negative integers.
Given any two integers $a,b\in\mathbb{N},$ we write $[a,b]$ for the set $\{z\in\mathbb{N} ~\!\colon\!~ a\leq z\leq b\}.$
Notice that the set $[a,b]$ is empty whenever $a>b.$
For any positive integer $c$ we set $[c]\coloneqq [1,c].$

\paragraph{Paths and linkages.}

A \emph{linkage} $\mathcal{L}$ in a graph $G$ is a set of pairwise vertex-disjoint paths.
We say that a path $P$ in $G$ is \emph{internally disjoint} from a set $X \subseteq V(G)$ if $V(P) \cap X$ does not contain any vertex of $P$ that is not an endpoint vertex.
Given a graph $G$ and two subsets $A, B \subseteq V(G)$, an \emph{$A$-path} in $G$ is a path with both endpoints in $A$ and internally disjoint from $A$, and an \emph{$A$-$B$-path} is a path with one endpoint in $A$, the other in $B$, and internally disjoint from $A \cup B$.
An \emph{$A$-$B$-linkage} in $G$ is a linkage consisting of $A$-$B$ paths.
If $H$ is a subgraph of $G$, an \emph{$H$-path} is a $V(H)$-path of length at least one with no edge in $E(H)$.

\paragraph{Surfaces.}
By a \emph{surface} we mean a compact $2$-dimensional manifold with or without boundary.

Given a pair $(\mathsf{h}, \mathsf{c}) \in \mathbb{N} \times [0,2]$ we define $\Sigma^{(\mathsf{h}, \mathsf{c})}$ to be the surface without boundary created from the sphere by adding $\mathsf{h}$ handles and $\mathsf{c}$ crosscaps (see \cite{MoharT2001Graphs} for more details).
If $\mathsf{c} = 0$ the surface $\Sigma^{(\mathsf{h}, \mathsf{c})}$ is an \emph{orientable} surface, otherwise it is called \emph{non-orientable}.
By Dyck's theorem \cite{Dyck1888Beitraege,FrancisW1999Conways}, two crosscaps are equivalent to a handle in the presence of a third crosscap.
Thus the notation $\Sigma^{(\mathsf{h}, \mathsf{c})}$ is sufficient to capture all two-dimensional surfaces without boundary.
We let the \emph{genus} of $\Sigma$ be $2\mathsf{h} + \mathsf{c}$, where $\Sigma^{\mathsf{(\mathsf{h},\mathsf{c})}}$ is a surface to which $\Sigma$ is isomorphic.

\subsection{Tools more specific to graph minor structure theory}

We continue by briefly introducing some key concepts for handling the general structure of $H$-minor-free graphs.
The definitions we introduce here are based on the framework introduced by \cite{KawarabayashiTW2021Quickly} and later adopted and reiterated by \cite{ThilikosW2024Excluding, PaulPTW2024Obstructionsa, PaulPTW2025Local, GorskySW2025Polynomial}.

\paragraph{Separations and tangles.}
Let $G$ be a graph and $k$ be a positive integer.
We denote by $\mathcal{S}_k(G)$ the collection of all separations $(A,B)$ of order less than $k$ in $G$.

An \emph{orientation} of $\mathcal{S}_k(G)$ is a set $\mathcal{O}$ such that for all $(A,B)\in\mathcal{S}_k(G)$ exactly one of $(A,B)$ and $(B,A)$ belongs to $\mathcal{O}$. 
A \emph{tangle} of order $k$ in $G$ is an orientation $\mathcal{T}$ of $\mathcal{S}_k(G)$ such that for all $(A_1,B_1),(A_2,B_2),(A_3,B_3)\in\mathcal{T}$, it holds that $G[A_1]\cup G[A_2]\cup G[A_3]\neq G$.
If $\mathcal{T}$ is a tangle and $(A,B)\in\mathcal{T}$ we call $A$ the \emph{small side} and $B$ the \emph{big side} of $(A,B)$.

Let $G$ be a graph and $\mathcal{T}$ and $\mathcal{D}$ be tangles of $G$.
We say that $\mathcal{D}$ is a \emph{truncation} of $\mathcal{T}$ if $\mathcal{D}\subseteq\mathcal{T}$.
\medskip

Let $G$ and $H$ be graphs as well as $\mathcal{T}$ be a tangle in $G$.
We say that a minor-model $\mu$ of $H$ in $G$ is \emph{controlled} by $\mathcal{T}$ if there does not exist a separation $(A,B)\in\mathcal{T}$ of order less than $|V(H)|$ and an $x \in V(H)$ such that $\mu(x)\subseteq A\setminus B$.

\paragraph{Meshes.}
Let $n,m$ be integers with $n,m\geq 2$.
A \emph{$(n\times m)$-mesh} is a graph $M$ which is the union of paths $M=P_1\cup\cdots\cup P_n\cup Q_1\cup \cdots \cup Q_m$ where
    \begin{itemize}
        \item $P_1,\cdots,P_n$ are pairwise vertex-disjoint, and $Q_1,\cdots,Q_m$ are pairwise vertex-disjoint.
        \item for every $i\in [n]$ and $j\in [m]$, the intersection $P_i\cap Q_j$ induces a path,
        \item each $P_i$ is a  $V(Q_1)$-$V(Q_m)$-path intersecting the paths $Q_1,\cdots Q_m$ in the given order, and each $Q_j$ is a $V(P_1)$-$V(P_m)$-path intersecting the paths $P_1,\cdots, P_h$ in the given order. 
    \end{itemize}
We say that the paths $P_1,\cdots,P_n$ are the \emph{horizontal paths}, and the paths $Q_1,\cdots,Q_m$ are the \emph{vertical paths}.
The union $P_{1} \cup P_{n} \cup Q_{1} \cup Q_{m}$ is a cycle called the \emph{perimeter} of $M$.
The unique cycle in the union $P_{i} \cup P_{i+1} \cup Q_{j} \cup Q_{j+1}$, where $i \in [n - 1]$ and $j \in [m - 1]$, is called a \emph{brick} of $M$.
A mesh $M'$ is a \emph{submesh} of a mesh $M$ if every horizontal (vertical) path of $M'$ is a subpath of a horizontal (vertical) path $M$, respectively.
We write \emph{$n$-mesh} as a shorthand for an $(n \times n)$-mesh.

Let $r \in \mathbb{N}$ with $r\geq 3$, let $G$ be a graph, and $M$ be an $r$-mesh in $G$.
Let $\mathcal{T}_M$ be the orientation of $\mathcal{S}_r$ such that for every $(A,B)\in\mathcal{T}_M$, the set $B\setminus A$ contains the vertex set of both a horizontal and a vertical path of $M$, we call $B$ the \emph{$M$-majority side} of $(A,B)$.
Then $\mathcal{T}_M$ is the tangle \emph{induced} by $M$.
If $\mathcal{T}$ is a tangle in $G$, we say that $\mathcal{T}$ \emph{controls} the mesh $M$ if $\mathcal{T}_M$ is a truncation of $\mathcal{T}$.

\paragraph{Paintings in surfaces.}
A \emph{painting} in a surface $\Sigma$ is a pair $\Gamma = (U,N)$, where $N \subseteq U \subseteq \Sigma$, $N$ is finite, $U \setminus N$ has a finite number of arcwise-connected components, called \emph{cells} of $\Gamma$, and for every cell $c$, the closure $\overline{c}$ is a closed disk where $N_\Gamma(c) \coloneqq \overline{c} \cap N \subseteq \mathsf{bd}(\overline{c})$.
If $|N_\Gamma(c)| \geq 4$, the cell $c$ is called a \emph{vortex}.
We further let $N(\Gamma) \coloneqq N$, let $U(\Gamma) \coloneqq U$, and let $C(\Gamma)$ be the set of all cells of $\Gamma$.
\medskip

Any given painting $\Gamma = (U,N)$ defines a hypergraph with $N$ as its vertices and the set of closures of the cells of $\Gamma$ as its edges.
Accordingly, we call $N$ the \emph{nodes} of $\Gamma$.

\paragraph{$\Sigma$-renditions.}
Let $G$ be a graph and $\Sigma$ be a surface.
A \emph{$\Sigma$-rendition} of $G$ is a triple $\rho = (\Gamma, \sigma, \pi)$, where
\begin{itemize}
    \item $\Gamma$ is a painting in $\Sigma$,
    \item for each cell $c \in C(\Gamma)$, $\sigma(c)$ is a subgraph of $G$, and
    \item $\pi \colon N(\Gamma) \to V(G)$ is an injection,
\end{itemize}
such that
\begin{description}
    \item[R1] $G = \bigcup_{c \in C(\Gamma)}\sigma(c)$,
    \item[R2] for all distinct $c,c' \in C(\Gamma)$, the graphs $\sigma(c)$ and $\sigma(c')$ are edge-disjoint,
    \item[R3] $\pi(N_\Gamma(c)) \subseteq V(\sigma(c))$ for every cell $c \in C(\Gamma)$, and
    \item[R4] for every cell $c \in C(\Gamma)$, we have $V(\sigma(c) \cap \bigcup_{c' \in C(\Gamma) \setminus \{ c \}} (\sigma(c'))) \subseteq \pi(M_\Gamma(c))$.
\end{description}
We write $N(\rho)$ for the set $N(\Gamma)$, let $N_\rho(c) = N_\Gamma(c)$ for all $c \in C(\Gamma)$, and similarly, we lift the set of cells from $C(\Gamma)$ to $C(\rho)$.
If it is clear from the context which $\rho$ is meant, we will sometimes simply write $N(c)$ instead of $N_\rho(c)$, and if the $\Sigma$-rendition $\rho$ for $G$ is understood from the context, we usually identify the sets $\pi(N(\rho))$ and $N(\rho)$ along $\pi$ for ease of notation.

\paragraph{Blank renditions.}
Let $\rho$ be a $\Sigma$-rendition of an annotated graph $(G,R)$.
If $$\pi(N(\rho)) \cup \bigcup \{ V(\sigma(c)) \colon c \in C(\rho) \text{ and } c \text{ is not a vortex}\}$$ is disjoint from $R$, we call $\rho$ a \emph{blank rendition (of $(G,R)$)}.

\paragraph{Societies.}
Let $\Omega$ be a cyclic ordering of the elements of some set which we denote by $V(\Omega)$.
A \emph{society} is a pair $(G,\Omega)$, where $G$ is a graph and $\Omega$ is a cyclic ordering with $V(\Omega)\subseteq V(G)$.
For a given set $S \subseteq V(\Omega)$ a vertex $s \in S$ is an \emph{endpoint} of $S$ if there exists a vertex $t \in V(\Omega) \setminus S$ that immediately precedes or succeeds $s$ in $\Omega$.
We call $S$ a \emph{segment} of $\Omega$ if $S$ has two or less endpoints.

Let $(G,\Omega)$ be a society and let $\Sigma$ be a surface with one boundary component $B$ homeomorphic to the unit circle.
A \emph{rendition} of $(G,\Omega)$ in $\Sigma$ is a $\Sigma$-rendition $\rho$ of $G$ such that the image under $\pi_{\rho}$ of $N(\rho) \cap B$ is $V(\Omega)$ and $\Omega$ is one of the two cyclic orderings of $V(\Omega)$ defined by the way the points of $\pi_{\rho}(V(\Omega))$ are arranged in the boundary $B$.

\paragraph{Traces of paths and cycles.}
Let $\rho$ be a $\Sigma$-rendition of a graph $G$.
For every cell $c \in C(\rho)$ with $|N_\rho(c)| = 2$, we select one of the components of $\mathsf{bd}(c) - N_\rho(c)$.
This selection will be called a \emph{tie-breaker in $\rho$}, and we assume that every rendition comes equipped with a tie-breaker.

Let $G$ be a graph and $\rho$ be a $\Sigma$-rendition of $G$.
Let $Q$ be a cycle or path in $G$ that uses no edge of $\sigma(c)$ for every vortex $c \in C(\rho)$.
We say that $Q$ is \emph{grounded} if it uses edges of $\sigma(c_1)$ and $\sigma(c_2)$ for two distinct cells $c_1, c_2 \in C(\rho)$, or $Q$ is a path with both endpoints in $N(\rho)$.
If $Q$ is grounded we define the \emph{trace} of $Q$ as follows.
Let $P_1,\dots,P_k$ be distinct maximal subpaths of $Q$ such that $P_i$ is a subgraph of $\sigma(c)$ for some cell $c$.
Fix $i \in [k]$.
The maximality of $P_i$ implies that its endpoints are $\pi(n_1)$ and $\pi(n_2)$ for distinct nodes $n_1,n_2 \in N(\rho)$.
If $|N_\rho(c)| = 2$, let $L_i$ be the component of $\mathsf{bd}(c) - \{ n_1,n_2 \}$ selected by the tie-breaker, and if $|N_\rho(c)| = 3$, let $L_i$ be the component of $\mathsf{bd}(c) - \{ n_1,n_2 \}$ that is disjoint from $N_\rho(c)$.
We define $L_i'$ by pushing $L_i$ slightly so that it is disjoint from all cells in $C(\rho)$, while maintaining that the resulting curves intersect only at a common endpoint.
The \emph{trace} of $Q$ is defined to be $\bigcup_{i\in[k]} L_i'$.
If $Q$ is a cycle, its trace thus the homeomorphic image of the unit circle, and otherwise, it is an arc in $\Sigma$ with both endpoints in $N(\rho)$.

\paragraph{Aligned disks and grounded subgraphs.}
Let $G$ be a graph and let $\rho = (\Gamma, \sigma, \pi)$ be a $\Sigma$-rendition of $G$. 
We say that a 2-connected subgraph $H$ of $G$ is \emph{grounded (in $\rho$)} if every cycle in $H$ is grounded and no vertex of $H$ is drawn by $\Gamma$ in a vortex of $\rho$.
A disk in $\Sigma$ is called \emph{$\rho$-aligned} if its boundary only intersects $\Gamma$ in nodes.
If $H$ is planar, we say that it is \emph{flat in $\rho$} if there exists a $\rho$-aligned disk $\Delta \subseteq \Sigma$ which contains all cells $c \in C(\rho)$ with $E(\sigma(c)) \cap E(H) \neq \emptyset$ and $\Delta$ does not contain any vortices of $\Gamma$.

For any $\rho$-aligned disk $\Delta$, we call the subgraph of $G$ that is drawn by $\Gamma$ onto $\Delta$ the \emph{crop of $G$ by $\Delta$ (in $\rho$)}.
Furthermore, the \emph{restriction $\delta'$ of $\rho$ by $\Delta$} is defined as the $\Delta$-rendition that consists of the restriction of both $\Gamma$, $\sigma$, and $\pi$ to $\Delta$.

This allows to define a society associated to $\Delta$ as follows.
Let $V(\Omega_{\Delta})$ be the set of all vertices whose corresponding nodes are drawn in the boundary of $\Delta$ and let $\Omega_{\Delta}$ be the cyclic ordering of $V(\Omega_{\Delta})$ obtained by traversing along the boundary of $\Delta$ in the anticlockwise direction.
Now, let $G_{\Delta}$ be the crop of $G$ by $\Delta$.
We call the society $(G_{\Delta}, \Omega_{\Delta})$ the \emph{$\Delta$-society (in $\rho$)}.
If $\rho$ is clear from the context, we do not mention it.
We also call the restriction of $\rho$ by $\Delta$, the \emph{restriction of $\rho$ to $(G_{\Delta}, \Omega_{\Delta})$.}

Let $\mathcal{P}$ be an $X$-$Y$-linkage in $G$ such that $X \cap V(G_{\Delta}) \subseteq V(\Omega_{\Delta})$ and $Y \subseteq V(G_{\Delta})$ and assume that each path in $\mathcal{P}$ is grounded in $\rho$.
Then we define the \emph{$\Delta$-truncation (in  $\rho$)} of $\mathcal{P}$ to be the $V(\Omega_{\Delta})$-$Y$-linkage in $G_{\Delta}$ which consists of the minimal $V(\Omega_{\Delta})$-$Y$-subpaths of the paths in $\mathcal{P}.$

Let $\rho$ be a rendition of a society $(G, \Omega)$ in the disk $\Delta.$
Given a cycle $C \subseteq G$ that is grounded in $\rho$ we define the \emph{$C$-disk (in $\rho$)} as the unique $\rho$-aligned disk $\Delta' \subseteq \Delta$ bounded by the trace of $C$ in $\rho.$
We also use the terms \emph{$C$-society (in $\rho$)} to denote the $\Delta'$-society in $\rho$ and \emph{$C$-truncation (in $\rho$)} to denote the $\Delta'$-truncation in $\rho$ of an appropriately defined linkage in $G.$

\paragraph{Transactions and their types.}
Let $(G, \Omega)$ be a society. 
A \emph{transaction} in $(G, \Omega)$ is an $A$-$B$-linkage for disjoint segments $A, B$ of $\Omega$ consisting of $V(\Omega)$-paths. 
The inclusion-wise minimal segments $X$ and $Y$ of $\Omega$ for which $\mathcal{P}$ is an $X$-$Y$-linkage are called the \emph{end segments} of $\mathcal{P}$ in $(G, 
\Omega)$.

Let $\mathcal{P}$ be a transaction in a society $(G, \Omega)$.
Suppose that the members of $\mathcal{P}$ can be enumerated as $P_{1}, \ldots, P_{n}$ so that if $x_{i} \in X$ and $y_{i} \in Y$ denote the endpoints of $P_{i}$, then the vertices $x_{1}, \ldots, x_{n}$ appear in the segment $X$ in the listed order or the reverse one, and the vertices $y_{1}, \ldots, y_{n}$ appear in $Y$ in the listed order or the reverse one.
Then we say that $\mathcal{P}$ is \emph{monotone}, and if $P_{1}, \ldots, P_{n}$ are ordered as above, they are \emph{indexed naturally}.

Should the vertices $x_{1}, \ldots, x_{n}, y_{n}, \ldots y_{1}$ appear in $\Omega$ in the listed cyclic ordering or its reverse, we call $\mathcal{P}$ a \emph{planar transaction}, and if the vertices $x_{1}, \ldots x_{n}, y_{1}, \ldots y_{n}$ appear in $\Omega$ in the listed cyclic ordering or its reverse, we call $\mathcal{P}$ a \emph{crosscap transaction}.
The paths $P_{1}$ and $P_{2}$ are called the \emph{boundary paths} of $\mathcal{P}$.

\paragraph{Strips and their societies.}
Let $H$ be a subgraph of a graph $G$.
An \emph{$H$-bridge} in $G$ is a connected subgraph $B$ of $G$ such that $E(B) \cap E(H) = \emptyset$ and either $E(B)$ consists of a unique edge with both ends in $H$, or $B$ is constructed from a component $C$ of $G - V(H)$ and the non-empty set of edges $F \subseteq E(G)$ with one end in $V(C)$ and the other in $V(H)$, by taking the union of $C$, the endpoints of the edges in $F$, and $F$ itself.
The vertices in $V(B) \cap V(H)$ are called the \emph{attachments} of $B$.

We let $H$ denote the subgraph of $G$ obtained from the union of elements of a monotone transaction $\mathcal{P}$ that is indexed naturally as $P_{1}, \ldots, P_{n}$ by adding the elements of $V(\Omega)$ as isolated vertices.
Further, we define $H'$ as the subgraph of $H$ consisting of $\mathcal{P}$ and all vertices of $X \cup Y$.
Consider all $H$-bridges of $G$ with at least one attachment in $V(H') \setminus V(P_{1} \cup P_{n})$, and for each such $H$-bridge $B$ let $B'$ denote the graph obtained from $B$ by deleting all attachments that do not belong to $V(H')$.
We let $G_{1}$ denote the union of $H'$ and all graphs $B'$ as above.

The \emph{$\mathcal{P}$-strip society in $(G, \Omega)$} is defined as the society $(G_{1}, \Omega_{1})$, where $\Omega_{1}$ is the concatenation of the segment $X$ ordered from $x_{1}$ to $x_{n}$, and the segment $Y$ ordered from $y_{n}$ to $y_{1}$.
If the $\mathcal{P}$-strip society admits a vortex-free rendition in a disk, we call $\mathcal{P}$ a \emph{flat transaction}.
Further, if no edge of $G$ has an endpoint in $V(G_{1}) \setminus V(P_{1} \cup P_{n})$ and the other endpoint in $V(G) \setminus V(G_{1})$, then we call $\mathcal{P}$ \emph{isolated}.
Let $X', Y'$ be the two distinct segments of $\Omega$ that have one endpoint in $X$ and the other in $Y$.
Note that $V(\Omega) = X \cup Y \cup X' \cup Y'$.
We say that $\mathcal{P}$ is \emph{separating} if it is isolated and there exists no $X'$-$Y'$-path in $G - V(G_{1})$.

\paragraph{Handle transactions.}
A transaction $\mathcal{P}$ of order $2n$ in a society $(G, \Omega)$, for a positive integer $n$, is called a \emph{handle transaction} if $\mathcal{P}$ can be partitioned into two transactions $\mathcal{R}, \mathcal{Q}$ each of order $n$, such that both are planar, and $S^\mathcal{R}_1, S^\mathcal{Q}_1, S^\mathcal{R}_2, S^\mathcal{Q}_2$ are segments partitioning $\Omega$, with $\mathcal{X}$ being a $S^\mathcal{X}_1$-$S^\mathcal{X}_2$-linkage for both $\mathcal{X} \in \{ \mathcal{R} , \mathcal{Q} \}$, and the segments are found on $\Omega$ in the order they were listed above.
We call a handle transaction $\mathcal{P} = \mathcal{R} \cup \mathcal{Q}$ \emph{isolated in $G$}, respectively \emph{flat in $G$}, if both the $\mathcal{R}$-strip society and the $\mathcal{Q}$-strip society of $(G, \Omega)$ are isolated in $G$, respectively flat in $G$.

\paragraph{Cylindrical renditions.}
A rendition of a society $(G, \Omega)$ in the disk with a unique vortex $c_{0}$ is called a \emph{cylindrical rendition} of $(G, \Omega)$ \emph{around} $c_{0}$.

If $(G, \Omega)$ is a society with a cylindrical rendition $\rho$ around a vortex $c_{0}$ and $\mathcal{P}$ is a transaction in $(G, \Omega)$, we call $\mathcal{P}$ \emph{exposed} if for every path $p \in \mathcal{P}$ there exists an edge $e \in E(P) \cap \sigma(c_{0})$.

\paragraph{Nests and radial linkages.}
Let $\rho$ be a rendition of a society $(G, \Omega)$ in a disk $\Delta$.
A \emph{nest (in $\rho$)} is a set of disjoint cycles $\mathcal{C} = \{ C_{1}, \ldots, C_{s} \}$ in $G$ such that each of them is grounded in $\rho$, and if $\Delta_{i}$ is the $C_{i}$-disk for $i \in [s]$, then every vortex of $\rho$ is contained in $\Delta_{1}$ and $\Delta_{1} \subseteq \ldots \subseteq \Delta_{s} \subseteq \Delta$.
We call $C_{1}$ the \emph{inner cycle} of $\mathcal{C}$ and $C_{s}$ the \emph{outer cycle} of $\mathcal{C}$ respectively.
Moreover, we call a $V(\Omega)$-$V(C_{1})$-linkage $\mathcal{R}$ a \emph{radial linkage (in $\rho$) for $\mathcal{C}$} if all paths in $\mathcal{R}$ are grounded in $\rho$ and internally disjoint from $V(\Omega).$

If $(G, \Omega)$ is a society with a nest $\mathcal{C}$ in a rendition $\rho$ of $(G, \Omega)$ in a disk, we say that a radial linkage $\mathcal{R}$ for $\mathcal{C}$ is \emph{orthogonal to $\mathcal{C}$} if for all $C \in \mathcal{C}$ and all $R \in \mathcal{R}$ the graph $C \cap R$ is a path.
Similarly, we say that a transaction $\mathcal{P}$ in $(G, \Omega)$ is \emph{orthogonal to $\mathcal{C}$} if for all $C \in \mathcal{C}$ and all $P \in \mathcal{P}$ the graph $C \cap P$ consists of exactly two paths.

\paragraph{Inner and outer graphs of a cycle.}
Let $(G, \Omega)$ be a society with a $\Sigma$-rendition $\rho.$
Further, let $C$ be a grounded cycle whose trace bounds a disk $\Delta_C$ and the $\Delta_C$-society $(G', \Omega').$
We call $G' \cup C$ the \emph{inner graph of $C$ (in $\rho$)} and call $G'$ itself the \emph{proper inner graph of $C$ (in $\rho$).}
Let $B = \pi(N(\rho) \cap \mathsf{bd}(\Delta_C)).$
We define the \emph{proper outer graph of $C$ (in $\rho$)} as $G'' \coloneqq G[B \cup (V(G) \setminus V(G'))]$ and call $G'' \cup C$ the \emph{outer graph of $C$ (in $\rho$).}

\paragraph{Depth of vortices.}
Let $G$ be a graph and $\rho$ be a $\Sigma$-rendition of $G$ with a vortex cell $c_0.$
Notice that $c_0$ defines a society $(\sigma(c_0), \Omega_{c_0})$, where $V(\Omega_{c_0})$ is the set of vertices of $G$ corresponding $N_\rho(c_0).$
The ordering $\Omega_{c_{0}}$ is obtained by traversing along the boundary of the closure of $c_0$ in anti-clockwise direction.
We call $(\sigma(c_0),\Omega_{c_0})$ as obtained above the \emph{vortex society} of $c_0.$

We define the \emph{depth} of a society $(G, \Omega)$ as the maximum cardinality of a transaction in $(G, \Omega).$
The \emph{depth} of the vortex $c_{0}$ is thereby defined as the depth of its vortex society.

Given a $\Sigma$-rendition $\rho$ with vortices, we define the \emph{breadth of $\rho$} as the number of vortex cells of $\rho$ and the \emph{depth of $\rho$} as the maximum depth of its vortex societies.

\subsection{Surface walls}\label{sec:surfacwalls}

In our local structure theorem we want to ensure that any part of the surface we add is sufficiently represented by a lot of grid-like infrastructure.
This part is inspired by the work of Thilikos and Wiederrecht on excluding graphs of bounded genus \cite{ThilikosW2024Excluding} and essentially the same definitions are featured in \cite{PaulPTW2024Obstructionsa, PaulPTW2025Local, GorskySW2025Polynomial}.

\paragraph{Annulus walls.}
Let $m,n$ be positive integers.
The \emph{$(n\times m)$-annulus grid} is the graph obtained from the $(n\times m)$-grid by adding the edges $\{ \{(i,1),(i,n)\} ~\!\colon\!~\ i\in[n] \}$.
The \emph{elementary $(n\times m)$-annulus wall} is the graph obtained from the $(n\times 2m)$-annulus grid by deleting all edges in the following set
\begin{align*}
    \big\{  \{(i,j),(i+1,j) \}  ~\!\colon\!~ i\in[n-1],\text{ }j\in[2m]\text{, and }i\not\equiv j\mod 2 \big\}.
\end{align*}
An \emph{$(n \times n)$-annulus wall} is a subdivision of the elementary $(n \times n)$-annulus wall.
We also write \emph{$n$-annulus wall or grid} as an abbreviation for an $(n \times n)$-annulus wall or grid.
\medskip

One can also see an annulus $(n\times m)$-wall as the graph obtained by completing the horizontal paths of a wall to cycles instead of discarding the vertices of degree one.
This viewpoint will be very helpful in the following constructions.
An $n$-annulus wall contains $n$ cycles $C_1,\dots,C_n$, such that $C_i$ consists exactly of the vertices of the $i$th row of the original wall.
We refer to these cycles as the \emph{base cycles} of the $n$-annulus wall.

\paragraph{Cylindrical meshes.}
In some settings it will be easier to work with a version of meshes that is cylindrical, since it is easier to argue for their existence than for annulus walls or grids.

Let $m,n$ be positive integers, let $M$ be a graph, and let $C_1, \ldots, C_m$ be cycles and $P_1, \ldots , P_n$ be paths in $M$ such that the following holds for all $i \in [m]$ and $j \in [n]$:
\begin{itemize}
    \item $C_1, \ldots , C_m$ are pairwise vertex-disjoint, $P_1, \ldots, P_n$ are pairwise vertex-disjoint, and $M = C_1 \cup \cdots \cup C_m \cup P_1 \cup \cdots \cup P_n$.

    \item $C_i \cap P_j$ is a path, and if $i \in \{ 1, m \}$ or $j \in \{ 1, n \}$, then $C_i \cap P_j$ has exactly one vertex,

    \item when traversing $C_i$ starting from an endpoint of $P_1\cap C_i$, then either the paths $P_1, \ldots , P_n$ are encountered in the order listed or the next $P_j$ one encounters is $P_n$ and from here the paths are encountered in the order $P_n,\dots,P_1$, and

    \item $P_j$ has one end in $C_1$ and the other in $C_m$, and when traversing $P_j$ starting from its endpoint on $C_1$, the cycles $C_1, \ldots , C_m$ are encountered in the order listed.
\end{itemize}
If the above conditions hold for $M$, we call $M$ an \emph{$(n \times m)$-cylindrical mesh}.
The cycles $C_1, \ldots, C_m$ are called the \emph{concentric cycles}, or \emph{cycles}, of $M$ and the paths $P_1, \ldots , P_n$ are called the \emph{radial paths}, or \emph{rails}, of $M$.
According to this definition, annulus walls are cylindrical meshes.
We also call $(n \times n)$-cylindrical meshes \emph{$n$-cylindrical meshes}.

Let $n, m \geq 3$ be integers, $G$ be a graph, and $M$ be an $(n \times m)$-cylindrical mesh in $G$.
Similarly to meshes, let $\mathcal{T}_{M}$ be the orientation of $\mathcal{S}_{r}$, where $r = \min \{ n, m \}$, such that for every $(A, B) \in \mathcal{T}_{M}$, the set $B \setminus A$ contains the vertex set of both a concentric cycle and a radial path of $M$, we call $B$ the \emph{$M$-majority side of $(A, B)$.}
Then $\mathcal{T}_{M}$ is the tangle \emph{induced} by $M$.
If $\mathcal{T}$ is a tangle in $G$, we way that $\mathcal{T}$ controls the mesh $M$ if $\mathcal{T}_{M}$ is a truncation of $\mathcal{T}$.

\paragraph{Wall segments.}
Let $n$ be a positive integer.
An \emph{elementary $n$-wall-segment} is the graph $W_0$ obtained from the $(n\times 8n)$-grid by deleting all edges in the following set
\begin{align*}
    \big\{  \{(i,j),(i+1,j) \}  ~\!\colon\!~ i\in[n-1],\text{ }j\in[8n]\text{, and }i\not\equiv j\mod 2 \big\}.
\end{align*}
The vertices in $\{ (i,1)  ~\!\colon\!~ i\in[n] \}$ are said to be the \emph{left boundary} of the segment, while the vertices in $\{ (i,n)  ~\!\colon\!~ i\in[n] \}$ form the \emph{right boundary of the segment}.
Finally, we refer to the vertices in $\{ (1,i) ~\!\colon\!~ i=2j\text{, }j\in[1,8n] \}$ as the \emph{top boundary}.
\smallskip

An \emph{elementary $n$-handle-segment} is the graph obtained from the elementary $n$-wall-segment $W_0$ by adding the following edges, which we call \emph{handle edges},
\begin{align*}
    &\big\{ \{(1,2i),(1,6n+2-2i)\}  ~\!\colon\!~  i\in[1,n]  \big\}\\
    \cup \ &\big\{ \{(1,2i),(1,8n+2-2i)\}  ~\!\colon\!~ i\in[n+1,2n]  \big\}.
\end{align*}

An \emph{elementary $n$-crosscap-segment} is the graph obtained from the elementary $n$-wall-segment $W_0$ by adding the following edges, which we call \emph{crosscap edges},
\begin{align*}
    \big\{ \{(1,2i),(1,4n+2i)\}  ~\!\colon\!~  i\in[1,2n]  \big\}.
\end{align*}

An \emph{elementary $n$-vortex-segment} is the graph obtained from the disjoint union of two elementary $n$-wall segments $W_0$ and $W_1$ by making the $i$th top boundary vertex of $W_0$ adjacent to the $i$th top boundary vertex of $W_1$ for each $i \in [4n]$, and by making the $j$th left boundary vertex of $W_1$ adjacent to the $j$th right boundary vertex of $W_1$ for each $j \in [n]$.

We denote the $(n\times 4n)$-annulus wall defined on the vertex set of $W_1$ as above by $W$.
The base cycle $C_1,\dots C_n$ of $W$ are assumed to be ordered such that $C_n$ contains all vertices adjacent to $W_0$ and we call $\{ C_1,\ldots, C_s\}$ the \emph{nest} of the elementary $n$-vortex segment.
We refer to $C_n$ as the \emph{outer cycle} and to $C_1$ as the \emph{inner cycle} of the elementary $n$-vortex segment.
Finally notice that there exist $4n$ pairwise disjoint ``vertical'' paths which are orthogonal to both the horizontal paths of $W_0$ and the cycles $C_1,\dots, C_n$.
The family of these paths is called the \emph{rails} of the elementary $n$-vortex-segment.
\smallskip

In all four types of segments we refer to the elementary wall segment $W_0$ as the \emph{base}.
If we do not want to specify the \textit{type} of an elementary wall-, handle-, crosscap-, or vortex-segment, we simply refer to the graph as a \emph{elementary $n$-segment}, or \emph{elementary segment} if $n$ is not specified.

Let $n$ and $\ell$ be positive integers and let $S_1,\dots,S_\ell$ be elementary $n$-segments.
The \emph{cylindrical closure} of $S_1,\dots,S_{\ell}$ is the graph obtained by introducing, for every $i\in[\ell-1]$ and every $j\in[n]$ and edge between the $j$th vertex of the right boundary of $S_i$ and the $j$th vertex of the left boundary of $S_{i+1}$ together with edges between the $j$th vertex of the right boundary of $S_{\ell}$ and the $j$th vertex of the left boundary of $S_1$.

\paragraph{Surface-walls.}
The \emph{elementary extended $n$-surface wall} with \emph{$h$ handles, $c$ crosscaps, and $b$ vortices} is the graph obtained from the cylindrical closure of $n$-segments $S_1,\dots,S_{h+c+b+1}$ such that there are exactly one elementary $n$-wall-segment, $h$ elementary $n$-handle-segments, $c$ elementary $n$-crosscap-segments, and $b$ elementary $n$-vortex-segments among the $S_i$.
An \emph{extended $n$-surface-wall} is a subdivision of an elementary $n$-surface-wall.
We refer to the tuple $(h,c,b)$ as the \emph{signature} of the extended $n$-surface-wall with $h$ handles, $c$ crosscaps, and $b$ vortices.
An extended surface wall without vortices is also simply called a \emph{surface wall}.

Notice that every extended $n$-surface-wall with signature $(h,c,b)$ contains an $(n \times 4(h+c+b+1)n)$-annulus-wall consisting of $n$ cycles.
We refer to this wall as the \emph{base wall} of the extended $n$-surface-wall.
Let $C_1,\dots,C_n$ be the base cycles of the base wall.
We will usually assume that $C_1$ is the cycle that contains all top boundary vertices of all segments involved, while $C_n$ can be seen as the ``outermost'' cycle.
We refer to $C_n$ as the \emph{simple cycle} of the extended $n$-surface-wall.
Moreover, we refer to the tangle induced by the base wall as the tangle \emph{induced} by the extended $n$-surface wall.

\subsection{Excluding a red clique minor}\label{sec:excludeclique}

Any attempt at describing the structure of an annotated graph $(G, R)$ in the the regime of red minors needs to deal with the presence of a large clique minor in $G$.
This is in fact the starting point of the structural results in \cite{ProtopapasTW2025Colorful}.
The following tool is a simple instantiation of Theorem 3.1 in \cite{ProtopapasTW2025Colorful}, for the case in which there is a single coloured set of vertices in the graph.

\medskip
Let $t$ be a positive integer and $\mu$ be a $K_{t}$-minor-model in $G$.
Notice that for every separation $(A, B) \in \mathcal{S}_{t}$ there exists a unique $X \in \{ A, B \}$, $Y \in \{ A, B \} \setminus X$, such that $\mu(u) \subseteq X \setminus Y$ for some $v \in V(K_{t})$.
We call $X$ the \emph{$\mu$-majority side} of $(A, B)$.
Moreover, if we let $\mathcal{T}_{\mu}$ be the orientation of $\mathcal{S}_{t}$ obtained by taking all separations $(A, B) \in \mathcal{S}_{t}$ such that $B$ is the $\mu$-majority side of $(A, B)$, then $\mathcal{T}$ is a tangle.
We call $\mathcal{T}_{\mu}$ the tangle \emph{induced by $\mu$.}

\begin{proposition}[Protopapas, Thilikos, and Wiederrecht \cite{ProtopapasTW2025Colorful}]\label{prop:redClique}
    Let $t$ and $k$ be positive integers with $k \geq \lfloor \nicefrac{3t}{2} \rfloor + t$.
    Let $(G,R)$ be an annotated graph such that $G$ contains a minor-model $\mu$ of $K_k.$
    Then one of the following is true:
    \begin{enumerate}
    
        \item There exists a separation $(A,B) \in \mathcal{T}_{\mu}$ of order at most $t-1$ such that $(B \setminus A) \cap R = \emptyset,$ or
            
        \item there exists a red minor-model $\mu'$ of $K_t$ in $(G,R)$ such that $\mathcal{T}_{\mu'}$ is a truncation of $\mathcal{T}_{\mu}.$

    \end{enumerate}
    Furthermore, there exists an algorithm that takes as input $t$, $(G,R)$, and $\mu$ as above and finds one of the two outcomes in time $\mathbf{poly}(t) \cdot |E(G)|.$
\end{proposition}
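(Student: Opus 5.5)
The plan is a Menger-type argument inside the large clique model, essentially the rooted-clique argument behind Theorem 3.1 of \cite{ProtopapasTW2025Colorful}. Write $\mu$ for the $K_k$-model with branch sets $\mu(v)$, $v\in V(K_k)$, and fix a set $Z$ of $t$ distinct vertices $z_1,\dots,z_t$ of $K_k$; these will carry the red clique. Let $X=\bigcup_{i\le t}V(\mu(z_i))$. We look for $t$ pairwise vertex-disjoint paths from $R$ to $X$ that end in distinct branch sets $\mu(z_i)$.

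To make ``distinct branch sets'' amenable to Menger, contract each $\mu(z_i)$ to a single vertex $x_i$, or equivalently attach a new sink $s_i$ adjacent to all of $\mu(z_i)$. We then ask for $t$ disjoint $R$--$\{s_1,\dots,s_t\}$ paths. If they exist, each path $P_i$ ends in $\mu(z_i)$; truncate it at its first vertex in $\bigcup_{v}V(\mu(v))$.

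There is an issue if a path passes through other branch sets. Re-choose $Z$ to avoid them: the paths use at most $t$ starting segments. Alternatively, let $P_i$ run through branch sets of $v\notin Z$ and absorb those, with the unused $k-t$ branch sets serving as spare connectors. The slack $k\ge\lfloor 3t/2\rfloor+t$ is what pays for this rerouting.

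Concretely, I would take the path from a red vertex to its first hit on the model. If it hits some $\mu(v)$, add that red tail to $\mu(v)$. The difficulty is that two paths might hit the same branch set, or a hit branch set might be needed for rerouting. Here the paths are linked greedily via Menger from $R$ into distinct branch sets among all $k$ of them. Each branch set that is hit and extended by a tail becomes red. We then get $t$ red branch sets whose union with their tails is connected and pairwise adjacent, which gives a red $K_t$-model $\mu'$.

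If Menger fails, there is a separator $S$ with $|S|\le t-1$ separating $R$ from the sinks in the auxiliary graph. Since $|S|<t\le k$, some branch set of $\mu$ avoids $S$ and lies on the sink side. Let $B$ be $S$ together with the component side containing a branch set disjoint from $S$, and let $A$ be the rest plus $S$. Because at most $t-1$ branch sets meet $S$, the side with branch sets untouched by $S$ is the $\mu$-majority side, so $(A,B)\in\mathcal T_\mu$. Moreover, $R\cap(B\setminus A)=\emptyset$, because $S$ separates $R$ from every unhit branch set. This is outcome (i).

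One detail needs care: the sinks should be attached to all branch sets, with a demand of $t$ distinct ones. I would model this with a flow network where each branch set is contracted to one vertex of capacity $1$, which makes vertex-Menger apply directly.

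For the truncation claim, note that any separation of order $<t$ in $\mathcal T_{\mu'}$ has some red branch set $\mu'(u)$ entirely on its big side. That set contains the whole of an original $\mu(v)$, and since a separation of order $<t$ cannot split the $K_k$-model's majority differently, the orientation agrees with $\mathcal T_\mu$.

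The main obstacle is the counting that shows the auxiliary separator really yields order at most $t-1$ in $G$, and that the counting gives the constant $\lfloor 3t/2\rfloor+t$. The $\lfloor 3t/2\rfloor$ presumably accounts for ends of paths landing in branch sets that separators may hit twice.

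The running time is dominated by $t$ augmenting-path iterations of a flow computation, each linear in $|E(G)|$, which gives $\mathbf{poly}(t)\cdot|E(G)|$.
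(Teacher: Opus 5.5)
Your argument has a genuine gap at the one step that carries the content. The paper gives no proof to compare against; it imports this proposition as an instance of Theorem 3.1 of \cite{ProtopapasTW2025Colorful}. The problem is that your two halves rely on incompatible auxiliary graphs.

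The separation half is correct for the uncontracted graph obtained from $G$ by adding, for every $v$, a sink $s_v$ adjacent to all of $V(\mu(v))$. A separator $S$ with $|S|\le t-1$ leaves some $v$ with $s_v\notin S$ and $V(\mu(v))\cap S=\emptyset$. The component of $G-S$ containing $\mu(v)$ is then red-free, which gives outcome (i).

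The linkage half does not work in that graph. The $t$ disjoint paths it provides end in distinct branch sets, but they may run through other branch sets, including the end sets of other paths. So ``branch set plus red tail'' is not a model, and truncating at the first hit destroys distinctness. Contracting each branch set to a capacity-$1$ vertex does give first-hit paths into distinct branch sets. But then the separator may contain contracted vertices, whose preimages in $G$ are unbounded, and the separation half collapses.

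Here is a concrete instance with $t\ge 2$. Let $\mu(v)$ be a path $x_1\cdots x_t$. Let the other $k-1$ branch sets be singletons $\{y_j\}$ forming a clique, with every $y_j$ adjacent to every $x_i$. Let $R=\{r_1,\dots,r_t\}$ with $r_i$ adjacent only to $x_i$.
\begin{itemize}
\item After contraction, $\{v\}$ separates $R$ from the clique, so your procedure reports (i).
\item In $G$, however, every separation of order at most $t-1$ in $\mathcal{T}_\mu$ has a red vertex on its big side.
\item The sets $\{r_i,x_i,y_i\}$, $i\in[t]$, form a red $K_t$-model $\mu'$ with $\mathcal{T}_{\mu'}\subseteq\mathcal{T}_\mu$.
\end{itemize}
Every red vertex first meets the model inside $\mu(v)$. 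Hence no red $K_t$-model of the form ``old branch set plus tail'' exists here at all. The new branch sets must split old ones and be completed by other branch sets.

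That rerouting is the missing idea, and it is where the slack $k-t\ge\lfloor 3t/2\rfloor$ has to be spent. ``Re-choosing $Z$'' or ``absorbing'' traversed branch sets cannot do it, since a branch set traversed by two paths cannot be absorbed by both.

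A sound starting point is to take the sink-linkage of minimum total length. Then each path meets its own end set only in its last vertex and meets no branch set outside the $t$ end sets, so at least $\lfloor 3t/2\rfloor$ branch sets are untouched. From there you still need a Robertson--Seymour-type rerouting argument for rooted clique minors. It must produce $t$ disjoint connected red sets, each containing an entire original branch set; that containment is also what makes your truncation argument valid. Your proposal leaves exactly this step, and its counting, open.
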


\section{A flat wall theorem for red minors}\label{sec:flatwall}

In this section we prove a variant of the Flat Wall Theorem of Robertson and Seymour~\cite{RobertsonS1995Graph} for red minors in annotated graphs.
The techniques and theorems given here provide the basis for the rest of our efforts.

\subsection{The flat wall theorem}

Originally proven by Robertson and Seymour \cite{RobertsonS1995Graph}, the Flat Wall Theorem now has multiple proofs guaranteeing polynomial bounds, including a relatively simple proof~\cite{KawarabayashiTW2018New}, and a proof with close-to optimal bounds~\cite{Chuzhoy2015Improved}.
The statement below is the one found in \cite{GorskySW2025Polynomial}
\medskip

Let $n \ge 2$ be an integer.
Let $G$ be a graph, and let $M \subseteq G$ be an $n$-mesh.
We say that $M$ is a \emph{flat mesh} in $G$ if there exists a sphere-rendition $\rho$ of $G$ with a single vortex $c_0$ such that $M$ is flat in $\rho$ and the trace of the perimeter of $M$ in $\rho$ separates all vertices in $N(\rho) \cap V(M)$ from $c_0$.
We say that $\rho$ \emph{witnesses} the flatness of $M.$

\begin{figure}[ht]
    \centering
    \begin{tikzpicture}

        \pgfdeclarelayer{background}
		\pgfdeclarelayer{foreground}
			
		\pgfsetlayers{background,main,foreground}
			
        \begin{pgfonlayer}{main}
        \node (C) [v:ghost] {};

        \node (Mpos) [v:ghost,position=180:41mm from C] {};

        \node(M) [v:ghost,position=0:0mm from Mpos] {
            \begin{tikzpicture}

                \pgfdeclarelayer{background}
		          \pgfdeclarelayer{foreground}
			
		          \pgfsetlayers{background,main,foreground}

                \begin{pgfonlayer}{background}
                    \pgftext{\includegraphics[width=8.15cm]{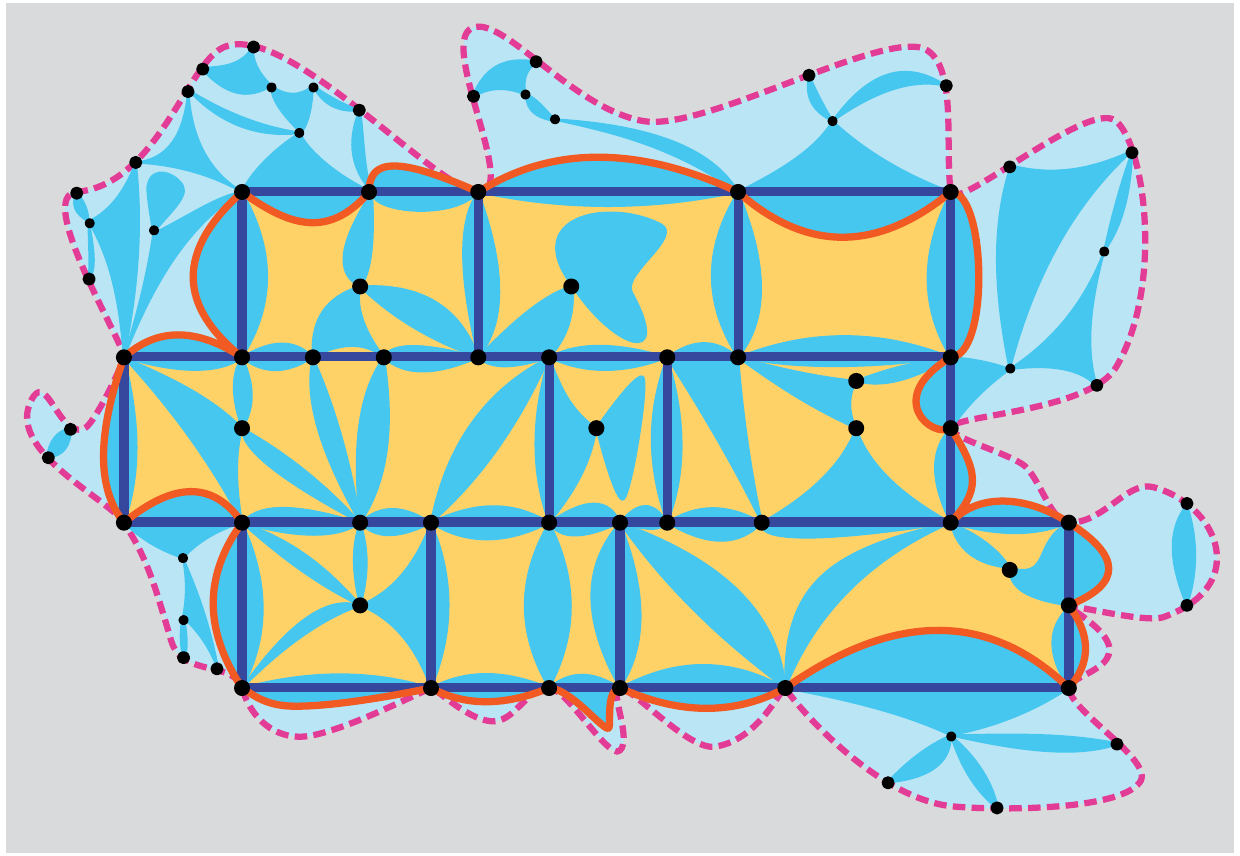}} at (C.center);
                \end{pgfonlayer}{background}
			
                \begin{pgfonlayer}{main}
                    \node (C) [v:ghost] {};
            
                \end{pgfonlayer}{main}
        
                \begin{pgfonlayer}{foreground}
                \end{pgfonlayer}{foreground}

            \end{tikzpicture}
        };

        \node (Rpos) [v:ghost,position=0:41mm from C] {};

        \node(R) [v:ghost,position=0:0mm from Rpos] {
            \begin{tikzpicture}

                \pgfdeclarelayer{background}
		          \pgfdeclarelayer{foreground}
			
		          \pgfsetlayers{background,main,foreground}

                \begin{pgfonlayer}{background}
                    \pgftext{\includegraphics[width=8.15cm]{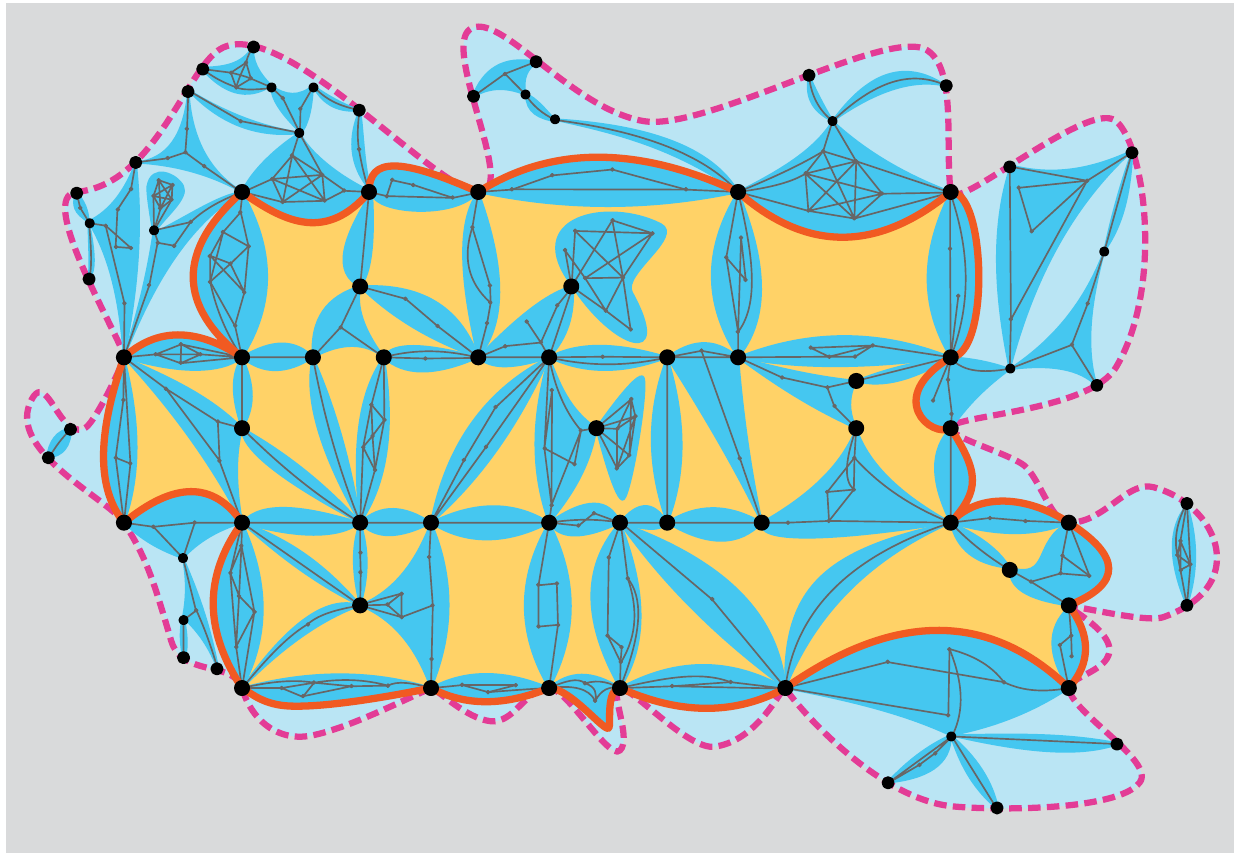}} at (C.center);
                \end{pgfonlayer}{background}
			
                \begin{pgfonlayer}{main}
                    \node (C) [v:ghost] {};
            
                \end{pgfonlayer}{main}
        
                \begin{pgfonlayer}{foreground}
                \end{pgfonlayer}{foreground}

            \end{tikzpicture}
        };

        \node (Rlabel) [v:ghost,position=270:31mm from R] {(ii)};

        \node (Mlabel) [v:ghost,position=270:31mm from M] {(i)};
            
        \end{pgfonlayer}{main}
        
        \begin{pgfonlayer}{foreground}
        \end{pgfonlayer}{foreground}

    \end{tikzpicture}
    \caption{An illustration of a flat mesh $M$ together with a rendition $\rho$ witnessing its flatness. In (i) the minor model of $M$ is indicated by the thick dark blue lines and (ii) depicts the rendition $\rho$, including the contents of the cells. In both pictures, the pink dashed line indicates the boundary of $c_0$, with the interior of $c_0$ being marked in grey. The black dots represent the nodes of $\rho$ and the blue blobs against the light blue, respectively the yellow background, depict the cells of $\rho$. The orange line marks the trace $T_P$ of the perimeter of $M$. The yellow area -- together with the cells separated from $c_0$ by $T_P$ -- describes the disc bounded by $T_P$ that captures the interior of $M$.}
    \label{fig:FlatMesh}
\end{figure}

\begin{proposition}[Gorsky, Seweryn, and Wiederrecht \cite{GorskySW2025Polynomial}]\label{prop:FlatMesh}
    For all integers $t \ge 5$, $n' \ge 2$, there exist integers \(k = k(t) = t^2-t\) and \(n = n(t, n') = 7t^2n'+535t^3+44t^2 \in\mathbf{O}(t^2(n'+t))\) such that the following holds.
    
    Let \(G\) be a graph with an \(n\)-mesh \(M \subseteq G\). Then either
    \begin{itemize}
        \item there exists a $K_{t}$-minor-model $\mu$ in $G$ such that $\mathcal{T}_{\mu}$ is a truncation of $\mathcal{T}_{M}$, or
        \item there exist an \(n'\)-submesh \(M' \subseteq M\), and a set \(Z \subseteq V(G) \setminus V(M')\) with \(|Z| \le k\) such that \(M'\) is a flat mesh in \(G - Z\).
    \end{itemize}
    Furthermore, there exists a \(\mathbf{O}(t^2|E(G)|)\)-time algorithm which finds either \(\mu\) or \((M', Z)\) as above together with a sphere-rendition \(\rho\) of \(G - Z\) witnessing the flatness of $M'$.
\end{proposition}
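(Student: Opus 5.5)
This statement is cited from Gorsky, Seweryn, and Wiederrecht and is not new to this paper, so the plan only reconstructs its proof along the lines of the Kawarabayashi--Thomas--Wollan ``new proof'' of the Flat Wall Theorem \cite{KawarabayashiTW2018New}, keeping every loss polynomial. The plan has three steps.

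\textbf{Step 1: subdivide the mesh.} Given the $n$-mesh $M$, partition it into roughly $t^2$ pairwise disjoint $n''$-submeshes $M_1,\dots,M_p$, where $n''\approx 7n'+\mathbf{O}(t)$. They are arranged in a grid-like fashion, separated by strips of horizontal and vertical paths of width $\Theta(t)$. These strips are the routing infrastructure used to link things back to the perimeter of $M$.

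\textbf{Step 2: look for crossing $M_i$-paths.} For each $M_i$, consider the compass of its perimeter $C_i$. This is the component structure of $G-V(C_i)$ containing the interior of $M_i$, together with $C_i$. Apply the two-paths (Jung/Seymour/Thomassen) characterisation to the society given by the compass and its cyclic order on $C_i$. Either there is a \emph{cross}, i.e.\ two disjoint paths with interleaving ends on $C_i$, or, after deleting a small separator, the compass admits a vortex-free disk rendition in which $M_i$ is flat.

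If at least $t^2-t$ of the submeshes contain a cross, use the surrounding strips to route the crosses and build a $K_t$-minor. Each branch set is a union of a row and a column of submeshes, and each cross realises one additional required adjacency. Because every branch set meets many horizontal and vertical paths of $M$, no separation of order $<t$ in $\mathcal{T}_M$ can put a branch set on its small side. Hence $\mathcal{T}_\mu$ is a truncation of $\mathcal{T}_M$.

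Otherwise we do not yet have flatness in all of $G$, since the rest of $G$ may attach to the interior of a cross-free submesh through a few vertices. Handle this by collecting ``jumps'': paths from the exterior of $M$ into the interior of some $M_i$ that avoid its perimeter. Either these jumps can be linked by disjoint paths to the infrastructure, and several jumps into distinct submeshes again yield a $K_t$-model controlled by $\mathcal{T}_M$, or Menger's theorem gives a small separator. Take $Z$ to be this separator. Its size stays at most $t^2-t$: each additional apex certifies a vertex adjacent to many submeshes, and $t-1$ apices each attached to $t$ disjoint submeshes would again give a clique.

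\textbf{Step 3: build the witness.} Having found a submesh $M'$ with no cross and no jump in $G-Z$, the sphere-rendition comes from combining the disk rendition of the compass of $M'$ with a single vortex $c_0$ that absorbs everything outside the perimeter. Shrink $M'$ to an $n'$-submesh so that its perimeter trace separates $c_0$ from $V(M')\cap N(\rho)$.

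\textbf{Running time.} Each cross or jump test is a linear-time two-paths or flow computation. Only $\mathbf{O}(t^2)$ submeshes are tested, so the total time is $\mathbf{O}(t^2|E(G)|)$.

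\textbf{Main obstacle.} The hardest step is the apex argument in Step 2: bounding $|Z|\le t^2-t$ while keeping $\mathcal{T}_\mu$ a truncation of $\mathcal{T}_M$. I would first follow the KTW counting argument that accumulates apices one at a time, tracking how many submeshes each touches, and compute the constants ($7t^2n'+535t^3+44t^2$) only at the end.
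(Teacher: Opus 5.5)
The paper does not prove this proposition; it is imported as a black box from \cite{GorskySW2025Polynomial}. Your sketch therefore has to stand on its own, and it has a genuine gap in Step~2.

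You test for crosses in the compass of $C_i$ taken in $G$ itself, before any apex set has been removed. Suppose there is even one jump from the interior of $M_i$ to the part of $M$ outside $C_i$. Then this compass contains the whole exterior of $C_i$ in $M$, and a cross always exists: one path runs through the interior of $M_i$, and the interleaving one runs through the exterior. Such crosses are not confined to $M_i$ and may all use the same few vertices. They cannot be linked disjointly through your strips, so the implication ``at least $t^2-t$ submeshes contain a cross $\Rightarrow$ $K_t$-minor'' is false.

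Concretely, let $G$ be $M$ plus one vertex $v$ adjacent to all of $V(M)$, and take $t\ge 6$. Every compass is all of $G$. Every $M_i$ has the cross formed by an inner vertical path of $M_i$ and the path $b\,v\,d$ between points on its left and right sides. Yet $G$ is an apex graph and has no $K_6$-minor, and the correct outcome is $Z=\{v\}$. The same observation shows that your later jump phase is vacuous where you placed it, since a cross-free compass has no jumps at all. In your plan, $Z$ is therefore always empty, and the whole difficulty sits in a step that fails.

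The order has to be reversed. First find $Z$ with $|Z|\le t^2-t$ such that in $G-Z$ many submeshes have compasses meeting the rest of $G$ only in their perimeters. Only then apply the two-paths theorem, which gives the vortex-free disk rendition directly; no further separator needs to be deleted. Crosses in pairwise disjoint local compasses can then be linked through the strips, each one used to uncross one of the $\binom{t}{2}$ crossing pairs of branch sets. Note that branch sets of the form ``row $\cup$ column'' intersect pairwise, so crosses are needed for disjointness, not for adjacency.

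Producing $Z$ is the real content of the proposition, and your proposal does not supply it:
\begin{itemize}
\item A Menger separator for one $M_i$ says nothing about the other submeshes, and the separators for different submeshes need not coincide.
\item Your remark that $t-1$ apices attached to $t$ submeshes give a clique does not apply to separator vertices, which need not be attached to many submeshes.
\end{itemize}
What is missing is an Erd\H{o}s--P\'osa-type statement over the whole family of submeshes. Either there are on the order of $t^2$ pairwise disjoint jumps into distinct, far-apart submeshes, which can be turned into a $K_t$-model controlled by $M$, or a single set of bounded size meets every jump into some submesh. This is where the bound on $|Z|$ has to come from. You flagged it as the main obstacle but did not resolve it.
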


\subsection{Proof of the annotated variant}

Let $(G, R)$ be an annotated graph with an $n$-mesh $M$ where $n \geq 2$, and $\rho$ be a $\Sigma$-rendition where $\Sigma$ is the sphere such that $M$ is grounded in $\rho$.
Let $B$ be a brick of $M$.
Then let $\Delta_{B} \subseteq \Sigma$ be the disk bounded by the trace of $B$ in $\rho$ that avoids the perimeter of $M$.
Let $H_{B} \subseteq G$ be defined as the union of $\sigma(c)$ for all cells $c$ of $\rho$ such that either $\sigma(c)$ contains an edge of $B$ or is contained in $\Delta_{B}$.
We call $H_{B}$ the \emph{subgraph of $G$ induced by $B$ (in $\rho$).}

We say that $M$ is \emph{red (in $\rho$)} if $V(H_{B}) \cap B$ contains a vertex of $R$ for every brick $B$ of $M$ and analogously, we call $M$ \emph{blank (in $\rho$)} if $V(H_{B}) \cap B$ contains a vertex of $R$ for no brick $B$ of $M$.
In either case we say that $M$ is \emph{homogeneous (in $\rho$).}
In case $M$ is a flat mesh in $G$ then whenever we say that it is red, blank, or homogeneous in $G$, we implicitly mean with respect to the sphere-rendition of $G$ that witnesses the flatness of $M$.

The following lemma shows how from a flat mesh in an annotated graph we can find a submesh that is homogeneous with a rendition witnessing the flatness in a respectful way.

\begin{lemma}\label{lemma:flatMeshToHomegeneousMesh} For every integer $r \geq 2$ the following holds.
Let $(G, R)$ be an annotated graph with a flat $(r + 2)^{2}$-mesh $M \subseteq G$ and $\rho$ be a sphere-rendition of $G$ with a single vortex $c_{0}$ witnessing the flatness of $M$.

Then, there exists an $r$-submesh $M' \subseteq M$ and a $\rho$-aligned disk $\Delta$ in the sphere that contains $c_{0}$ and avoids $M'$ such that
\begin{itemize}
\item $M'$ is flat and homogeneous in $G$ witnessed by the sphere-rendition $\rho'$, where
\item $\rho'$ is obtained from $\rho$ by removing all cells contained in $\Delta$ and replacing them with a single vortex $c'_{0} = \Delta$.
Also, if $M'$ is blank then $\rho'$ is blank.
\end{itemize}
Moreover, there exists an algorithm that $M'$ and $\rho'$ in time $\mathbf{poly}(r) \cdot |E(G)|$.
\end{lemma}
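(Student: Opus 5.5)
The plan is to partition $M$ into a grid of pairwise disjoint submeshes and use a pigeonhole-type argument. Write $N = (r+2)^2$ and index the horizontal paths $P_1,\dots,P_N$ and vertical paths $Q_1,\dots,Q_N$ of $M$. Group consecutive paths into $r+2$ blocks of $r+2$ paths each. For $i,j \in [r+2]$ this gives an $(r+2)$-submesh $M_{i,j}$ of $M$, formed by the subpaths of the horizontal paths in block $i$ and the vertical paths in block $j$, each cut to the relevant range. Let $M^\circ_{i,j}$ denote the $r$-submesh obtained by dropping the outermost horizontal and vertical paths of $M_{i,j}$. This one-path buffer guarantees that everything drawn inside the perimeter of $M^\circ_{i,j}$ lies strictly inside the region bounded by the perimeter of $M_{i,j}$. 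Since $M$ is flat in $\rho$, the trace of its perimeter bounds a disk $\Delta_M$ avoiding $c_0$, and all $M_{i,j}$ are grounded in $\rho$ with their brick-disks inside $\Delta_M$.

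We split into two cases. \textbf{Case 1:} some $M^\circ_{i,j}$ has no brick $B$ whose induced subgraph $H_B$ contains a red vertex. Take $M' = M^\circ_{i,j}$ and let $D$ be the disk bounded by the trace of its perimeter. Define $\Delta$ as the closure of the complement of $D$, slightly adjusted so that it is $\rho$-aligned; it runs along the nodes on the trace of the perimeter of $M'$ rather than through the cells containing perimeter edges. Then $\Delta$ contains $c_0$ and avoids $M'$ up to the perimeter nodes lying on its boundary. To be safe, I would shrink $M'$ so that the perimeter of $M'$ is an inner cycle of the $M_{i,j}$ buffer. Replacing all cells in $\Delta$ by the single vortex $c_0' = \Delta$ yields a sphere rendition $\rho'$. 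The conditions R1--R4 are inherited: cells outside $\Delta$ are unchanged, and the new vortex's graph is the union of the removed cells, meeting the rest only in boundary nodes. $M'$ is flat in $\rho'$, witnessed by $D$, and every non-vortex cell and node of $\rho'$ lies in $D$, hence inside some brick region of $M'$ or on its bricks. By blankness of $M'$ these contain no red vertices, so $\rho'$ is blank.

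\textbf{Case 2:} every $M^\circ_{i,j}$ contains a brick with a red vertex. Here the goal is to produce an $r$-submesh that is red, i.e.\ every brick contains red. The natural candidate is the ``coarse'' mesh obtained by taking one horizontal path between consecutive row blocks and one vertical path between consecutive column blocks (the buffer paths), giving an $(r+1)$- or $(r+2)$-submesh of $M$. Each of its bricks encloses an entire $M^\circ_{i,j}$, so $H_B$ contains a red vertex for every brick $B$; we then restrict to an $r$-submesh. The disk $\Delta$ is constructed exactly as in Case 1, and the blankness clause is vacuous.

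The main obstacle is the careful handling of the boundary. We must show that $\Delta$ can be chosen $\rho$-aligned, containing $c_0$ and avoiding $M'$, and that cells carrying perimeter edges of $M'$ correctly remain outside the vortex. We also need that the red or blank status of the bricks is preserved when passing from $\rho$ to $\rho'$; this holds because $H_B$ only involves cells in $D$, which are unchanged. The counting is exactly why the mesh size is $(r+2)^2$: $r+2$ blocks of $r+2$ paths, with the extra paths serving as separators. Algorithmically, we compute the brick subgraphs by traversing $\rho$ once and checking redness, which takes $\mathbf{poly}(r)\cdot|E(G)|$ time.
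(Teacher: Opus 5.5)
Your two-level grid with a pigeonhole dichotomy is the same idea as the paper's, but the buffer sits in the wrong case, and Case~1 has a genuine gap. You flag this gap as ``the main obstacle'' but do not resolve it. Knowing only that the bricks of $M^\circ_{i,j}$ are blank does not control the cells just outside the trace of its perimeter. A cell carrying a perimeter edge $uv$ of $M'$ can lie on the outer side of that trace, for example a $3$-node cell whose third node $w$ is outside. If $\Delta$ is the closure of the complement of $D$, such cells are swallowed by $c_0'$, and then $M'$ is no longer flat in $\rho'$. If instead you route $\partial\Delta$ around them, two consecutive such cells sharing the outer node $w$ can enclose a pocket attached at $\{v,w\}$. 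A $\rho$-aligned disk whose interior avoids these two cells cannot contain both that pocket and $c_0$. Meanwhile, the cells in the pocket lie in no $H_B$ for a brick $B$ of $M'$, so they may contain red vertices. Hence, for $M'=M^\circ_{i,j}$, the required blank $\rho'$ need not exist. Your ``to be safe'' remedy does not help. Putting $\partial\Delta$ on the perimeter of $M_{i,j}$ requires the outer ring of bricks of $M_{i,j}$ to be red-free, which your Case~1 hypothesis does not give. Shrinking $M'$ inside $M^\circ_{i,j}$ drops its order below $r$.

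The fix is to move the buffer: split on whether some whole block $M_{i,j}$ (all of its bricks) is blank.
\begin{itemize}
\item If so, let $\Delta$ be the side of the trace of the perimeter of $M_{i,j}$ containing $c_0$, and put $M'=M^\circ_{i,j}$. Since $M'$ is vertex-disjoint from that perimeter, every cell carrying an edge of $M'$ lies strictly inside. Every cell and node not absorbed into $c_0'$ belongs to $H_B$ for some brick $B$ of $M_{i,j}$, hence is red-free.
\item Otherwise every block has a red brick. Take the coarse $(r+3)$-mesh formed by the first path of every block together with the last path of $M$. It is red, because the induced subgraph of each of its bricks contains the induced subgraphs of all bricks of the corresponding block. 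In this case you can simply keep $\rho'=\rho$ rather than repeating the Case~1 construction.
\end{itemize}
This is exactly the paper's proof, phrased there via the coarse mesh $M_\alpha$ on the paths with indices $i+(i-1)(r+2)$. Either all bricks of $M_\alpha$ are red, or some brick $B'$ has $V(H_{B'})\cap R=\emptyset$. In the latter case the new vortex is the side of the trace of $B'$ avoiding the $r$-submesh drawn strictly inside $B'$.
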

\begin{proof}
Let us denote the vertical paths of $M$ by $P_{1}, \ldots P_{(r + 2)^{2}}$ and the horizontal paths of $M$ by $Q_{1}, \ldots, Q_{(r + 2)^{2}}$.
Next, for each $i \in [r + 2]$ let $\alpha_{i} = i + (i - 1)(r + 2)$.
Let $M_{\alpha}$ be the $(r + 2)$-mesh contained in the union of the vertical paths $P_{\alpha_{i}}$ and $Q_{\alpha_{i}}$ for all $i \in [r + 2]$.
Moreover, observe that for every brick $B$ of $M_{\alpha}$, the subgraph $H_{B}$ of $G$ induced by $B$ contains an $(r + 2)$-mesh $M_{B}$ whose perimeter is $B$.

Now there are two cases to examine: Either $V(H_{B}) \cap R \neq \emptyset$ for all bricks $B$ of $M_{\alpha}$, or there exists a brick $B'$ of $M_{\alpha}$ such that $V(H_{B}) \cap R = \emptyset$.

In the first case, we let $M'$ be the $r$-submesh of $M_{\alpha}$ that avoids its perimeter which by assumption is red.
In this case we may simply conclude with $\rho$ being the witnessing sphere-rendition as well.

In the second case, we let $M'$ be the $r$-submesh of $M_{B'}$ that avoids $B'$ which by assumption is blank.
It remains to argue that there is a blank sphere-rendition $\rho'$ of $G$ witnessing the flatness of $M'$ with the required properties.
This follows easily by taking the sphere-rendition $\rho$ of $G$ witnessing the flatness of $M$ and altering it by making the disk bounded by the trace of $B'$ in $\rho$ that avoids $M'$ into its unique vortex $c'_{0}$.
The fact that $\rho$ is also blank follows from the fact that $M_{B'}$ is blank.
\end{proof}

We proceed with the main the variant of the flat wall theorem we will use throughout the paper.

\begin{theorem}\label{thm:FlatRedMesh}
    There exists a function $\mathsf{rfw}_{\ref{thm:FlatRedMesh}} \colon \mathbb{N}^{2} \to \mathbb{N}$ such that for all integers $t \geq 5$ and $r \geq 2$ the following holds.

    Let $(G, R)$ be an annotated graph with an $\mathsf{rfw}_{\ref{thm:FlatRedMesh}}(t, r)$-mesh $M \subseteq G$.
    Then there exists either
    \begin{itemize}
        \item a separation $(A,B) \in \mathcal{T}_M$ of order at most $t-1$ such that $(B \setminus A) \cap R = \emptyset$,
        \item a red $K_{t}$-minor-model $\mu$ in $(G,R)$ such that $\mathcal{T}_{\mu}$ is a truncation of $\mathcal{T}_{M}$, or
        \item an $r$-submesh $M' \subseteq M$ and a set $Z \subseteq V(G) \setminus V(M')$ with $|Z| < 9t^{2}$ such that $M'$ is a homogeneous, flat mesh in $G - Z$.
        Moreover, in case that $M'$ is blank, there is a blank sphere-rendition $\rho$ of $(G - Z, R)$ witnessing the flatness of $M'$.
    \end{itemize}
    Furthermore, $\mathsf{rfw}_{\ref{thm:FlatRedMesh}}(t, r) \in \mathbf{O}(t^{2}(r^{2} +  t))$ and there exists an algorithm that either find $\mu$ or $M', Z$, and $\rho$ as above in time $\mathbf{poly}(t + r) \cdot |E(G)|$.
\end{theorem}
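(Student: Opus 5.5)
We combine \zcref{prop:FlatMesh}, \zcref{prop:redClique}, and \zcref{lemma:flatMeshToHomegeneousMesh}. Set $t' \coloneqq \lfloor \nicefrac{3t}{2}\rfloor + t$, so that $t' \leq 3t$ and \zcref{prop:redClique} applies to a $K_{t'}$-minor-model with target $t$. Set $n' \coloneqq (r+2)^2$ and
\[
\mathsf{rfw}_{\ref{thm:FlatRedMesh}}(t,r) \coloneqq n(t',n') = 7t'^2(r+2)^2 + 535t'^3 + 44t'^2 ,
\]
which lies in $\mathbf{O}(t^2(r^2+t))$. We then apply \zcref{prop:FlatMesh} to $G$ and $M$ with parameters $t'$ and $n'$.

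\textbf{Clique outcome.} Suppose \zcref{prop:FlatMesh} returns a $K_{t'}$-minor-model $\mu$ with $\mathcal{T}_\mu$ a truncation of $\mathcal{T}_M$. We apply \zcref{prop:redClique} to $\mu$.
\begin{itemize}
\item If it returns a red $K_t$-minor-model $\mu'$ with $\mathcal{T}_{\mu'}$ a truncation of $\mathcal{T}_\mu$, then by transitivity of truncation we obtain the second outcome.
\item Otherwise it returns a separation $(A,B)\in\mathcal{T}_\mu$ of order at most $t-1$ with $(B\setminus A)\cap R=\emptyset$. Since $t-1 < t'$ and the mesh order both lie in the range where $\mathcal{T}_\mu$ agrees with $\mathcal{T}_M$, we have $\mathcal{T}_\mu \cap \mathcal{S}_t(G) \subseteq \mathcal{T}_M$. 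Hence $(A,B)\in\mathcal{T}_M$, which is the first outcome.
\end{itemize}
The only care needed here is checking that the orientations coincide on separations of order less than $t$. This follows because both tangles are truncations and orient $\mathcal{S}_t$ consistently.

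\textbf{Flat outcome.} Otherwise \zcref{prop:FlatMesh} gives an $n'$-submesh $M_1\subseteq M$, a set $Z$ with $|Z|\leq t'^2-t' < 9t^2$ disjoint from $M_1$, and a sphere-rendition $\rho$ of $G-Z$ with one vortex witnessing the flatness of $M_1$. We regard $(G-Z, R\setminus Z)$ as an annotated graph and apply \zcref{lemma:flatMeshToHomegeneousMesh} to $M_1$ and $\rho$. This yields an $r$-submesh $M'\subseteq M_1\subseteq M$ that is flat and homogeneous in $G-Z$, witnessed by $\rho'$, where $\rho'$ is blank whenever $M'$ is blank. Also $Z\cap V(M')=\emptyset$, since $M'\subseteq M_1$. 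This is the third outcome.

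\textbf{Running time.} Each step runs in $\mathbf{poly}(t+r)\cdot|E(G)|$ time, so the total does as well.

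The main point to get right is the tangle bookkeeping in the clique case: the separation given by \zcref{prop:redClique} is oriented by $\mathcal{T}_\mu$, and we must ensure it is oriented the same way by $\mathcal{T}_M$. The bound $t-1 < t' \leq$ the order of $M$ together with the truncation relation handles this.
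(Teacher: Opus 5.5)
Your proposal is correct and follows the paper's proof essentially step for step: apply \zcref{prop:FlatMesh} with a clique size large enough for \zcref{prop:redClique} and target mesh order $(r+2)^2$, then handle the clique outcome with \zcref{prop:redClique} and the flat outcome with \zcref{lemma:flatMeshToHomegeneousMesh}. The differences are cosmetic. The paper uses $3t$ rather than your $\lfloor 3t/2\rfloor + t$; both give $|Z| < 9t^2$ and the bound $\mathbf{O}(t^2(r^2+t))$. Your tangle bookkeeping is also immediate, because truncation is defined as containment, so $(A,B)\in\mathcal{T}_\mu\subseteq\mathcal{T}_M$ holds directly.
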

\begin{proof}
We begin by setting up the function $\mathsf{rfw}_{\ref{thm:FlatRedMesh}}$ to be
\begin{align*}
\mathsf{rfw}_{\ref{thm:FlatRedMesh}}(t, r) \ \coloneqq \ 63t^{2}(r + 2)^{2} + 14445t^{3} + 396t^{2}.
\end{align*}
This allows us to apply \zcref{prop:FlatMesh} with $3t$ and $(r + 2)^{2}$ which gives us either a model $\mu$ of $K_{3t}$ in $G$ such that $\mathcal{T}_{\mu} \subseteq \mathcal{T}_{M}$, or an $(r + 2)^{2}$-submesh $M'' \subseteq M$ together with a set $Z \subseteq V(G) \setminus V(M'')$ with $|Z| < 9t^{2}$ such that $M'$ is flat in $G - Z$.
The proof is split in two cases.

\textbf{Case 1:} In case we get a model $\mu$ of $K_{3t}$ in $G$ such that $\mathcal{T}_{\mu} \subseteq \mathcal{T}_{M}$, we call upon \zcref{prop:redClique} and obtain, either a red minor-model $\mu'$ of $K_{t}$ in $(G, R)$ such that $\mathcal{T}_{\mu'} \subseteq \mathcal{T}_{\mu} \subseteq \mathcal{T}_{M}$ and conclude, or a separation $(A, B) \in \mathcal{T}_{\mu} \subseteq \mathcal{T}_{M}$ of order at most $t - 1$ such that $(B \setminus A) \cap R = \emptyset$, and also conclude.

\textbf{Case 2:} This leaves us with the second outcome of \zcref{prop:FlatMesh}.
In this case we may simply call \zcref{lemma:flatMeshToHomegeneousMesh} with $G - Z$ and $M'$ and also conclude.
\end{proof}

\subsection{Red, flat meshes and red grid minors}

Here we present a couple of useful tools that will come in handy later.
The first says that, if we are given a flat mesh of even order whose bricks sandwiched between its middle horizontal paths, all contain a red vertex, then we can in fact find a fully red, flat mesh.
First some definitions.

\medskip
Let $M$ be an $(n \times m)$-mesh, $P_{1}, \ldots, P_{n}$ be a top to bottom ordering of its horizontal paths, and $Q_{1}, \ldots, Q_{m}$ be a left to right ordering of its vertical paths.
Given $(i, j) \in [n - 1] \times [m - 1]$, we say that the brick of $M$ defined as the unique cycle in $P_{i} \cup P_{i + 1} \cup Q_{j} \cup Q_{j + 1}$ is the \emph{$(i, j)$-brick} of $M$.

\begin{lemma}\label{lemma:MiddleRedMeshToFullyRed} For every integer $r \geq 3$ the following holds.
Let $(G, R)$ be an annotated graph and $\rho$ be a rendition of a society $(G, \Omega)$ in the disk with an $(2(r + 1) \times r(r - 1))$-mesh $M \subseteq G$ that is grounded in $\rho$.
Further, assume that for every $j \in [r(r - 1) - 1]$ the subgraph of $G$ induced by the $(r, j)$-brick of $M$ contains a vertex of $R$.
Then, there exists an $r$-mesh $M' \subseteq G$ that is grounded and red in $\rho$ and such that $\mathcal{T}_{M'}$ is a truncation of $\mathcal{T}_{M}$.
\end{lemma}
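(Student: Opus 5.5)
We build $M'$ by folding the long row of red bricks into an $r\times r$ array of bricks. Let $P_1,\dots,P_{2(r+1)}$ be the horizontal paths and $Q_1,\dots,Q_{r(r-1)}$ the vertical paths of $M$. By hypothesis, the bricks $B_j$ in the strip between $P_r$ and $P_{r+1}$ each carry a red vertex in $H_{B_j}$, for $j\in[r(r-1)-1]$. We need an $r$-mesh whose $(r-1)^2$ bricks each enclose (in $\rho$) at least one of these red bricks. The vertical paths of $M'$ will be the concatenations of vertical paths of $M$ with pieces of horizontal paths, routed so that the long red row is traversed in a serpentine or spiral fashion.

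\textbf{Grouping the vertical paths.} Partition the vertical paths into $r-1$ consecutive blocks of $r$ paths each; this accounts for all $r(r-1)$ of them. Inside a block, the $r-1$ consecutive red bricks between its paths will become one row of bricks of $M'$. To stack these $r-1$ rows on top of one another we use the $r+1$ horizontal paths above and the $r+1$ below the red strip. Block $i$ is routed upward by $i$ levels using the top half, and the connections between consecutive blocks are carried by the bottom half.

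\textbf{Concrete construction to try first.} An alternative to the full $2$D folding is to let the new horizontal paths of $M'$ be nested ``U-shaped'' curves. The $k$-th horizontal path of $M'$ would go along $P_{r+1-k}$ and $P_{r+k}$, joined by vertical segments at the two ends of the region belonging to level $k$. In other words, we use the annulus-like nesting of the rows around the red strip. The vertical paths of $M'$ are then subpaths of the $Q_j$ crossing the strip, chosen so that each $Q_j$ crosses all nested curves exactly once. Planarity of the disk rendition makes this consistent, and each cell between consecutive nested curves and consecutive chosen $Q_j$ contains a brick $B_j$ of the red row. This is the point where the leftover rows ($r+1$ on each side) and the factor $r-1$ in the width are spent: each of the $r-1$ levels consumes $r$ columns at its ends. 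I would therefore lay out levels of decreasing width so that every region between consecutive horizontal paths of $M'$ still contains $r-1$ fresh red bricks. Since every brick of $M'$ is a union of bricks of $M$ in the disk, redness is inherited from the contained $B_j$.

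\textbf{Remaining checks.} Groundedness of $M'$ is immediate, since $M'\subseteq M$ and $M$ is grounded. For the tangle, $M'$ is built from subpaths of $M$, and every horizontal and every vertical path of $M'$ contains or meets many paths of $M$. Hence any separation of order less than $r$ leaves whole paths of $M'$ on the $M$-majority side, so $\mathcal{T}_{M'}\subseteq\mathcal{T}_M$. The main obstacle is the combinatorial bookkeeping of the routing. One must ensure that the new paths are disjoint, that each new brick encloses a red brick $B_j$ and not merely a brick adjacent to the strip, and that the budget of $2(r+1)$ rows and $r(r-1)$ columns suffices exactly.
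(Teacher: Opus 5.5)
\textbf{There is a genuine gap: the routing that produces $M'$ is never specified, and the one concrete version you give does not work.} This routing is the whole content of the lemma, while groundedness and the tangle truncation are routine, and you leave it as ``bookkeeping''.

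Your nested-curve construction fails for a structural reason. The $k$-th curve runs along $P_{r+1-k}$ and $P_{r+k}$. So for every $k\ge 1$, the region between the $k$-th and $(k+1)$-st curves consists of bricks of $M$ lying strictly above $P_r$ or strictly below $P_{r+1}$. None of the red bricks $B^j_r$ lies there; they lie only inside the innermost curve. Straight subpaths of the $Q_j$ cannot repair this. A subpath of a single $Q_j$ that meets every curve exactly once must start inside the innermost curve and run monotonically up or down, so every brick of $M'$ it helps bound lies off the red strip. Making outer levels narrower, as you suggest, makes the curves cross: the end segment of level $k+1$ runs from $P_{r-k}$ to $P_{r+k+1}$ and cuts both long sides of level $k$.

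Your first sketch, where blocks of $r$ consecutive columns become the rows of $M'$, is the right starting point. But as described (block $i$ lifted by $i$ levels, all inter-block connections in the bottom half) it does not give disjoint paths. Each path of $M'$ traverses a block's column segment from one side of the strip to the other, so consecutive inter-block connections must alternate between the two sides.

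The missing idea is a serpentine folding, which is what the paper does.
\begin{itemize}
\item For $i\in[r-1]$, let $\mathcal{B}_i=\{B_r^{(i-1)r+1},\dots,B_r^{ir-1}\}$, so every $r$-th red brick is skipped.
\item Join the $r$ column segments of block $i$ to those of block $i+1$ in \emph{reversed} order by $r$ nested U-shaped paths in $M$. Place them above the strip for one parity of $i$ and below it for the other.
\item Each such nested family spans only two adjacent blocks, so $r$ levels on each side of the strip suffice. This is what the $2(r+1)$ horizontal paths pay for.
\item The result is $r$ disjoint snakes, forming one family of paths of $M'$.
\item The other family consists of one long side of each block, namely the side through which the snakes arrive (for the first block, the side where they start), together with the opposite side of the last block.
\end{itemize}
Each path of the second family meets every snake exactly once. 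Between two consecutive such paths, each pair of consecutive snakes bounds a single brick of $M'$. That brick contains an entire red brick of some $\mathcal{B}_i$, at least three of whose sides lie on its boundary. Hence $M'$ is red in $\rho$.
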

\begin{proof} Let $P_{1}, \ldots, P_{2(r + 1)}$ be a top to bottom ordering of the horizontal paths of $M$ and $Q_{1}, \ldots, Q_{r(r - 1)}$ be a left to right ordering of the vertical paths of $M.$
Also, for $i \in [2r + 1]$ and $j \in [r(r - 1) - 1]$, let $B_{i}^{j}$ denote the $(i, j)$-brick of $M$ and for $i \in [r - 1]$, let $$\mathcal{B}_{i} = \{ B_{r}^{(i - 1)r + 1}, \ldots, B_{r}^{ir - 1} \}.$$

Let $T_{1}$ be defined as the union of bricks in $\mathcal{B}_{1}$ union $P_{r + 1} \cap \bigcup_{j \in [r]} Q_{j}$, and $T_{r - 1}$ be defined as the union of bricks in $\mathcal{B}_{r - 1}$ union $R \cap \bigcup_{j \in [(r - 2)r + 1, r(r - 1)]} Q_{j}$, minus all edges of the bricks in $\mathcal{B}_{r - 1}$ which are edges of $R$ but not edges of any vertical path, where $R = P_{r + 2}$ if $r$ is odd and $R = P_{r + 1}$ otherwise.

Next, for each $i \in [2, r - 2]$, let $T_{i}$ be defined as the union of bricks in $\mathcal{B}_{i}$ union $(P_{r + 1} \cup P_{r}) \cap \bigcup_{j \in [(i - 1)r + 1, ir]} Q_{j}$, minus all edges of the bricks in $\mathcal{B}_{i}$ which are edges of $R$ but not edges of any vertical path, where $R =  P_{r + 1}$ if $i$ is odd and $R = P_{r + 2}$ otherwise.

Note, that all previously defined $T_{i}$'s are pairwise disjoint.
Moreover, for each $i \in [2, r - 2]$, $T_{i}$ contains precisely $2r$ degree-$1$ vertices, $r$ of them, say $t^{i}_{1}, \ldots, t^{i}_{r}$ ordered from left to right, are vertices of $P_{r}$, and we call them the \emph{top endpoints of $T_{i}$}, and $r$ of them say $b^{i}_{1}, \ldots, b^{i}_{r}$ ordered from left to right, are vertices of $P_{r + 1}$, and we call them the \emph{bottom endpoints of $T_{i}$}.
Additionally, there are precisely $r'$ degree-$1$ vertices of $T_{1}$, say $x_{1}, \ldots, x_{r'}$ ordered from left to right, and precisely $r$ degree-$1$ vertices of $T_{r - 1}$,
say $y_{1}, \ldots, y_{r'}$ ordered from left to right.

Our goal is to find a collection of linkages in $M$, internally disjoint from all $T_{i}$'s that join the $x_{i}$'s in left to right order to the $b^{2}_{j}$'s in the reverse order, then, for each $i \in [2, r - 2]$, join the $t^{i}_{j}$'s in left to right order to the $t^{i + 1}_{j}$'s in the reverse order, and join the $b^{i + 1}_{j}$'s in left to right order to the $b^{i + 2}_{j}$'s in reverse order, and so on, until we reach the $y_{i}$'s.

It follows that $M$ provides the necessary infrastructure for this routing.
See \zcref{fig:MiddleRedMeshToFullyRed} for an illustration on how to find these paths.
In fact, by construction, if we take the union of all $T_{i}$'s with the linkages we find, we obtain an $r$-mesh $M'$ such that moreover, for every brick $B$ of $M$, the subgraph of $G$ induced by $B$ contains the subgraph of $G$ induced by precisely one brick among those contained in a family $\mathcal{B}_{i}$ above, and as a result $M'$ is red in $\rho$.
Also clearly, $\mathcal{T}_{M'}$ is a truncation of $\mathcal{T}_{M}$.
\end{proof}

\begin{figure}[ht]
\centering
\includegraphics{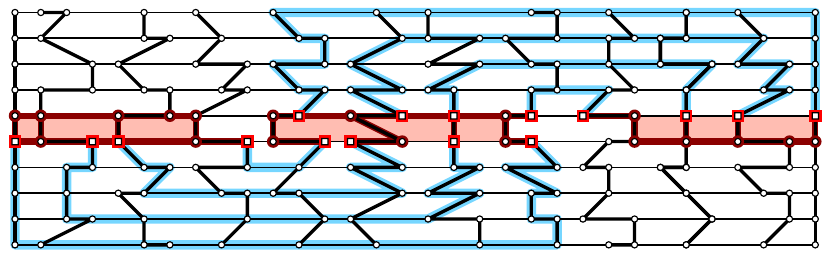}
\caption{\label{fig:MiddleRedMeshToFullyRed} An illustration for the proof of \zcref{lemma:MiddleRedMeshToFullyRed} for $r = 4.$}
\end{figure}

The second lemma extracts a red grid minor from a red, flat mesh.

\begin{lemma}\label{lemma:RedMeshToRedGrid} For every integer $k \geq 2$ the following holds.
Let $(G, R)$ be an annotated graph and $\rho$ be a rendition of a society $(G, \Omega)$ in the disk with a $(3k - 1)$-mesh $M \subseteq G$ that is grounded and red in $\rho.$
Then, $(G, R)$ contains $\mathsf{R}_{k}$ as a red minor.
\end{lemma}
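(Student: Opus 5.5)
We partition the $(3k-1)$-mesh $M$ into $k^2$ pairwise disjoint blocks, one for each vertex of the $(k\times k)$-grid $\mathsf{R}_k$, and give each block a connected branch set containing a red vertex. Let $P_1,\dots,P_{3k-1}$ be the horizontal paths and $Q_1,\dots,Q_{3k-1}$ the vertical paths of $M$. For $i,j\in[k]$ the block $(i,j)$ is spanned by the horizontal paths $P_{3i-2},P_{3i-1}$ and the vertical paths $Q_{3j-2},Q_{3j-1}$, and its central brick is the $(3i-2,3j-2)$-brick $B_{i,j}$. The horizontal path $P_{3i}$ and the vertical path $Q_{3j}$ are the separating lines between consecutive blocks. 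Since $3k-1 = 3(k-1)+2$, the last block uses $P_{3k-2},P_{3k-1}$ and no trailing separator is needed, so the count is exact.

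The branch set $X_{i,j}$ is built in three steps:
\begin{enumerate}
\item Start from the cycle $B_{i,j}$, which is connected.
\item Add all of $H_{B_{i,j}}$ except the vertices lying on other paths of $M$, that is, the union of the $\sigma(c)$ for cells $c$ inside the disk $\Delta_{B_{i,j}}$ or touching $B_{i,j}$. Because $M$ is red, this subgraph contains a vertex of $R$.
\item Keep only the component that contains $B_{i,j}$.
\end{enumerate}

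To ensure the red vertex lies in that component, we use that $\rho$ is a rendition in a disk and $M$ is grounded. Every cell inside $\Delta_{B_{i,j}}$ meets the rest of the graph only in nodes. Any component of $\sigma(c)$ inside the disk that avoids $B_{i,j}$ therefore attaches to the rest only through nodes inside $\Delta_{B_{i,j}}$. So we replace $X_{i,j}$ by the union of $B_{i,j}$ with all components of $H_{B_{i,j}}$ that touch $B_{i,j}$ or the interior of the disk.

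The main obstacle is precisely this connectivity: a red vertex might sit in a piece of $H_{B_{i,j}}$ not connected to $B_{i,j}$. We handle it as follows. If the red vertex is not connected to $B_{i,j}$ inside $H_{B_{i,j}}$, then it is not connected to the outside at all, because its attachments are all inside the disk. In that case it lies in a component of $G$ disjoint from $M$. The definition of red in the paper is meant to count only red vertices reachable within $H_B$, so we would either argue that $H_B$ is by construction the union of cells forming a connected piece together with $B$, or simply take $X_{i,j}:=H_{B_{i,j}}\cap$ (component containing $B_{i,j}$) and note that the cells defining $H_B$ meet $B$ or lie in $\Delta_B$, so a red vertex there is connected to $B$ through the cell structure. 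Failing that, we would adjust the block definition to use the full subgraph induced by the block's bricks, since the mesh rows give enough slack.

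Next we check that the branch sets are pairwise disjoint. Distinct central bricks $B_{i,j}$ and $B_{i',j'}$ are vertex-disjoint, since they are separated by the unused paths $P_{3i}$ or $Q_{3j}$. Their disks are also disjoint because $\rho$ is planar in the disk. Hence the $X_{i,j}$ are disjoint.

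Finally we realise the grid edges, adjacent blocks horizontally and vertically, by enlarging branch sets along the mesh. To join $X_{i,j}$ and $X_{i,j+1}$, we add to $X_{i,j}$ the subpath of $P_{3i-2}$ running from $Q_{3j-1}$ to $Q_{3j+1}$. This subpath crosses $Q_{3j}$, and we assign that crossing segment to $X_{i,j}$. Vertically we use the subpath of $Q_{3j-2}$ running from $P_{3i-1}$ to $P_{3i+1}$, crossing $P_{3i}$. The two kinds of connectors never meet: horizontal connectors lie on the paths $P_{3i-2}$ between the $Q$-columns, vertical connectors lie on the paths $Q_{3j-2}$ between the $P$-rows, and they would intersect only at $P_{3i-2}\cap Q_{3j-2}$, which is already inside $X_{i,j}$. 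The connectors stay pairwise disjoint, and an extended set still attaches to the next block's brick, which gives the required edge. The resulting family is a minor-model of $\mathsf{R}_k$ in which every branch set contains a red vertex.
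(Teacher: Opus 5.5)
Your construction is the paper's. Both keep the paths with indices $3m+1,3m+2$, take the bricks $B_{i,j}$ between them as centres, let $H_{B_{i,j}}$ supply the red vertex, use the skipped paths $P_{3i},Q_{3j}$ as separators, and route the grid edges along the kept paths. Your connectors spell out what the paper calls ``easy to see''.

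The gap is the step you flag yourself: you never show that the $X_{i,j}$ are at once connected, red, and pairwise disjoint.
\begin{enumerate}
\item Disjoint disks do not give disjoint $H_{B_{i,j}}$. A cell containing an edge of $B_{i,j}$ may lie outside $\Delta_{B_{i,j}}$, with its third node on a separator such as $Q_{3j}$. It can then share that node with a cell containing an edge of $B_{i,j+1}$.
\item Your step~2, deleting all vertices on other paths of $M$, would repair this. But it may delete the only red vertex the hypothesis provides (that third node can be it), or cut a red vertex off from $B_{i,j}$. So ``this subgraph contains a vertex of $R$'' is unsupported.
\item Your fallback claim fails for exactly these outside cells. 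You claim that a red vertex of $H_{B_{i,j}}$ not joined to $B_{i,j}$ inside $H_{B_{i,j}}$ has all its attachments in $\Delta_{B_{i,j}}$. Where the claim does hold, its conclusion (the red vertex lies in a component of $G$ avoiding $M$) would refute the statement, not prove it.
\item The remaining options (``either \dots\ or \dots, failing that \dots'') are not an argument.
\end{enumerate}

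The paper does not prove this step either. It asserts that $H_{B_{ij}}$ is connected and contains $B_{ij}$ ``by definition'', and that the selected $H_B$ are pairwise disjoint because every third path is skipped.

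Your worry is well founded. Read ``red'' as $V(H_B)\cap R\neq\emptyset$, as the paper's proofs do. Then the hypothesis alone does not force connectivity. Take a $(3k-1)$-grid with a disjoint red triangle drawn inside each face. In its natural rendition this is a grounded red mesh, yet every red vertex lies in a three-vertex component, so there is not even a red $\mathsf{R}_2$.

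So the missing property cannot be derived from the rendition. It has to come from one of two sources:
\begin{enumerate}
\item the convention the paper implicitly uses: each $H_B$ is connected and contains $B$, and the chosen ones are disjoint; or
\item the literal wording of the definition, ``$V(H_B)\cap B$ contains a vertex of $R$'', which puts a red vertex on $B$ itself.
\end{enumerate}
Under either, your proof goes through with one change. Take $X_{i,j}$ to be all of $H_{B_{i,j}}$ (respectively just $B_{i,j}$), with no deletions or component selection. Add its outgoing connectors, each cut just before it first meets the neighbouring branch set.
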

\begin{proof}
Let us denote the vertical paths of $M$ by $P_{1}, \ldots, P_{3k - 1}$ and the horizontal paths of $M$ by $Q_{1}, \ldots, Q_{3k - 1}$.
Let $M'$ denote the $2k$-submesh of $M$ obtained from the union of the $P_{i}$'s and $Q_{i}$'s with indices in the set
$$\{ 3m + 1, 3m + 2 \mid m \in [0, k - 1] \}.$$
Moreover, for every $i, j \in [0, k-1]$, denote by $B_{ij}$ the brick of $M$ that is the unique cycle in $P_{3i + 1} \cup P_{3i + 2} \cup Q_{3j + 1} \cup Q_{3j + 2}$ and let $\mathcal{B}_{M'} = \{ B_{ij} \mid i,j \in [0, k - 1] \}$.

By definition, for every $i, j \in [0, k-1]$, it follows that the subgraph $H_{B_{ij}}$ of $G$ induced by $B_{ij}$ is connected, contains $B_{ij}$, and since $M$ is red in $\rho$, $V(H_{B_{ij}}) \cap R \neq \emptyset$.
Moreover, for distinct $B, B' \in \mathcal{B}_{M}$, $H_{B}$ is disjoint from $H_{B'}$, and we ensure this by skipping over every 3rd horizontal and vertical path in the definition of $M'$.

Now, it is easy to see that $M'$ contains a $\mathsf{R}_{k}$-minor-model where the branch set of each vertex contains precisely one of each $H_{B}$, $B \in \mathcal{B}_{M}.$
\end{proof}

\section{A society classification theorem for red minors}\label{sec:societyclassification}

In this section we prove a variant of the society classification theorem for red minors with polynomial bounds.
This is the most technical and involved contribution of our work.

\paragraph{The society classification theorem with polynomial bounds.}
We start from the society classification theorem for graphs.
The original comes from \cite{KawarabayashiTW2021Quickly}.
Here we utilize the polynomial version due to~\cite{GorskySW2025Polynomial}.

\begin{proposition}[Gorsky, Seweryn, and Wiederrecht \cite{GorskySW2025Polynomial}]\label{thm:societyclassification}
    There exist polynomial functions $\mathsf{apex}^\mathsf{genus}_{\ref{thm:societyclassification}}, \mathsf{loss}_{\ref{thm:societyclassification}} \colon \mathbb{N} \rightarrow \mathbb{N}$, $\mathsf{nest}_{\ref{thm:societyclassification}}, \mathsf{cost}_{\ref{thm:societyclassification}} \colon \mathbb{N}^2 \rightarrow \mathbb{N}$, and $ \mathsf{apex}^\mathsf{fin}_{\ref{thm:societyclassification}}$,  $\mathsf{depth}_{\ref{thm:societyclassification}} \colon \mathbb{N}^3\to\mathbb{N}$, such that for all integers $t, k, p \geq 1$ the following holds.

    Let $s \geq \mathsf{nest}_{\ref{thm:societyclassification}}(t, k)$ be an integer.
    Let $(G,\Omega)$ be a society and $\rho$ be a cylindrical rendition of $(G, \Omega)$ in a disk $\Delta$ with a cozy nest $\mathcal{C} = \{ C_1, \ldots , C_s \}$ around the vortex $c_{0}$ and a radial linkage $\mathcal{R}$ for $\mathcal{C}$ of order $s$ that is orthogonal to $\mathcal{C}$.
    Further, let $M$ be the $s$-cylindrical mesh contained in $\bigcup (\mathcal{C} \cup \mathcal{R})$, and $(G', \Omega')$ be the $C_{s - \mathsf{cost}_{\ref{thm:societyclassification}}(t,k)}$-society in $\rho.$
    
    Then $G'$ contains a set $A \subseteq V(G')$ such that one of the following exists:
    \begin{enumerate}
        \item A $K_t$-minor-model in $G$ controlled by $M$.

        \item A flat, isolated crosscap transaction $\mathcal{P}$ of order $p$ in $(G'-A,\Omega')$, with $|A| \leq \mathsf{apex}^\mathsf{genus}_{\ref{thm:societyclassification}}(t)$, and a nest $\mathcal{C}'$ in $\rho$ of order $s - (\mathsf{loss}_{\ref{thm:societyclassification}}(t)+\mathsf{cost}_{\ref{thm:societyclassification}}(t,k))$ around $c_0$ to which $\mathcal{P}$ is orthogonal.

        \item A flat, isolated handle transaction $\mathcal{P}$ of order $p$ in $(G'-A,\Omega')$, with $|A| \leq \mathsf{apex}^\mathsf{genus}_{\ref{thm:societyclassification}}(t)$, and a nest $\mathcal{C}'$ in $\rho$ of order $s - (\mathsf{loss}_{\ref{thm:societyclassification}}(t)+\mathsf{cost}_{\ref{thm:societyclassification}}(t,k))$ around $c_0$ to which $\mathcal{P}$ is orthogonal.

        \item A rendition $\rho'$ of $(G - A, \Omega)$ in $\Delta$ with breadth $b \in [\nicefrac{1}{2}(t-3)(t-4)-1]$ and depth at most $\mathsf{depth}_{\ref{thm:societyclassification}}(t,k,p)$, $|A| \leq \mathsf{apex}^\mathsf{fin}_{\ref{thm:societyclassification}}(t,k,p)$, and an extended $k$-surface-wall $D$ with signature $(0,0,b)$, such that $D$ is grounded in $\rho'$, the base cycles of $D$ are the cycles $C_{s-\mathsf{cost}_{\ref{thm:societyclassification}}(t,k)-1-k},\dots,C_{s-\mathsf{cost}_{\ref{thm:societyclassification}}(t,k)-1}$, and there exists a bijection between the vortices $v$ of $\rho'$ and the vortex segments $S_v$ of $D$, where $v$ is the unique vortex contained in the disk $\Delta_{S_v}$ defined by the trace of the inner cycle of the nest of $S_v$, and $\Delta_{S_v}$ is chosen to avoid the trace of the simple cycle of $D$.
    \end{enumerate}
    
    In particular, the set $A$, the $K_t$-minor-model, the transaction $\mathcal{P}$, the rendition $\rho'$, and the extended surface-wall $D$ can each be found in time $\mathbf{poly}( t + s + p + k ) \cdot |E(G)||V(G)|^2$.
\end{proposition}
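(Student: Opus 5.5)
The statement is the society classification theorem of \cite{GorskySW2025Polynomial}, so in the paper it suffices to cite it. If I had to re-derive it, I would follow the Kawarabayashi--Thomas--Wollan scheme \cite{KawarabayashiTW2021Quickly} while tracking every bound polynomially. The argument is an iterative refinement of the cylindrical rendition $\rho$. At each stage we hold a rendition of $(G-A,\Omega)$ in $\Delta$ with $b$ vortices $c_1,\dots,c_b$, each surrounded by its own nest. These nests are linked by orthogonal radial paths to an outer block of cycles of $\mathcal{C}$, and the wasted part of $\mathcal{C}$ is charged to $\mathsf{cost}(t,k)$. Initially $b=1$ and $A=\emptyset$.

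\textbf{Step 1 (depth test).} For every vortex $c_i$, compute a maximum transaction in its vortex society by a Menger-type flow argument. If all depths are below the current threshold, we stop. The last $k+1$ usable cycles of $\mathcal{C}$, the nests around the $c_i$, and the linking radial paths then assemble into the extended $k$-surface-wall $D$ with signature $(0,0,b)$, which is outcome (iv).

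\textbf{Step 2 (large transaction).} Otherwise some $c_i$ carries a transaction of order much larger than $p$. I would extend it through the nest of $c_i$ to a transaction of the outer society that is orthogonal to the nest. By Erd\H{o}s--Szekeres-type arguments, pass to a large monotone subtransaction. Then apply the flat-transaction lemma: either many pairwise ``jumping'' paths across the strip give a $K_t$-minor controlled by $M$ (outcome (i)), or after deleting $\mathbf{O}(t^2)$ apices there is a large flat, isolated subtransaction. I expect this to hold by the two-paths theorem together with the clique-building argument from the flat wall theorem, as in \zcref{prop:FlatMesh}.
\begin{itemize}
\item A crosscap-type transaction of order $p$ gives outcome (ii).
\item Two interleaving planar transactions give a handle, hence outcome (iii).
\item Otherwise all large transactions are planar. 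A planar flat isolated transaction lets us cut $c_i$ into two vortices by re-rendering its strip in the disk, so $b$ increases by one. The nest for each new vortex is carved from the strip and from the old nest, and the radial linkage is rerouted along the transaction paths.
\end{itemize}

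\textbf{Step 3 (termination).} I would show that once $b$ exceeds $\tfrac12(t-3)(t-4)$, the vortices together with their pairwise-disjoint nests and the radial linkage yield a $K_t$-minor controlled by $M$. Each vortex still has large depth, and the transactions inside distinct vortices can be used as crossing edges between disjoint branch sets routed along the nest infrastructure, so one obtains a clique on about $\sqrt{2b}$ vertices. Hence there are at most $\mathbf{O}(t^2)$ iterations, each adding $\mathbf{O}(t^2)$ apices and costing polynomially many cycles. This gives the polynomial functions $\mathsf{apex}^{\mathsf{fin}}$, $\mathsf{depth}$ and $\mathsf{cost}$. Running time is dominated by the flow computations and the rendition updates, for $\mathbf{poly}(t+s+p+k)\,|E(G)||V(G)|^2$.

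\textbf{Main obstacle.} The hardest step is keeping the infrastructure polynomial while splitting vortices. Naively re-partitioning radial paths at each split loses a constant factor per step, compounding exponentially over $\mathbf{O}(t^2)$ steps. I would first try to charge each split only an additive polynomial loss, by taking the new radial paths directly from the splitting transaction rather than subdividing the parent's linkage, and argue that $\mathbf{O}(t^2)$ such additive losses fit inside $\mathsf{cost}(t,k)$.
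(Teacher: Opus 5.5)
You are right that the paper contains no proof of this proposition. It is imported verbatim from \cite{GorskySW2025Polynomial}, and only the growth rates of its functions are quoted afterwards, so your first sentence is exactly what the paper does. Your fallback re-derivation, however, has a genuine gap: the claim in Step~3 that there are only $\mathbf{O}(t^2)$ iterations.

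\textbf{Why the iteration count fails.} The counting in Step~3 can only use vortices that carry a cross. A cross-free vortex has a vortex-free rendition by the two-paths theorem and contributes nothing towards a $K_t$-minor (compare condition \textbf{L8} in the paper's landscapes). ``Large depth'' is therefore the wrong invariant, and it is not even preserved, since nothing in Step~2 forces both residual pieces of a split to contain a cross.

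Take as vortex society a huge grid with $\Omega$ on its outer face, plus two crossing chords inside one face. Its depth is enormous and it has many large flat, isolated (indeed separating) planar transactions. Yet every split along one of them leaves a cross-free side.
\begin{itemize}
\item If you keep that side as a vortex, $b$ grows but Step~3 produces no $K_t$.
\item If you absorb it, $c_i$ shrinks without creating a new essential vortex. Each such round still pays the price of Step~2: new apices, and nest cycles spent to make the next transaction flat, isolated and orthogonal.
\end{itemize}
So the number of rounds is bounded only in terms of $|E(G)|$, and neither $|A|$ nor the loss of cycles is bounded by functions of $t,k,p$.

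\textbf{How this is handled elsewhere.} As the paper's introduction notes, \cite{KawarabayashiTW2021Quickly,GorskySW2025Polynomial} close exactly this hole with crooked transactions \cite{RobertsonS1990Graphb}. The paper's own device for its red variant replaces a non-progressing split by a strictly tighter nest tree that reroutes cycles without losing any (\zcref{lemma:exposedOrTighten} and outcome~ii) of \zcref{lemma:SplitLeafs}). That device does not help you here. It gets flat transactions for free from an existing rendition of bounded breadth and depth (\zcref{lemma:flatTransaction}), i.e.\ from outcome~iv) of the very proposition you are proving. Your remedy for the ``main obstacle'' (taking child radial linkages from the splitting transaction) is the paper's nest-tree idea. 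It handles the radial-linkage loss, but not this termination problem.

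\textbf{A smaller point.} To cut $c_i$ along a planar transaction, the transaction must be separating in the paper's sense, not merely isolated. Paths joining the two complementary segments around the strip block the split. They must either be hit by a small set added to $A$, or be turned into a handle (or crosscap) transaction.
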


In \cite{GorskySW2025Polynomial}, the following estimates for the functions in \zcref{thm:societyclassification} are given.
\begin{align*}
\mathsf{nest}_{\ref{thm:societyclassification}}(t,k) &\in \mathbf{O}((t+k)^9),\\
\mathsf{apex}^\mathsf{genus}_{\ref{thm:societyclassification}}(t) &\in \mathbf{O}(t^8), \ \ \mathsf{loss}_{\ref{thm:societyclassification}}(t) \in \mathbf{O}(t^3), \ \ \mathsf{cost}_{\ref{thm:societyclassification}}(t,k) \in \mathbf{O}(t+k),\\
\mathsf{apex}^\mathsf{fin}_{\ref{thm:societyclassification}}(t,k,p) &\in \mathbf{O}((t+k+p)^{37}), \ \ \text{and} \ \ \mathsf{depth}_{\ref{thm:societyclassification}}(t,k,p) \in \mathbf{O}((t+k+p)^{51}).
\end{align*}

\paragraph{The annotated variant.}
The goal of this section is to prove the following version of the society classification theorem for annotated graphs.

\begin{restatable}{theorem}{blanksocietyclassification}\label{thm:blanksocietyclassification}
There exist polynomial functions $\mathsf{apex}^\mathsf{genus}_{\ref{thm:blanksocietyclassification}}, \mathsf{loss}_{\ref{thm:blanksocietyclassification}} \colon \mathbb{N} \rightarrow \mathbb{N}$, $\mathsf{nest}_{\ref{thm:blanksocietyclassification}}, \mathsf{cost}_{\ref{thm:blanksocietyclassification}} \colon \mathbb{N}^3 \rightarrow \mathbb{N}$, and $ \mathsf{apex}^\mathsf{fin}_{\ref{thm:blanksocietyclassification}}$, $\mathsf{depth}_{\ref{thm:blanksocietyclassification}} \colon \mathbb{N}^4 \to \mathbb{N}$, such that for all integers $r, k \geq 3$ and $t, p \geq 1$ the following holds.

Let $s \geq \mathsf{nest}_{\ref{thm:blanksocietyclassification}}(r, t, k).$
Let $(G, R)$ be an annotated graph and $\rho$ be a blank cylindrical rendition of a society $(G, \Omega)$ in a disk $\Delta$ with a cozy nest $\mathcal{C} = \{ C_{1}, \ldots, C_{s} \}$ around the vortex $c_{0}$ and a radial linkage $\mathcal{R}$ for $\mathcal{C}$ of order $s$ that is orthogonal to $\mathcal{C}$.
Further, let $M$ be the $s$-cylindrical mesh contained in $\bigcup (\mathcal{C} \cup \mathcal{R})$, and $(G', \Omega')$ be the $C_{s - \mathsf{cost}_{\ref{thm:blanksocietyclassification}}}(r, t, k)$-society in $\rho$.

Then $G'$ contains a set $A \subseteq V(G')$ such the one of the following exists:
\begin{enumerate}

\item A separation $(X, Y) \in \mathcal{T}_{M}$ of order at most $t - 1$ such that $(Y \setminus X) \cap R = \emptyset$.

\item A red $K_{t}$-minor-model in $(G, R)$ controlled by $M$.

\item A homogeneous, isolated crosscap transaction $\mathcal{P}$ of order $p$ in $(G' - A, \Omega')$, with $|A| \leq \mathsf{apex}^\mathsf{genus}_{\ref{thm:blanksocietyclassification}}(t)$, and a nest $\mathcal{C}'$ in $\rho$ of order $s - (\mathsf{loss}_{\ref{thm:blanksocietyclassification}}(t) + \mathsf{cost}_{\ref{thm:blanksocietyclassification}}(r, t, k))$ to which $\mathcal{P}$ is orthogonal.

\item A homogeneous, isolated handle transaction $\mathcal{P}$ of order $p$ in $(G' - A, \Omega')$, with $|A| \leq \mathsf{apex}^\mathsf{genus}_{\ref{thm:blanksocietyclassification}}(t)$, and a nest $\mathcal{C}'$ in $\rho$ of order $s - (\mathsf{loss}_{\ref{thm:blanksocietyclassification}}(t)+\mathsf{cost}_{\ref{thm:blanksocietyclassification}}(r, t, k))$ to which $\mathcal{P}$ is orthogonal.

\item A blank rendition $\rho'$ of $(G - A, \Omega)$ in $\Delta$ such either
\begin{itemize}
\item there exists an $r$-mesh $M' \subseteq G - A$ that is grounded and red in $\rho'$ and such that $\mathcal{T}_{M'}$ is a truncation of $\mathcal{T}_{M}$, or
\item $\rho'$ has breadth $b$ at most $\nicefrac{3}{2}(t - 1)(3t - 4) + r(r - 1) - 3$ and depth at most $\mathsf{depth}_{\ref{thm:blanksocietyclassification}}(r, t, k, p)$, $|A| \leq \mathsf{apex}^{\mathsf{fin}}_{\ref{thm:blanksocietyclassification}}(r, t, k, p)$, and an extended $k$-surface-wall $D$ with signature $(0, 0, b)$, such that $D$ is grounded in $\rho'$, the base cycles of $D$ are the cycles $C_{s - \mathsf{cost}_{\ref{thm:blanksocietyclassification}}(r, t, k)-1-k}, \ldots, C_{s - \mathsf{cost}_{\ref{thm:blanksocietyclassification}}(r, t, k)-1}$, and there exists a bijection between the vertices $v$ of $\rho'$ and the vortex segments $S_{v}$ of $D$, where $v$ is the unique vortex contained in $\Delta_{S_{v}}$ defined by the trace of the inner cycle of the nest of $S_{v}$, and $\Delta_{S_{v}}$ is chosen to avoid the trace of the simple cycle of $D$.
\end{itemize}

\end{enumerate}
In particular, each of these outcomes can be found in time $\mathbf{poly}(r + t + s + p + k) \cdot |E(G)|^{3}$.
\end{restatable}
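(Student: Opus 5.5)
We start from \zcref{thm:societyclassification}, applied with $3t$ in place of $t$ so that the clique outcome can be fed into \zcref{prop:redClique}. The parameter $p$ is inflated to $p' \coloneqq (p+2)\cdot q(r)$ for a suitable polynomial $q$, so that any transaction we obtain can later be homogenised. The nest bound $\mathsf{nest}_{\ref{thm:blanksocietyclassification}}$ is chosen large enough to accommodate this, plus a polynomial extra amount of cycles reserved for the final step. The four outcomes are then treated as follows.

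\textbf{Clique outcome.} If we obtain a $K_{3t}$-minor-model controlled by $M$, then \zcref{prop:redClique} yields either outcome (i) or outcome (ii). Here we use that $\mathcal{T}_\mu$ truncating $\mathcal{T}_M$ transfers the separation into $\mathcal{T}_M$.

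\textbf{Crosscap and handle outcomes.} Here we mimic \zcref{lemma:flatMeshToHomegeneousMesh}.
\begin{itemize}
\item Since the transaction is flat and orthogonal to the nest $\mathcal{C}'$, its strip together with the cycles of $\mathcal{C}'$ carries a grounded mesh-like structure.
\item We cut the transaction into $q(r)$ consecutive blocks of $p$ paths each, separated by spare paths.
\item If every block's strip region contains a red vertex, then the cycles of $\mathcal{C}'$ crossing the strip, together with these blocks, yield a mesh as in \zcref{lemma:MiddleRedMeshToFullyRed}. That lemma followed by \zcref{lemma:RedMeshToRedGrid} produces a red mesh, and hence a red grid.
\item Strictly speaking the statement has no red-grid outcome for (iii)/(iv). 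I would instead declare the resulting transaction homogeneous red: we select $p$ paths with a red vertex between any two consecutive ones.
\item Otherwise some block is blank. It is still isolated and flat (sub-transactions of isolated flat transactions remain so), and is homogeneous blank.
\end{itemize}
For the handle case we do this for $\mathcal{R}$ and $\mathcal{Q}$ simultaneously. We use Ramsey-free pigeonhole: homogenise $\mathcal{R}$ first, then $\mathcal{Q}$, keeping matched block sizes.

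\textbf{Bounded-depth rendition outcome.} This is the main obstacle. The rendition $\rho'$ of $G-A$ may have red vertices outside the vortices, and $b$ may be too small to cover them. The plan is as follows.
\begin{enumerate}
\item Use the extended surface wall $D$, whose base cycles are nest cycles, as a large cylindrical infrastructure. Subdivide its base into $r(r-1)$ columns per ``band'', exactly as in \zcref{lemma:MiddleRedMeshToFullyRed}.
\item If each of the $r(r-1)$ bricks in the middle row of the wall strip contains a red vertex or a vortex, with the vortex count bounded by $b \le \tfrac12(3t-3)(3t-4)$, then after discarding bricks meeting vortices enough red bricks remain. \zcref{lemma:MiddleRedMeshToFullyRed} then gives the red $r$-mesh $M'$ of the first bullet of (v).
\item Otherwise there are fewer than $r(r-1)$ red regions in the plane part. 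Each maximal red-containing region not separated from vortices is enclosed in a new vortex cell: we draw a $\rho'$-aligned disk around it using the surrounding wall bricks, producing a blank rendition.
\item This adds at most $r(r-1)$ new vortices, which explains the breadth bound $\tfrac32(t-1)(3t-4)+r(r-1)-3$.
\end{enumerate}

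The delicate point is that each new vortex needs its own nest and vortex segment in $D$, with bounded depth. To get this, I would carve the new vortex segments out of the wall columns surrounding the red brick. This uses the $r(r-1)$-fold subdivision so that the segments remain disjoint and attach to the base cycles. Depth is bounded because the new vortex disk is surrounded by a cycle of a brick of bounded size in $D$. If that fails, I would apply the same trick with nested cycles, using the reserved extra nest cycles, and absorb the polynomial loss into $\mathsf{cost}$ and $\mathsf{depth}$.

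Running times follow from \zcref{thm:societyclassification} together with linear-time homogenisation.
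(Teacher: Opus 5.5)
Your handling of the clique outcome and of the crosscap/handle outcomes is fine and agrees with the paper; the latter is just \zcref{lemma:homoTransactions}. One small fix there: a red transaction of order $p$ needs at least $p$ blocks, so the inflation must depend on $p$ (the paper uses $p^2$), not only on $r$.

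\textbf{The gap is in the bounded-depth rendition outcome.}

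First, $D$ gives you no handle on where the red vertices are. Since $\rho$ is blank, every red vertex of $G-A$ comes from the interior of $c_0$. In $\rho'$ it may be drawn anywhere in the planar part of that re-rendered region, inside the inner base cycle of $D$. The wall $D$ has grid structure only in its base annulus and in the nests around the $b$ vortices. Most of the region, in particular the face of $D$ surrounding the vortex segments, carries no bricks at all. So there is no ``middle row'' whose red/blank status controls the red vertices. Your dichotomy ``all $r(r-1)$ middle bricks are red, or there are fewer than $r(r-1)$ red regions'' is therefore not a dichotomy.

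Second, even a localised red vertex cannot be enclosed in a bounded-depth vortex this way. $D$ is a subdivided wall, and the disk of a brick or of a few nested cycles can contain an arbitrary planar piece. For example, it could contain a huge grid with one red vertex in its centre, whose society has transactions of unbounded order. Bounding the depth of the disks you cut out is exactly the hard part, and your fallback with reserved nested cycles does not address it.

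\textbf{What is missing is the nest-tree refinement.} The paper proceeds as follows.
\begin{enumerate}
\item It homogenises a subwall of the wall segment of $D$ via \zcref{lemma:flatMeshToHomegeneousMesh}. This gives either a red mesh, or a blank annulus wall whose inner disk contains all vortices and all of $R\setminus A$.
\item It then grows an $R$-consistent nest tree (\zcref{lemma:finalNestTree}). Whenever a leaf society has a large transaction:
\begin{itemize}
\item \zcref{lemma:flatTransaction} makes it $\rho$-flat;
\item \zcref{lemma:exposedOrTighten} makes it exposed, or else tightens the leaf nest, which can happen only polynomially often;
\item it is orthogonalised, and \zcref{lemma:blankTrans} makes it $R$-blank or yields a red mesh;
\item \zcref{lemma:SplitLeafs} splits the leaf into two children, each containing a vortex or a red vertex, or discards an empty side. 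The children's radial linkages are taken from the splitting transaction, so no loss accumulates.
\end{itemize}
\item The process stops in one of two ways.
\begin{itemize}
\item There are at most $b^1+r(r-1)-2$ leaves, all of bounded depth, where $b^1=\frac{1}{2}(3t-3)(3t-4)-1$ bounds the number of vortices of $\rho'$. These leaf disks become the new vortices, which is where the breadth bound $\frac{3}{2}(t-1)(3t-4)+r(r-1)-3$ comes from.
\item There are $b^1+r(r-1)-1$ leaves. At least $r(r-1)-1$ of them contain no vortex and hence contain a red vertex. Linking them back to the wall by Menger (\zcref{lemma:nestTreeToSurfaceWall}) and applying \zcref{lemma:SurfaceWallToRedMesh} gives the red $r$-mesh.
\end{itemize}
\end{enumerate}

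Your wall-column carving cannot substitute for this. It presupposes that red vertices sit inside bricks of $D$ and that brick disks have bounded depth, and neither holds.
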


Towards this, in the upcoming subsections we develop a series of tools that carefully combine ideas from the proof of \zcref{thm:societyclassification} along with new techniques to deal with the red vertices, culminating in the proof of \zcref{thm:blanksocietyclassification} in \zcref{sec:ProofOfSociety}.

At this stage, let us provide estimates for the function involved in \zcref{thm:blanksocietyclassification}.
\begin{align*}
\mathsf{nest}_{\ref{thm:blanksocietyclassification}}(r, t, k) &\in \mathbf{O}((r + t + k)^{144}),\\
\mathsf{apex}^{\mathsf{genus}}_{\ref{thm:blanksocietyclassification}}(t) &\in \mathbf{O}(t^{8}), \ \ \mathsf{loss}_{\ref{thm:blanksocietyclassification}}(t) \in \mathbf{O}(t^{3}), \ \ \mathsf{cost}_{\ref{thm:blanksocietyclassification}}(r, t, k) \in \mathbf{O}((r + t + k)^{16}),\\
\mathsf{apex}^{\mathsf{fin}}_{\ref{thm:blanksocietyclassification}}(r, t, k, p) &\in \mathbf{O}((r + t + k + p)^{592}), \ \ \text{and} \ \ \mathsf{depth}_{\ref{thm:blanksocietyclassification}}(r, t, k, p) \in \mathbf{O}((r + t + k + p)^{820}).
\end{align*}

\subsection{Dealing with flat crosscap or handle transactions}

To prove \zcref{thm:blanksocietyclassification} we eventually have to deal with outcomes ii) and iii) if \zcref{thm:societyclassification}.
Our goal here is to first show that, similar to \zcref{sec:flatwall}, we may homogenize flat crosscap or handle transactions with respect to the red set.
Then, assuming the homogenized transaction is not blank, we show that using the mesh infrastructure provided by the union of the nest and the transaction, we may always find a large red grid minor in the graph.
This allows us to always conclude with a blank transaction.

\paragraph{Homogeneous transactions.}
Let $(G, R)$ be an annotated graph and $(G, \Omega)$ be a society.
Also, let $\mathcal{P} = \{ P_{1}, \ldots, P_{n} \}$ be a naturally indexed, flat transaction in $(G, \Omega)$ of order $n \geq 2$ and $(G_{1}, \Omega_{1})$ be the $\mathcal{P}$-strip society in $(G, \Omega)$.

Let $X, Y$ be the end segments of $\mathcal{P}$ in $(G, \Omega)$ such that $\Omega_{1}$ is the concatenation of $X$ and $Y$.
Moreover, for all $i \in [n - 1]$, let $X_{i} \subseteq X$ and $Y_{i} \subseteq Y$ be the end segments of $\{ P_{i}, P_{i+1} \}$ in $(G, \Omega)$.

Next, let $H \subseteq G$ be obtained from the union of elements of $\mathcal{P}$ by adding the elements of $V(\Omega)$ as isolated vertices.
Given $i \in [n-1]$, consider any $H$-bridge of $G$ with at least one attachment in $$\big(V(X_{i}) \cup V(Y_{i}) \cup V(P_{i} \cup P_{i + 1})\big) \setminus V(P_{1} \cup P_{n}),$$ and for each such $H$-bridge $B$ let $B'$ denote the graph obtained from $B$ by deleting all attachments that do not belong to $V(\bigcup \mathcal{P}) \cup V(X) \cup V(Y)$.
We define the $i$-th \emph{strip} of $\mathcal{P}$ to be the graph $H_{i} \subseteq G_{1}$ obtained from the union of $P_{i} \cup P_{i+1}$ and all graphs $B'$ as above, after adding all vertices of $V(X_{i})$ and $V(Y_{i})$ as isolated vertices.
We also say that $H_{1}$ and $H_{n-1}$ are the \emph{boundary strips} of $\mathcal{P}$.

Note that since $\mathcal{P}$ is flat, and therefore $(G_{1}, \Omega_{1})$ admits a vortex-free rendition in the disk, it follows that for every $H$-bridge $B$ of $G$ with at least one attachment in $V(\bigcup \{ P_{2}, \ldots, P_{n - 1} \}) \cup V(X) \cup V(Y)$, $B'$ defined as above is a subgraph of possibly both the $i$-th and $(i+1)$-th strip of $\mathcal{P}$ for some $i \in [n-1]$, and is otherwise disjoint from any other strip.
Also note that $G_{1} = \bigcup_{i \in [n -1]} H_{i}$.

A flat transaction is called \emph{red} if all its non-boundary strips contain a vertex of $R$ or \emph{blank} if no strip does.
In either case we call it \emph{homogeneous}.

\begin{lemma}\label{lemma:homoTransactions} For all integers $p, q \geq 2$ the following holds.
Let $(G, R)$ be an annotated graph, $(G, \Omega)$ be a society, and $\rho$ be a rendition of $(G, \Omega)$ in the disk.
Then for every flat transaction $\mathcal{P}$ in $(G, \Omega)$ of order $qp$ there exists either a blank transaction $\mathcal{Q}_{1} \subseteq \mathcal{P}$ of order $p$ or a red transaction $\mathcal{Q}_{2} \subseteq \mathcal{P}$ of order $q$.

Moreover, there exists an algorithm that finds $\mathcal{Q}$ in time $\mathbf{O}(qp \cdot |E(G)|)$.
\end{lemma}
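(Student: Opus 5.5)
The plan is a pigeonhole argument on consecutive blocks of the transaction, mirroring the proof of \zcref{lemma:flatMeshToHomegeneousMesh}. Index $\mathcal{P} = \{P_1,\dots,P_{qp}\}$ naturally and partition it into $q$ consecutive blocks $\mathcal{P}^j = \{P_{(j-1)p+1},\dots,P_{jp}\}$ for $j\in[q]$. Each $\mathcal{P}^j$ is itself a monotone transaction in $(G,\Omega)$. It is flat, since its strip society is obtained by restricting the vortex-free rendition of the $\mathcal{P}$-strip society to the disk bounded by the traces of $P_{(j-1)p+1}$ and $P_{jp}$. The first thing to check is that the strips of $\mathcal{P}^j$ are, up to the boundary strips, exactly the strips $H_{(j-1)p+1},\dots,H_{jp-1}$ of $\mathcal{P}$. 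This uses the observation stated before the lemma: by flatness, every bridge with an attachment on an interior path lies in at most two consecutive strips.

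We then split into two cases.

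\textbf{Case 1:} some block $\mathcal{P}^j$ has none of its strips meeting $R$. Then $\mathcal{Q}_1 \coloneqq \mathcal{P}^j$ is a blank transaction of order $p$.

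\textbf{Case 2:} every block contains a red vertex in one of its strips. For each $j$, pick one representative path $P_{i_j}$ from $\mathcal{P}^j$ and set $\mathcal{Q}_2 \coloneqq \{P_{i_1},\dots,P_{i_q}\}$; taking the first path of each block is a natural choice. Every strip of $\mathcal{Q}_2$ between $P_{i_j}$ and $P_{i_{j+1}}$ contains the union of the $\mathcal{P}$-strips between these two paths, and these cover an entire block. By the case assumption, that strip therefore contains a red vertex, so $\mathcal{Q}_2$ is red of order $q$. Since we have the freedom to use $q$ blocks, the indexing can be shifted slightly if needed so that all non-boundary strips of $\mathcal{Q}_2$ contain a full block.

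The main obstacle I expect is the bookkeeping that compares strips of a subtransaction with strips of $\mathcal{P}$.

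First, the $i$-th strip of a subtransaction $\mathcal{Q}$ is defined through $H$-bridges, where $H$ is now built from fewer paths. A bridge of the smaller graph may therefore merge several $\mathcal{P}$-strips together with the intermediate paths. Using the vortex-free disk rendition of the $\mathcal{P}$-strip society, I would argue that the strip of $\mathcal{Q}$ between two of its consecutive paths equals the union of the $\mathcal{P}$-strips between them, with the deleted intermediate paths included. This gives containment for the red case. For the blank case it gives the converse: blank strips of a block produce blank strips of the block.

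Second, there is care needed at the boundary strips. The bridges attaching to $P_1$ or $P_n$ of the original transaction are treated differently, and the blank definition quantifies over all strips while the red one excludes the boundary strips. To handle this, I would possibly discard the outer paths of a block or shift blocks by one.

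Finally, the algorithm is direct. Compute the strips of $\mathcal{P}$ by one bridge decomposition, which takes $\mathbf{O}(|E(G)|)$ per path, for $\mathbf{O}(qp\,|E(G)|)$ in total. Then mark which strips are red and scan the blocks.
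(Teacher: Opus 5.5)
Your proposal is correct and is essentially the paper's proof: split $\mathcal{P}$ into $q$ blocks of $p$ consecutive paths, output a block whose strips avoid $R$ if one exists, and otherwise output the first path of every block as a red transaction of order $q$. In the red case you only need the containment of the $\mathcal{P}$-strips between two consecutive paths of $\mathcal{Q}_2$ in the corresponding $\mathcal{Q}_2$-strip. The equality you (and the paper) state can fail, because a bridge attached to a path of $\mathcal{Q}_2$ from the far side may absorb an entire deleted path of the neighbouring region. With the first-path choice, no shifting of blocks or discarding of outer paths is needed.
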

\begin{proof} Let $P_{1}, \ldots, P_{qp}$ be an ordering of $\mathcal{P}$ so that they are indexed naturally.
For all integers $i < j \in [qp - 1]$, let $H_{i}^{j} = \bigcup_{l \in [i, j - 1]} H_{l}$, where $H_{i}$ denotes the $i$-th strip of $\mathcal{P}$.

For $i \in [q]$, let $\mathcal{P}_{i} = \{ P_{(i-1)\cdot p + 1}, \ldots, P_{i \cdot p}\}$ and let $\mathcal{P}'$ be defined by collecting the first path from every $\mathcal{P}_{i}$, $i \in [p]$.
Now we are in one of two cases.
Either there exists $i \in [q]$, such that $H_{(i - 1) \cdot p + 1}^{i \cdot p - 1}$ contains no vertex of $R$, or all do.
In the first case, we argue that $\mathcal{P}_{i}$ is the desired blank transaction, while in the second case we argue that $\mathcal{P}'$ is the desired red transaction.

It follows that any subtransaction $Q \subseteq \mathcal{P}$ is flat.
Let $(G_{1}, \Omega_{1})$ be the $\mathcal{P}$-strip society in $(G, \Omega)$ and $(G_{2}, \Omega_{2})$ be the $\mathcal{Q}$-strip society in $(G, \Omega)$.
To complete our proof it suffices to show that $G_{2} \subseteq G_{1}$ and that for any two consecutive paths $P_{i}, P_{j}$ in $\mathcal{Q}$ such that both $P_{i}$ and $P_{j}$ are non-boundary paths of $\mathcal{Q}$, $H'_{i} = H_{i}^{j}$, where $H'_{i}$ is the strip of $\mathcal{Q}$ that corresponds to $P_{i}$.

The fact that $G_{2} \subseteq G_{1}$ follows by definition of strip societies and this already concludes the proof if we are in the first case above.
For the second property, let $X, Y$ be the end segments of $\mathcal{P}$ in $(G, \Omega)$ so that $\Omega_{1}$ is the concatenation of $X$.
Moreover, for all $i \in [n-1]$, let $X_{i} \subseteq X$ and $Y_{i} \subseteq Y$ be the end segments of $\{ P_{i}, P_{i+1} \}$ in $(G, \Omega)$.

Let $H$ denote the subgraph of $G$ obtained from the union of elements of $\mathcal{P}$, by adding the elements of $V(\Omega)$ as isolated vertices.
First note that strips, by definition, cannot contain isolated vertices which are not vertices of $V(\Omega)$.
Hence consider an edge $e \in E(H'_{i})$.
If $e$ is an edge of a path in $\{ P_{i}, \ldots, P_{j} \}$, then $e \in E(H_{i}^{j})$.
Moreover, by flatness of $\mathcal{Q}$, $e$ cannot be an edge of a path in $\mathcal{P} \setminus \{ P_{i}, \ldots, P_{j} \}$ either.
This implies, combined with the flatness of $\mathcal{P}$, that there is $l \in [i, j-1]$ and $B'$, where $B$ is an $H$-bridge of $G$ with at least one attachment in $(V(X_{l}) \cup V(Y_{l}) \cup V(P_{l} \cup P_{l+1})) \setminus V(P_{1} \cup P_{n})$, and $B'$ is obtained from $B$ after deleting attachments that do not belong to $V(\bigcup \mathcal{P}) \cup V(X) \cup V(Y)$, such that $e \in E(B')$.
However by definition of strips for $\mathcal{P}$, $e \in E(H_{i})$, and therefore $e \in E(H_{i}^{j})$.
In a similar manner we may prove that $H_{i}^{j} \subseteq H'_{i}$ as well.
This concludes the proof for the latter case.
\end{proof}

Note that in case of a flat handle transaction $\mathcal{P} = \mathcal{R} \cup \mathcal{Q}$ in $G$, where $\mathcal{R}$ and $\mathcal{Q}$ are its constituent transactions, we say that $\mathcal{P}$ is \emph{homogeneous} if both $\mathcal{P}$ and $\mathcal{R}$ are homogeneous in $G$.

\paragraph{Red, flat mesh or blank transaction.}

We now guarantee that, given a cylindrical rendition of a society in the disk with a nest around a vortex and a large flat, red transaction that is orthogonal to the nest, we may always use the mesh infrastructure provided by the union of the nest and the transaction to find a mesh that is fully red.
This will allow us to clean up the crosscap or handle transactions implied by~\zcref{thm:blanksocietyclassification} for the proof of our local structure theorem, \zcref{thm:strongest_localstructure}.

\begin{lemma}\label{lemma:RedTransactionGivesRedMesh} For every integer $r \geq 2$ the following holds.
Let $(G, R)$ be an annotated graph and $\rho$ be a blank cylindrical rendition of a society $(G, \Omega)$ in a disk $\Delta$ around a vortex $c_{0}$ with a nest $\mathcal{C}$ of order $r + 2.$
If $\mathcal{P}$ is a red, exposed transaction in $(G, \Omega)$ of order $3r(r - 1) - 2$ that is orthogonal to $\mathcal{C}$ and $\rho'$ is a vortex-free rendition of the $\mathcal{P}$-strip society in $(G, \Omega),$ then there exists an $r$-mesh $M \subseteq G$ that is grounded and red in $\rho'$ and such that $\mathcal{T}_{M}$ is a truncation of the tangle of a mesh whose horizontal paths are subpaths of distinct cycles from $\mathcal{C}.$

Moreover, there exists an algorithm that finds $M$ in time $\mathbf{poly}(r) \cdot |E(G)|$.
\end{lemma}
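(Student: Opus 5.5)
The plan is to build a $(2(r+1)\times r(r-1))$-mesh out of the nest and the transaction, and then invoke \zcref{lemma:MiddleRedMeshToFullyRed}. Let $\mathcal{C}=\{C_1,\dots,C_{r+2}\}$ and let $P_1,\dots,P_{3r(r-1)-2}$ be the natural indexing of $\mathcal{P}$.

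Since $\mathcal{P}$ is exposed and orthogonal to $\mathcal{C}$, every $P_i$ meets each $C_j$ in exactly two subpaths: one on the way into the vortex $c_0$ and one on the way out. So each $P_i$ splits into an ``entering half'' $P_i^{X}$, from its endpoint in $X$ to $C_1$, and an ``exiting half'' $P_i^{Y}$, from $C_1$ to its endpoint in $Y$. Both halves are radial with respect to $\mathcal{C}$.

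The strips of $\mathcal{P}$ live in the vortex-free rendition $\rho'$ of the $\mathcal{P}$-strip society. Because $\rho$ is blank, every red vertex lies inside $c_0$. Hence the red vertex of the $i$-th strip $H_i$ sits in the part of $H_i$ between $P_i$ and $P_{i+1}$ that lies beyond $C_1$, i.e.\ in the ``middle'' of the strip.

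I will read the strip society $(G_1,\Omega_1)$ in $\rho'$ as a long rectangle. The transaction paths are the vertical paths. The horizontal paths come from the nest: the two arcs of each $C_j$ that run across the strip, one on the $X$-side and one on the $Y$-side.

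Concretely, I take the first $r+1$ cycles $C_1,\dots,C_{r+1}$. Their $X$-side arcs restricted to the strip give $r+1$ horizontal paths, ordered from $C_{r+1}$ down to $C_1$. Their $Y$-side arcs give another $r+1$ horizontal paths, ordered from $C_1$ to $C_{r+1}$. That is $2(r+1)$ horizontal paths in total, and the middle rows are the two copies of $C_1$. The brick rows between them contain the interior of $c_0$ inside each strip, and so contain the red vertices.

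The main obstacle is that the arcs of $C_j$ between consecutive transaction paths are not automatically inside the strip society. A cycle of the nest may leave the strip between $P_1$ and $P_n$ and travel around the outside of the transaction. To handle this, I restrict to the subpaths of each $C_j$ between its intersections with $P_1$ and with $P_n$ on the appropriate side. The flatness of $\rho'$ and orthogonality should ensure these subpaths lie in $G_1$ and are drawn in $\rho'$ in the natural order.

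The remaining difficulty is the count of vertical paths. Only non-boundary strips are guaranteed to be red, so the reduction costs some paths. I use every third path $P_1,P_4,\dots$, giving $r(r-1)$ vertical paths, so that each middle brick contains at least one full non-boundary strip together with its red vertex. This is why the order is $3r(r-1)-2$ rather than $r(r-1)$. Alternatively, consecutive paths may suffice once the boundary strips are dropped. The resulting mesh is grounded in $\rho'$, and every $(r+1,j)$-brick induces a subgraph containing a red strip.

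Now \zcref{lemma:MiddleRedMeshToFullyRed} gives an $r$-mesh $M$ that is grounded and red in $\rho'$. Its tangle is a truncation of the tangle of the constructed mesh, whose horizontal paths are subpaths of distinct cycles of $\mathcal{C}$. The algorithm computes the intersections with the nest, selects the paths, and applies the routing of \zcref{lemma:MiddleRedMeshToFullyRed}, all in $\mathbf{poly}(r)\cdot|E(G)|$ time.
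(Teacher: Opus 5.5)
Your skeleton matches the paper's: keep every third path $P_1,P_4,\dots,P_{3r(r-1)-2}$, build a $(2(r+1)\times r(r-1))$-mesh from two arcs of each of $r+1$ nest cycles together with these paths, and finish with \zcref{lemma:MiddleRedMeshToFullyRed}. The gap is in the one step that needs a real argument: that every middle brick is red \emph{with respect to $\rho'$}.

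You infer this from where the red vertices sit in $\rho$ (inside $c_0$, hence ``beyond $C_1$''). But $\rho$ and $\rho'$ are different renditions, and with the two $C_1$-arcs as your middle rows the conclusion can fail. Take a red vertex $v$ of $H_{3m+2}$ lying in a piece of the graph (for instance $v$ inside $c_0$, joined through a node of $c_0$) that attaches to the rest of $G$ only at two vertices $a,b$ of the arc of $C_1$ between $P_{3m+2}$ and $P_{3m+3}$. This piece lies in the $H$-bridge containing that arc, hence in $G_1$ and in $H_{3m+2}$. Yet a vortex-free rendition of the strip society may just as well draw it on the outer side of $C_1$, in the face between $C_1$ and $C_2$. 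Then $v$ is not in the subgraph induced by your middle brick. If this happens in every strip (on the $X$-side in some strips, the $Y$-side in others), no row of your mesh is uniformly red and \zcref{lemma:MiddleRedMeshToFullyRed} does not apply.

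A telltale sign is that your construction never uses $C_{r+2}$, although the hypothesis provides $r+2$ cycles. The paper builds the mesh from $C_2,\dots,C_{r+2}$, so the middle bricks are bounded by the two arcs of $C_2$, and uses $C_1$ only as a separator. Blankness of $\rho$ puts every red vertex strictly inside the $C_1$-disk, so in $G$ all its connections to $C_2$ and to the outer parts of the paths pass through $C_1$. For $m'=3m+2$ the paper packages this via the set $U_{m'}$, formed by the inner segments of $P_{m'},P_{m'+1}$ and the $C_1$-arcs between them. Consequently a red component of $H_{m'}-U_{m'}$ cannot reach the $C_2$-arcs bounding the brick. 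In $\rho'$ it is therefore confined to the $C_2$-brick, whichever side of $C_1$ it is drawn on.

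Shifting your horizontal paths one cycle outward and adding this separation argument repairs the proof. The rest of your plan, including the check that the nest arcs lie in $G_1$, then goes through. Your fallback remark that consecutive paths might suffice should be dropped. A strip $H_i$ also contains bridges attached only to $P_i$ (and $P_{i-1}$) that lie on the far side of $P_i$, which is exactly why the vertical paths must be spaced three apart.
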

\begin{proof} Let $r' = r(r - 1).$
Let $\mathcal{P} = \{ P_{1}, \ldots, P_{3r' - 2} \}$ be an ordering of $\mathcal{P}$ that gives a natural indexing for $\mathcal{P},$ $\mathcal{C} = \{ C_{1}, \ldots, C_{r + 2} \},$ and $\rho'$ be a vortex-free rendition in the disk $\Delta$ of the $\mathcal{P}$-strip society $(G', \Omega')$ in $(G, \Omega).$
Also, let $\mathcal{P}' = \{ P'_{1}, \ldots, P'_{3r' - 2} \}$ where each $P'_{i}$ is defined as the unique maximal subpath of $P_{i}$ with both endpoints in $V(C_{1})$ and for $i \in [3r' - 1],$ let $X_{i}$ and $Y_{i}$ denote the two disjoint $V(P_{i})$-$V(P_{i + 1})$-subpaths of $C_{1}$ that are disjoint from all other paths in $\mathcal{P}.$

Let $\mathcal{Q} = \{ P_{3m + 1} \mid m \in \{ 0, \ldots, r' - 1 \} \}.$
Clearly $|\mathcal{Q}| = r'.$
Observe that since $\mathcal{Q}$ is orthogonal to $\mathcal{C}$ and $|\mathcal{C}| = r + 2,$ we can find a $(2(r + 1) \times r')$-mesh $M \subseteq \bigcup (\mathcal{C} \setminus \{ C_{1} \}) \cup \bigcup \mathcal{Q}$ such that the unique subpath of every path $Q \in \mathcal{Q}$ contained in the intersection of $Q$ with the crop of $G$ by the $C_{r + 2}$-disk in $\rho,$ that intersects all cycles in $\mathcal{C},$ is a vertical path of $M.$
Clearly, $M \subseteq G'$ and grounded in $\rho'.$

We now claim that $M$ satisfies the conditions to apply \zcref{lemma:MiddleRedMeshToFullyRed} that will give us the desired mesh.
What we have to show is that for every $m \in \{ 0, \ldots, r' - 2 \},$ the subgraph $H_{B_{m}}$ of $G_{1}$ induced by the brick $B_{m}$ of $M$ in $\rho',$ where $B_{m}$ is the brick contained in the union of $P_{3m + 1} \cup P_{3(m + 1) + 1} \cup C_{2},$ contains a vertex of $R.$

For this, it suffices to show the following.
Let $m' = 3m + 2$ and let $H_{m'}$ be the $m'$-th strip of $\mathcal{P}.$
Since $\mathcal{P}$ is red, $V(H_{m'}) \cap R \neq \emptyset.$
Further let $U_{m'} \subseteq V(H_{n'})$ be defined as the vertex set of the union $P'_{m'} \cup P'_{m' + 1} \cup X_{m'} \cup Y_{m'}.$
The first thing to observe is that the existence of $\rho$ and the fact that it is blank, implies that every $(V(H_{m'}) \cap R)$-$V(M)$-path in $G$ that is disjoint from $P'_{m'} \cup P'_{m' + 1}$ must intersect $C_{1}$ precisely at a vertex of $X_{i}$ or $Y_{i}.$
This implies that any connected component $C_{m'}$ in the graph $H_{m'} - U_{m'}$ which contains a vertex of $R$ is disjoint from the connected component of $H_{m'} - U_{m'}$ which contains $B_{m},$ which in turn implies that $C_{m'} \subseteq H_{B_{m}}.$
Indeed, if this were not the case, then the existence of $\rho'$ would imply that there is a vertex $v \in V(C_{m'}) \setminus V(H_{B_{m}})$ which $B_{m}$ separates from $U_{m'}$ which is clearly absurd.

Moreover, by construction, it easily follows that $\mathcal{T}_{M}$ is the truncation of $\mathcal{T}_{M'}$, where $M'$ is any $r$-submesh of $M$ whose horizontal paths are subpaths of distinct cycles from $\mathcal{C}.$
We may now call \zcref{lemma:MiddleRedMeshToFullyRed} to obtain an $r$-mesh $M'' \subseteq G$ that is grounded and red in $\rho'$ such that $\mathcal{T}_{M''} \subseteq \mathcal{T}_{M} \subseteq \mathcal{T}_{M'}$ as desired.
\end{proof}
 
\subsection{Towards a blank rendition}

To prove \zcref{thm:blanksocietyclassification} we eventually have to deal with outcome iv) of \zcref{thm:societyclassification}.
The task here is to show that, starting from a bounded breadth and depth rendition of a society in a disk, one can either find a large flat, red mesh in the graph, or refine this rendition into a bounded breadth and depth rendition which is blank, i.\@e.\@, red vertices are featured only within the vortices of this rendition.
In addition this has to be performed modestly in order to preserve polynomial functions.
This is the most technically challenging part of our proof.

\subsubsection{Nest trees}

In what follows we define an intermediate structure that will aid us in keeping track of the necessary structure to be subsequently refined in multiple steps over a series of lemmas that follow.

\medskip
Let $(G, \Omega)$ be a society with a rendition $\rho$ in the disk.
Let $\mathcal{C}$ be a nest and $\mathcal{R}$ be a radial linkage for $\mathcal{C}$.
Further, let $\mathfrak{C}$ be a non-empty set of pairwise disjoint nests in $\rho$ containing $\mathcal{C}$ and $\mathfrak{R}$ be a (possibly empty) set of linkages of $G$.

Further let $(T, r)$ be a rooted subcubic tree and $\phi_{1} \colon V(T) \to \mathfrak{C}$, $\phi_{2} \colon E(T) \to \mathfrak{R}$ be bijections satisfying the following properties.

Let $s_{0} \geq 1$ be an integer.
First assume that for every non-leaf node $t \in V(T)$, $|\phi_{1}(t)| = s_{0}$, and if $\phi_{1}(t) = \{ C^{t}_{1}, \ldots, C^{t}_{s_{0}} \}$, let $\Delta^{\mathsf{out}}_{t}$ denote the $C^{t}_{s_{0}}$-disk in $\rho$ and $\Delta^{\mathsf{in}}_{t}$ denote the $C^{t}_{1}$-disk in $\rho$.

Moreover, assume that for every leaf node $l \in V(T)$, $|\phi_{1}(l)| \geq s_{0} + 2$, and if $\phi_{1}(l) = \{ C^{l}_{1}, \ldots, C^{l}_{s} \}$, let $\Delta^{\mathsf{out}}_{l}$ denote the $C^{l}_{s}$-disk in $\rho$, $\Delta^{\mathsf{soc}}_{t}$ denote the $C^{l}_{s - s_{0}}$, and $\Delta^{\mathsf{in}}_{t}$ denote the $C^{l}_{1}$-disk in $\rho$.

Further assume that,
\begin{description}
    \item[T1] $\phi_{1}(r) = \mathcal{C}$ and $\mathcal{R}_{r}$ is the $\Delta^{\mathsf{out}}_{r}$-truncation in $\rho$ of $\mathcal{R}$.
    Then $\mathcal{R}_{r}$ is an orthogonal radial linkage in the restriction of $\rho$ by $\Delta^{\mathsf{out}}_{r}$ for $\mathcal{C}$.
    \item[T2] for every edge $tt' \in E(T)$ with $t$ being the parent of $t'$ in $(T, r)$, $\Delta^{\mathsf{out}}_{t'} \subseteq \Delta^{\mathsf{in}}_{t}$,
    \item[T3] if $t, t' \in V(T)$ are \emph{siblings}, i.e., they have the same parent in $(T, r)$, then $\Delta^{\mathsf{out}}_{t} \cap \Delta^{\mathsf{out}}_{t'} = \emptyset$.
    \item[T4] Moreover, for every edge $tt' \in E(T)$ with $t$ being the parent of $t'$ in $(T, r)$, $\phi_{2}(tt')$ is a radial linkage in the restriction of $\rho$ by $\Delta^{\mathsf{out}}_{t}$ for $\phi_{1}(t) \cup \phi_{1}(t')$ that is orthogonal to $\phi_{1}(t) \cup \phi_{1}(t')$.
    Also, if $\mathcal{R}_{t'}$ is the $\Delta^{\mathsf{out}}_{t'}$-truncation in $\rho$ of $\phi_{2}(tt')$, then $\mathcal{R}_{t'}$ is a radial linkage in the restriction of $\rho$ by $\Delta^{\mathsf{out}}_{t'}$ for $\phi_{1}(t')$ that is orthogonal to $\phi_{1}(t')$.
    \item[T5] Further, if $t_{1}, t_{2} \in V(T)$ are siblings with parent $t \in V(T)$ in $(T, r)$, then every path in $\phi_{2}(tt_{1})$ is disjoint from every path in $\phi_{2}(tt_{2})$.
    In particular, $\phi_{2}(tt_{1})$ is disjoint from the crop of $G$ by $\Delta^{\mathsf{out}}_{t_{2}}$ and $\phi_{2}(tt_{2})$ is disjoint from the crop of $G$ by $\Delta^{\mathsf{out}}_{t_{1}}$.
    \item[T6] Additionally, every vortex cell $c_{0}$ of $\rho$ is contained in $\Delta^{\mathsf{in}}_{l}$, for some leaf node $l \in V(T)$.
\end{description}

We call the tuple $\mathfrak{T} = (\mathcal{C}, \mathcal{R}, T, r, \phi_{1}, \phi_{2})$ a \emph{nest tree (in $\rho$)} of $(G, \Omega)$.

For an integer $t \geq 1$, we say that $\mathfrak{T}$ has \emph{linkage order} $t$ if $|\mathcal{R}| = t$ and for every edge $e \in V(T)$, $|\phi_{2}(e)| = t$.
Moreover, for an integer $s \geq 1$, assume that for every leaf node $l \in V(T)$, $|\phi_{1}(l)| = s + s_{0} + 1$.
We refer to $s$ as the \emph{cycle order} of $\mathfrak{T}$, while to $s_{0}$ as the \emph{reserve}.
We also refer to the number of leaf nodes in $T$ as the \emph{number of leaves} of $\mathfrak{T}$.

We refer to nodes of $T$ as \emph{nodes} of $\mathfrak{T}$.
We call $\mathcal{C}$ the \emph{root nest} of $\mathfrak{T}$ and $\mathcal{R}$ the \emph{root radial linkage} of $\mathfrak{T}$.
For every node $t \in V(T)$ we call $\mathcal{R}_{t}$ the \emph{truncated radial linkage for $t$ (in $\mathfrak{T}$)} and the corresponding non-truncated version, the \emph{radial linkage for $t$ (in $\mathfrak{T}$).}
For every leaf node $l \in V(T)$ we call the set $\{ C^{t}_{s+2}, \ldots C^{t}_{s + s_{0} + 1} \}$ the \emph{nest in reserve for $l$ (in $\mathfrak{T}$).}
Further, we call $C^{l}_{s + 1}$ the \emph{boundary cycle for $l$ (in $\mathfrak{T}$)} and $\phi_{1}(l)$ minus both the nest in reserve and the boundary cycle for $l$, the \emph{leaf nest for $l$ (in $\mathfrak{T}$).}

We also refer to the $\Delta^{\mathsf{soc}}_{l}$-society $(G_{l}, \Omega_{l})$, for some leaf node $l \in V(T)$, as a \emph{leaf society} of $\mathfrak{T}$, to the leaf nest for $l$ as the \emph{nest of $(G_{l}, \Omega_{l})$,} and to the $\Delta^{\mathsf{soc}}_{l}$-truncation in $\rho$ of $\mathcal{R}_{l}$ as the \emph{radial linkage of $(G_{l}, \Omega_{l})$.}

Given a set $R \subseteq V(G)$, we call a nest tree $\mathfrak{T}$ \emph{$R$-consistent} if every vertex of $R$ is drawn in the interior of $\Delta^{\mathsf{in}}_{l}$, for some leaf node $l \in V(T)$, and for every leaf node $l \in V(T)$, if $\Delta^{\mathsf{in}}_{l}$ contains no vortex cell of $\rho$, then there is at least one vertex of $R$ drawn in the interior of $\Delta^{\mathsf{in}}_{l}$.

Finally, given a second nest tree $\mathfrak{T}' = (\mathcal{C}', \mathcal{R}', T', r', \phi'_{1}, \phi'_{2})$ in $(G, \Omega)$, we say that $\mathfrak{T}$ \emph{respects} $\mathfrak{T}$ if the endpoints of $\mathcal{R}'$ on $V(\Omega)$ are a subset of the endpoints of $\mathcal{R}$ on $V(\Omega)$ and $\mathcal{C'} \subseteq \mathcal{C}$.

\subsubsection{Developing the tools}

\paragraph{Flat transactions in a fixed rendition.}
Outcome iv) of \zcref{thm:societyclassification} allows to assume that we are always dealing with societies for which we already know that there exists a rendition in a disk with bounded breadth and depth.
As it will become apparent later, in the absence of a large flat, red mesh, outcome iv) of \zcref{thm:societyclassification} combined with \zcref{thm:FlatRedMesh}, will allows us to start from a society with a rendition in the disk of bounded breadth and depth and an initial nest tree with a single leaf.

Therefore, we may always assume to be working with such renditions which immediately allow us to deduce that any large enough transaction contains a still large subtransaction which is planar and resides in a vortex-free part of our rendition.
\medskip

Let $(G, \Omega)$ be a society and $\rho$ be a rendition of $(G, \Omega)$ in the disk.
Moreover, let $\mathcal{P} = \{ P_{1}, \ldots, P_{n} \}$ be a naturally indexed planar transaction of order $n \geq 2$ in $(G, \Omega)$.
Also, assuming that both $P_{1}$ and $P_{n}$ are grounded in $\rho$, let $T_{1}$ and $T_{n}$ be the trace of $P_{1}$ and $P_{n}$ in $\rho$ respectively. 
Then, there exists a unique open disk in $\Delta \setminus (T_{1} \cup T_{n})$ whose closure $\Delta_{\mathcal{P}}$ is bounded by both $T_{1}$ and $T_{2}$.
We call $\Delta_{\mathcal{P}}$ the \emph{container (in $\rho$)} of $\mathcal{P}$.

We say that $\mathcal{P}$ is \emph{$\rho$-flat} if the restriction $\rho_{\mathcal{P}}$ of $\rho$ by $\Delta_{\mathcal{P}}$ is vortex-free.

Note that $\rho$-flatness should not be confused with the flat transactions that were previously used.
This definition of $\rho$-flatness is used only in relation to a fixed rendition $\rho$.

\begin{lemma}\label{lemma:flatTransaction} For all integers $b \geq 0$ and $p, d \geq 2$ the following holds.
If $\rho$ is a rendition of $(G, \Omega)$ in the disk with breadth $b$ and depth $d$, then every transaction in $(G, \Omega)$ of order $(b + 1)(2bd + p)$ contains a transaction of order $p$ that is $\rho$-flat.
\end{lemma}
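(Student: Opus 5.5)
We first pass to a planar subtransaction and then use pigeonhole on the vortices. Let $\mathcal{P}$ be a transaction of order $(b+1)(2bd+p)$ with end segments $X$ and $Y$. Transactions in a disk rendition need not be planar, but the claimed bound carries no extra factor for untangling. We therefore first argue that the paths of $\mathcal{P}$ are pairwise disjoint $X$-$Y$-paths. Wherever two of them cross in the embedded part of $\rho$, they must share a node. So crossings can only be realised inside vortices. We will simply work with the traces of the paths outside the vortices and treat each vortex as a region that a path may enter and leave.

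\textbf{Step 1 (paths meeting vortices).} Each vortex $c$ has depth at most $d$, so the vortex society $(\sigma(c),\Omega_c)$ has no transaction of order larger than $d$. We claim that at most $2d$ paths of $\mathcal{P}$ use an edge of $\sigma(c)$. The idea is as follows. Every path of $\mathcal{P}$ using an edge of $\sigma(c)$ contributes a subpath that is a $V(\Omega_c)$-path inside $\sigma(c)$. Among a family of $m$ disjoint such $V(\Omega_c)$-paths, we can choose two disjoint segments of $\Omega_c$ that split the endpoints so that at least about $m/2$ of the subpaths form a transaction. The standard argument cuts $\Omega_c$ at a point balancing the endpoints. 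Hence at most $2d$ paths pass through $c$ (or $2d+1$; we adjust the constant if needed). Summing over the $b$ vortices, at most $2bd$ paths of $\mathcal{P}$ touch a vortex. Deleting them leaves at least $(b+1)(2bd+p)-2bd \geq (b+1)p + b\cdot 2bd$ grounded paths, which is still plenty.

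\textbf{Step 2 (planarity and flatness).} The remaining paths are grounded and pairwise disjoint. Their traces are therefore pairwise disjoint arcs in the disk with endpoints on the boundary, so they are non-crossing. Since all of them run from $X$ to $Y$, they form a planar transaction and can be indexed naturally as $P_1,\dots,P_m$. Consecutive traces bound containers that partition the region between $T_1$ and $T_m$. Each vortex is disjoint from all traces, so it lies in exactly one of the $m-1$ gaps between consecutive paths, or outside the whole family. Thus at most $b$ gaps contain vortices. They split the sequence into at most $b+1$ blocks of consecutive paths, each of whose container is vortex-free. With $m-1-b \geq (b+1)(p-1)$ vortex-free gaps, pigeonhole gives a block of at least $p$ consecutive paths, using $m \geq (b+1)p$. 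That block is the desired $\rho$-flat transaction of order $p$.

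\textbf{Main obstacle.} The hard step is Step 1, namely bounding by $2d$ the number of disjoint paths that can pass through a single vortex of depth $d$. Paths can enter and leave a vortex many times, and a single path may contribute several subpaths. We will take one subpath per path, say its first visit, to get a disjoint family of $V(\Omega_c)$-paths. Then we apply the cyclic halving argument: choose a cut of $\Omega_c$ into two segments so that at least half of the chords cross it. Crossing chords form a transaction, since they go between the two disjoint segments, and so there are at most $d$ of them. We also need to be careful that a path which has an endpoint on $\Omega$ lying in a vortex is handled by the same count.
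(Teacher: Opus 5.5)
Your Step~2 is sound, but Step~1 fails, and the problem is not only in its proof.

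\textbf{The halving claim is false.} Suppose $\sigma(c)$ consists of the edges $v_1v_2, v_3v_4,\dots,v_{2m-1}v_{2m}$, with $v_1,\dots,v_{2m}$ consecutive in $\Omega_c$. Any two disjoint segments of $\Omega_c$ separate the ends of at most two of these chords. So $c$ has depth at most $2$ while carrying $m$ disjoint $V(\Omega_c)$-paths. Once you reduce to an abstract family of chords inside $c$, you have discarded the only useful information, namely how the paths sit in the disk.

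When $c$ is the only vortex, that information does give a per-vortex bound. The pieces of the paths outside $c$ are then disjoint curves in an annulus, so the first and last visits to $c$ form two blocks around $\partial c$. This yields a transaction of order $m$ in $c$.

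\textbf{The per-vortex bound is false once other vortices are present.} Let $m$ paths of $\mathcal{P}$ use the $m$ edges of the matching vortex above. The $2m$ pieces leaving $c$ alternate around $\partial c$ between heading to $X$ and heading to $Y$. They can be sorted into an $X$-block and a $Y$-block by at most $\binom{m}{2}$ pairwise crossings. Each crossing is realised in its own vortex with four nodes and two crossing edges, which has depth $2$. So with $d=2$ and $b\le 1+\binom{m}{2}$, a single vortex is used by $m$ paths of a transaction. Hence no bound of the form $Cd$ per vortex can hold, and summing over vortices cannot be how Step~1 is proved.

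\textbf{What the argument actually needs.} Step~2 only needs an aggregate bound on the number of paths meeting vortices. That bound has to come from a global argument about where crossings can occur, which is what the paper supplies.
\begin{enumerate}
\item Two paths of $\mathcal{P}$ whose ends interleave must cross inside some vortex.
\item The paper argues that a path crossing more than $bd$ others forces a vortex containing a transaction of order above $d$.
\item It then splits $X$ into $b+1$ consecutive blocks of $2bd+p$ endpoints. In each block it keeps the middle $p$ paths, with $bd$ paths as a buffer on either side.
\item It argues that no vortex meets the middle families of two different blocks. By pigeonhole, some middle family meets no vortex at all.
\end{enumerate}

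Your gap count is worth keeping at this point. The paper's last step only excludes vortices that meet the chosen family. A vortex lying between two consecutive paths of that family without touching them must also be charged to that block. Such a vortex touches no path of $\mathcal{P}$, so it spoils at most one block, and the pigeonhole over $b$ vortices and $b+1$ blocks still goes through.
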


\begin{proof}
Let $\mathcal{P}$ be a transaction of order $(b+1)(2bd + p)$ in $(G, \Omega)$, $I, J$ be the end segments of $\mathcal{P}$ in $(G, \Omega)$, and consider an ordering $P_1, \ldots, P_{\ell}$ of the paths in $\mathcal{P}$, ordered with respect to the occurrence of the endpoint $u_i$ of $P_i$ in $I$.

Suppose there exists a transaction $\mathcal{P}' \subseteq \mathcal{P}$ such that there exists a path $P$ in $\mathcal{P}'$ which crosses all other paths in $\mathcal{P}'$.
If $|\mathcal{P}'| > bd$, then there must exist a transaction $\mathcal{P}'' \subseteq \mathcal{P}'$ of order at least $d+1$ and a vortex $c$ of $\rho$ such that $\mathcal{P}''$ induces a transaction of order at least $d+1$ in the vortex society of $c$.
Since $c$ is of depth at most $d$ this is impossible and thus $|\mathcal{P}'| \leq bd$.

We partition $I$ into $b+1$ consecutive segments $I_1, I_2, \ldots, I_{b+1}$ each containing at least $2bd + p$ endpoints of members of $\mathcal{P}$.
For each $i \in [b+1]$ there exists a family $\mathcal{P}_i$ containing $p$ consecutive paths such that $I_i$ has at least $bd$ other endpoints on either side.
Notice that, by the observation above, it is not possible that a path from $\mathcal{P}_i$ crosses a path from $\mathcal{P}$ which does not have an endpoint in $I_i$.
In particular, this means that for any choice of $i \neq j \in [b+1]$ there does not exist a vortex of $\rho$ which contains an edge of a path in $\mathcal{P}_i$ and an edge of a path of $\mathcal{P}_j$.
Therefore, there must exist $i \in [b+1]$ such that no path in $\mathcal{P}_i$ has an edge or a vertex that belongs to a vortex of $\rho$.
It follows that $\mathcal{P}_i$ is $\rho$-flat.
\end{proof}

\paragraph{Exposed transactions in non-cylindrical renditions.}
Given a rendition of a society with a nest, in order to make any progress with our constructions, we require a large part of a large transaction in the society to traverse the interior of the inner cycle of the nest.
\medskip

Let $(G, \Omega)$ be a society and $\rho$ be a rendition $\rho$ of $(G, \Omega)$ in the disk with a nest $\mathcal{C} = \{ C_{1}, \ldots \}$.
A transaction $\mathcal{P}$ in $(G, \Omega)$ is said to be \emph{$\rho$-exposed} if the crop of $G$ by the $C_{1}$-disk in $\rho$ contains at least one edge of every path in $\mathcal{P}$ which is not an edge of $C_{1}$.

Please note that the definition of exposed we use here differs from the one we previously used for cylindrical renditions.
The reason behind this is that for the proofs that follow we do not require the presence of an actual vortex cell for a transaction to be exposed.
For technical reasons, it simply suffices that the transaction crosses through the disk of the inner cycle of the nest.

\paragraph{Tighter nest trees.} As we have discussed, our target is to iteratively refine the initial single leaf nest tree.
There are two ways in which this refinement works.
Given a nest tree, we are either able to utilize a large exposed transaction we find in the leaf society of one of the leaf nests in order to split it into two new nests which will be placed as children to the first, which always increases the number of leaf segments by one, or in case this fails, we are able to find a ``tighter'' leaf nest that reduces the part of the graph that belongs to the corresponding leaf society.
This second trick will is in fact what allows us to find exposed transactions in the first place.
This trick originates from a technique used in~\cite{ThilikosW2024Killing}.

\medskip
Let $\rho$ be a rendition of a society $(G, \Omega)$ in the disk and $\mathfrak{T} = (\mathcal{C}, \mathcal{R}, T, r, \phi_{1}, \phi_{2})$ be a nest tree of $(G, \Omega)$.
We say that a nest tree $\mathfrak{T}' = (\mathcal{C}', \mathcal{R}', T, r, \phi'_{1}, \phi'_{2})$ of $(G, \Omega)$ is \emph{tighter (in $\rho$)} than $\mathfrak{T}$ if $\mathfrak{T}$ and $\mathfrak{T}'$ have the same cycle and linkage order, the same reserve, as well as the same number of leaves.
Moreover, they are identical, except for a unique leaf node $l \in V(T)$, for which the following holds.

Let $\{ C_{1}, \ldots, C_{s + 1} \}$ be the leaf nest for $l$ including the boundary cycle for $l$ and $\{ C'_{1}, \ldots, C'_{s+1} \}$ be the leaf nest for $l'$ including the boundary cycle for $l'.$
Then, there exists $i \in [s+1]$ such that, if $H_{i}$ is the crop of $G$ by the $C_{i}$-disk in $\rho$ and $H'_{i}$ is the crop of $G$ by the $C'_{i}$-disk in $\rho$, then either $H'_{i} - V(C'_{i}) \subsetneq H_{i} - V(C_{i})$ or $E(H'_{i}) \subsetneq E(H_{i}).$
In case $i = s + 1$ above, we say that $\mathfrak{T}'$ is \emph{strictly} tighter than $\mathfrak{T}.$
Note that $\mathfrak{T'}$ clearly respects $\mathfrak{T}.$

Finding a tighter nest tree is essentially accomplished by either slightly pushing one of the cycles of a leaf nest closer to the untamed area or by extending the nest into the untamed area.
To achieve this, we introduce the following supporting definitions.

\medskip
Let $(G, \Omega)$ be a society and $\rho$ be a rendition of $(G, \Omega)$ in the disk $\Delta$ and let $\Delta_{0}$ be a $\rho$-aligned disk in $\Delta$.
Also, let $T$ be a closed curve in $\Delta$, or a curve with both endpoints in the boundary of $\Delta$.
Then, if one of the two regions of $\Delta - T$ is a disk whose closure $\Delta'$ contains $\Delta_{0}$, we call $\Delta'$ the \emph{$\Delta_{0}$-disk} of $T$.
For a grounded subgraph $H$ of $G$ containing a cycle that possesses a $\Delta_{0}$-disk, the inclusion-wise minimal $\Delta_{0}$-disk of any cycle in $H$ is the \emph{$\Delta_{0}$-disk} of $H$.

If $T$ is the trace of a grounded cycle or a grounded $V(\Omega)$-path $C$ and $T$ has a $\Delta_{0}$-disk $\Delta'$, then we say that $\Delta'$ is the $\Delta_{0}$-disk of $C$ (not to be confused with the $C$-disk in $\rho$).

Now, given a grounded $C$-path $P$, we say that $P$ \emph{sticks out towards $\Delta_{0}$ (in $\rho$)} if the $\Delta_{0}$-disk of $P \cup C$ is not the $\Delta_{0}$-disk of $C$ and otherwise we say that $P$ \emph{sticks out away from $\Delta_{0}$ (in $\rho$).}

With this we are ready to prove the following lemma.

\begin{lemma}\label{lemma:exposedOrTighten} For all integers $s, p \geq 1$ the following holds.
Let $(G, Z)$ be an annotated graph and $\rho$ be a rendition of a society $(G, \Omega)$ with a $Z$-consistent nest tree $\mathfrak{T}$ of cycle order $s$.
Moreover, let $(G', \Omega')$ be a leaf society of $\mathfrak{T}$ and $\rho'$ be the restriction of $\rho$ to $(G', \Omega')$.
If $\mathcal{P}$ is a $\rho'$-flat transaction in $(G', \Omega')$ of order $2s + p + 2$, then there exists either
\begin{itemize}
\item a $\rho'$-exposed, $\rho'$-flat transaction $\mathcal{Q} \subseteq \mathcal{P}$ of order $p$, or
\item a tighter $Z$-consistent nest tree $\mathfrak{T}'$ of $(G, \Omega)$.
\end{itemize}
Moreover, there exists an algorithm that finds one of the two outcomes in time $\mathbf{poly}(s + p) \cdot |E(G)|$.
\end{lemma}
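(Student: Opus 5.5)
The plan is to analyse how the $\rho'$-flat transaction $\mathcal{P}$ sits relative to the leaf nest $\{C_1,\dots,C_s\}$ and the boundary cycle $C_{s+1}$ of the leaf $l$ under consideration. Since $(G',\Omega')$ is the $\Delta^{\mathsf{soc}}_l$-society, every path of $\mathcal{P}$ starts and ends on the outermost cycle of this society and is drawn inside $\Delta^{\mathsf{soc}}_l$. Index $\mathcal{P}=\{P_1,\dots,P_{2s+p+2}\}$ naturally. The paths are pairwise disjoint and the container $\Delta_{\mathcal{P}}$ contains no vortex. Hence the paths are nested in the planar part, and the traces of $P_1,\dots,P_{2s+p+2}$ cut $\Delta^{\mathsf{soc}}_l$ into consecutive regions. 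The disk $\Delta^{\mathsf{in}}_l=$ the $C_1$-disk, which contains all vortices and red vertices relevant to $l$, lies in exactly one of these regions, or meets some paths.

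\textbf{Step 1: a counting argument.} Call a path $P_i$ \emph{exposed} if it has an edge inside the $C_1$-disk not on $C_1$. By planarity of the container and the nesting of the paths, the exposed paths form a consecutive interval of indices. If at least $p$ paths are exposed, take $\mathcal{Q}$ to be $p$ of them. Subtransactions of a $\rho'$-flat transaction remain $\rho'$-flat, since their container is contained in $\Delta_{\mathcal{P}}$. So the first outcome holds. Otherwise fewer than $p$ paths are exposed. Then there is a side, say $P_1,\dots,P_{s+1}$ after possibly reversing the indexing, consisting of at least $s+1$ non-exposed paths that separate $C_1$ from one end of the container. (The $+2$ slack handles an off-by-one between the two sides.)

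\textbf{Step 2: tighten the nest.} Assume Step 1 gave the $s+1$ non-exposed paths. Each cycle $C_j$ with $j\in[s+1]$ is crossed by the paths $P_i$ on that side. Consider the outermost such path $P$ and the innermost cycle $C_j$ that it meets. Some subpath of $P$ is a $C_j$-path $P'$ that does not stick out towards $\Delta^{\mathsf{in}}_l$, or else one that does. The choice is made to witness a strict shrinking of a crop.

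\emph{Sticks out away.} Here $P'$ lies outside the $C_j$-disk but in the region between $C_j$ and $C_{j+1}$. This cannot happen unless $P$ re-enters. I use instead the following: because $P$ is non-exposed yet must reach from $\Omega'$ past the nest, some $C_j$-path of $P$ sticks out towards $\Delta^{\mathsf{in}}_l$ and stays outside the $C_{j-1}$-disk. Rerouting $C_j$ along this path, replacing the segment of $C_j$ cut off on the far side from $\Delta^{\mathsf{in}}_l$, gives a cycle $C'_j$ whose $C'_j$-disk still contains the $C_{j-1}$-disk, and hence all vortices and red vertices of the leaf, so $Z$-consistency is kept. The crop of $C'_j$ is strictly smaller in vertices or in edges.

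\emph{Choosing disjoint reroutes.} To keep the cycles disjoint and keep orthogonality with the truncated radial linkage $\mathcal{R}_l$, use a different path $P_{i}$ for each level $j$. This is why $s+1$ non-exposed paths are needed: pair $P_{j}$ with $C_j$ in the outward order. Alternatively, reroute a single cycle only, which already suffices for ``tighter''; then only orthogonality has to be restored. To restore it, re-truncate $\mathcal{R}_l$ by taking the $\Delta^{\mathsf{out}}_l$-truncation, noting that the paths of $\mathcal{R}_l$ meet the new cycle in a path, because the rerouted piece lies between consecutive cycles.

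\textbf{Main obstacle.} The hard part is exactly this: ensuring that the rerouted cycle stays disjoint from the other nest cycles and that the radial linkage stays orthogonal, with reserve, linkage order and cycle order all unchanged. I would first try rerouting the innermost cycle $C_1$ along a $C_1$-path of a non-exposed path, where interference is minimal. If no path even touches $C_1$, I would move to the smallest $j$ that some path meets, using the counting above to locate it. Each step is a planar traversal, so the running time is $\mathbf{poly}(s+p)\cdot|E(G)|$.
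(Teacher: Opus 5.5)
Your skeleton matches the paper's: return $p$ exposed paths if they exist, and otherwise use the many non-exposed paths on one side of $\Delta^{\mathsf{in}}_l$ to push a single leaf-nest cycle inward. However, Step 2 has two genuine gaps.

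The first gap is the existence of the rerouting path. You assert that one chosen non-exposed path has a $C_j$-path sticking out towards $\Delta^{\mathsf{in}}_l$ and avoiding the other cycles, because it ``must reach past the nest''. This is false for a single path. A non-exposed path may descend from $C_{s+1}$ meeting $C_s,\dots,C_2$, run along $C_1$, and climb back. Such a path has no $C_1$-path at all, and each of its $C_j$-paths with $j\ge 2$ meets $C_{j-1}$; the same happens if it merely runs along some $C_m$. Your claim that every $C_j$ is crossed by the paths on that side is also unfounded.

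What is missing is an argument over the whole nested family. List the non-exposed paths of the larger class as $Q^1,Q^2,\dots$, starting from the one nearest $\Delta^{\mathsf{in}}_l$, and let $m_j$ be the least index of a cycle met by $Q^j$.
If $Q^j$ enters the open $C_{m_j}$-disk, that excursion is a $C_{m_j}$-path sticking out towards $\Delta^{\mathsf{in}}_l$ and avoiding all other cycles. Moreover $m_j\ge 2$, since $Q^j$ is not exposed.
Otherwise the closed $C_{m_j}$-disk lies on the $\Delta^{\mathsf{in}}_l$-side of $Q^j$, so $Q^{j+1}$ avoids $C_1,\dots,C_{m_j}$ and $m_{j+1}>m_j$.
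With only $s+1$ cycles and at least $s+2$ paths in the class, the first case must occur. This pigeonhole is what underlies the paper's ``small side'' classes, and it is the actual reason for requiring $2(s+1)+1$ non-exposed paths.

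No pairing of paths with cycles is needed: one rerouted cycle avoiding the others already gives a tighter tree. Your fallback of rerouting $C_1$ cannot work. Every $C_1$-path of a non-exposed path sticks out away from $\Delta^{\mathsf{in}}_l$, so rerouting along it enlarges the disk. The paper takes $i\in[2,s+1]$, which also leaves $C_1$, and hence $Z$-consistency, untouched.

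The second gap is orthogonality. Taking the $\Delta^{\mathsf{out}}_l$-truncation of $\mathcal{R}_l$ does not change how its paths meet the new cycle. It is also not true that they meet it in a path merely because the new piece lies between consecutive cycles: a radial path can cross the arbitrary subpath $L$ several times, or touch both the retained part of $C_i$ and $L$.

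The paper repairs this in two rounds.
First, it keeps rerouting the new cycle $C'$ along $C'$-subpaths of radial paths that stick out towards $\Delta^{\mathsf{in}}_l$. The result is a cycle $C''$ for which every $C''$-subpath of a radial path sticks out away.
Second, it replaces each such subpath $R'$ by the segment of $C''$ with the same ends whose union with $R'$ bounds a disk avoiding $\Delta^{\mathsf{in}}_l$. By the first round, no other radial path enters that disk, so the linkage stays disjoint with unchanged endpoints on $\Omega$. Iterating yields a linkage orthogonal to the modified nest.

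All changes are local to the leaf's disk, so this produces the tighter $Z$-consistent nest tree.
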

\begin{proof} Let $l$ be the leaf node of $\mathfrak{T}$ corresponding to $(G', \Omega')$.
Let $\mathcal{C} = \{ C_{1}, \ldots, C_{s+1} \}$ be the leaf nest for $l$ including the boundary cycle for $l$ and $\mathcal{R}$ be the radial linkage for $l$.
Moreover, let $\Delta$ be the $C_{s+1}$-disk in $\rho$ in which the rendition $\rho'$ lives in.

If $\mathcal{P}$ contains a $\rho'$-exposed transaction of order $p$ we are immediately done.
Moreover, we can check in linear time for the existence of such a transaction by checking for each path in $\mathcal{P}$ individually if it is $\rho'$-exposed.
Hence, we may assume that there exists a linkage $\mathcal{Q} \subseteq \mathcal{P}$ of order $2(s + 1) + 1$ such that \textsl{no} path of $\mathcal{Q}$ is $\rho'$-exposed.

Let $\Delta^{*}$ denote the $C_{1}$-disk in $\rho$.
It follows that no path in $\mathcal{Q}$ intersects the interior of $\Delta^{*}$.
Now, for each $Q \in \mathcal{Q}$, let us call the disk defined as the closure of the region of $\Delta$ minus the $\Delta^{*}$-disk of $Q$, the \emph{small side} of $Q$.
Given two members of $\mathcal{Q}$, then either the small side of one is contained in the small side of the other, or their small sides are disjoint.
It is straightforward to see that saying two members are \emph{equivalent} if their small sides intersect, defines an equivalence relation on $\mathcal{Q}$.
Moreover, there are exactly two equivalence classes, one of which, call it $\mathcal{Q}'$, contains at least $s + 2$ members.
Since $|\mathcal{C}| = s + 1$ there must exist some $i \in [2, s + 1]$ and a subpath $L$ of a path in $\mathcal{Q}'$ such that $L$ is a $C_{i}$-path that sticks out towards $\Delta^{*}$ and $V(L) \cap V(C_{j}) = \emptyset$, $j \in [s + 1] \setminus \{ i \}$.

Notice that $C_i \cup L$ contains a unique cycle $C'$ different from $C_i$ whose trace separates $\Delta^*$ from the nodes corresponding to $V(\Omega)$.
Moreover, the $C'$-disk $\Delta''$ in $\rho$ is properly contained in the $C_{i}$-disk in $\rho$.
In particular, there exists an edge of $C_i$ which is not an edge of $C'$ and an edge of $C'$ which is not an edge of $C_i$.
Therefore, the crop of $G$ by $\Delta''$ after deleting the vertices of $C'$ is properly contained in the crop of $G$ by $\Delta'$ after deleting the vertices of $C_i$.

We would now like to define the new nest $\mathcal{C}'$ by replacing the edited cycle with $C'$.
First, we are going to change the cycle $C'$ once more.
Consider $C''$ to be the cycle obtained from $C'$ after iteratively replacing a subpath of $C'$, in the same way as above, with any $C'$-subpath of a path in $\mathcal{R}$ that sticks out towards $\Delta^{*}$.
This step guarantees that the drawing of any $C''$-subpath of a path in $\mathcal{R}$ is now a path that sticks out away from $\Delta^{*}$.
Call this property *.

The next and final step is to edit the paths in the radial linkage $\mathcal{R}$ in order to obtain a new radial linkage $\mathcal{R}'$ that is orthogonal to the new nest $\mathcal{C}''$ obtained by replacing $C'$ with $C''$.
First, observe that the paths in $\mathcal{R}$ may not be orthogonal to $\mathcal{C}''$ only because they are not orthogonal to $C''$.
Now, for each path $R \in \mathcal{R}$, let $R'$ be any $C''$-subpath of $R$ and $C''_{1}$ be the unique subpath of $C''$ with the same endpoints as $R'$ such that the $(R' \cup C''_{1})$-disk $\Delta_{R'}$ in $\rho$ avoids $\Delta^{*}$.
By property *, $\Delta_{R'}$ does not intersect the interior of the $\Delta^{*}$-disk of $C''$.
In fact, $\Delta_{R'}$ cannot intersect the drawing of any other path in $\mathcal{R}$, as this would imply the existence of a $C''$-subpath of a path in $\mathcal{R}$ that sticks out towards $\Delta^{*}$ which cannot exist.
Therefore, we may safely update $R$ by replacing $R'$ by $C''_{1}$.
By repeating this procedure until we no longer find any such subpath $R'$, we conclude with a radial linkage $\mathcal{R}'$ orthogonal to $\mathcal{C''}$.

In fact, since all changes to $\mathcal{R}$ happen locally within $\Delta$, by replacing $\mathcal{R}$ with $\mathcal{R}'$ and the nest $\mathcal{C}$ with the nest $\mathcal{C}''$ in $\mathfrak{T}$, we obtain a tighter nest tree $\mathfrak{T}'$.
If moreover $\mathfrak{T}$ is $Z$-consistent, then $\mathfrak{T}'$ is clearly $Z$-consistent as the cycle $C_{1}$ does not change.
\end{proof}

\paragraph{Orthogonalizing transactions and radial linkages.}

What follows is a lemma that allows us to orthogonalize a transaction with respect to a nest in a given rendition of a society.
Before that we need to introduce the notion of a \textsl{cozy nest} in a society.

\medskip
Let $(G, \Omega)$ be a society and $\rho$ be a rendition of $(G, \Omega)$ in the disk $\Delta$ with a nest $\mathcal{C} = \{ C_{1}, \ldots, C_{s} \}$.
Let $\Delta_{1}$ be the $C_{1}$-disk in $\rho$.

We say that $\mathcal{C}$ is \emph{cozy} if for every $i \in [s]$ and every grounded $C_{i}$-path $P$ that sticks out away from $\Delta_{1}$ we have $V(P) \cap V(\Omega) \neq \emptyset$ or there exists a $j \in [s] \setminus \{ i \}$ such that $V(P) \cap V(C_{j}) \neq \emptyset$.

The following lemma states that we may always turn a nest into a cozy nest.

\begin{proposition}[\cite{GorskySW2025Polynomial}, Lemma 8.4]\label{prop:cozyNest}
Let $s \geq 1$ be an integer, $(G, \Omega)$ be a society, and $\rho$ be a rendition of $(G, \Omega)$ in the disk with a nest $\mathcal{C} = \{ C_{1}, \ldots, C_{s} \}$.
Then there exists a cozy nest $\mathcal{C}'$ of order $s$ in $(G, \Omega)$ such that the union of $C_{1}$ and the outer graph of $C_{1}$ in $\rho$ contains $\bigcup \mathcal{C}'.$

Moreover, there exists an algorithm that finds $\mathcal{C'}$ in time $\mathbf{O}(s \cdot |E(G)|^{2}).$
\end{proposition}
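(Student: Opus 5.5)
We process the cycles from the outside in. Writing $\mathcal{C}=\{C_1,\ldots,C_s\}$, we replace each cycle by a cycle that is pushed as far as possible towards $V(\Omega)$ while staying inside the region left over by the cycles already fixed. Concretely, we proceed by induction on $i$ from $s$ down to $1$. Assume $C'_{s},\ldots,C'_{i+1}$ have been constructed, with $C'_{s+1}$ taken as the boundary of $\Delta$. Among all grounded cycles $C$ with the following three properties, we choose $C'_{i}$ to be one whose disk is inclusion-wise maximal:
\begin{itemize}
\item $C$ lies in the union of $C_{1}$ and the outer graph of $C_{1}$ in $\rho$;
\item the trace of $C$ separates the $C_{1}$-disk, together with all vortices, from the trace of $C'_{i+1}$;
\item $C$ is disjoint from $C'_{i+1}$.
\end{itemize}
For $i=s$ the third property simply says that $C$ lies inside $\Delta$. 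The candidate set is never empty: by the nesting of $\mathcal{C}$, the cycle $C_{i}$ itself qualifies, provided the previous steps kept $C_{i}$ strictly inside the disk of $C'_{i+1}$. We ensure this by choosing $C'_{i+1}$ so that its disk contains the $C_{i+1}$-disk. The maximal choice can then only move the new cycle outward, so $C_i$ is still available at the next step, and the nest keeps order exactly $s$. Finally $C'_{1}$ is still a grounded cycle whose disk contains the $C_1$-disk, hence all vortices, so $\mathcal{C}'=\{C'_1,\ldots,C'_s\}$ is a nest. The containment condition in the statement holds by construction.

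To verify coziness, suppose that for some $i$ there is a grounded $C'_{i}$-path $P$ that sticks out away from the $C'_1$-disk, avoids $V(\Omega)$, and avoids every $C'_{j}$ with $j\neq i$. Rerouting $C'_{i}$ along $P$ gives a cycle whose disk strictly contains the disk of $C'_{i}$. This cycle still separates the inner part from $C'_{i+1}$, because $P$ misses $C'_{i+1}$. It also still surrounds $C'_{i-1}$: the rerouted cycle only grows the disk, and $P$ avoids $C'_{i-1}$. It is disjoint from $C'_{i+1}$, and it lies in the outer graph of $C_1$ together with $C_1$, since its disk only grows beyond the $C_{1}$-disk. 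This contradicts the maximality of $C'_{i}$. The case $i=1$ needs care, since $P$ might enter the $C_1$-disk. This cannot happen, because $P$ sticks out away from the inner disk. The main subtlety is to check that maximality at step $i$ is not destroyed by the later choices of $C'_{i-1},\ldots$. It is not, because later cycles lie strictly inside, and any violating path $P$ would also have been available when $C'_i$ was chosen. The key point is that avoidance of $C'_{j}$ for $j<i$ is only an extra restriction on $P$.

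For the algorithm, we compute each $C'_{i}$ greedily. Starting from $C_{i}$, we repeatedly search for a grounded $C$-path that sticks out away from the inner disk and avoids $V(\Omega)$ and $C'_{i+1}$, and we reroute along it whenever one exists. Each such path is found by a search over bridges of the current cycle, in time $\mathbf{O}(|E(G)|)$. Every reroute strictly enlarges the disk, so it swallows at least one new edge, and there are therefore at most $|E(G)|$ reroutes per cycle. This gives $\mathbf{O}(s\cdot|E(G)|^{2})$ in total. The step I expect to be the main obstacle is the topological bookkeeping: showing that, with respect to the traces in $\rho$, each reroute yields a grounded cycle whose disk strictly grows and which still contains all vortices. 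This relies on the tie-breaker conventions for cells with two or three nodes.
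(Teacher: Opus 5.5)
Your argument is correct and is essentially the standard outside-in argument behind the cited lemma; the paper does not reprove it but imports it from Gorsky, Seweryn, and Wiederrecht. Each cycle is pushed outward along violating paths that avoid $V(\Omega)$ and the already fixed $C'_{i+1}$, and the strictly growing inner graph bounds the number of reroutes by $s\cdot|E(G)|$, which gives the $\mathbf{O}(s\cdot|E(G)|^{2})$ running time.

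One small point: your greedy algorithm need not output a disk-maximal cycle, for instance when a candidate cycle surrounds the current $C$ disjointly but no $C$-path leaves $C$ outward. This is harmless, because the cozy condition at level $i$ depends only on $C'_{i+1}$ and coincides with the greedy stopping condition. So both the algorithm and your extremal existence argument are valid.
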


Next, whenever we swap a nest for a cozy nest we need to make sure to update any orthogonal radial linkage to the previous nest to an orthogonal radial linkage in the new nest.

\medskip
Let $(G,\Omega)$ be a society, let $\rho$ be a $\Sigma$-rendition of $(G,\Omega)$ in a disk $\Delta$, and let $\mathcal{P}$ and $\mathcal{Q}$ be two linkages of the same order.
If for each path $Q \in \mathcal{Q}$ there exists a path $P \in \mathcal{P}$ with the same endpoints as $Q$, we say that $\mathcal{P}$ and $\mathcal{Q}$ are \emph{end-identical}.

The next lemma will allow us to do just that.

\begin{proposition}[Gorsky, Seweryn, and Wiederrecht \cite{GorskySW2025Polynomial}]\label{prop:radialtoorthogonal}
Let $s,r$ with $r \leq s$.
Let $(G,\Omega)$ be a society with a $\Sigma$-rendition $\rho$ with a cozy nest $\mathcal{C}$ of order $s$ and a radial linkage $\mathcal{R}$ of order $r$ for $\mathcal{C}$.

Then there exists a radial linkage $\mathcal{R}'$ of order $r$ for $\mathcal{C}$ that is orthogonal to $\mathcal{C}$ and end-identical to $\mathcal{R}$.

Moreover, there exists an algorithm running in $\mathbf{O}(r \cdot |E(G)|)$-time that finds $\mathcal{R}'$.
\end{proposition}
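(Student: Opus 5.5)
The statement to prove is \zcref{prop:radialtoorthogonal}: given a cozy nest $\mathcal{C}=\{C_1,\dots,C_s\}$ and a radial linkage $\mathcal{R}$ of order $r\le s$, we want an end-identical radial linkage that is orthogonal to $\mathcal{C}$. My plan is an uncrossing argument that modifies $\mathcal{R}$ only by rerouting segments along nest cycles. For each $R\in\mathcal{R}$ and each $C_i$, a non-orthogonal intersection means $R$ has a $C_i$-subpath $R'$ whose endpoints lie on $C_i$ and whose interior leaves $C_i$. Such an $R'$ either sticks out towards $\Delta_1$ or away from $\Delta_1$.

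\textbf{Step 1: handle excursions away from $\Delta_1$.} If $R'$ sticks out away from $\Delta_1$, coziness applies. Since $R$ is internally disjoint from $V(\Omega)$ and its endpoint on $V(\Omega)$ is not interior to $R'$, $R'$ avoids $V(\Omega)$. Coziness then forces $R'$ to meet some $C_j$ with $j\neq i$, necessarily with $j>i$. I would replace $R'$ by the subpath of $C_i$ between its endpoints that, together with $R'$, bounds a disk avoiding $\Delta_1$. This is the same rerouting used at the end of the proof of \zcref{lemma:exposedOrTighten}.

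\textbf{Step 2: handle excursions towards $\Delta_1$.} If $R'$ sticks out towards $\Delta_1$, I would reroute along the corresponding subpath of $C_i$ on the inside. In both cases, the disk bounded by $R'$ and the chosen arc of $C_i$ must not contain drawings of other paths of $\mathcal{R}$ in a way that breaks disjointness. To guarantee this, I would always pick an innermost (disk-minimal) bad subpath among all paths of $\mathcal{R}$ and all cycles simultaneously. Planarity of the rendition inside $\Delta$, and the fact that all paths are grounded, then ensure that any other path of $\mathcal{R}$ entering that disk would yield a smaller bad subpath, a contradiction.

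\textbf{Step 3: termination and output.} The endpoints never change, so the result is end-identical to $\mathcal{R}$, and the paths stay radial and grounded. For termination I would use a potential, such as the total number of edges of $\bigcup\mathcal{R}$ not lying on $\bigcup\mathcal{C}$, plus the number of components of $R\cap C_i$. Each rerouting strictly decreases this lexicographically, so at most $\mathbf{O}(|E(G)|)$ reroutings occur, each computable in linear time. A careful implementation that processes one path at a time from $\Omega$ inwards should give the claimed $\mathbf{O}(r\cdot|E(G)|)$ bound. When no bad subpath remains, each $R\cap C_i$ is a single path; it is nonempty because $R$ runs from $V(\Omega)$ to $C_1$ and must cross every $C_i$. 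Hence $\mathcal{R}'$ is orthogonal to $\mathcal{C}$.

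\textbf{Main obstacle.} The delicate part is ensuring that rerouting along $C_i$ keeps the paths of $\mathcal{R}$ pairwise disjoint and does not create new non-orthogonal intersections with other cycles $C_j$. I would try the innermost-disk choice first, together with the observation that a $C_i$-arc meets no $C_j$ because the cycles are disjoint. The running-time bound is secondary; if necessary I would settle for $\mathbf{O}(r\cdot|E(G)|)$ amortized by charging each rerouting to edges of $\bigcup\mathcal{C}$ newly absorbed into a path.
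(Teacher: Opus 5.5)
The paper imports this proposition from \cite{GorskySW2025Polynomial} without proof, so I am judging your argument on its own. There is a genuine gap in Step~2.

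\textbf{The gap.} For a fixed $C_i$, disks of bumps sticking out towards $\Delta_1$ lie inside the $C_i$-disk $\Delta_i$, and disks of bumps sticking out away lie outside it, so the two kinds are never nested. The danger is therefore not a path \emph{entering} the disk bounded by a bump and its arc $L$. It is a path \emph{touching $L$ from the other side of $C_i$}, and no minimality argument on disks detects this.

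Concretely, let $w_1,u,w,v$ appear in this order on $C_i$.
\begin{itemize}
\item $R$ crosses $C_{i+1}$ at $a$, touches $C_i$ at $u$ from outside, goes back out to touch $C_{i+1}$ at $b$ (as coziness demands), and returns to cross $C_i$ at $v$.
\item $Q$ crosses $C_i$ at $w_1$, runs inside $\Delta_i$ underneath $u$, touches $C_i$ at $w$ from inside, and continues to $C_1$.
\end{itemize}
The away bump $R[u,v]$ and the towards bump $Q[w_1,w]$ both have inclusion-minimal disks with nothing inside. Yet the arc of $R[u,v]$ contains $w\in V(Q)$, and the arc of $Q[w_1,w]$ contains $u\in V(R)$, so neither rerouting yields a linkage. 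The bump that can be rerouted is the towards bump $R[a,b]$ at level $i+1$, and ``pick an innermost bad subpath'' gives no reason to prefer it.

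\textbf{Coziness is never really used.} You invoke coziness in Step~1 but never use its conclusion, so your argument would apply verbatim to non-cozy nests, where the statement is false. Take the same picture, but with $R$'s outward bump not reaching $C_{i+1}$ and $Q$'s inward bump not reaching $C_{i-1}$. Let $G$ consist only of the nest, $R$ and $Q$, each meeting every other cycle in a single vertex. Each annulus between consecutive cycles then contains only the two rungs of $R$ and $Q$, and single-vertex crossings forbid any swap. So every end-identical linkage must enter $C_i$ at $u$ and $w_1$ and leave it at $v$ and $w$ respectively. Since $w_1,u,w,v$ interleave, both arcs of $C_i$ between $u$ and $v$ are blocked, and the $u$--$v$ connection is forced through the outward bump. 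No orthogonal end-identical linkage exists.

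\textbf{The missing idea.} Use coziness to clean one side of each cycle by working from the outside in.

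Suppose every path is already orthogonal to $C_{i+1},\dots,C_s$. Then $R$ traverses its single path $R\cap C_{i+1}$ before first reaching $C_i$, and stays inside $\Delta_{i+1}$ afterwards. Hence an away bump of $R$ at $C_i$ meets no $C_j$ with $j\neq i$, and no vertex of $V(\Omega)$, contradicting coziness. The only exception is a bump through an endpoint $x_R\in V(\Omega)\cap V(C_s)$ at the top level, which needs a separate word. So at level $i$ only towards bumps occur.

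Take one with inclusion-minimal disk $D_P$ and arc $L_P$. Any contact of a path, including the rest of $R$, with the interior of $L_P$ does one of two things:
\begin{itemize}
\item it yields a towards bump inside $D_P$, contradicting minimality; or
\item it forces an away bump at $C_i$, which was just excluded.
\end{itemize}
Reroute along $L_P$, shortcutting if the neighbouring component of $R\cap C_i$ already runs along $L_P$. This leaves $R\cap C_j$ unchanged for $j>i$ and strictly decreases the number of components at level $i$. Level $1$ is automatic, since radial paths meet $C_1$ only in their endpoint.

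The nearest argument in the paper, \zcref{lemma:orthogonalRadial}, avoids this issue differently: it removes towards-sticking subpaths by pushing the cycles inward. That is not an option here, because $\mathcal{C}$ must stay fixed.
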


Now we can present the lemma that does the orthogonalization.

\begin{proposition}[Choi et al.\ \cite{ChoiGKMW2025OddCyclePackingtreewidth} (see also Lemma 4.16 in \cite{PaulPTW2024Obstructionsa})]\label{prop:planar_orthogonal_transaction}
Let $s,p$ be positive integers.
Let $(G, \Omega)$ be a society and $\rho$ be rendition of $(G, \Omega)$ in the disk with a cozy nest $\mathcal{C} = \{ C_1, \ldots , C_s \},$ and let $H$ be the outer graph of $C_1$ in $\rho.$
Further, let $\mathcal{P}$ be an exposed transaction of order $s(p+1)+1$ in $(G,\Omega)$ with the end segments $X_1, X_2,$ such that there exists a vortex-free rendition $\rho'$ of $(H \cup \bigcup \mathcal{P}, \Omega)$ that agrees with $\rho$ on $H.$

Then there exists a transaction $\mathcal{P}'$ of order $p$ such that
\begin{enumerate}
    \item $\mathcal{P}'$ is orthogonal to $\mathcal{C}$ and exposed in $\rho$,
    
    \item $\mathcal{P}'$ connects vertices of $X_1 \cap V(\mathcal{P})$ to vertices of $X_2 \cap V(\mathcal{P})$, and

    \item the intersection of $\bigcup\mathcal{P}'$ with the inner graph of $C_1$ in $\rho$ is fully contained in $C_1\cup\bigcup\mathcal{P}$.
\end{enumerate}
Moreover, there exists an algorithm that finds $\mathcal{P}'$ in time $\mathbf{O}( p \cdot |E(G)|)$.
\end{proposition}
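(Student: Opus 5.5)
The plan is to split each path of $\mathcal{P}$ at the disk of the inner cycle $C_1$, make the two resulting families of "half paths" orthogonal to $\mathcal{C}$ as radial linkages, and then reconnect the halves inside the $C_1$-disk using only $C_1$ and pieces of $\mathcal{P}$. All topological reasoning takes place in the vortex-free rendition $\rho'$ of $H \cup \bigcup\mathcal{P}$. There, every grounded cycle bounds a disk, and any two disjoint paths are either nested or separated by the traces of the cycles of $\mathcal{C}$.

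\textbf{Step 1: split the paths.} Since $\mathcal{P}$ is exposed, every $P \in \mathcal{P}$ meets the proper inner graph of $C_1$. Orient $P$ from its endpoint in $X_1$ to its endpoint in $X_2$. Let $a_P$ be the first vertex of $P$ on $C_1$ and $b_P$ the last. This gives a prefix $L_P$ from $X_1$ to $a_P$, a suffix $R_P$ from $b_P$ to $X_2$, and a middle part $M_P$ from $a_P$ to $b_P$.

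\textbf{Step 2: choose paths whose middles are clean.} I would argue, in $\rho'$, that the middle parts $M_P$ essentially stay in the inner graph of $C_1$ together with $C_1$. A middle part can leave the $C_1$-disk only along a $C_1$-path that sticks out away from $\Delta_1$, the $C_1$-disk. By coziness, such a path must meet $V(\Omega)$ or another cycle $C_j$. Planarity and disjointness of the paths in $\mathcal{P}$ then imply that the excursions are nested. Each nested excursion uses up one further cycle on its way out and blocks at most a bounded group of neighbouring paths. The counting I aim for is: partition $\mathcal{P}$, ordered naturally along $X_1$, into consecutive blocks of $s$ paths. This gives at least $p+1$ blocks plus one spare path, which is the source of the order $s(p+1)+1$. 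The claim to prove is that in each block at least one path $P$ has $M_P$ contained in $C_1 \cup$ the inner graph of $C_1$. The reason is that the $s$ cycles can support at most $s-1$ nested excursions that "shield" other members of the block. This yields $p+1$ selected paths, and one of them may be discarded to absorb the boundary effects.

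\textbf{Step 3: orthogonalize the halves and reconnect.} For the selected paths, the family $\{L_P\} \cup \{R_P\}$ is a linkage from $V(\Omega)$ to $V(C_1)$. Its members are grounded and pairwise disjoint, so it is a radial linkage for the cozy nest $\mathcal{C}$. Applying \zcref{prop:radialtoorthogonal} gives an end-identical orthogonal radial linkage. In particular the endpoints on $X_1$, $X_2$ and on $C_1$ are preserved, which gives property ii). Because the new linkage is end-identical and the old halves together with the $M_P$ are pairwise disjoint, concatenating each new half with the corresponding $M_P$ yields a transaction $\mathcal{P}'$. It is exposed since each $M_P$ lies in the inner graph of $C_1$. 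Inside that graph it uses only $C_1 \cup \bigcup\mathcal{P}$, which gives iii). It meets each $C_i$ in exactly two paths, one coming from each half, which gives i). Each step costs $\mathbf{O}(|E(G)|)$ per path, giving the running time $\mathbf{O}(p\cdot|E(G)|)$.

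\textbf{Main obstacle.} Step 2 is the hard part: the counting argument that shows a clean middle part exists in every block of $s$ consecutive paths, using coziness and planarity of $\rho'$. A secondary risk in Step 3 is that orthogonalization could run a new half path through some $M_{P'}$. I would handle this by carrying out the rerouting only in the outer graph of $C_1$, which the construction in \zcref{prop:radialtoorthogonal} permits. If needed, I would first shortcut each $M_P$ along $C_1$ to make the inner parts disjoint from the annulus of the nest.
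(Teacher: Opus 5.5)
Your Step~2 rests on a false claim, and Step~3 depends on it. Coziness only says that an excursion of $M_P$ out of $\Delta_1$ must reach $C_2$. Nothing forces excursions of different paths to be nested, and excursions lying side by side use up no extra cycles.

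Here is a concrete counterexample. Take $G=H\cup\bigcup\mathcal{P}$ with concentric cycles $C_1,\dots,C_s$, and route each $P_m$ as follows.
\begin{enumerate}
\item It runs radially from its end $x_m\in X_1$ to its first vertex $a_m$ on $C_1$.
\item It passes through the interior of $\Delta_1$ to a vertex $c_m\in V(C_1)$ lying between $a_{m-1}$ and $a_m$.
\item It leaves $\Delta_1$ into the face of the annulus between $C_1$ and $C_2$ bounded by $L_{P_{m-1}}$ and $L_{P_m}$, and meets a vertex of $C_2$ there.
\item It returns to $C_1$ at a vertex $d_m$ between $a_{m-1}$ and $c_m$.
\item Only then does it cross $\Delta_1$ to $b_m$ and run radially to $X_2$.
\end{enumerate}
For $m=1$ use the face on the other side of $L_{P_1}$.

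This drawing is planar and the paths are disjoint and exposed. If you want exposure in the vortex sense, let the final crossings of $\Delta_1$ pass through a vortex in its centre. The nest is cozy: every piece of $G$ in an open annulus between consecutive cycles joins those two cycles, and every piece outside $C_s$ ends in $V(\Omega)$. Yet no $M_{P_m}$ lies in $C_1$ together with the inner graph of $C_1$, so no block of any size contains a clean path. Your ``$s-1$ shielding excursions'' heuristic bounds the \emph{depth} of nesting, not the number of excursions.

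Step~3 has further problems that any repair must handle.
\begin{enumerate}
\item Even a clean $M_P$ may meet $C_1$ in many components. The concatenated path then does not meet $C_1$ in exactly two paths, so i) fails for $C_1$.
\item \zcref{prop:radialtoorthogonal}, as stated, requires the linkage order to be at most $s$. You apply it to $2(p+1)$ half-paths, with no relation between $p$ and $s$.
\end{enumerate}

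What is missing is a genuine rerouting step. Each kept path must be cut down to a single subpath of $P$ through the inner graph of $C_1$. Its two ends must then be joined to $X_1$ and $X_2$ by pairwise disjoint paths, for instance by shortcutting $M_P$ along arcs of $C_1$. You then have to show that these arcs avoid the other kept paths. This can fail when another path touches the arc from inside $\Delta_1$, or nests its own excursion under $P$'s bump. Controlling this is where your bounded-nesting intuition belongs, and it is what must justify discarding paths and the count $s(p+1)+1$. Your closing remark about shortcutting names this step but does not prove it, and that argument is the real content of the proposition.
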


We also need the following lemma that allows us to connect connect two radial linkages in a given society with a nest.

\begin{proposition}[Gorsky, Seweryn, and Wiederrecht \cite{GorskySW2025Polynomial}]\label{prop:connected_linkages}
Let $s,r,k,\ell$ be positive integers with $s \geq r+3$.
Let $(G,\Omega)$ be a society with a $\Sigma$-rendition $\rho$ and a nest $\mathcal{C} = \{ C_1, \ldots , C_s \}$.
Moreover, let $\mathcal{L}$ and $\mathcal{R}$ each be radial linkages of order $r$ in $(G,\Omega)$ such that both are orthogonal to $\mathcal{C}$ and let $I = [\ell,k] \subseteq [2,s]$ be an interval with $|I| = r+2$.

Then there exists a radial linkage $\mathcal{P}$ of order $r$ in $(G,\Omega)$ such that
\begin{enumerate}
    \item $\mathcal{P}$ is orthogonal to $\{ C_i ~\!\colon\!~ i \in [s] \setminus I \}$ with endpoints on $C_1$,
    
    \item $H_{\ell}\cap \bigcup\mathcal{P}$ is a subgraph of $H_{\ell}\cap \mathcal{L}$, where $H_{\ell}$ is the inner graph of $C_{\ell}$ in $\rho$.
    In particular, the endpoints of $\mathcal{P}$ on $V(C_1)$ coincide with the endpoints of $\mathcal{L}$ on $V(C_1)$, and
    
    \item $H_k \cap \bigcup \mathcal{P}$ is a subgraph of $H_k \cap \mathcal{R}$, where $H_k$ is the outer graph of $C_k$ in $\rho$.
    In particular, the endpoints of $\mathcal{P}$ on $V(\Omega)$ coincide with the endpoints of $\mathcal{R}$ on $V(\Omega)$.
\end{enumerate}
Moreover, there exists an algorithm that finds $\mathcal{P}$ in time $\mathbf{O}(r|E(G)|)$.
\end{proposition}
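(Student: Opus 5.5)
The statement to prove is \zcref{prop:connected_linkages}. The plan is to build $\mathcal{P}$ by gluing three pieces: the inner parts of $\mathcal{L}$ (inside the $C_\ell$-disk), the outer parts of $\mathcal{R}$ (outside the $C_k$-disk), and a linkage through the annulus between $C_\ell$ and $C_k$ obtained from Menger's Theorem. Orthogonality to the cycles $C_i$ with $i\notin I$ will then be inherited from $\mathcal{L}$ on the inside and from $\mathcal{R}$ on the outside.

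\textbf{The endpoint sets.} Since $\mathcal{L}$ is orthogonal to $\mathcal{C}$, each $L\in\mathcal{L}$ meets $C_\ell$ in a path. Let $a_L$ be the last vertex of this path when traversing $L$ from $C_1$ towards $V(\Omega)$. Symmetrically, each $R\in\mathcal{R}$ meets $C_k$ in a path; let $b_R$ be its first vertex when traversing $R$ from $V(\Omega)$ towards $C_1$. Put $A=\{a_L\}_{L\in\mathcal{L}}$ and $B=\{b_R\}_{R\in\mathcal{R}}$.

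\textbf{The annulus graph.} Let $H$ be the part of $G$ drawn in the closed region between the traces of $C_\ell$ and $C_k$, with all vertices of $C_\ell$ other than those in $A$ deleted. We restrict $H$ to the following subgraphs:
\begin{itemize}
\item the cycles $C_{\ell+1},\dots,C_k$;
\item the subpaths of the paths in $\mathcal{L}$ running from $a_L$ to $C_k$;
\item the subpaths of the paths in $\mathcal{R}$ running from $b_R$ down to $C_{\ell+1}$.
\end{itemize}

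\textbf{The Menger step.} We claim $H$ has no $A$-$B$ separator $X$ with $|X|<r$. There are $r$ pairwise disjoint $\mathcal{L}$-pieces, $r$ pairwise disjoint $\mathcal{R}$-pieces, and $|I|-1=r+1$ cycles. Hence $X$ misses some $\mathcal{L}$-piece (starting at some $a\in A$), some $\mathcal{R}$-piece (ending at some $b\in B$), and some cycle $C_j$ with $j\in[\ell+1,k]$. By orthogonality, both pieces cross $C_j$. So $a$ reaches $b$ in $H-X$ by going along the $\mathcal{L}$-piece, around $C_j$, and along the $\mathcal{R}$-piece. Menger's Theorem therefore yields an $A$-$B$-linkage $\mathcal{Q}$ of order $r$ in $H$. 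As an $A$-$B$-linkage, each path meets $A$ and $B$ only in its endpoints.

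\textbf{Gluing.} For each $Q\in\mathcal{Q}$ from $a_L$ to $b_R$, let $P$ be the concatenation of the $C_1$-$a_L$ subpath of $L$, then $Q$, then the $b_R$-$V(\Omega)$ subpath of $R$. These pieces pairwise meet only at $a_L$ and $b_R$, for two reasons. First, $H$ avoids the interior of the $C_\ell$-disk and every vertex of $C_\ell$ outside $A$. Second, the outer part of $R$ lies in the outer graph of $C_k$ and meets $C_k$ only at $b_R$, by the choice of $b_R$. For distinct paths the same reasons apply, since the inner and outer pieces are disjoint across different members of $\mathcal{L}$ and of $\mathcal{R}$.

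\textbf{Verifying the three properties.} Inside $H_\ell$ the new paths coincide with $\mathcal{L}$, and outside $C_k$ they coincide with $\mathcal{R}$. This gives properties ii) and iii), including the statements about endpoints. It also gives orthogonality to every $C_i$ with $i<\ell$ or $i>k$, because there $P$ agrees with an orthogonal path. The cycles $C_\ell$ and $C_k$ themselves belong to $I$, so no condition is required on them. Running time: the construction is one max-flow computation with $r$ augmentations, so it takes $\mathbf{O}(r|E(G)|)$.

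\textbf{Main obstacle.} The delicate point is the bookkeeping at the two boundary cycles. We must choose $a_L$ and $b_R$, and delete the rest of $C_\ell$, so that $\mathcal{Q}$ cannot re-enter the interior region or run along $C_\ell$ or $C_k$ into another path's inner or outer piece. The choice of the last and first crossing vertices, combined with $H$ containing no part of $C_\ell$ other than $A$, is what I would rely on. A second point to verify is that the $\mathcal{R}$-pieces really reach $C_{\ell+1}$ within $H$, which follows from orthogonality of $\mathcal{R}$ to $\mathcal{C}$.
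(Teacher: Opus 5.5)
The paper imports this proposition from Gorsky, Seweryn, and Wiederrecht without proof, so this review judges your argument on its own terms. Most of it works: properties i) and ii), the disjointness of the glued pieces, and the running time are fine. \textbf{Property iii), however, does not follow from your construction.}

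\textbf{Why iii) fails.} The outer graph $H_k$ of $C_k$ contains the whole cycle $C_k$. So iii) forbids $\mathcal{P}$ from using any vertex or edge of $C_k$ that is not on $\mathcal{R}$. Your annulus graph $H$ contains all of $C_k$. Moreover, your $\mathcal{L}$-pieces run up to $C_k$ and end in vertices of $C_k$ that lie on $\mathcal{L}$ but in general not on $\mathcal{R}$. Nothing stops a path $Q\in\mathcal{Q}$ from following an $\mathcal{L}$-piece to $C_k$ and then walking along $C_k$ to $b_R$. Your own connectivity argument does exactly this when the missed cycle is $C_j=C_k$. The resulting $P$ agrees with $\mathcal{R}$ strictly outside the $C_k$-disk, but not on $H_k$. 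So the step ``outside $C_k$ they coincide with $\mathcal{R}$; this gives iii)'' is not justified. If $C_k$ passes through vertices of $V(\Omega)$, $Q$ may even use them, and then $P$ is not internally disjoint from $V(\Omega)$. You handled $C_\ell$ carefully, but treated $C_k$ asymmetrically.

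\textbf{Repair.} Treat $C_k$ exactly like $C_\ell$:
\begin{enumerate}
\item Let $b_R$ be the \emph{last} vertex of $R\cap C_k$ when traversing $R$ from $V(\Omega)$, i.e.\ the vertex where $R$ leaves $C_k$ towards $C_1$. Then the outer piece of $R$ contains all of $R\cap C_k$.
\item Delete $V(C_k)\setminus B$ from $H$.
\item Stop each $\mathcal{L}$-piece at its first vertex on $C_{k-1}$.
\item Use only the cycles $C_{\ell+1},\dots,C_{k-1}$.
\end{enumerate}
These are $|I|-2=r$ cycles, which is still enough. A set $X$ with $|X|<r$ misses one of them, one $\mathcal{L}$-piece and one $\mathcal{R}$-piece, and your routing argument goes through verbatim.

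Now every $Q\in\mathcal{Q}$ meets $C_\ell$ only in its first vertex and $C_k$ only in its last vertex. All its other vertices are drawn strictly between the two traces. Hence $Q$ contributes no edge and no further vertex to $H_\ell$ or $H_k$, and both ii) and iii) hold. This is exactly why the hypothesis is $|I|=r+2$: the two boundary cycles of $I$ are sacrificed, and the $r$ cycles strictly between them carry the Menger argument.
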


We need one more tool that allows us to orthogonalize a radial linkage with respect to some nest in a respectful way.
This is essentially done by iterating the ideas in the proof of \zcref{lemma:exposedOrTighten}.

\begin{lemma}\label{lemma:orthogonalRadial} For all integers $s, r \geq 1$ the following holds.
Let $\rho$ be a rendition of a society $(G, \Omega)$ with a nest $\mathcal{C}$ of order $s$ and a radial linkage $\mathcal{R}$ of order $r$ for $\mathcal{C}$.
Then there exists a nest $\mathcal{C}'$ of order $s$ and a radial linkage $\mathcal{R}'$ of order $r$ for $\mathcal{C}'$ that is orthogonal to $\mathcal{C}'$ and with the same endpoints on $V(\Omega)$ as $\mathcal{R}$.
Further, the disk of the outer cycle of $\mathcal{C}'$ in $\rho$ is contained in the disk of the outer cycle of $\mathcal{C}$ in $\rho$, and if $Z \subseteq V(G)$ such that $\mathcal{C}$ is $Z$-consistent, then $\mathcal{C}'$ is $Z$-consistent.

Moreover, there exists an algorithm that finds $\mathcal{C}'$ and $\mathcal{R}'$ in time $\mathbf{poly}(s + r) \cdot |E(G)|^{2}.$
\end{lemma}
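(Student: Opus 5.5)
The plan is to iterate the local surgery from the proof of \zcref{lemma:exposedOrTighten}, using a potential function to guarantee termination. The potential is the vector of crops $(H_1 - V(C_1), \ldots, H_s - V(C_s))$, where $H_i$ is the crop of $G$ by the $C_i$-disk in $\rho$; each step shrinks it with respect to inclusion.

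We start with $\mathcal{C} = \{C_1,\dots,C_s\}$ and $\mathcal{R}$. While some $R \in \mathcal{R}$ and some $C_i$ fail orthogonality, we perform one of two local repairs.

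\textbf{Outward repair.} Suppose some $R \in \mathcal{R}$ has a $C_i$-subpath $L$ that sticks out towards the $C_1$-disk $\Delta_1$ and is internally disjoint from the other cycles of $\mathcal{C}$. Then $C_i$ is replaced by the unique cycle $C_i'$ of $C_i \cup L$ whose trace still separates $\Delta_1$ from $V(\Omega)$.
\begin{itemize}
\item If $i = 1$, we must first check that $\Delta_1$ is not lost, i.e.\ that the new inner disk still contains all vortices and all vertices of $Z$. Otherwise we use the other cycle of $C_1 \cup L$, or choose $L$ so that it sticks out towards the vortex/$Z$-side. Defining ``towards'' relative to the fixed set of vortices and $Z$, rather than relative to $\Delta_1$, makes this automatic.
\item The new $C_i'$-disk is strictly contained in the old one, so the potential strictly drops.
\item Nestedness $\Delta_1 \subseteq \dots \subseteq \Delta_s$ is preserved because $L$ avoids $C_j$ for $j \neq i$, and disjointness of the cycles is preserved for the same reason.
\end{itemize}

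\textbf{Inward repair.} Suppose no such $L$ exists, but some $R$ meets $C_i$ in more than one path. Then there is a $C_i$-subpath $R'$ of $R$ that sticks out away from $\Delta_1$. As in the proof of \zcref{lemma:exposedOrTighten}, let $C_i^1$ be the subpath of $C_i$ with the same endpoints such that the disk bounded by $R' \cup C_i^1$ avoids $\Delta_1$.
\begin{itemize}
\item Because no path sticks out towards $\Delta_1$, this disk contains no other radial path's trace.
\item We reroute $R$ along $C_i^1$. This disk may contain parts of $C_j$ for $j > i$; in that case we would instead have found a path sticking out towards $\Delta_1$ relative to $C_j$, so we first apply the outward repair there.
\item This keeps endpoints on $V(\Omega)$ and on $C_1$ (up to sliding along $C_1$, which is permitted for a radial linkage), keeps $\mathcal{R}$ a linkage, and strictly reduces $\sum_{R,i}$ of the number of components of $R \cap C_i$ without changing $\mathcal{C}$.
\end{itemize}

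Combining the two, a lexicographic potential — first the cycle crops, then the number of intersection components — strictly decreases at every step. It is bounded by a polynomial: each crop can shrink at most $|E(G)|$ times, for $s$ cycles, and the component count is at most $r \cdot |E(G)|$. This gives the $\mathbf{poly}(s+r)\cdot|E(G)|^2$ running time, since each step is a linear-time search.

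Throughout, all changes happen inside the $C_s$-disk, since every path used lies in the region of $\mathcal{R}$ within that disk or in $\bigcup\mathcal{C}$. Hence the outer disk can only shrink, and the endpoints of $\mathcal{R}$ on $V(\Omega)$ are unchanged.

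The main obstacle is the interplay between repairs: an outward repair on $C_i$ can create new non-orthogonal intersections of other radial paths with $C_i'$, and $Z$-consistency must be maintained when modifying $C_1$. To handle this, define all disks relative to the fixed region containing the vortices and $Z$ (as with $\Delta^{*}$ in \zcref{lemma:exposedOrTighten}), and rely on the lexicographic potential to absorb any newly created intersections.
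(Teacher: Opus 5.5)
Your two repairs are the two phases of the paper's proof. The first tightens $C_i$ along inward-sticking $C_i$-subpaths of $\mathcal{R}$ that avoid all other cycles. The second reroutes radial paths along cycles to remove outward-sticking subpaths, which the paper calls \emph{regressions}. However, the inward repair fails as you justify it.

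You claim that when no outward repair is available, the disk $\Delta_{R'}$ bounded by $R'\cup C_i^1$ contains no other radial path. But the absence of outward repairs only excludes inward-sticking subpaths that avoid \emph{all other} cycles; an inward-sticking $C_i$-subpath that touches $C_{i-1}$ is still allowed. Concretely, take $i\geq 3$ and let $Q\in\mathcal{R}$ do the following:
\begin{enumerate}
\item cross $C_i$ at a vertex $z\notin V(C_i^1)$ and run inside the $C_i$-disk until it touches $C_{i-1}$;
\item come back up to a vertex $w$ of $C_i^1$ and leave into $\Delta_{R'}$;
\item return to a vertex $w'$ of $C_i^1$ and descend to $C_1$.
\end{enumerate}
If all other intersections are single crossings, the only inward-sticking subpath is the piece of $Q$ from $z$ to $w$, which meets $C_{i-1}$, so no outward repair applies. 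Yet rerouting $R$ along $C_i^1$ passes through $w,w'\in V(Q)$, and the result is no longer a linkage. Your claim that pieces of some $C_j$, $j>i$, inside $\Delta_{R'}$ would yield an outward repair on $C_j$ is also false, since the part of $R'$ beyond $C_j$ sticks out \emph{away}. Such pieces are harmless for the rerouting, though.

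The missing ingredient is the paper's minimality choice. Anything of $\bigcup\mathcal{R}$ meeting the interior of $\Delta_{R'}$ must enter and leave through $C_i^1$. It therefore lies on a regression with respect to $C_i$, possibly of $R$ itself, whose disk is strictly contained in $\Delta_{R'}$. So you can pick a regression whose disk meets no other part of $\bigcup\mathcal{R}$, and reroute only that one; in the example you first reroute $Q$ along the arc of $C_i$ between $w$ and $w'$.

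A path could also touch the arc $C_i^1$ itself from inside the $C_i$-disk. To keep the arc free of other vertices of $\mathcal{R}$, it is convenient to take $i$ minimal among indices carrying a regression. A path touching the arc would then have a regression with respect to $C_{i-1}$, or, for $i=2$, would give an outward repair.

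Finally, your case $i=1$ never arises. Each path of $\mathcal{R}$ is a $V(\Omega)$-$V(C_1)$-path and meets $C_1$ only in its endpoint, so it has no $C_1$-subpaths and $C_1$ is never modified. This is exactly how the paper obtains $Z$-consistency of $\mathcal{C}'$ for free, with no need to redefine ``towards''.
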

\begin{proof} Let $\mathcal{C} = \{ C_{1}, \ldots, C_{s} \}$.
Moreover, for each $i \in [s]$, let $\Delta_{i}$ be the $C_{i}$-disk in $\rho$.

We define the desired nest $\mathcal{C}' = \{ C'_{1}, \ldots, C'_{s} \}$ by iteratively applying the following rule: As long as there exists a path $R \in \mathcal{R}$, some $i \in [s]$, and a $C_{i}$-subpath $R' \subseteq R$ that sticks out towards $\Delta_{1}$ satisfying $V(R') \cap V(C_{j}) = \emptyset$, for all $j \in [s] \setminus \{ i \}$, we update $C_{i}$ to the unique cycle $C'$ contained in $C_{i} \cup R'$ different from $C_{i}$, whose trace in $\rho$ separates $\Delta_{1}$ from the nodes of $\rho$ corresponding to $V(\Omega)$.
Now observe that the $C'$-disk in $\rho$ is properly contained in $\Delta_{i}$.
Also notice that $C'_{1} = C_{1}$ as since $\mathcal{R}$ is a radial linkage, there are no $C_{1}$-subpaths of any path in $\mathcal{R}$.
In fact this implies that $\mathcal{C}'$ remains $Z$-consistent.
This step can be performed in time $\mathbf{poly}(s + r) \cdot |E(G)|^{2}$.

Now, for $i \in [s]$, let $\Delta'_{i}$ be the $C'_{i}$-disk in $\rho$ accordingly.
It follows from the previous step that any $C'_{i}$-subpath $R'$ of any path $R \in \mathcal{R}$ for $i \in [2, s]$, must either intersect $C'_{i-1}$ or it cannot be drawn $\Delta'_{i}$.
From now on we call such a subpath $R'$ of a path $R \in \mathcal{R}$ a, \emph{regression} of $R$.

Let the paths in $\mathcal{R} = \{ R_1, \ldots, R_r \}$ be ordered according to the occurrence of their endpoints on $\Omega$.
Moreover, assume there exists an $i \in [r]$ and a $j \in [2, s]$ such that $R_i$ has a regression $R'$ with both endpoints on $C'_j$.
Notice that there exists a subpath $L_{R'}$ of $C'_j$ that shares its endpoints with $R'$ which is internally disjoint from $R_i$.
This is because otherwise, we could find a subpath of $R_i$ which witnesses that we are in the previous case.
Let $\Delta_{R'}$ be the $(R' \cup L_{R'})$-disk in $\rho$ which avoids $\Delta_{1}$.
We assume that $R'$ is chosen maximally with the property of being a regression.

Suppose there exists some path $Q$, possibly $Q = R$, in $\mathcal{R}$ whose drawing intersects the interior of $\Delta_{R'}$ in a node or arc distinct from those of $R'$.
Then, this intersection of $Q$ with $\Delta_{R'}$ must belong to a maximal regression $Q'$ of $Q$ and there must exist $j' \in [2, s]$, $j' < j$, such that $Q'$ is a $C_{j'}$-path.
Hence, there must exist some maximal regression $P'$ of some path $P' \in \mathcal{R}$ such that no other part of $\mathcal{R}$ intersects the corresponding disk $\Delta_P'$.
Thus, by replacing $P'$ with $L_{P'}$, we obtain a new radial linkage with the same endpoints in $V(\Omega)$ as $\mathcal{R}$ but with strictly less regressions.
This means that, after at most $E(\mathcal{R})$ many such steps, we must have found a radial linkage $\mathcal{R}'$ which is orthogonal to $\mathcal{C}'$.
This step can be performed in time $\mathbf{poly}(r) \cdot |E(G)|$.
\end{proof}

\paragraph{Blank transactions in a fixed rendition.}
We now know that we may always tighten a given nest tree, or find within a leaf society, a large flat planar transaction which is also exposed.
Moreover, subject to making the nest of the leaf society cozy, we can use \zcref{prop:planar_orthogonal_transaction} to make this transaction orthogonal to the nest.
Similarly to \zcref{lemma:RedTransactionGivesRedMesh}, we now want to use the mesh infrastructure provided by this transaction union the nest, to either find a large red mesh, or conclude with a large subtransaction that avoids all red vertices.

\medskip
Let $(G, \Omega)$ be a society and $\rho$ be a rendition of $(G, \Omega)$ in the disk with a nest $\mathcal{C} = \{ C_{1}, \ldots, C_{s} \}$.
In a slight abuse of terminology, given a set $R \subseteq V(G)$, we say that $\mathcal{C}$ is \emph{$R$-consistent (in $\rho$)} if every vertex of $R$ is a vertex of the crop of $G$ by the $C_{1}$-disk in $\rho$.

Moreover, let $\mathcal{P}$ be a $\rho$-flat transaction in $(G, \Omega)$ of order at least $2$.
We say that $\mathcal{P}$ is \emph{$R$-blank (in $\rho$)} if $\sigma(c) \cap R = \emptyset$, for every cell $c \in C(\rho)$ that is either contained in the $\rho$-container of $\mathcal{P}$ or for which $\sigma(c)$ contains an edge of a boundary path of $\mathcal{P}$.

\begin{lemma}\label{lemma:blankTrans} For all integers $r \geq 3$ and $p \geq 2$ the following holds.
Let $(G, R)$ be an annotated graph and $\rho$ be a rendition of a society $(G, \Omega)$ in the disk with an $R$-consistent nest $\mathcal{C}$ of order $r + 1.$
If $\mathcal{P}$ is a $\rho$-flat, $\rho$-exposed transaction in $(G, \Omega)$ of order $r(r - 1)p$ that is orthogonal to $\mathcal{C}$, then there exists either
\begin{itemize}
\item an $r$-mesh $M \subseteq G$ grounded and red in $\rho$ such that $\mathcal{T}_{M}$ is a truncation of the tangle induced by a mesh whose horizontal paths are subpaths of distinct cycles from $\mathcal{C}$, or
\item an $R$-blank transaction $\mathcal{Q} \subseteq \mathcal{P}$ of order $p$.
\end{itemize}
Moreover, there exists an algorithm that finds one of the two outcomes in time $\mathbf{poly}(k + p) \cdot |E(G)|$.
\end{lemma}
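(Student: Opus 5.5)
The plan is to mirror the proofs of \zcref{lemma:homoTransactions} and \zcref{lemma:RedTransactionGivesRedMesh}. Set $r' = r(r-1)$ and index $\mathcal{P}$ naturally as $P_{1}, \ldots, P_{r'p}$. Partition $\mathcal{P}$ into $r'$ consecutive blocks $\mathcal{P}_{i} = \{ P_{(i-1)p+1}, \ldots, P_{ip} \}$ for $i \in [r']$. For each block, consider the cells of $\rho$ that lie in the $\rho$-container of $\mathcal{P}_{i}$ or that carry an edge of its two boundary paths.

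If for some $i$ none of these cells contains a vertex of $R$, then $\mathcal{P}_{i}$ is $\rho$-flat, since its container is contained in the container of $\mathcal{P}$. It is therefore an $R$-blank transaction of order $p$, and we are done. This check is a linear scan over the cells in each container, which gives the claimed running time.

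Otherwise every block container (together with its boundary cells) holds a red vertex. The next step is to show that this red vertex lies deep inside the disk bounded by $C_{1}$. We use that $\mathcal{C}$ is $R$-consistent, so every red vertex belongs to the crop of $G$ by the $C_{1}$-disk. Since $\mathcal{P}$ is orthogonal to $\mathcal{C}$ and $\rho$-exposed, each path $P_{j}$ meets every cycle $C_{m}$ in exactly two subpaths. Consequently, the container of $\mathcal{P}_{i}$ intersected with the $C_{1}$-disk is the region bounded by the two boundary paths of $\mathcal{P}_{i}$ and the two arcs of $C_{1}$ between them. The red vertex of block $i$ therefore lies in the region cut out of the $C_{1}$-disk by consecutive chosen paths.

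Now select one path per block, say $Q_{i} = P_{(i-1)p+1}$, together with the last path $P_{r'p}$. Restrict these paths to their segments crossing the annulus $C_{2}, \ldots, C_{r+1}$ on one side (or use both crossings). Together with subpaths of the cycles, this yields a $(2(r+1) \times r')$-mesh $M$, obtained exactly as in \zcref{lemma:RedTransactionGivesRedMesh}: the horizontal paths are subpaths of distinct cycles of $\mathcal{C}$, the vertical paths are subpaths of the chosen $Q_{i}$, and the middle row of bricks consists of the bricks passing through the inner disk.

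The key verification is that for each $j \in [r'-1]$, the subgraph induced by the $(r, j)$-brick of $M$ in $\rho$ contains the red vertex of the corresponding block. This follows from flatness: the container of $\mathcal{P}_{j}$ is vortex-free, so everything drawn between $Q_{j}$ and $Q_{j+1}$ inside $C_{1}$ lies in the disk bounded by the middle brick. That brick is formed by the two crossings of $Q_{j}$ and $Q_{j+1}$ and the arcs of the innermost cycle used. Any red vertex in the container cannot escape this brick disk without crossing $Q_{j}$, $Q_{j+1}$, or $C_{1}$.

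With this in hand, \zcref{lemma:MiddleRedMeshToFullyRed} yields an $r$-mesh that is grounded and red in $\rho$, and whose tangle is a truncation of $\mathcal{T}_{M}$, whose horizontal paths come from $\mathcal{C}$.

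I expect the main obstacle to be this geometric step: correctly defining the middle brick so that its disk captures all of the block container that lies inside $C_{1}$. This needs care because the nest has only $r+1$ cycles while \zcref{lemma:MiddleRedMeshToFullyRed} wants $2(r+1)$ horizontal paths. I would first try to obtain the $2(r+1)$ horizontal paths by using both crossings of each transaction path through the annulus, one on each side of the inner disk. The inner disk then sits in the middle row of bricks, closed off by $C_{1}$-arcs. I would also handle the red vertices in the boundary-path cells by the same containment argument.
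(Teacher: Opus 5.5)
Your proposal is correct and follows the paper's proof essentially step for step. You split $\mathcal{P}$ into $r(r-1)$ blocks of $p$ consecutive paths and return a block if it is $R$-blank; otherwise you take the first path of each block together with both crossings of the $r+1$ nest cycles, obtaining a $(2(r+1)\times r(r-1))$-mesh whose middle bricks contain the red vertices (by $R$-consistency and flatness), and finish with \zcref{lemma:MiddleRedMeshToFullyRed}. One detail needs tidying: you say this mesh has horizontal paths from \emph{distinct} cycles of $\mathcal{C}$, but each cycle contributes two of them, so, as the paper does, you should pass to an $r$-submesh that uses only one crossing per cycle, whose tangle agrees with that of the big mesh up to order $r$, to obtain the required truncation statement.
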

\begin{proof} Let $P_{1} \ldots, P_{r(r-1)p}$ be an ordering of $\mathcal{P}$ so that they are indexed naturally and let $\mathcal{C} = \{ C_{1}, \ldots, C_{r + 1} \}$.
Moreover, let $\Delta^{*}$ be the $C_{1}$-disk in $\rho$.
Also, let $P'_{i}$ be a $C_{1}$-subpath of $P_{i} \in \mathcal{P}$ that is grounded in $\rho$ and $T'_{i}$ be the trace of $P'_{i}$ in $\rho$.
Note that since $\mathcal{P}$ is $\rho$-exposed and orthogonal to $\mathcal{C}$, each $P'_{i}$ is uniquely defined.

Now, for all integers $i < j \in [r(r - 1)]$, there is a unique disk in $\Delta^{*} \setminus (T'_{i} \cup T'_{j})$, whose closure $\Delta_{i}^{j}$ is a $\rho$-aligned disk bounded by both $T'_{i}$ and $T'_{j}$.
Let $H_{i}^{j}$ denote the crop of $G$ by $\Delta_{i}^{j}$ union $\sigma(c)$, for any cell $c \in C(\rho)$ such that $E(\sigma(c)) \cap E(P'_{i} \cup P'_{j}) \neq \emptyset$.
Since $\mathcal{C}$ is $R$-consistent, observe that what we are looking for is a transaction $\mathcal{Q}$ such that, if $i$ is the minimum index among all paths in $\mathcal{Q}$ while $j$ is the maximum, then $H_{i}^{j} \cap R = \emptyset$.

For $i \in [r(r - 1)]$, let $\mathcal{P}_{i} = \{ P_{(i - 1) \cdot p + 1}, \ldots, P_{i \cdot p} \}$, where each $|\mathcal{P}_{i}| = p$.
Now we are in one of two cases.
Either there exists $i \in [r(r - 1)]$, such that $H_{(i - 1)\cdot p + 1}^{i \cdot p}$ contains no vertex of $R$, or all do.
In the former case we are done with $\mathcal{P}_{i}$ being the desired transaction.
So we may assume we are in the latter.

Let $\mathcal{Q}$ be defined by collecting the first path from every $\mathcal{P}_{i},$ $i \in [r(r - 1)]$.
Now observe that since $\mathcal{Q}$ is orthogonal to $\mathcal{C}$ and $|\mathcal{C}| = r + 1$, $|\mathcal{Q}| = r(r - 1)$, there exists a $(2(r+1) \times r(r - 1))$-mesh $M \subseteq \bigcup \mathcal C \cup \bigcup \mathcal{Q}$ such that $\bigcup \mathcal{Q} \subseteq M$.
Moreover, since for every $i \in [r(r - 1) - 1]$, $H_{(i - 1)\cdot p + 1}^{i \cdot p}$ contains a red vertex, we infer that the subgraph of $G$ induced by the $(r + 1, i)$-brick of $M$ contains a vertex of $R$.
Then, by calling \zcref{lemma:MiddleRedMeshToFullyRed}, we may conclude with an $r$-mesh $M' \subseteq G$ such that $\mathcal{T}_{M'}$ is a truncation of the tangle induced by an $r$-submesh $M''$ of $M$ whose horizontal paths are subpaths of distinct cycles from $\mathcal{C}$.
\end{proof}

\subsection{Refining nest trees}

\paragraph{Splitting leaf nests.}
We now have all the tools at our disposal to start splitting the nests of leaf societies for a given nest tree.
We first introduce some additional definitions to ease writing.
\medskip

Let $\rho$ be a rendition of a society $(G, \Omega)$ in the disk with a nest $\mathcal{C}$.
Moreover, let $\Delta$ be the disk of the inner cycle of $\mathcal{C}$ in $\rho$ and $\mathcal{P}$ be a $\rho$-flat, $\rho$-exposed transaction in $(G, \Omega)$ that is orthogonal to $\mathcal{C}$.

Now, notice that since $\mathcal{P}$ is $\rho$-exposed and orthogonal to $\mathcal{C}$, if we remove from $\Delta$ the interior of the $\rho$-container of $\mathcal{P}$, we obtain precisely two $\rho$-aligned disks $\Delta^{*}_{1}$ and $\Delta^{*}_{2}$ which contain precisely the cells of $\rho$ contained in $\Delta$ and not in the $\rho$-container of $\mathcal{P}$.
We call $\Delta^{*}_{1}$ and $\Delta^{*}_{2}$ the \emph{residual vortices} of $\rho$ and $\mathcal{P}$.

\begin{lemma}\label{lemma:SplitLeafs} For all integers $s_{0}, s, t, \ell \geq 1$ with $s_{0} \geq t$, let $p = 2t + 2s_{0} + 2s + 6$.
Then the following holds.

Let $(G, R)$ be an annotated graph and $\rho$ be a rendition of a society $(G, \Omega)$ in the disk with an $R$-consistent nest tree $\mathfrak{T}$ of cycle order $2s_{0} + s + 2$, linkage order $t$, reserve $s_{0}$, and $\ell$ leaves.
Also, let $(G', \Omega')$ be a leaf society of $\mathfrak{T}$, $\rho'$ be the restriction of $\rho$ to $(G', \Omega')$, and $\mathcal{C}$ be the nest of $(G', \Omega')$ in $\rho'$.

Further, let $\mathcal{P}$ be a $\rho'$-flat, $\rho'$-exposed transaction in $(G', \Omega')$ of order $p$ that is $R$-blank and orthogonal to $\mathcal{C}$ and let $\Delta^{*}_{1}$ and $\Delta^{*}_{2}$ be the residual vortices of $\rho'$ and $\mathcal{P}$.

Now, assume that either $\rho'$ has a vortex, or that $V(G') \cap R \neq \emptyset$.
Then,
\begin{enumerate}
\item if for each $i \in [2]$, either $\Delta^{*}_{i}$ contains a vortex, or there is a vertex of $R$ in the crop of $G'$ by $\Delta^{*}_{i}$, then there exists an $R$-consistent nest tree $\mathfrak{T}'$ of $(G, \Omega)$ of cycle order $s$, linkage order $t$, reserve $s_{0}$, and $\ell + 1$ leaves, that respects $\mathfrak{T}$.
\item Otherwise, there exists a strictly tighter $R$-consistent nest tree of $(G, \Omega)$.
\end{enumerate}
Moreover, there exists an algorithm that finds one of the two outcomes in time $\mathbf{poly}(t + s + s_{0}) \cdot |V(G)||E(G)|$.
\end{lemma}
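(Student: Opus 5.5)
We build the new nest tree by cutting the leaf nest of $l$ along the blank transaction $\mathcal{P}$. Let $l$ be the leaf of $\mathfrak{T}$ whose leaf society is $(G',\Omega')$. Write $\mathcal{C}=\{C_1,\ldots,C_{2s_0+s+2}\}$ for its nest, and index $\mathcal{P}=\{P_1,\ldots,P_p\}$ naturally. Since $\mathcal{P}$ is orthogonal to $\mathcal{C}$ and $\rho'$-exposed, each $P_i$ meets each $C_j$ in exactly two subpaths. The residual vortices $\Delta^*_1,\Delta^*_2$ lie on opposite sides of the container of $\mathcal{P}$.

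\textbf{Case (ii).} Say $\Delta^*_2$ has no vortex and no red vertex in its crop. Because $\mathcal{P}$ is $R$-blank, all red vertices and vortices of the leaf society inside the $C_1$-disk lie in $\Delta^*_1$. Since the leaf society contains a vortex or a red vertex, $\Delta^*_1$ contains one, so $R$-consistency is preserved. We replace the inner cycle $C_1$ by the cycle formed from $P_1'\subseteq P_1$ (its $C_1$-subpath) together with the arc of $C_1$ bounding $\Delta^*_1$. The resulting disk is a proper sub-disk, because $\Delta^*_2$ and the strip of $\mathcal{P}$ contain edges. Iterating the same surgery on every $C_j$, using the outermost paths of $\mathcal{P}$ from the $\Delta^*_2$ side, pushes the whole nest, including the boundary cycle $C_{s+1}$, strictly inward. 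This gives \emph{strict} tightness. \zcref{lemma:orthogonalRadial} then restores orthogonality of the radial linkage while keeping its endpoints on $V(\Omega)$, so the tree still respects $\mathfrak{T}$.

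\textbf{Case (i).} We use $\mathcal{P}$ to build two child nests and the linkages joining them to $l$.
\begin{enumerate}
\item Split $\mathcal{P}$ into blocks: the first $s_0+s+3+t$ paths serve the side of $\Delta^*_1$, the last $s_0+s+3+t$ serve $\Delta^*_2$. The count $p=2t+2s_0+2s+6$ is chosen to allow exactly this.
\item Nested concatenations give the new cycles. The cycle made from $P_i$, $P_{p+1-i}$ and two arcs of $C_1$ bounds a disk containing the container; this gives a nest around both residual vortices. Likewise, the $j$-th innermost path on the $\Delta^*_1$ side together with an arc of $C_1$ (or of the inner cycles) yields cycles enclosing $\Delta^*_1$ alone. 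This produces $s+s_0+1$ disjoint cycles for the new leaf $l_1$, similarly $l_2$ for $\Delta^*_2$, and the parent keeps $s_0$ cycles.
\item Reorganise the tree. The old leaf $l$ becomes an internal node with nest of order $s_0$, taken from its former reserve, so the cycle order drops from $2s_0+s+2$ to $s$. Children $l_1,l_2$ are attached, each with leaf nest, boundary cycle and reserve. The nests of $l_1,l_2$ lie in the $C_1$-disk, which gives T2, and are disjoint, which gives T3.
\item The radial linkages $\phi_2(ll_i)$ are $t$ of the transaction paths on side $i$. Truncating them runs them from the parent's outer cycle across the parent nest and into the child nests. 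They are disjoint between the two sides (T5) and orthogonal to both nests, because $\mathcal{P}$ is orthogonal to $\mathcal{C}$ and the child cycles are built from the remaining paths. Where needed, \zcref{prop:connected_linkages} stitches the $t$ paths to the parent's outer cycle.
\item $R$-consistency follows from the hypothesis of (i) together with $\mathcal{P}$ being $R$-blank and the old tree being $R$-consistent. Respecting holds because the root data are untouched.
\end{enumerate}

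The main obstacle is step 2: cutting out child nests and orthogonal linkages from a single transaction with exactly the stated budget. The cycles must be disjoint from the linkage paths except along subpaths, and the parent's nest must be taken from the reserve without disturbing T4 for the linkage of $l$'s own parent. I would first check that T4 survives when the parent's nest is shrunk to the outermost $s_0$ cycles. The running time comes from linearly many path and cycle surgeries, plus applications of \zcref{lemma:orthogonalRadial}.
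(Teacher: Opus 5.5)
Both cases have a genuine gap, and both come from the same missing idea: the new radial linkages have to be built from paths of $\mathcal{P}$ itself. That requirement dictates where the new nests can be taken from.

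\textbf{Case (i).} The problem is your choice of the new nest of $l$. By T4 and T5, $\phi_2(ll_1)\cup\phi_2(ll_2)$ must be $2t$ pairwise disjoint paths that start on the outer cycle of $l$'s new nest and cross all of its cycles orthogonally. If that nest is the old reserve, $\mathcal{P}$ cannot help, because it lives in the leaf society inside the boundary cycle. The only paths known to cross the reserve are the $t$ paths of $l$'s incoming radial linkage (the root radial linkage if $l$ is the root). So \zcref{prop:connected_linkages} supplies at most $t$ of the $2t$ paths, and nothing in the hypotheses gives more.

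The fix is to take $l$'s new nest from the outermost $s_0$ cycles of the leaf nest $\mathcal{C}$ itself, and discard the old reserve and boundary cycle. Since $\mathcal{P}$ is orthogonal to $\mathcal{C}$, truncating $P_2,\dots,P_{t+1}$ and the mirror block at the other end of $\mathcal{P}$ directly gives the two disjoint orthogonal linkages. The incoming linkage of $l$ is merely cropped, so T4 for $l$'s own parent, which you flagged, is not the real difficulty.

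Both children then reuse $C_1,\dots,C_{s_0+s+1}$ on opposite arcs. On side $1$, the $j$-th child cycle is the cycle in $C_j\cup P_{t+1+j}$ enclosing $\Delta^*_1$: it uses an arc of $C_j$, not of $C_1$, and the path index increases with $j$. This places the $t$ linkage paths between $\Delta^*_1$ and all child cycles, so they end on the arc of $C_1$ belonging to the child's inner cycle. This is what the budget of $2s_0+s+2$ cycles pays for. Your step 2 cycle through $P_i$, $P_{p+1-i}$ and two arcs of $C_1$ bounds a sub-disk of the container. It contains neither residual vortex and plays no role.

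\textbf{Case (ii).} \zcref{lemma:orthogonalRadial} only orthogonalises a linkage that is already radial for the given nest. After your surgery, the radial linkage of $l$ is not: its paths may end on the arc of $C_1$ on the $\Delta^*_2$ side, which you cut away, and then need not meet the new inner cycle at all. You must reroute its inner part through paths of $\mathcal{P}$ on the kept side. With $P_1$ defining the new inner cycle, no such paths exist.

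The paper instead builds every new cycle, the boundary cycle included, from $C_i\cup P_{t+1+i}$, which keeps $\mathcal{T}_1=\{P_2,\dots,P_{t+1}\}$ on the $\Delta^*_1$ side. It then applies \zcref{prop:connected_linkages} to the truncated old linkage and the truncations of $\mathcal{T}_1$; this is possible because the leaf nest plus boundary cycle has $2s_0+s+3\ge t+3$ cycles. The result follows the old linkage outside and $\mathcal{T}_1$ inside. The paper extends it back to the original endpoints on $V(\Omega)$, and only then invokes \zcref{lemma:orthogonalRadial}.
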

\begin{proof}

Let $l$ be the leaf node of $\mathfrak{T}$ corresponding to $(G', \Omega')$.
Also, assume that $\mathcal{C} = \{ C_{1}, \ldots, C_{2s_{0} + s + 2} \}$, let $\mathcal{R}$ be the radial linkage for $l$ in $\mathfrak{T}$, and let $\Delta$ be the disk of the boundary cycle for $l$ in which the rendition $\rho'$ lives in.

\textbf{Step 1: Setup.} Consider an ordering $P_{1}, \ldots, P_{2t + 2s_{0} + 2s + 4}$ of $\mathcal{P}$ so that it is indexed naturally.
With this, we split $\mathcal{P}$ in $4$ subtransactions to utilize in the construction steps that follow.
\begin{align*}
\mathcal{T}_{1} &\coloneqq \{ P_{2}, \ldots, P_{t + 1} \}\\
\mathcal{S}_{1} &\coloneqq \{ P_{t + 2}, \ldots, P_{t + s_{0} + s + 3} \}\\
\mathcal{S}_{2} &\coloneqq \{ P_{t + s_{0} + s + 4}, \ldots, P_{t + 2s_{0} + 2s + 5} \}\\
\mathcal{T}_{2} &\coloneqq \{ P_{t + 2s_{0} + 2s + 6}, \ldots, P_{2t + 2s_{0} + 2s + 6} \}.
\end{align*}
Notice that, with $\mathcal{P}$ being $R$-blank, each of the $4$ transactions above is also $R$-blank.

Next we define two disks $\Delta_{1}$ and $\Delta_{2}$ as follows.
Let $U_{1}$ denote the cycle consisting of a subpath of $C_{s_{0} + s + 2}$ together with a subpath of $P_{t + s_{0} + s + 3}$ such that the $U_{1}$-disk in $\rho'$ avoids the paths in $\mathcal{S}_{2}$ and $\mathcal{T}_{2}$.
Similarly, let $U_{2}$ denote the cycle consisting of a subpath of $C_{s_{0} + s + 2}$ together with a subpath of $P_{t + s_{0} + s + 4}$ such that the $U_{2}$-disk in $\rho'$ avoids the paths in $\mathcal{T}_{1}$ and $\mathcal{S}_{1}$.
Then, for each $i \in [2]$, we define $\Delta_{i}$ to be the $U_{i}$-disk in $\rho'$.

The two disks $\Delta_{1}$ and $\Delta_{2}$ are our candidates for the new leaf societies.
Notice that for each $i \in [2]$, $\Delta^{*}_{i} \subseteq \Delta_{i}$.
Also note that at this point each $\Delta_{i}$ also contains the infrastracture that we will utilize to define the reserved nests that surround the new leaf societies.

\textbf{Step 2: Deciding the outcome.} Recall that $\mathcal{P}$ is $\rho'$-flat and $R$-blank.
This implies that every vortex of $\rho$ is contained in either $\Delta^{*}_{1}$ or $\Delta^{*}_{2}$.
Moreover, every vertex of $R$ is a vertex of the union of the crop of $G'$ by $\Delta^{*}_{1}$ and the crop of $G'$ by $\Delta^{*}_{2}$.
For each $i \in [2]$, we say that $\Delta^{*}_{i}$ is \emph{promising} if either $\Delta^{*}_{i}$ contains a vortex of $\rho$ or there is a vertex of $R$ that is a vertex of the crop of $G'$ by $\Delta^{*}_{i}$.

Now by assumption, either $\rho'$ has a vortex or $V(G') \cap R \neq \emptyset$.
Therefore, there is $i \in [2]$ that is promising.
If both are promising we proceed with \textbf{Step 3a}.
Otherwise we proceed with \textbf{Step 3b}.

\textbf{Step 3a: The split.} To ensure that the two new leaf societies we wish to produce are connected to the nest in reserve for $l$, we shall redefine it utilizing the outermost cycles of $\mathcal{C}$ and the paths in $\mathcal{T}_{1} \cup \mathcal{T}_{2}$.

Let $\mathcal{C}_{l} = \{ C_{s_{0} + s + 3}, \ldots, C_{2s_{0} + s + 2} \}$, $\Delta^{\mathsf{out}}_{l}$ be the $C_{2s_{0} + s + 2}$-disk in $\rho'$, and $\Delta^{\mathsf{in}}_{l}$ be the $C_{s_{0} + s + 3}$-disk in $\rho'$.
Note that $|\mathcal{C}_{l}| = s_{0}$.
Moreover, we define $\mathcal{R}_{l}$ by cropping each path $R \in \mathcal{R}$ to the minimal subpath with one endpoint in $V(\Omega_{\Delta^{\mathsf{in}}_{l}})$ and the other being the endpoint of $R$ not drawn in $\Delta^{\mathsf{out}}_{l}$.

We now fix the following radial linkages which will be used to append the new leaf societies to $\mathfrak{T}$.
For every $T \in \mathcal{T}_{1} \cup \mathcal{T}_{2}$ let $T'$ be a minimal $V(\Omega_{\Delta^{\mathsf{out}}_{l}})$-$(V(C_{1}) \cap N(\rho'))$-subpath of $T$.
We then set $\mathcal{T}'_{1} \coloneqq \{ T' \mid T \in \mathcal{T}_{1} \}$ and $\mathcal{T}'_{2} \coloneqq \{ T' \mid T \in \mathcal{T}_{2} \}$.

Next, we create new nests for each of the future leaf societies arising from $\Delta_{1}$ and $\Delta_{2}$.
For each $j \in [s_{0} + s + 1]$, let $C^{1}_{j}$ be the cycle contained in $C_{j} \cup P_{t + 1 + j}$ which is fully drawn within $\Delta_{1}$ and such that the $C^{1}_{j}$-disk in $\rho'$ contains $\Delta^{*}_{1}$.
Then, set
$$\mathcal{C}'_{1} \coloneqq \{ C^{1}_{j} \mid j \in [s_{0} + s + 1] \}.$$
Similarly, for each $j \in [s_{0} + s + 1]$, let $C^{2}_{j}$ be the cycle contained in $C_{j} \cup P_{t + s_{0} + s + 3 + j}$ which is fully drawn within $\Delta_{2}$ and such that the $C^{2}_{j}$-disk in $\rho'$ contains $\Delta^{*}_{2}$.
Then, set
$$\mathcal{C}'_{2} \coloneqq \{ C^{2}_{j} \mid j \in [s_{0} + s + 1] \}.$$
Also note that by construction for both $i \in [2]$, $\mathcal{T}_{i}$ is orthogonal to $\mathcal{C}'_{i}$ as well as to $\mathcal{C}_{l}$.

It remains to discuss how we obtain the desired nest tree $\mathfrak{T}'$.
First, we append to $\mathfrak{T}$ two new leaf nodes, say $l_{1}$ and $l_{2}$ as the children of $l$.
For $l$, we discard the nest in reserve for $l$ as well as the boundary cycle for $l$ and place $\mathcal{C}_{l}$ as the nest for $l$ instead.
We moreover replace the radial linkage $\mathcal{R}$ by $\mathcal{R}_{l}$.
Now, for each $i \in [2]$, we associate $l_{i}$ with the nest $\mathcal{C}'_{i}$.
The outermost $s_{0}$ cycles serve as nest in reserve for $l_{i}$, the cycle $C^{i}_{s + 1}$ as the boundary cycle for $l_{i}$, and the rest as the leaf nest for $l_{i}$.
Moreover, we set $\mathcal{T}'_{i}$ to be the radial linkage associated to the edge $ll_{i}$ of (the underlying tree of) $\mathfrak{T}'$.
Finally, it follows by construction that all required properties holds, especially that $\mathfrak{T}'$ remains $R$-consistent.

\textbf{Step 3b: Getting a strictly tighter nest tree.} We may now assume that exactly one of $\Delta^{*}_{1}$ or $\Delta^{*}_{2}$ is not promising.
Without loss of generality we may assume that $\Delta^{*}_{2}$ is not promising.

Now, let $C_{2s_{0} + s + 3}$ be the boundary cycle for $l$ and consider $\mathcal{C}^{*} = \{ C_{1}, \ldots, C_{2s_{0} + s + 3} \}$.
Also, let $\Delta^{*}$ be the $C_{2s_{0} + s + 2}$-disk in $\rho$.

Now, let $\mathcal{T}^{*}$ be defined by cropping each path $T \in \mathcal{T}_{1}$ to a minimal $V(\Omega')$-$(V(C_{1}) \cap N(\rho))$-subpath of $T$, and let $\mathcal{R}^{*}$ be the $\Delta$-truncation in $\rho$ of $\mathcal{R}$.
Note that both $\mathcal{T}^{*}$ and $\mathcal{R}^{*}$ are radial linkages orthogonal to $\mathcal{C}^{*}$.

We may now apply \zcref{prop:connected_linkages}, which is possible since $|\mathcal{C}^{*}| \geq t + 3,$ in order to obtain a radial linkage $\mathcal{L}$ for $\mathcal{C}^{*}$ with the additional property that every edge of a path in $\mathcal{L}$ drawn in the exterior of $\Delta^{*}$ is an edge of a path in $\mathcal{R}^{*}$.
In particular, the endpoints of $\mathcal{L}$ on $V(\Omega')$ are exactly those of $\mathcal{R}^{*}$.
This allows us to extend each path in $\mathcal{L}$ through the non-truncated version of the corresponding path in $\mathcal{R}^{*}$, so as to reach their original endpoint drawn outside of $\Delta^{*}$,
thereby obtaining the radial linkage $\mathcal{L}'$.

The next step is to define the new tighter nest.
This construction is similar to before.
For each $i \in [2s_{0} + s + 3]$, let $C'_{i}$ be the cycle in $C_{i} \cup P_{t + 1 + i}$ such that the $C'_{i}$-disk in $\rho$ contains $\Delta^{*}_{1}$ and avoids the drawing of every path in $\mathcal{T}_{2}$.
Moreover, set $\mathcal{C}' = \{ C'_{i} \mid i \in [2s_{0} + s + 3] \}$.

Clearly, if $\Delta^{\mathsf{out}}$ is the $C'_{2s_{0} + s + 3}$-disk in $\rho$ that contains $\Delta^{*}_{1}$, the crop of $G$ by $\Delta^{\mathsf{out}}$ is a proper subgraph of $G'$, as no path in $\mathcal{T}_{2}$ is contained in it.

Now, by applying \zcref{lemma:orthogonalRadial}, we may assume (up to finding an even tighter nest than $\mathcal{C}'$) that $\mathcal{L}'$ is orthogonal to $\mathcal{C}'$.
By replacing $\mathcal{C}^{*}$ with $\mathcal{C}'$ and $\mathcal{R}$ with $\mathcal{L}'$ in $\mathfrak{T}$ we obtain an $R$-consistent nest tree $\mathfrak{T}'$ that is strictly tighter, thereby concluding the proof.
\end{proof}

\paragraph{Obtaining the final nest tree.}
We are now in the position to show a crucial lemma towards the proof of \zcref{thm:blanksocietyclassification}, which will inductively build a nest tree that either has a target number of leaves, or certifies that all leaf societies have bounded depth.

\begin{lemma}\label{lemma:finalNestTree} There exist polynomial functions $\mathsf{nest}_{\ref{lemma:finalNestTree}} \colon \mathbb{N}^{2} \to \mathbb{N}$ and $\mathsf{depth}_{\ref{lemma:finalNestTree}} \colon \mathbb{N}^{6} \to \mathbb{N}$ such that for all integers $s \geq t \geq r + 1 \geq 4$, $b \geq 0$, $d \geq 2$, and $\ell \geq 1$ the following holds.

Let $(G, R)$ be an annotated graph and $\rho$ be a rendition of a society $(G, \Omega)$ in the disk with breadth $b$ and depth $d$.
Let $\mathcal{C}$ be a nest in $\rho$ of order $\mathsf{nest}_{\ref{lemma:finalNestTree}}(s, \ell)$ that is $R$-consistent.
Further, let $\mathcal{R}$ be a radial linkage in $\rho$ for $\mathcal{C}$ of order $t$ that is orthogonal to $\mathcal{C}$.

Then, there exists either
\begin{enumerate}
\item an $r$-mesh $M \subseteq G$ that is grounded and red in $\rho$ and such that $\mathcal{T}_{M}$ is a truncation of the tangle induced by a mesh whose horizontal paths are subpaths of distinct cycles from $\mathcal{C}$, or
\item an $R$-consistent nest tree $\mathfrak{T}$ in $\rho$ with linkage order $t$ and reserve $s$ such that, either
\begin{itemize}
\item $\mathfrak{T}$ has $\ell + 1$ leaves, or
\item at most $\ell$ leaves and every leaf society of $\mathfrak{T}$ has depth at most $\mathsf{depth}_{\ref{lemma:finalNestTree}}(b, d, r, s, t, \ell)$.
\end{itemize}
Further, the root nest $\mathcal{C}'$ of $\mathfrak{T}$ is a subset of $\mathcal{C}$. 
\end{enumerate}
Moreover, there exists an algorithm that finds $\mathfrak{T}$ in time $\mathbf{poly}(b + d + s + \ell) \cdot |E(G)|^{3}.$
\end{lemma}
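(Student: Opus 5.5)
We plan to build the nest tree iteratively, starting from a nest tree with a single leaf and refining it with \zcref{lemma:exposedOrTighten}, \zcref{lemma:blankTrans} and \zcref{lemma:SplitLeafs}. The refinement is driven by a potential argument: the number of leaves increases, or some leaf nest gets tighter.

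\textbf{Initial tree.} We choose $\mathsf{nest}_{\ref{lemma:finalNestTree}}(s,\ell)$ of order roughly $\ell\cdot(2s+c)$, and write $s^{*}$ for the cycle order needed for a leaf nest to survive up to $\ell$ further splits. Each split consumes $s_0 + 1$ cycles from the leaf nest, with $s_0 = s$, and the children get cycle order $s$ from a parent of cycle order $2s_0+s+2$. So we fix a leaf cycle order that is large enough for depth-$\ell$ recursion. The cleaner way to do this is to track a level $j$ per leaf, where a leaf at level $j$ has cycle order $a_j$ with $a_j = 2s + a_{j+1} + 2$. Hence $a_0 \in \mathbf{O}(\ell s)$, which is polynomial.

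The initial tree has root $r$ whose nest is the outermost $s$ cycles of $\mathcal{C}$. It has one child leaf carrying the remaining inner cycles, split into the leaf nest, the boundary cycle, and $s$ reserve cycles. The edge linkage is $\mathcal{R}$ truncated. Properties \textbf{T1}--\textbf{T6} and $R$-consistency hold because $\mathcal{C}$ is $R$-consistent and all vortices lie in $\Delta^{\mathsf{in}}$. If the innermost disk contains neither a vortex nor a red vertex, there is nothing to refine: every leaf society then has depth $0$ in its vortex-free part, and we return.

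\textbf{Main loop.} While the tree has at most $\ell$ leaves, we pick a leaf society $(G',\Omega')$ whose depth exceeds $D \coloneqq (b+1)(2bd + p')$. Here $p' = 2s' + p'' + 2$ with $s'$ the leaf cycle order, and $p'' = (2s'+1)\,r(r-1)\,(2t+4s+2s'+6)$; the precise constants are routine. If no such leaf exists, we stop with the second bullet, taking $\mathsf{depth}_{\ref{lemma:finalNestTree}} \coloneqq D$. The leaf societies inherit breadth $\le b$ and depth $\le d$ from $\rho$, so \zcref{lemma:flatTransaction} gives a $\rho'$-flat transaction of order $p'$. \zcref{lemma:exposedOrTighten} then yields either a tighter tree, in which case we restart the loop, or a $\rho'$-exposed, $\rho'$-flat transaction.

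Next we make the leaf nest cozy using \zcref{prop:cozyNest}, restore orthogonality of the radial linkage with \zcref{prop:radialtoorthogonal}, and orthogonalize the transaction with \zcref{prop:planar_orthogonal_transaction}. Using only its $r+1$ innermost cycles, which form an $R$-consistent nest inside the leaf, \zcref{lemma:blankTrans} gives either a red $r$-mesh (outcome i) or an $R$-blank transaction of the order required by \zcref{lemma:SplitLeafs}. The tangle condition in outcome i holds because all leaf nests are linked to the root nest through the radial linkages of the tree, so horizontal paths on leaf cycles extend to a mesh on cycles of $\mathcal{C}$. This extension uses linkage order $t\ge r+1$ and \textbf{T4}. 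Finally, \zcref{lemma:SplitLeafs} either adds a leaf or strictly tightens the tree.

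\textbf{Termination and the main obstacle.} Each tightening strictly shrinks the crop of some cycle of a leaf nest, in vertices or edges, while the number of leaves is unchanged. Each split raises the leaf count, which is bounded by $\ell+1$. The pair (number of leaves, total size of leaf crops) therefore gives termination with at most $\ell\cdot\mathbf{poly}(s)\cdot|E(G)|$ iterations. Each iteration costs $\mathbf{poly}\cdot|E(G)|^2$, giving the stated $|E(G)|^3$ running time.

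The delicate part is bookkeeping, not any single step. We must ensure that making leaf nests cozy and applying \zcref{lemma:orthogonalRadial} keep the tree a valid $R$-consistent nest tree whose root nest stays a subset of $\mathcal{C}$ (the root is never modified). We must also control cycle orders so that no loss accumulates along root-to-leaf paths. Both are handled by always taking new radial linkages from the splitting transactions, via $\mathcal{T}'_i$ in \zcref{lemma:SplitLeafs}, and by the level-indexed cycle orders $a_j$ above.
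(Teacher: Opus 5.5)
Your route is the paper's: the same chain of \zcref{lemma:flatTransaction}, \zcref{lemma:exposedOrTighten}, cozying and re-orthogonalising, \zcref{lemma:blankTrans} and \zcref{lemma:SplitLeafs}, the same $\mathbf{O}(\ell s)$ recursion for cycle orders, and the same connectivity-through-the-tree argument for the tangle in the first outcome. Starting from a root that carries $s$ untouched cycles of $\mathcal{C}$ is a harmless variant. It is even slightly cleaner than the paper's single-node start, because the root is then never tightened and ``root nest $\subseteq \mathcal{C}$'' holds literally.

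The gap is in termination. Your potential (number of leaves, total size of leaf-nest crops) is not monotone, because the cozying step via \zcref{prop:cozyNest} moves leaf-nest cycles \emph{outward}. A cozy nest admits no $C_i$-path sticking out away from $\Delta_1$ that avoids $V(\Omega)$ and the other cycles. So the new cycles can have larger crops than the old ones. In particular, a cycle that \zcref{lemma:exposedOrTighten} has just pushed inward along a transaction path leaves behind exactly such an outward-sticking arc, and cozying can push the cycle straight back. As written, your loop could therefore alternate between tightening and cozying without making progress.

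The missing point is that \zcref{lemma:SplitLeafs} returns a \emph{strictly} tighter tree: it shrinks the crop of the boundary cycle of the leaf. Cozying happens inside the leaf society and never moves that boundary cycle. So use the following potential. The number of leaves never decreases and is at most $\ell+1$. While it is fixed, the pair (sum over leaves of boundary-cycle crops, sum over leaves of leaf-nest crops) decreases lexicographically:
\begin{itemize}
\item a non-strict tightening decreases the second entry and leaves the first unchanged;
\item every round that cozies ends in a split, a red mesh, or a strict tightening.
\end{itemize}
The paper flags exactly this point at the end of its proof. Your bound of $\ell\cdot\mathbf{poly}(s)\cdot|E(G)|$ iterations no longer follows as stated. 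You have to count the cheap rounds ending in a \zcref{lemma:exposedOrTighten}-tightening separately from the rounds that pay for cozying.

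Separately, your degenerate-case exit is wrong. Depth counts transactions, so a vortex-free leaf society can have large depth; a planar grid in a disk already has large transactions. Moreover, a leaf whose inner disk contains neither a vortex nor a red vertex violates the second clause of $R$-consistency, so the tree you return there is not an admissible output. In that situation no $R$-consistent nest tree exists at all. This is a boundary case that the statement, and the paper's proof, silently exclude, not one you can dispose of as claimed.
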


Before we begin with the proof of \zcref{lemma:finalNestTree}, we fix a recursive function $\mathsf{nestrec}$ that represents the recursive application of \zcref{lemma:SplitLeafs} and ensures that the final nest tree we obtain has reserve $s_{0}$.
Let $s_{0} \geq t \geq r + 1 \geq 4$ and $\zeta \geq 1$ be integers. We define,
\begin{align*}
\mathsf{nestrec}(s_{0}, 0) &\coloneq 2\\
\mathsf{nestrec}(s_{0}, \zeta) &\coloneqq 2s_{0} + 2 + \mathsf{nestrec}(s_{0}, \zeta - 1).
\end{align*}
Unfolding the recursion we infer that
$$\mathsf{nestrec}(s_{0}, \zeta) = 2 + 2\zeta(s_{0} + 1).$$
Moreover, assuming we start with a rendition in the disk of breadth $b \geq 1$ and depth $d \geq 2$, the following value $p^{*}$ is an estimate on the depth of the leaf societies.
$$p^{*} < (b + 1)(2bd + 12 r^{2} s_{0} (t + (\zeta + 1) s_{0}).$$
Based on these estimates let us define the the two functions $\mathsf{nest}_{\ref{lemma:finalNestTree}}$ and $\mathsf{depth}_{\ref{lemma:finalNestTree}}$, assuming $\ell + 1 \geq 2$ is the goal number of leaves in the final nest tree.
\begin{align*}
\mathsf{nest}_{\ref{lemma:finalNestTree}}(s, \ell) &\coloneqq s + 1 + \mathsf{nestrec}(s, \ell) \in \mathbf{O}(\ell s), \ \text{and}\\
\mathsf{depth}_{\ref{lemma:finalNestTree}}(b, d, r, s, t, \ell) &\coloneqq p^{*} \in \mathbf{O}((b + 1)(bd + r^{2} s t + r^{2}s^{2}\ell)).
\end{align*}

\begin{proof}[Proof of \zcref{lemma:finalNestTree}]

We commence the proof with the construction of a first nest tree $\mathfrak{T}_{1} = (\mathcal{C}_{1}, \mathcal{R}_{1}, T_{1}, r, \phi^{1}_{1}, \phi_{2}^{1})$ in $(G, \Omega)$ from $(\mathcal{C}, \mathcal{R})$ with a single leaf as follows.
Let $(T_{1}, r_{1})$ be a graph with a single vertex $r_{1}$.
Then, let $\phi_{1}^{1}(r_{1}) = \mathcal{C}_{1} = \mathcal{C}$ and $\mathcal{R}_{1} = \mathcal{R}$.

Let $\mathcal{C} = \{ C_{1}, \ldots, C_{\mathsf{nest}_{\ref{lemma:finalNestTree}}(r, s, t, \ell)} \}$ and define $\mathcal{C}^{\mathsf{res}}_{r_{1}} = \{ C_{\mathsf{nest}_{\ref{lemma:finalNestTree}}(s, \ell) - s + 1}, \ldots, C_{\mathsf{nest}_{\ref{lemma:finalNestTree}}(s, \ell)} \}$ and $\mathcal{C}_{r_{1}} = \{ C_{1}, \ldots, C_{\mathsf{nest}_{\ref{lemma:finalNestTree}}(s, \ell) - s - 1} \}$.
Observe that $\mathcal{C}^{\mathsf{res}}_{r_{1}}$ corresponds to the nest in reserve for $r_{1}$, $\mathcal{C}_{r}$ correponds to the leaf nest for $r_{1}$, while $C_{\mathsf{nest}_{\ref{lemma:finalNestTree}}(s, \ell) - s}$ is the boundary cycle for $r_{1}$.
Also, $|\mathcal{C}^{\mathsf{res}}_{r_{1}}| = s$ and $|\mathcal{C}_{r_{1}}| = \mathsf{nestrec}(s, \ell)$.
Hence $\mathfrak{T}_{1}$ is a nest tree of $(G, \Omega)$ of cycle order $\mathsf{nestrec}(s, \ell)$, linkage order $t$, with a single leaf.

We prove the following claim by iteratively applying \zcref{lemma:SplitLeafs}.
Consider an integer $\zeta \in [\ell]$.

\textbf{Claim:} Assume there is no $r$-mesh in $G$ that is red in $\rho$.
Then, if $\mathfrak{T}_{\zeta}$ is an $R$-consistent nest tree in $(G, \Omega)$ of cycle order $\mathsf{nestrec}(s, \ell - \zeta + 1)$, linkage order $t$, reserve $s$ and $\zeta$ leaves, there is an $R$-consistent nest tree $\mathfrak{T}_{\zeta + 1}$ in $(G, \Omega)$ with cycle order $\mathsf{nestrec}(s, \ell - \zeta)$, linkage order $t$, and reserve $s$ that respects $\mathfrak{T}_{\zeta}$, such that, either every leaf society of $\mathfrak{T}_{\zeta + 1}$ has depth at most $\mathsf{depth}_{\ref{lemma:finalNestTree}}(b, d, r, s, t, \ell - \zeta + 1)$, or $\mathfrak{T}'$ has $\zeta + 1$ leaves.

Moreover, there exists an algorithm that, given $\mathfrak{T}$ produces $\mathfrak{T}'$ in time $\mathbf{poly}(b + d + s + \zeta) \cdot |E(G)|^{3}$.

Clearly, by iteratively applying the claim above, starting from $\mathfrak{T}_{1}$, we may construct a sequence of $\mathfrak{T}_{2}, \ldots, \mathfrak{T}_{\xi}$, such that $\xi \leq \ell + 1$ and $\mathfrak{T}_{\xi} = \mathfrak{T}$ is the desired nest tree and conclude.

So consider an integer $\zeta \in [\ell]$ and assume that we are given a nest tree $\mathfrak{T}_{\zeta}$ as above.

We begin by choosing some leaf society $(G', \Omega')$ of $\mathfrak{T}_{\zeta}$ and letting $\rho'$ be the restriction of $\rho$ to $(G', \Omega')$.
Also let $\mathcal{C}_{\zeta}$ be the leaf nest of $(G', \Omega')$ in $\rho'$ and $\mathcal{R}_{\zeta}$ be the radial linkage for this leaf of $\mathfrak{T}_{\zeta}$.
Moreover, if $\Delta_{\zeta}$ is the disk of the boundary cycle for the chosen leaf in $\rho$, let $\mathcal{R}'_{\zeta}$ be the $\Delta_{\zeta}$-truncation in $\rho$ of $\mathcal{R}_{\zeta}$.
Since, by assumption the cycle order of $\mathfrak{T}_{\zeta}$ is $s_{\zeta} \coloneqq \mathsf{nestrec}(s, \ell - \zeta + 1)$, we have that $|\mathcal{C}_{\zeta}| = s_{\zeta}$.

Also, set $s_{\zeta + 1} \coloneqq \mathsf{nestrec}(s, \ell - \zeta)$ and set $p_{\zeta}$ to be the value
$$p_{\zeta} \coloneqq (b + 1)(2bd + (2s_{\zeta} + (s_{\zeta}(r(r - 1)(2t + 2s + 2s_{\zeta + 1} + 6) + 1) + 1) + 2)) - 1$$
and observe that $p_{\zeta} \leq p^{*}$.

In what follows, whenever we encounter an outcome which produces a tighter nest tree, we mark it, and we assume that the other outcome is given.

If every leaf society has depth at most $p_{\zeta}$ we immediately conclude.
Therefore, we may assume that there exists a transaction $\mathcal{P}$ in $(G', \Omega')$ of order $p_{\zeta} + 1$.
By \zcref{lemma:flatTransaction}, we may assume that there is a transaction $\mathcal{P}' \subseteq \mathcal{P}$ in $(G', \Omega')$ of order $2s_{\zeta} + (s_{\zeta}(r(r - 1)(2t + 2s + 2s_{\zeta + 1} + 6) + 1) + 1) + 2$ that is $\rho'$-flat.
Then, by \zcref{lemma:exposedOrTighten}, we either find a \textbf{tighter} $R$-consistent nest tree, or a $\rho'$-exposed transaction $\mathcal{P}'' \subseteq \mathcal{P}'$ in $(G', \Omega')$ of order $s_{\zeta}(r(r - 1)(2t + 2s + 2s_{\zeta + 1} + 6) + 1) + 1$.

Now, we may call \zcref{prop:cozyNest}, to transform $\mathcal{C}_{\zeta}$ into a cozy nest in $(G', \Omega')$.
Notice that by doing so we may have messed with the orthogonality of $\mathcal{R}_{\zeta}$.
This can be easily mended by calling \zcref{prop:radialtoorthogonal} on $\mathcal{R}'_{\zeta}$, which we can do since by assumption $s_{\zeta} \geq t$, and updating the radial linkage accordingly.

This gives us permission to call \zcref{prop:planar_orthogonal_transaction} to obtain a $\rho'$-exposed transaction $\mathcal{P}'''$ of order $r(r - 1)(2t + 2s + 2s_{\zeta + 1} + 6)$ that is moreover orthogonal to $\mathcal{C}_{\zeta}$.
Next, we apply \zcref{lemma:blankTrans}, which we can do since by assumption $s_{\zeta} \geq r + 1$, and obtain one of two outcomes.

The first outcome is an $r$-mesh $M \subseteq G'$ that is grounded and red in $\rho'$ and such that $\mathcal{T}_{M} \subseteq \mathcal{T}_{M'}$, where $M'$ is a mesh whose horizontal paths are subpaths of distinct cycles from $\mathcal{C}_{\zeta}$.
Let $M''$ be mesh of order at least $r$ contained in $\bigcup (\mathcal{C} \cup \mathcal{R})$ whose horizontal paths are subpaths of distinct cycles from $\mathcal{C}$.
Clearly $M'$ exists since $s_{\zeta} \geq t \geq r$.
Now, by induction, since each subsequent nest tree we obtain respects the previous one, we know that the root nest of $\mathfrak{T}_{\zeta}$ is a subset of $\mathcal{C}$.
Moreover, since $s \geq r$, by definition of nest trees and Menger's theorem, we can observe that for any set $S \subseteq V(G)$ with $|S| < r$, there exists a path in $G - S$ that connects a vertex of $M' - S$ to a vertex of $M'' - S$.
This in fact implies that $\mathcal{T}_{M'} \subseteq \mathcal{T}_{M''}$ and therefore $\mathcal{T}_{M} \subseteq \mathcal{T}_{M''}$ which gives the first outcome of the lemma as desired.

Therefore, we may always assume that we obtain the second outcome which is an $R$-blank transaction $\mathcal{P}''' \subseteq \mathcal{P}''$ of order $2t + 2s + 2s_{\zeta + 1} + 6$.
Now, we may finally apply \zcref{lemma:SplitLeafs}, which we can do since by assumption $s_{\zeta} \geq 2s + 2 + s_{\zeta + 1} \geq t,$ and either obtain an $R$-consistent nest tree $\mathfrak{T}_{\zeta + 1}$ with cycle order $s_{\zeta + 1}$, linkage order $t$, reserve $s$, and $\zeta + 1$ leaves, in which case we conclude, or a \textbf{strictly tighter} $R$-consistent nest tree.
Note that, since this last step produces a strictly tighter nest tree, we are still making progress, even though we previously made the nest cozy.

It remains to discuss what we do in the case we produce a tighter nest tree.
Recall, that whenever this is the case, at least one edge of $(G', \Omega')$ is being ``pushed'' towards the ``outside'' of $(G', \Omega')$, and eventually fully outside the updated leaf society.
Since there are at most $|E(G')|$ edges and $s_{\zeta}$ many cycles in $\mathcal{C}_{\zeta}$, we cannot find a tighter nest tree more that $s_{\zeta} |E(G')|$ times.
As a result, we may assume that we always end up in an outcome which does not produce a tighter nest tree and with this we may conclude.
\end{proof}

\subsubsection{Obtaining the blank rendition}

We finally have all the tools required to produce the desired blank rendition.
What we have to show is that, given a nest tree as in the outcome of \zcref{lemma:finalNestTree}, we can produce the surface wall needed for outcome v) of \zcref{thm:blanksocietyclassification}.

We prove two lemmas.
The first shows how starting from a grounded annulus wall in a rendition of a society, we can build a nest tree as in the outcome of \zcref{lemma:finalNestTree} with large enough reserve, and then employing an argument using Menger's Theorem, use the structure provided by the nest tree to link back the leaf societies, with their surrounding nest, to what is left of the annulus wall, thereby building the surface wall.

\begin{lemma}\label{lemma:nestTreeToSurfaceWall} There exist polynomial functions $\mathsf{wall}_{\ref{lemma:nestTreeToSurfaceWall}} \colon \mathbb{N}^{3} \to \mathbb{N}$ and $\mathsf{depth}_{\ref{lemma:nestTreeToSurfaceWall}} \colon \mathbb{N}^{5} \to \mathbb{N}$ such that for all integers $r \geq 3$, $b \geq 0$, $\ell \geq 1$, and $d, k \geq 2$ the following holds.

Let $(G, R)$ be an annotated graph and $\rho$ be a rendition of a society $(G, \Omega)$ in the disk with breadth $b$ and depth $d$.
Further, let $W \subseteq G$ be a $\mathsf{wall}_{\ref{lemma:nestTreeToSurfaceWall}}(r, k, \ell)$-annulus wall that is grounded in $\rho$ such that every vortex of $\rho$ and every vertex of $R$ is contained in the disk of the inner base cycle of $W$.

Then, there exists either
\begin{enumerate}
\item an $r$-mesh $M \subseteq G$ that is grounded and red in $\rho$ and such that $\mathcal{T}_{M}$ is a truncation of $\mathcal{T}_{W}$, or
\item an extended $k$-surface-wall $D \subseteq G$ grounded in $\rho$ such that the base cycles of $D$ are the $k$ outermost base cycles of $W$ and for every vortex segment $S$ of $D$ there is a $\rho$-aligned disk $\Delta_{S}$ contained in the disk bounded by the trace of the inner cycle of $S$ that avoids the trace of the simple cycle of $D$ such that
\begin{itemize}
\item every vortex of $\rho$ is contained in $\Delta_{S}$ for some vortex segment $S$ of $D$,
\item every vertex of $R$ is a vertex of the crop of $G$ by $\Delta_{S}$ for some vortex segment $S$ of $D$, and
\item if for some vortex segment $S$ of $D$, $\Delta_{S}$ contains no vortex of $\rho$, then at least one vertex of $R$ is a vertex of the crop of $G$ by $\Delta_{S}$,
\end{itemize}
and either
\begin{enumerate}
\item $D$ has signature $(0, 0, \ell + 1)$, or
\item $D$ has signature $(0, 0, \ell')$, where $\ell' \leq \ell$, and for every vortex segment $S$ of $D$, the $\Delta_{S}$-society has depth at most $\mathsf{depth}_{\ref{lemma:nestTreeToSurfaceWall}}(r, k, \ell, b, d)$.
\end{enumerate}
\end{enumerate}
Moreover, $\mathsf{wall}_{\ref{lemma:nestTreeToSurfaceWall}}(r, k, \ell) \in \mathbf{O}(r\ell + k\ell^{3})$, $\mathsf{depth}_{\ref{lemma:nestTreeToSurfaceWall}}(r, k, \ell, b, d) \in \mathbf{O}((b + 1)(bd + r^{4}\ell + r^{3} k \ell^{3} + r^{2} k^{2} \ell^{5}))$, and there exists an algorithm that finds one of the two outcomes in time $\mathbf{poly}(r + k + \ell + b + d) \cdot |E(G)|^{3}.$
\end{lemma}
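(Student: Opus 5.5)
We plan to derive the statement from \zcref{lemma:finalNestTree}, applied to a nest and radial linkage extracted from $W$, followed by a Menger-type linking step that builds $D$. Write $w = \mathsf{wall}_{\ref{lemma:nestTreeToSurfaceWall}}(r,k,\ell)$ and split the base cycles of $W$ into three groups. The $k$ outermost cycles are kept as the future base cycles of $D$. Next to them we reserve a buffer of $\mathbf{O}(k\ell^{2})$ cycles, which will be used for routing. The remaining inner cycles form a nest $\mathcal{C}$ of order $\mathsf{nest}_{\ref{lemma:finalNestTree}}(s,\ell)$, where $s \coloneqq 4k(\ell+1) \geq t \coloneqq s$. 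For $\mathcal{R}$ we take $t$ of the vertical paths of $W$, truncated to run from the outer cycle of $\mathcal{C}$ to its inner cycle; they are orthogonal to $\mathcal{C}$ by construction. Since every vortex and every vertex of $R$ lies in the disk of the inner base cycle, $\mathcal{C}$ is $R$-consistent. Restricting $\rho$ to the disk of the outer cycle of $\mathcal{C}$ gives a rendition whose breadth and depth are still at most $b$ and $d$. With $s \in \mathbf{O}(k\ell)$ and $\mathsf{nest}_{\ref{lemma:finalNestTree}} \in \mathbf{O}(\ell s)$, the total number of cycles used is $\mathbf{O}(r\ell + k\ell^{3})$ once the buffer is included. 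The depth bound becomes $\mathsf{depth}_{\ref{lemma:finalNestTree}}(b,d,r,s,t,\ell)$ with $s,t \in \mathbf{O}(k\ell + r)$, which matches the claimed estimate up to the stated polynomial.

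Now apply \zcref{lemma:finalNestTree}. If it returns a red $r$-mesh, its tangle is a truncation of the tangle of a mesh built from cycles of $\mathcal{C}$ and paths of $\mathcal{R}$. That mesh lies inside $W$, so its tangle is in turn a truncation of $\mathcal{T}_W$, and the first outcome follows. Otherwise we obtain an $R$-consistent nest tree $\mathfrak{T}$ with reserve $s$ and linkage order $t$. It has either $\ell+1$ leaves, or at most $\ell$ leaves that all have bounded depth. For each leaf $l$ we set $\Delta_S \coloneqq \Delta^{\mathsf{soc}}_{l}$. Property \textbf{T6} and $R$-consistency of $\mathfrak{T}$ then give exactly the three bulleted conditions on vortices and red vertices, and in the second case also the depth condition (b). In the first case we pass to a subtree with exactly $\ell+1$ leaves, dropping subtrees if necessary. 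The $k$ innermost cycles of the reserve nest of $l$ will serve as the nest of the vortex segment $S_l$.

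The main work is building $D$, and in particular linking each leaf's reserve nest back to the $k$ outer cycles of $W$ with $4k$ rails per leaf, all disjoint and in the correct cyclic order. We would use Menger's theorem along the tree. First, every edge linkage $\phi_2(tt')$ and the root linkage have order $t = s \geq 4k(\ell+1)$. Second, each node nest has order $s$, and every cycle of it separates its subtree from the rest. Together these show that no set of fewer than $4k\cdot(\text{number of leaves})$ vertices separates the union of the leaf reserve nests from the root nest. We would then route greedily from the leaves upward. Each nest tree node is a cylinder, so the linkages coming up from its two children can be merged in cyclic order through the parent's nest, using \zcref{prop:connected_linkages} at each level to switch from the child linkage to the parent linkage. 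Because siblings' linkages are disjoint by \textbf{T5}, the planar arrangement keeps the rails of different leaves in contiguous, non-interleaved blocks. Finally, in the buffer cycles between $\mathcal{C}$ and the outer $k$ cycles of $W$, we reroute all $4k(\ell+1)$ rails onto vertical paths of $W$, again via \zcref{prop:connected_linkages}. We then cut the outer $k$ cycles into $\ell'+1$ consecutive arcs, one per leaf block plus a plain wall segment, which yields the vortex segments and the base. Inside each leaf, the $4k$ rails meet the $k$ innermost reserve cycles orthogonally, and these form the annulus wall of the vortex segment.

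The step we expect to be the obstacle is keeping the rails from the several leaves disjoint and contiguous on the outer cycles without losing polynomial bounds. Each tree level costs $\mathbf{O}(t)$ cycles of the parent nest for rerouting, and this is exactly what the reserve $s$ provides, so there should be no exponential loss. The running time is dominated by the call to \zcref{lemma:finalNestTree}.
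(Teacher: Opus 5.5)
Your proposal is correct in outline, but your linking step differs from the paper's. The paper takes the union $G''$ of all nests and edge linkages of $\mathfrak{T}$, keeping $4k\zeta(\zeta+1)$ root paths and $4k(\zeta+1)$ paths per leaf, where $\zeta$ is the number of leaves. It applies Menger's theorem once, globally: any set of fewer than $4k\zeta(\zeta+1)$ vertices misses a whole root-to-leaf chain consisting of one cycle per nest and one path per linkage. A pigeonhole count then extracts, for each leaf, $4k$ paths with endpoints in a common segment of $\Omega'$, and these are extended along vertical paths of $W$.

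You instead route bottom-up. Inside each internal nest $\phi_1(t)$ you use \zcref{prop:connected_linkages} to hand the rails over from the children's linkages $\phi_2(tt_i)$ to the parent's linkage.
\begin{itemize}
\item \textbf{T4} supplies the orthogonality that proposition requires.
\item \textbf{T3} and \textbf{T5} keep different subtrees disjoint.
\item Planarity keeps each leaf's rails in a contiguous block: disjoint paths across an annulus preserve cyclic order, and paths running to disjoint sub-disks cannot interleave.
\end{itemize}

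The two routes trade off differently. The global Menger argument is shorter and avoids per-node bookkeeping. However, it gives no control over which leaf a path reaches or how endpoints sit on $\Omega'$, so the paper takes quadratic slack ($s=t=r+4k(\ell+2)^2$) and recovers the per-leaf segments by pigeonhole. Your route is constructive, yields the cyclic block structure for the vortex segments directly, and needs only linear slack, giving $\mathsf{wall}\in\mathbf{O}(r\ell+k\ell^{2})$.

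Some bookkeeping needs fixing.
\begin{itemize}
\item With $s=t=4k(\ell+1)$ you can violate the hypothesis $t\geq r+1$ of \zcref{lemma:finalNestTree}. Also, \zcref{prop:connected_linkages} needs nest order at least the number of rails plus $3$. Take $s=t=r+4k(\ell+1)+3$ instead.
\item Nothing should be dropped in the $(\ell+1)$-leaf case. \zcref{lemma:finalNestTree} returns exactly $\ell+1$ leaves, and dropping a subtree would break the first two bullets.
\item Your buffer is not needed. The final tree respects the initial one, so the root linkage already ends on vertical paths of $W$ and can be extended directly, as in the paper. You do need $4k$ consecutive vertical paths of $W$ unused by the rails for the plain wall segment. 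Reserve them when choosing $\mathcal{R}$, or create the gap with your buffer rerouting.
\end{itemize}
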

\begin{proof} We begin by defining the functions $\mathsf{wall}_{\ref{lemma:nestTreeToSurfaceWall}}$ and $\mathsf{depth}_{\ref{lemma:nestTreeToSurfaceWall}}$ as follows.
\begin{align*}
\mathsf{wall}_{\ref{lemma:nestTreeToSurfaceWall}}(r, k, \ell) &\coloneqq 4k + \mathsf{nest}_{\ref{lemma:finalNestTree}}(r + 4 k (\ell + 2)^{2}, \ell)\\
\mathsf{depth}_{\ref{lemma:nestTreeToSurfaceWall}}(r, k, \ell, b, d) &\coloneqq \mathsf{depth}_{\ref{lemma:finalNestTree}}(b, d, r, r + 4 k (\ell + 2)^{2}, r + 4 k (\ell + 2)^{2}, \ell).
\end{align*}

Let $\{ C_{1}, \ldots, C_{\mathsf{wall}_{\ref{lemma:nestTreeToSurfaceWall}}(r, k, \ell, b, d)} \}$ denote the base cycles of $W$ from innermost to outermost.
Moreover, let $\mathcal{C} = \{ C_{1}, C_{\mathsf{wall}_{\ref{lemma:nestTreeToSurfaceWall}}(r, k, \ell, b, d) - k} \}$ and $\mathcal{R}$ contain, for each vertical path $P$ of $W$ except $4k$ many consecutive vertical paths which we reserve for the vertical paths of the wall segment of $D$, a minimal $(V(C_{\mathsf{wall}_{\ref{lemma:nestTreeToSurfaceWall}}(r, k, \ell, b, d) - k + 1}) \cap N(\rho))$-$(V(C_{1}) \cap N(\rho))$-subpath of $P$.
Also, let $\Delta$ be the $C_{\mathsf{wall}_{\ref{lemma:nestTreeToSurfaceWall}}(r, k, \ell, b, d) - k + 1}$-disk in $\rho$ and $\rho'$ be the restriction of $\rho$ by $\Delta$ which is a rendition of the $\Delta$-society in $\rho$, say $(G', \Omega')$.

Observe that by definition, $\mathcal{C}$ is a nest in $(G', \Omega')$ of order $\mathsf{nest}_{\ref{lemma:finalNestTree}}(r + 4 k (\ell + 2)^{2}, \ell)$ and $\mathcal{R}$ is a radial linkage in $(G', \Omega')$ for $\mathcal{C}'$ of order at least $r + 4 k (\ell + 2)^{2}$.
Moreover, $\mathcal{C}$ is $R$-consistent by the assumptions on $W.$

We may now call upon \zcref{lemma:finalNestTree} for $(G', \Omega')$, $\mathcal{C}$, and $\mathcal{R}$ with parameters $s = t = r + 4k(\ell + 2)^{2} \geq r + 1.$

In the fist case, we get an $r$-mesh $M \subseteq G$ that is grounded and red in $\rho$ and such that $\mathcal{T}_{M}$ is a truncation of the tangle induced by a mesh $M'$ whose horizontal paths are subpaths of distinct cycles from $\mathcal{C}$.
Since the cycles of $\mathcal{C}$ are base cycles of $W$ and $W$ has at least $|\mathcal{C}|$ radial paths, the order of $\mathcal{T}_{W}$ is at least $|\mathcal{C}| \geq r$.
Moreover, since $\mathcal{C}$ is a subset of the base cycles of $W$, it must be that $\mathcal{T}_{M'} \subseteq \mathcal{T}_{W}$, and we may conclude with $\mathcal{T}_{M} \subseteq \mathcal{T}_{W}$ as desired.

In the second case, we get an $R$-consistent nest tree $\mathfrak{T}$ in $\rho'$ with linkage order $r + 4 k (\ell + 2)^{2}$ and reserve $r + 4 k (\ell + 2)^{2}$ such that, either
\begin{itemize}
\item $\mathfrak{T}$ has $\ell + 1$ leaves, or
\item at most $\ell$ leaves and every leaf society of $\mathfrak{T}$ has depth at most $\mathsf{depth}_{\ref{lemma:nestTreeToSurfaceWall}}(r, k, \ell, b, d)$.
\end{itemize}
We now show how to utilize $\mathfrak{T}$ to obtain the desired extended $t$-surface-wall $D$ in $(G, \Omega)$.

Let $\zeta \leq \ell + 1$ denote the exact number of leaves of $\mathfrak{T}$. 
Now, let $G'' \subseteq G'$ be the graph defined by taking the union of all cycles from all nests involved in $\mathfrak{T}$ and all paths of all radial linkages involved in $\mathfrak{T}$, except for the radial linkage of the root node of $\mathfrak{T}$, where we take precisely $4 k \zeta(\zeta + 1)$ paths, and the radial linkages of leaf nodes of $\mathfrak{T}$, where we take precisely $4 k (\zeta + 1)$ paths.

Let $X \subseteq V(G'')$ be the endpoints in $V(\Omega')$ of all paths in the radial linkage of the root node, and let $Y \subseteq V(G'')$ be the set of all endpoints of the radial linkages of leaf nodes on the inner cycle of their corresponding leaf nest.
Note that $|X| = |Y| = 4 k \zeta(\zeta + 1)$.

We now utilize Menger's theorem in $G''$, asking for an $X$-$Y$ linkage $\mathcal{P}$ of order $4 k \zeta(\zeta + 1)$.
We claim that such a linkage exists.
Towards a contradiction, let us assume that this is not the case.
Then, there exists a set $Z \subseteq V(G'')$ with $|Z| < 4 k \zeta(\zeta + 1)$ that intersects every $X$-$Y$-path in $G''$.
First, let us observe that there exists at least one leaf node $l$ of $\mathfrak{T}$ for which, if $\mathcal{C}_{l}$ and $\mathcal{R}_{l}$ are the corresponding leaf nest and radial linkage, there is at least one cycle in $\mathcal{C}_{l}$ and at least one path in $\mathcal{R}_{l}$ that $Z$ cannot intersect.
Indeed, let $r = t_{0}, t_{1}, \ldots, t_{\eta}, t_{\eta + 1} = l$ be the root to leaf path reaching $l$ in $\mathfrak{T}$, and let $\mathcal{C}_{r}, \mathcal{R}_{r}, \ldots, \mathcal{C}_{t_{i}}, \mathcal{R}_{t_{i}}, \ldots, \mathcal{R}_{t_{\eta - 1}}, \mathcal{C}_{l}$, be an alternating sequence of nest and radial linkages from $\mathfrak{T}$ such that for $i \in [\eta]$, $\mathcal{C}_{t_{i}}$ is the nest associated to $t_{i}$ and $\mathcal{R}_{t_{i}}$ is the radial linkage associated to the edge $t_{i}t_{i + 1}$ in $\mathfrak{T}$.

Observe that, by definition of nest trees, for each choice of a cycle $C_{t_{i}} \in \mathcal{C}_{t_{i}}$ and each choice of path $R_{t_{i}} \in \mathcal{R}_{t_{i}}$, the graph $\bigcup_{i} C_{t_{i}} \cup R_{t_{i}}$ is connected.
Therefore, since $\mathfrak{T}$ has linkage order and reserve at least $4 k \zeta(\zeta + 1)$, there is such a choice of cycles and paths that is disjoint from $Z$.
In this graph we can clearly find an $X$-$Y$-path, which is a contradiction, and therefore our claim holds.

Moreover, since we choose precisely $4 k (\zeta + 1)$ paths from each leaf radial linkage, the $X$-$Y$-linkage we find is orthogonal to each leaf nest.
There is one more technical issue that we have to resolve.
Menger's Theorem does not allow us to control exactly how the endpoints of these paths may be shuffled along $\Omega'$.
However, by a simple pigeonhole argument, since $\mathfrak{T}$ has $\zeta$ leaves and we chose the numbers so that $\nicefrac{|\mathcal{P}|}{(\zeta + 1)} = 4 k \zeta$, there exist disjoint segments $I_{1}, \ldots, I_{\zeta}$ of $\Omega'$ such that there is a subset $\mathcal{P}_{i} \subseteq \mathcal{P}$ with $\mathcal{P}_{i} = 4 k $, of paths with one endpoint in $V(I_{i})$ and the other in the inner cycle of the leaf nest for the \textsl{same} leaf node.

Let $\mathcal{P}' =  \bigcup_{i \in \zeta} \mathcal{P}_{i}$.
We may now extend each path in $\mathcal{P}'$ via the vertical path of $W$ with which it shares an endpoint in $V(\Omega')$, all the way to its other endpoint in $V(C_{\mathsf{wall}_{\ref{lemma:nestTreeToSurfaceWall}}(r, k, \ell, b, d)})$.
With this step we have managed to attach all leaf nests onto the remainder of $W$ which gives us the desired extended $k$-surface-wall $D$ with signature $(0, 0, \zeta)$.
By the assumptions on $\mathfrak{T}$, all desired properties for $D$ follow directly.
\end{proof}

The next and final lemma shows that if we obtain an extended surface wall as above with a sufficient number of vortex segments we may still conclude with a red mesh.

\begin{lemma}\label{lemma:SurfaceWallToRedMesh} For every integer $r \geq 3$ the following holds.
Let $(G, R)$ be an annotated graph and $\rho$ be a rendition of a society $(G, \Omega)$ in the disk.
Further, let $D \subseteq G$ be an extended $(r(r - 1))$-surface-wall grounded in $\rho$ with signature $(0, 0, r(r - 1) - 1)$ such that, for every vortex segment $S$ of $D$ there is a $\rho$-aligned disk $\Delta_{S}$ contained in the disk bounded by the trace of the inner cycle of $S$ that avoids the trace of the simple cycle of $D$ and there is a vertex of $R$ that is a vertex of the crop of $G$ by $\Delta_{S}$.
Then, there exists an $r$-mesh $M \subseteq G$ that is grounded and red in $\rho$ and such that $\mathcal{T}_{M}$ is a truncation of $\mathcal{T}_{D}$.

Moreover, there exists an algorithm that finds $\mu$ in time $\mathbf{poly}(r) \cdot |E(G)|$.
\end{lemma}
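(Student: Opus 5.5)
The plan is to reduce to \zcref{lemma:MiddleRedMeshToFullyRed}, following the proof of \zcref{lemma:blankTrans}. Write $n = r(r-1)$ and let $C_1,\dots,C_n$ be the base cycles of $D$, with $C_1$ the cycle containing all top boundary vertices of the segments. We use the infrastructure inside each vortex segment of $D$. There are $n-1$ vortex segments $S_1,\dots,S_{n-1}$ and one wall segment. Together with the base wall they give a cylindrical structure in which each $S_j$ hangs off $C_1$ through $4n$ rails, and each $S_j$ has its own nest of $n$ cycles surrounding $\Delta_{S_j}$.

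\textbf{Step 1: build the mesh.} Keep $r+1$ of the base cycles, say $C_1,\dots,C_{r+1}$ (we may restrict to the innermost ones). Cut the cylinder open at the wall segment, so that these cycles become $r+1$ horizontal paths and the order of the segments along the base wall becomes linear. For the second half of the mesh we do the following for each vortex segment $S_j$. Take two rails of $S_j$ adjacent to the segment boundary and extend them through the nest of $S_j$ down to its inner cycle. Join them along the inner cycle, around the portion of the nest that encloses $\Delta_{S_j}$. Then use the other cycles of the nest of $S_j$ and the remaining rails to supply $r+1$ further ``horizontal'' levels that run downward into $S_j$ and come back.

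Concretely, we mimic Figure~\ref{fig:MiddleRedMeshToFullyRed}. The vertical paths are chosen to alternate between the two sides of each vortex segment. The horizontal paths $P_{r+2},\dots,P_{2(r+1)}$ are formed by concatenating subpaths of the nest cycles of consecutive vortex segments, linked through rails and base-wall pieces between consecutive segments. The goal is a $(2(r+1)\times n)$-mesh $M_0$, grounded in $\rho$, in which the $(r,j)$-brick (or the $(r+1,j)$-brick, depending on indexing) for $j\in[n-1]$ encloses exactly $\Delta_{S_j}$. The vertical paths are pairs of rails, one between each consecutive pair of vortex segments, which gives $n$ vertical paths separating the $n-1$ segments.

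\textbf{Step 2: red bricks.} By hypothesis the crop of $G$ by $\Delta_{S_j}$ contains a vertex of $R$. Since $\Delta_{S_j}$ lies inside the disk of the trace of that brick, and this disk avoids the simple cycle, the subgraph induced by the brick in $\rho$ contains that red vertex. Then \zcref{lemma:MiddleRedMeshToFullyRed} gives an $r$-mesh $M$ that is grounded and red in $\rho$, with $\mathcal{T}_M$ a truncation of $\mathcal{T}_{M_0}$.

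\textbf{Step 3: tangles.} $M_0$ contains $r+1$ base cycles of $D$ as subpaths of horizontal paths, and contains at least $r$ vertical paths that are subpaths of base-wall rails. A separation of order less than $r$ therefore misses some base cycle and some rail used by $M_0$, so its $M_0$-majority side coincides with its $D$-majority side. Hence $\mathcal{T}_{M_0}$ is a truncation of $\mathcal{T}_D$, and thus so is $\mathcal{T}_M$. The algorithm only reads off the paths and then runs the routing of \zcref{lemma:MiddleRedMeshToFullyRed}, which takes time $\mathbf{poly}(r)\cdot|E(G)|$.

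The main obstacle is Step 1: choosing the horizontal paths so that each middle brick really contains $\Delta_{S_j}$, stays disjoint from the other segments, and keeps the mesh grounded. I would first try the simplest choice: take the middle brick between consecutive rails to be the cycle formed by the inner nest cycle of $S_j$ together with the two rail pieces joining it to $C_1$.
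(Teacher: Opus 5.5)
The reduction you set up is the right one, and it is the one the paper uses: build a $(2(r+1)\times r(r-1))$-mesh whose middle bricks each enclose some $\Delta_{S_j}$, then apply \zcref{lemma:MiddleRedMeshToFullyRed}. The gap is Step~1, which is the entire content of the lemma and which you leave open. Moreover, the orientation you sketch (horizontal paths running around the cylinder, one vertical path between each pair of consecutive vortex segments) cannot be realised as described, for two reasons.

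First, you keep $C_1$ as a horizontal path, and your ``simplest choice'' also uses $C_1$. In $D$, however, the annulus of a vortex segment is attached to the rest of $D$ only by edges to the top-boundary vertices of its base, and all of these lie on $C_1$. So none of $P_{r+2},\dots,P_{2(r+1)}$ can get from the nest of $S_j$ to the nest of $S_{j+1}$ while avoiding the horizontal path on $C_1$.

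Second, freeing $C_1$ does not rescue this orientation. Consecutive segments of $D$ are joined by exactly $r(r-1)$ edges, one per base-wall row. In your layout all $2(r+1)$ pairwise disjoint horizontal paths must pass through every such junction. So the layout can only work when $2(r+1)\le r(r-1)$, which fails for $r=3$, where you would need $8$ passages but have only $6$ edges.

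The missing idea is to transpose the roles. Put $n=r(r-1)$ and cut the cylinder inside the wall segment.
\begin{itemize}
\item Horizontal paths: take $T_1,\dots,T_{r+1}$ to be the first $r+1$ vertical paths of the wall segment on one side of the cut, and $T_{r+2},\dots,T_{2(r+1)}$ the last $r+1$ on the other side.
\item Vertical paths: use the $n$ base cycles themselves, with detours arranged as a staircase. Let $Q_n$ be a subpath of $C_n$. For $i\in[n-1]$, the path $Q_i$ runs along $C_i$ and passes $S_1,\dots,S_{i-1}$ without entering them.
\item Detours: at each $S\in\{S_i,\dots,S_{n-1}\}$, with rails $R^S_1,\dots,R^S_{4n}$ ordered left to right, $Q_i$ climbs from $C_i$ along the $i$-th vertical path of $S$ and the rail $R^S_i$ to the $i$-th nest cycle of $S$ (counted from outside). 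It follows the arc of that cycle that avoids $R^S_{i+1},\dots,R^S_{4n-i}$, and descends along $R^S_{4n-i+1}$ back to $C_i$.
\end{itemize}
These detours are nested and pairwise disjoint, since $Q_i$ detours into a superset of the segments that $Q_{i'}$ does whenever $i<i'$.

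The middle row of bricks now consists of long strips running from $T_{r+1}$ around the whole cylinder to $T_{r+2}$. Because $Q_j$ detours around $S_j$ while $Q_{j+1}$ does not, the $(r+1,j)$-brick contains the disk of the $j$-th nest cycle of $S_j$, hence $\Delta_{S_j}$ and a red vertex.

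This is why the hypotheses fit exactly. The $n$ base cycles supply the $n$ vertical paths, and the $n-1$ vortex segments supply the $n-1$ middle bricks. The $2(r+1)$ horizontal paths only need room inside the wall segment, which has $4n$ columns.
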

\begin{proof}
Let $\mathcal{C} = \{ C_{1}, \ldots, C_{r(r - 1)} \}$ be the base cycles of $D$ ordered from innermost to outermost and $\mathcal{P} = \{ P_{1}, \ldots, P_{4br(r - 1)} \}$ be the vertical paths of $D$ ordered from left to right so that $P_{1}, \ldots, P_{2r(r - 1)}$ and $P_{4br(r - 1) - 2r(r - 1) + 1}, \ldots, P_{4br(r - 1)}$ are the vertical paths of the wall segment of $D$.
With respect to this ordering, for every vortex segment $S$ of $D$, let $\mathcal{C}_{S} = \{ C^{S}_{1}, \ldots, C^{S}_{r(r - 1)} \}$ be the nest of $S$ ordered from outermost to innermost and $\mathcal{R}_{S} = \{ R^{S}_{1}, \ldots, R^{S}_{4r(r - 1)} \}$ be the rails of $S$ ordered left to right.
Moreover, if $S_{i}$ is the $i$-th vortex segment from left to right in this ordering, let $\mathcal{P}_{S_{i}}$ be the subset of $\mathcal{P}$ that are vertical paths of $S_{i}$ ordered from left to right as well.

We first explain how to obtain an $(2(r + 1) \times r(r - 1))$-mesh $M \subseteq$ such that for every $j \in [r(r-1) - 1]$ the subgraph of $G$ induced by the $(r + 1, j)$-brick of $M$ contains a vertex of $R$.
We do so by explaining how to define its $2(r+1)$ horizontal paths $T_{1}, \ldots, T_{2(r+1)}$ and its $r(r - 1)$ vertical paths $Q_{1}, \ldots, Q_{r(r - 1)}$.

We define the $i$-th horizontal path $T_{i}$ as $P_{i}$ when $i \leq r + 1$ and as $P_{4br(r - 1) r + i}$ otherwise.
Moreover, we define the $r(r - 1)$-th vertical path $Q_{r(r - 1)}$ as the subpath of $C_{r(r - 1)}$ with endpoints on $T_{1}$ and $T_{2r}$ that intersects all $T_{i}$'s.

For $i \in [r(r - 1) - 1]$, we define the $i$-th vertical path as follows.
We start from a vertex of $T_{i}$ on $C_{i}$, and move towards the right, until we hit the $i$-th vertical path $P^{S_{i}}_{i} \in \mathcal{P}_{S_{i}}$.
From there we move upwards along $P^{S_{i}}_{i}$, continuing on $R^{S_{i}}_{i} \in \mathcal{R}_{S_{i}}$, until we hit the $i$-th cycle $C^{S_{i}}_{i} \in \mathcal{C}_{S_{i}}$.
Then we continue on $C^{S_{i}}_{i}$ until we reach $R^{S_{i}}_{4r(r - 1) - i + 1} \in \mathcal{R}_{S_{i}}$, by following the subpath of $C^{S_{i}}_{i}$ that does not intersect any $R^{S_{i}}_{j}$, where $i < j < 4r(r - 1) - i + 1$.
From there, we continue downwards, onto the $4r(r - 1) - i + 1$-th vertical path $P^{S_{i}}_{4r(r - 1) - i + 1} \in \mathcal{P}_{S_{i}}$, until we reach $C_{i}$ once more.
We repeat the above and visit all vortex segments to the right of $S_{i}$ in precisely the same way.
Finally, we continue along $C_{i}$ towards the right, until we intersect $T_{2(r+1)}$, where we stop.

It is not hard to see that $M$ is indeed the desired mesh.
See~\zcref{fig:SurfaceWallToRedMesh} for an illustration of the above procedure.
In particular we may observe that, the subgraph induced by the $(r+1, j)$-brick of $M$, $j \in r(r - 1) - 1$, contains the crop of $G$ by $\Delta_{S_{i}}$ and therefore at least one vertex of $R$.
We may now conclude by applying \zcref{lemma:MiddleRedMeshToFullyRed} which gives us the desired $r$-mesh $M' \subseteq G$ that is grounded and red in $\rho$.
Clearly $\mathcal{T}_{M'} \subseteq \mathcal{T}_{D}$.
\end{proof}

\begin{figure}[ht]
\centering
\includegraphics{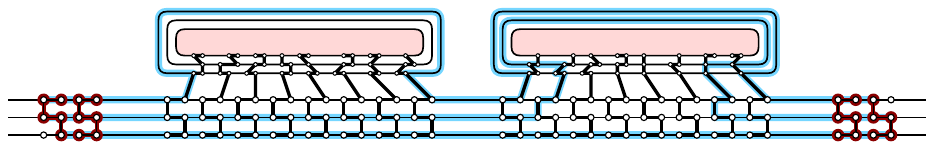}
\caption{\label{fig:SurfaceWallToRedMesh} An illustration for the proof of \zcref{lemma:SurfaceWallToRedMesh} of how a $(4 \times 3)$-mesh is found  within an (subgraph of an) extended $3$-surface-wall with signature $(0, 0, 2).$}
\end{figure}

\subsection{The proof of \texorpdfstring{\zcref{thm:blanksocietyclassification}}{Theorem 4.2}}\label{sec:ProofOfSociety}

We are finally ready to prove \zcref{thm:blanksocietyclassification}.
We repeat the statement here for convenience.

\blanksocietyclassification*
We begin by discussing the functions involved.
Let $t^{*} = 3t$.
This value signifies the size of a clique minor we want as an outcome of \zcref{thm:societyclassification}.
This value now fully determines the following functions:
\begin{align*}
\mathsf{apex}^{\mathsf{genus}}_{\ref{thm:blanksocietyclassification}}(t) &\coloneqq \mathsf{apex}^{\mathsf{genus}}_{\ref{thm:societyclassification}}(t^{*})\\
\mathsf{loss}_{\ref{thm:blanksocietyclassification}}(t) &\coloneqq \mathsf{loss}_{\ref{thm:societyclassification}}(t^{*})
\end{align*}
This further determines the breadth of the rendition in outcome iv) of \zcref{thm:societyclassification} which is $b^{2} \leq b^{1} = \nicefrac{1}{2}(t^{*} - 3)(t^{*} - 4) - 1$.
The order of transaction we need to make it homogeneous is $p^{*} = p^{2}$.

Moreover, the order of surface-wall and the number of vortex segments it has that we are looking for in the end to be able to land in one of the two cases of outcome v) of our proof is, $k^{1} = k + r(r - 1)$ and $\ell^{1} = b^{1} + r(r - 1) - 2$ respectively.

This determines the breadth of the final rendition which is: $\ell^{1} - 1 = \nicefrac{3}{2}(t - 1)(3t - 4) + r(r - 1) - 3.$

Also, the order of the surface-wall we will need to obtain in outcome iv) of \zcref{thm:societyclassification} in order to be able to further refine it for outcome v) of our result is, $k^{*} = (3k^{**} + 2)^{2}$, where $k^{**} = \mathsf{wall}_{\ref{lemma:nestTreeToSurfaceWall}}(r, k^{1}, \ell^{1})$.

These requirements determine the following functions:
\begin{align*}
\mathsf{cost}_{\ref{thm:blanksocietyclassification}}(r, t, k) &\coloneqq \mathsf{cost}_{\ref{thm:societyclassification}}(t^{*}, k^{*})\\
\mathsf{apex}_{\ref{thm:blanksocietyclassification}}^{\mathsf{fin}}(r, t, k, p) &\coloneqq \mathsf{apex}_{\ref{thm:societyclassification}}^{\mathsf{fin}}(t^{*}, k^{*}, p^{*})
\end{align*}
Under these assumptions the depth of the rendition we get in outcome iv) is $d^{2} \leq d^{1} = \mathsf{depth}_{\ref{thm:societyclassification}}(t^{*}, k^{*}, p^{*})$.
Then finally we can define the last two functions:
\begin{align*}
\mathsf{nest}_{\ref{thm:blanksocietyclassification}}(r, t, k) &\coloneqq \mathsf{nest}_{\ref{thm:societyclassification}}(t^{*}, k^{*})\\
\mathsf{depth}_{\ref{thm:blanksocietyclassification}}(r, t, k, p) &\coloneqq \mathsf{depth}_{\ref{lemma:nestTreeToSurfaceWall}}(r, k^{1}, \ell^{1}, b^{1}, d^{1})
\end{align*}

Based on the estimates of the functions involved in \zcref{thm:societyclassification} and \zcref{lemma:finalNestTree} we derive the following bounds for the previously defined functions.
\begin{align*}
\mathsf{nest}_{\ref{thm:blanksocietyclassification}}(r, t, k) &\in \mathbf{O}((k + r^{2})^{18}(t^{2} + r^{2})^{54}),\\
\mathsf{apex}^{\mathsf{genus}}_{\ref{thm:blanksocietyclassification}}(t) &\in \mathbf{O}(t^{8}), \ \ \mathsf{loss}_{\ref{thm:blanksocietyclassification}}(t) \in \mathbf{O}(t^{3}), \ \ \text{and} \ \ \mathsf{cost}_{\ref{thm:blanksocietyclassification}}(r, t, k) \in \mathbf{O}((k + r^{2})^{2}(t^{2} + r^{2})^{6}),\\
\mathsf{apex}^{\mathsf{fin}}_{\ref{thm:blanksocietyclassification}}(r, t, k, p) &\in \mathbf{O}((k + r^{2})^{74}(t^{2} + r^{2})^{222} + p^{74}), \ \ \text{and}\\
\mathsf{depth}_{\ref{thm:blanksocietyclassification}}(r, t, k, p) &\in \mathbf{O}((k + r^{2})^{102}(t^{2} + r^{2})^{308} + t^{4}p^{102}).
\end{align*}

\begin{proof}
Consider the variables we have defined above.
We commence with an application of \zcref{thm:societyclassification} on $(G, \Omega)$, $\rho$, and $\mathcal{C}$, with $t = t^{*}$, $p = p^{*}$, and $k = k^{*}$.
Observe that this is valid as $s \geq \mathsf{nest}_{\ref{thm:blanksocietyclassification}}(r, t, k) = \mathsf{nest}_{\ref{thm:societyclassification}}(t^{*}, k^{*})$.
Let $(G', \Omega')$ be the $C_{s - \mathsf{cost}_{\ref{thm:societyclassification}}(t^{*}, k^{*})}$-society in $\rho$.

Then, $G'$ contains a set $A \subseteq V(G')$ such that we land in one of the following outcomes which we discuss separately.

\textbf{Outcome i):} There exists a $K_{t^{*}}$-minor-model $\mu$ in $G$ controlled by $M$.

Here, we may apply \zcref{prop:redClique} to $G$ and $\mu$ and obtain one of two things.
The first outcome is a red-minor-model $\mu'$ of $K_{t}$ such that $\mathcal{T}_{\mu'} \subseteq \mathcal{T}_{\mu}$, in which case we conclude.
The second outcome is a separation $(X, Y) \in \mathcal{T}_{\mu}$ of order at most $t - 1$ such that $(Y \setminus X) \cap R = \emptyset$, in which case we also conclude.

\textbf{Outcome ii):} There exists a flat, isolated crosscap transaction $\mathcal{P}$ of order $p^{*}$ in $(G' - A, \Omega')$, with $|A| \leq \mathsf{apex}^\mathsf{genus}_{\ref{thm:societyclassification}}(t^{*})$, and a nest $\mathcal{C}'$ in $\rho$ of order $s - (\mathsf{loss}_{\ref{thm:societyclassification}}(t^{*}) + \mathsf{cost}_{\ref{thm:societyclassification}}(t^{*}, k^{*}))$ to which $\mathcal{P}$ is orthogonal.

Here, we may apply \zcref{lemma:homoTransactions} to $\mathcal{P}$ and $\mathcal{C}'$ and obtain a subtransaction $\mathcal{Q} \subseteq \mathcal{P}$ of order $p$ that is homogeneous.
It follows that any subtransaction of an isolated transaction remains isolated and we may conclude.

\textbf{Outcome iii):} There exists a flat, isolated handle transaction $\mathcal{P}$ of order $p^{*}$ in $(G' - A, \Omega')$, with $|A| \leq \mathsf{apex}^\mathsf{genus}_{\ref{thm:societyclassification}}(t^{*})$, and a nest $\mathcal{C}'$ in $\rho$ of order $s - (\mathsf{loss}_{\ref{thm:societyclassification}}(t^{*}) + \mathsf{cost}_{\ref{thm:societyclassification}}(t^{*}, k^{*}))$ to which $\mathcal{P}$ is orthogonal.

Let $\mathcal{R}$ and $\mathcal{Q}$ be the constituent planar, flat, and isolated transactions of order $p^{*}$ that make up $\mathcal{P}$.
We may apply \zcref{lemma:homoTransactions} to both $\mathcal{R}$ and $\mathcal{Q}$ independently, along with $\mathcal{C}'$, and obtain a subtransaction $\mathcal{R'} \subseteq \mathcal{R}$ of order $p$ that is homogeneous, and a subtransaction $\mathcal{Q'} \subseteq \mathcal{Q}$ that is also homogeneous, which concludes this case as well.

\textbf{Outcome iv):} There exists a rendition $\rho'$ of $(G - A, \Omega)$ in $\Delta$ with breadth $b_{2}$ and depth at most $d_{2}$, $|A| \leq \mathsf{apex}^\mathsf{fin}_{\ref{thm:societyclassification}}(t^{*}, k^{*}, p^{*})$, and an extended $k^{*}$-surface-wall $D$ with signature $(0, 0, b^{2})$, such that $D$ is grounded in $\rho'$, the base cycles of $D$ are the cycles $C_{s - \mathsf{cost}_{\ref{thm:societyclassification}}(t^{*}, k^{*}) - 1 -k^{*}}, \ldots, C_{s - \mathsf{cost}_{\ref{thm:societyclassification}}(t^{*}, k^{*}) - 1}$, and there exists a bijection between the vortices $v$ of $\rho'$ and the vortex segments $S_v$ of $D$, where $v$ is the unique vortex contained in the disk $\Delta_{S_v}$ defined by the trace of the inner cycle of the nest of $S_v$, and $\Delta_{S_v}$ is chosen to avoid the trace of the simple cycle of $D$.

Here our goal is to apply \zcref{lemma:nestTreeToSurfaceWall}.
We first have to prep $D$ for that.
We discard most of $D$ and keep a $k^{*}$-subwall $W$ of the wall segment of $D$.
Clearly, $W$ is grounded in $\rho$ and of order $(3k^{**} + 2)^{2}$.
We can therefore call upon \zcref{lemma:nestTreeToSurfaceWall} and obtain a $3k^{**}$-subwall $W' \subseteq W$ that is homogeneous in $\rho$.
Note that if $W'$ is red in $\rho$ we may immediately conclude as $3k^{**} \geq r$.
Therefore, we may assume that $W'$ is blank.
The next step is to define from $W'$ a $k^{**}$-annulus wall $W'' \subseteq W'$.
This can be done by iteratively peeling of $k^{**}$ layers from $W'$ starting from its perimeter, and using the remaining $k^{**}$ vertical paths to define the vertical paths of $W'$.
Also note, that by construction, and since $W'$ is blank, every vortex of $\rho$ and every vertex of $R \setminus A$, is contained in the interior of the disk of the inner base cycle of $W'$.

We are now in the position to apply \zcref{lemma:nestTreeToSurfaceWall} to $W'$ and as a result, obtain either an $r$-mesh $M \subseteq G - A$ that is grounded and red in $\rho'$ and such that $\mathcal{T}_{M} \subseteq \mathcal{T}_{W'}$, in which case we may terminate, or obtain an extended $k^{1}$-surface-wall $D' \subseteq G - A$ grounded in $\rho'$ such that the base cycles of $D'$ are the $k^{1}$ outermost base cycles of $W'$ and for every vortex segment $S'$ of $D'$ there is a $\rho'$-aligned disk $\Delta_{S'}$ contained in the disk bounded by the trace of the inner cycle of $S'$ that avoids the trace of the simple cycle of $D'$ such that
\begin{itemize}
\item every vortex of $\rho'$ is contained in $\Delta_{S'}$ for some vortex segment $S'$ of $D'$,
\item every vertex of $R \setminus A$ is a vertex of the crop of $G - A$ by $\Delta_{S'}$ for some vortex segment $S'$ of $D$, and
\item if for some vortex segment $S'$ of $D'$, $\Delta_{S'}$ contains no vortex of $\rho'$, then at least one vertex of $R \setminus A$ is a vertex of the crop of $G - A$ by $\Delta_{S'}$,
\end{itemize}
and either
\begin{enumerate}
\item $D'$ has signature $(0, 0, b^{1} + r(r - 1) - 1)$, or
\item $D'$ has signature $(0, 0, b')$, where $b \leq b^{1} + r(r - 1) - 2)$, and for every vortex segment $S'$ of $D'$, the $\Delta_{S'}$-society has depth at most $\mathsf{depth}_{\ref{lemma:nestTreeToSurfaceWall}}(r, k^{1}, \ell^{1}, b^{1}, d^{1})$.
\end{enumerate}
Now, in case we are in the first outcome above, and $D'$ has signature $(0, 0, b^{1} + r(r - 1) - 1)$, we know that there exist at least $r(r - 1) - 1$ distinct vortex segments $S'$ of $D'$ such that there is a vertex of $R \setminus A$ that is a vertex of the crop of $G - A$ by $\Delta_{S'}$.
Since $k^{1} \geq r(r - 1)$, we can invoke \zcref{lemma:SurfaceWallToRedMesh} and conclude with an $r$-mesh $M \subseteq G - A$ that is grounded and red in $\rho'$ and such that $\mathcal{T}_{M} \subseteq \mathcal{T}_{D}$.
In this case we conclude since clearly $\mathcal{T}_{D} \subseteq \mathcal{T}_{M}$.

Hence, lastly, we may assume that $D'$ has signature $(0, 0, b')$, where $b \leq b^{1} + r(r - 1) - 2)$, and for every vortex segment $S'$ of $D'$, the $\Delta_{S'}$-society has depth at most $\mathsf{depth}_{\ref{lemma:nestTreeToSurfaceWall}}(r, k^{1}, \ell^{1}, b^{1}, d^{1})$.
In this case, we define a new rendition $\rho''$ from $\rho'$, by removing for each disk $\Delta_{S'}$ all cells of $\rho'$ from its interior, and replacing them with a new vortex cell whose closure is $\Delta_{S'}$.
Since, $k^{1} \geq k$ and each vortex of $\rho''$ has depth at most $\mathsf{depth}_{\ref{lemma:nestTreeToSurfaceWall}}(r, k^{1}, \ell^{1}, b^{1}, d^{1})$, our proof concludes.
\end{proof}

\section{A local structure theorem for red minors}\label{sec:localstructure}
In this section, building on \zcref{thm:blanksocietyclassification}, we obtain a local structure theorem for annotated graph minors akin to the local structure theorem for graph minors, that exhibits the distribution of red vertices, relative to the almost embedding of the graph.
The proof we give essentially follows the structure of the proof of the local structure theorem in \cite{GorskySW2025Polynomial}, with modifications made to account for our setting of annotated graphs.

\paragraph{Blank landscapes.}
We begin with the description of an object that will coherently describe the decomposition of the graph we seek to obtain.
The definition we present here is again a mild variation on what can be found in \cite{GorskySW2025Polynomial} and represents a very specific description of how our graph and in particular the surface wall within it embeds into the surface.
Its use is contained to the proof of the local structure theorem itself.
\medskip

Let $k, w \geq 4$ be integers, let $(G,R)$ be an annotated graph, and let $\Sigma$ be a surface of Euler-genus $g$.
Let $h$, $c$, and $b$ be non-negative integers where $g=2h+c$ and $c \neq 0$ if and only if $\Sigma$ is non-orientable.
Moreover, let $D \subseteq G$ be a $k$-surface-wall with signature $(h,c,b)$, let $W \subseteq G-A$ be a $w$-mesh in $G$, and let $\mathcal{T}_D$ and $\mathcal{T}_W$ be the tangles they respectively define.
Finally, let $A \subseteq V(G) \setminus V(D)$.

The tuple $\Lambda = (A,W,D,\rho)$ is called a \emph{blank $\Sigma$-landscape} of \emph{detail $k$} if
\begin{description}
    \item[L1~~] $\rho$ is a blank $\Sigma$-rendition of $(G - A, R \setminus A),$
    \item[L2~~] $D$ and $W$ are grounded in $\rho,$
    \item[L3~~] $W$ is flat in $\rho,$
    \item[L4~~] the disk bounded by the trace of the simple cycle of $D$ in $\rho$ avoids the traces of the other base cycles of $D,$
    \item[L5~~] the tangle $\mathcal{T}_D$ is a truncation of the tangle $\mathcal{T}_W,$
    \item[L6~~] if $C$ is a cycle from the nest of some vortex-segment of $D,$ then the trace of $C$ is a contractible closed curve in $\Sigma,$
    \item[L7~~] $\rho$ has exactly $b$ vortices and there exists a bijection between the vortices $v$ of $\rho$ and the vortex segments $S_v$ of $D$ such that $v$ is the unique vortex of $\rho$ that is contained in the $v$-disk $\Delta_{C_1}$ of the inner cycle of $S_v$, where $\Delta_{C_1}$ avoids the trace of the simple cycle of $D,$ and
    \item[L8~~] for every vortex $v$ of $\rho$, the society induced by the outer cycle from the nest of the corresponding vortex segment has a cross or $\sigma(v)$ contains a vertex of $R$.
\end{description}
We refer to the vortex segments as the \emph{vortices} of $\Lambda$ and call $A$ the \emph{apex set}.
The integer $b$ is called the \emph{breadth} of $\Lambda$ and the \emph{depth} of $\Lambda$ is the depth of $\rho$.
We say that $\Lambda$ is \emph{centred} at the mesh $W$.

\paragraph{Blank layouts.}
Let $k \geq 4$, $l$, $d$, $b$, $r$, and $a$ be non-negative integers and $\Sigma$ be a surface.
We say that an annotated graph $(G,R)$ with a mesh $M$ has a \emph{blank $k$-$(a,b,d,r)$-$\Sigma$-layout} $\Lambda$ \emph{centred at $M$} if there exists a set $A \subseteq V(G)$ of size at most $a$ and a submesh $M'\subseteq M$ such that there exists a blank $\Sigma$-landscape $(A,M',D,\rho)$ of detail $k$, breadth $b$, and depth $d$ for $G$ where every vortex of $\rho$ has a linear decomposition of adhesion at most $d$, and $M'$ is a $w$-mesh with $w \geq a+b(2d+1)+6+r$.

\begin{theorem}\label{thm:strongest_localstructure}
There exist functions $\mathsf{apex}_{\ref{thm:strongest_localstructure}},\mathsf{depth}_{\ref{thm:strongest_localstructure}} \colon \mathbb{N}^3 \to \mathbb{N}$ and $\mathsf{mesh}_{\ref{thm:strongest_localstructure}} \colon \mathbb{N}^4 \to \mathbb{N}$ such that for all integers $t \geq 1,$ $r, k \geq 4,$ and $w \geq 3$, every annotated graph $(G,R)$, and every $\mathsf{mesh}_{\ref{thm:strongest_localstructure}}(r, t, k, w)$-mesh $M \subseteq G$ one of the following holds.
\begin{enumerate}
    \item there exists a separation $(X, Y) \in \mathcal{T}_M$ of order at most $t-1$ such that $(Y \setminus X) \cap R = \emptyset$,
    \item $(G, R)$ has a red $K_{t}$-minor model controlled by $M$,
    \item $(G, R)$ has a red, flat $r$-mesh $M'$ such that $\mathcal{T}_{M'}$ is a truncation of $\mathcal{T}_{M}$, or
    \item $(G, R)$ has a blank $k$-$(\mathsf{apex}_{\ref{thm:strongest_localstructure}}(t, r, k), \nicefrac{3}{2}(t-1)(3t-4) + r(r-1)-3, \mathsf{depth}_{\ref{thm:strongest_localstructure}}(t, r, k), w)$-$\Sigma$-layout $\Lambda$ centred at $M$ and the surface $\Sigma$ has genus less than $9t^2$.
\end{enumerate}
Moreover, it holds that

{\centering
  $ \displaystyle
    \begin{aligned}
        \mathsf{apex}_{\ref{thm:strongest_localstructure}}(t,r,k),~ \mathsf{depth}_{\ref{thm:strongest_localstructure}}(t,r,k) \in  & \ \mathbf{O}\big((t+r+k)^{2833952}\big), \text{ and} \\
        \mathsf{mesh}_{\ref{thm:strongest_localstructure}}(t,r,k,w) \in                                                             & \ \mathbf{O}\big((t+r+k)^{5667844} + t^2w \big) .
    \end{aligned}
  $
\par}

There also exists an algorithm that, given $t$, $k$, $r$, $w$, a graph $G$, and a mesh $M$ as above as input finds one of these outcomes in time $\mathbf{poly}(t+k+r+w)|E(G)|^3$.
\end{theorem}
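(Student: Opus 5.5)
We follow the inductive scheme of the local structure theorem in \cite{GorskySW2025Polynomial}, with \zcref{thm:FlatRedMesh} as the base case and \zcref{thm:blanksocietyclassification} as the induction step. The induction runs over the Euler-genus of the current surface $\Sigma$, and every intermediate object is kept blank.

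\textbf{Base case.} Apply \zcref{thm:FlatRedMesh} to $M$ with clique parameter $t$ and a large mesh parameter $r_0$. The first two outcomes are outcomes i) and ii) of the statement. If we get a red flat mesh, we take a submesh of order $r$; this gives outcome iii), since the flat submesh's tangle truncates $\mathcal{T}_M$. Otherwise we have a blank flat $r_0$-mesh $M_1$ in $G-Z_1$ with $|Z_1|<9t^2$, together with a blank sphere-rendition $\rho_1$ having a single vortex $c_0$ that contains all red vertices of $G-Z_1$.

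We split $M_1$ into two pieces. One piece gives a $w'$-submesh to serve as the centre, with $w'\geq a+b(2d+1)+6+r$ and $w'\in\mathbf{O}(t^2w)$ plus a polynomial term. The other piece is peeled into an annulus wall around $c_0$, which gives a cozy nest (via \zcref{prop:cozyNest}) and an orthogonal radial linkage (via \zcref{prop:radialtoorthogonal}). Together these form a blank $\Sigma$-landscape with $\Sigma$ the sphere and a single vortex. Condition L8 holds, since if the vortex had no cross and no red vertex we could simply absorb it into the disk.

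\textbf{Induction step.} Suppose we have a blank landscape $(A,M',D,\rho)$ in a surface of genus $g<9t^2$, and the vortex segment of $D$ still to be refined carries a large cozy nest. Apply \zcref{thm:blanksocietyclassification} to the vortex society cut out by the outer cycle of that nest. Note that $\rho$ restricted there is blank and cylindrical.

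Outcomes i) and ii) transfer to $\mathcal{T}_M$ because $\mathcal{T}_D$ truncates $\mathcal{T}_{M'}$, which truncates $\mathcal{T}_M$ (property L5). A red grounded $r$-mesh in outcome v) is flat in the disk rendition, so it gives outcome iii).

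For a homogeneous crosscap or handle transaction, there are two cases.
\begin{itemize}
\item If it is red, \zcref{lemma:RedTransactionGivesRedMesh} applied with the nest $\mathcal{C}'$ yields a red mesh, which gives outcome iii).
\item If it is blank, we use the surface-extension lemmas of \cite{KawarabayashiTW2021Quickly,GorskySW2025Polynomial}. We attach a crosscap or handle, reroute most of the transaction through it, and build a new crosscap or handle segment of $D$ out of the transaction plus a few cycles of the nest.
\end{itemize}
Because the transaction is blank, everything newly embedded is red-free. The new single vortex inherits all red vertices and a nest of order reduced by $\mathsf{loss}+\mathsf{cost}$. We add the at most $\mathsf{apex}^{\mathsf{genus}}(t)$ new apices to $A$ and increase the genus by one or two.

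By Euler-genus bounds, a surface wall of genus $\geq 9t^2$ (or a suitable bound derived from $K_{3t}$) contains a $K_{3t}$ minor controlled by $D$. Applying \zcref{prop:redClique} then gives outcome i) or ii). Hence this branch of the induction happens fewer than $9t^2$ times.

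\textbf{Termination and bounds.} In the last bullet of outcome v), we replace the vortex by the $\leq \nicefrac32(t-1)(3t-4)+r(r-1)-3$ blank vortices of bounded depth. We stitch the extended surface wall into $D$ along the base cycles, which coincide with nest cycles. Finally we convert depth into linear decompositions of adhesion bounded polynomially, using the standard lemma that a society of depth $d$ with a vortex-free surrounding has a linear decomposition of adhesion at most $d$.

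The genus is incremented at most $\mathbf{O}(t^2)$ times, and each round composes polynomials. For this to give the stated exponent, the parameters must be set top-down: the nest required at round $i$ must be $\mathsf{nest}_{\ref{thm:blanksocietyclassification}}$ evaluated at the requirement of round $i+1$. Iterating this a bounded number of times yields a fixed (huge) polynomial.

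\textbf{Main obstacle.} I expect the hardest part to be this iterated parameter bookkeeping, together with checking that the surface-extension step keeps the landscape blank. The point to verify is that rerouting through the handle or crosscap never places a red vertex from the old vortex onto a non-vortex cell. I would verify this first, using the $R$-blankness of every strip of the homogeneous transaction together with the isolation of the transaction.
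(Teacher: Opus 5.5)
Your case analysis follows the paper's proof. This covers the base case via \zcref{thm:FlatRedMesh}, one application of \zcref{thm:blanksocietyclassification} per round, red transactions handled by \zcref{lemma:RedTransactionGivesRedMesh}, blank ones absorbed by \zcref{lemma:integrate-crosscap}/\zcref{lemma:integrate-handle}, and the genus capped at $9t^2$ via \zcref{prop:universal-surface-walls} and \zcref{prop:redClique}. However, the parameter bookkeeping you propose would not give polynomial bounds, and those bounds are the actual content of the theorem.

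You set the nest needed at round $i$ to be $\mathsf{nest}_{\ref{thm:blanksocietyclassification}}$ evaluated at the requirement of round $i+1$. You then claim that iterating this `a bounded number of times' yields a fixed polynomial. But the number of genus-increasing rounds is up to $9t^2$, so it is not bounded. A recurrence $N_i\geq \mathsf{nest}_{\ref{thm:blanksocietyclassification}}(\dots N_{i+1}\dots)$ with a degree-$144$ polynomial, iterated $\Theta(t^2)$ times, gives degree $144^{\Theta(t^2)}$. Even $N_i\geq 2N_{i+1}$ would already give $2^{\Theta(t^2)}$. Only a recurrence of the form $N_i = N_{i+1}+\mathrm{poly}(t,r,k)$ survives $\Theta(t^2)$ rounds.

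The missing idea is why the recurrence can be made additive.
\begin{itemize}
\item The wall parameter passed to \zcref{thm:blanksocietyclassification} is the same value, $4(k+c_{\ref{prop:universal-surface-walls}}t^{12}+1)$, in every round.
\item The nest requirement, $\mathsf{cost}$ and $\mathsf{loss}$ of that theorem do not depend on the transaction order $p$.
\item In a transaction outcome the nest loses only $\mathsf{loss}+\mathsf{cost}$ cycles. Another $\mathbf{O}(k+t^{12})+\mathsf{radial}(g-1,t,k)$ cycles are spent rerouting $\mathcal{R}_i$ with \zcref{prop:connected_linkages}.
\item \zcref{lemma:integrate-crosscap} consumes only $9$ more cycles. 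The new nest of order $s$ around the new vortex is assembled from old cycles together with the $2s+7$ surplus paths of the transaction ($4s+12$ for handles).
\end{itemize}

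So it is the transaction order that must be chosen in terms of the next round: $\mathsf{transaction}(g)\approx 4\,\mathsf{nest}(g-1)+2\,\mathsf{radial}(g-1)+\mathbf{O}(k+t^{12})$. Meanwhile $\mathsf{nest}(g)-\mathsf{nest}(g-1)$ and $\mathsf{radial}(g)-\mathsf{radial}(g-1)$ are bounded by fixed polynomials in $(t,r,k)$. The order $p$ enters $\mathsf{apex}^{\mathsf{fin}}_{\ref{thm:blanksocietyclassification}}$ and $\mathsf{depth}_{\ref{thm:blanksocietyclassification}}$ only once, at the terminating round with $p=\mathsf{transaction}(9t^2,t,r,k)$.

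Your remark that the nest is `reduced by $\mathsf{loss}+\mathsf{cost}$' points this way. But you never say that the next nest comes from surplus transaction paths, and the recurrence you actually state is the compositional one.
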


\paragraph{Extending the surface and other tools.}
To prove \zcref{thm:strongest_localstructure} we import two lemmas from~\cite{KawarabayashiTW2021Quickly} (see Lemma 10.2 and Lemma 10.5) that will allow us, starting from \zcref{lemma:flatMeshToHomegeneousMesh}, to inductively extend the surface by adding handles and crosscaps.
We note that we adopt the minor changes that are made to these statements in \cite{GorskySW2025Polynomial}, which involves only considering a radial linkage instead of a linkage from $\Omega$ to the vortex.
The cost of this is very minor, reflecting solely in a mildly increased number of cycles lost.
These tools are powerful enough so that we barely have to modify them for our purposes.
We note that the original statements do not deal with blank renditions.
However, since our condition for blankness simply involves checking the location of red vertices, the \emph{reconciliation lemma} from \cite{KawarabayashiTW2021Quickly} (see Lemma 5.15, and also see Proposition 5.5 and Corollary 5.6 in \cite{ChoiGKMW2025OddCyclePackingtreewidth}) directly translates to preserve blankness.
Thus we state our version of these theorems to also preserve blankness.

\begin{proposition}[Kawarabayashi, Thomas, and Wollan \cite{KawarabayashiTW2021Quickly}]\label{lemma:integrate-crosscap}
Let $s$ and $p$ be non-negative integers.
Let $(G,\Omega)$ be a society with a blank, cylindrical rendition $\rho_0$ in the disk $\Delta$ with a nest $\mathcal{C}=\{ C_1, \ldots , C_{s+9} \}$ around the vortex $c_0$.
Let $X_1,X_2$ be disjoint segments of $\Omega$ such that there exist
\begin{itemize}
    \item a radial linkage $\mathcal{R}$ orthogonal to $\mathcal{C}$ starting in $X_1$, and
    \item a blank, flat, isolated crosscap transaction $\mathcal{P}$ of order at least $p+2s+7$ with all endpoints in $X_2$ and disjoint from $\mathcal{R}$.
\end{itemize}
Let $\Sigma^*$ be a surface, homeomorphic to the projective plane minus an open disk, which is obtained from $\Delta$ by adding a crosscap to the interior of $c_0$.

Then there exists a crosscap transaction $\mathcal{P}'\subseteq \mathcal{P}$ of order $p$, consisting of the middle $p$ paths of $\mathcal{P}$, and a blank rendition $\rho_1$ of $(G,\Omega)$ in $\Sigma^*$ (around $\bigcup \mathcal{C}$) with a unique vortex $c_1$ and the following hold:
\begin{enumerate}
    \item $\mathcal{P}'$ is disjoint from $\sigma(c_1)$,

    \item the vortex society of $c_1$ in $\rho_1$ has a blank cylindrical rendition $\rho_1'$ with a nest $\mathcal{C}'=\{ C_1',\dots,C_s'\}$ around the unique vortex $c_1'$,

    \item every element of $\mathcal{R}$ has an endpoint in $V(\sigma_{\rho_1'}(c_1'))$,

    \item $\mathcal{R}$ is orthogonal to $\mathcal{C}'$ and for every $i\in[s]$ and every $R\in\mathcal{R}$, $C_i'\cap R= C_{i+8}\cap R$. Moreover,

    \item let $\mathcal{R}=\{ R_1,\dots,R_{\ell}\}$.
    For each $i\in[\ell]$ let $x_i$ be the endpoint of $R_i$ in $X_1$, and let $y_i$ be the last vertex of $R_i$ on $c_1$ when traversing along $R_i$ starting from $x_i$; if $x_1,x_2,\dots,x_{\ell}$ appear in $\Omega$ in the order listed, then $y_1,y_2,\dots,y_{\ell}$ appear on $\mathsf{boundary}(c_1)$ in the order listed.

    \item Finally, let $\Delta'$ be the open disk bounded by the trace of $C_{s+8}$ in $\rho_0$.
    Then $\rho_0$ restricted to $\Delta\setminus\Delta'$ is equal to $\rho_1$ restricted to $\Delta\setminus \Delta'$.
\end{enumerate}
Moreover, there exists an algorithm that computes this outcome in time $\mathbf{poly}(sp|\mathcal{R}|)|V(G)|$.
\end{proposition}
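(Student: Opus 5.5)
The plan is to derive this statement from the corresponding crosscap-integration lemma of Kawarabayashi, Thomas, and Wollan (Lemma 10.2 in \cite{KawarabayashiTW2021Quickly}), in the form with a radial linkage $\mathcal{R}$ used in \cite{GorskySW2025Polynomial}. All effort goes into checking that blankness survives. First I would forget the annotation and apply that lemma to $(G,\Omega)$, $\rho_0$, $\mathcal{C}$, $\mathcal{R}$ and $\mathcal{P}$. Flatness, isolation and the order $p+2s+7$ of $\mathcal{P}$ are exactly its hypotheses. It returns the middle $p$ paths $\mathcal{P}'$, a rendition $\rho_1$ in $\Sigma^*$ with a unique vortex $c_1$, and a cylindrical rendition $\rho_1'$ of the vortex society of $c_1$ with nest $\mathcal{C}'$. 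Properties i)--vi) are verbatim conclusions of that lemma, so only the blankness of $\rho_1$ and $\rho_1'$ needs an argument.

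For blankness I would recall how $\rho_1$ is built. Outside the open disk $\Delta'$ bounded by the trace of $C_{s+8}$, $\rho_1$ coincides with $\rho_0$ by vi), so all nodes and non-vortex cells there are red-free because $\rho_0$ is blank. Inside $\Delta'$, the non-vortex cells of $\rho_1$ come from two sources. The first is the restriction of $\rho_0$ to the annulus between $C_{s+8}$ and the inner cycles, again red-free. The second is a vortex-free rendition of the $\mathcal{P}$-strip society (cut down to the middle paths), glued along the crosscap via the reconciliation lemma (Lemma 5.15 of \cite{KawarabayashiTW2021Quickly}, see also Proposition 5.5 and Corollary 5.6 of \cite{ChoiGKMW2025OddCyclePackingtreewidth}). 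The reconciliation lemma only merges two renditions: each cell of the output is a cell of one of the inputs, or is contained in the vortex $c_1$. Blankness of $\mathcal{P}$ means every strip of $\mathcal{P}$ is disjoint from $R$. Hence every vertex placed on a node or in a non-vortex cell inside $\Delta'$ lies either in the blank part of $\rho_0$ or in $\bigcup$ of the strips of $\mathcal{P}$, and so is not in $R$. Everything left over is pushed into $\sigma(c_1)$, which may contain red vertices without violating blankness.

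For $\rho_1'$ I would argue the same way. It is the restriction of $\rho_0$ to the disk bounded by $C'_s$, with the part of $\mathcal{P}$ removed, and its non-vortex cells are cells of $\rho_0$ or pieces of $\mathcal{P}$-strips. The cycles of $\mathcal{C}'$ agree with $C_{i+8}$ on $\mathcal{R}$ and are otherwise rerouted only through red-free parts.

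The main obstacle is the bookkeeping at the interface where the crosscap is attached. One must check that the reconciliation step does not create a new non-vortex cell containing a vertex of $G-\bigcup\mathcal{P}$ that previously lay inside $c_0$, since such a vertex might be red. I would resolve this by choosing $c_1$ to be exactly the closure of the complement of the union of the strip cells and the inherited $\rho_0$-cells. Any vertex from $\sigma(c_0)$ outside the strips, red or not, then ends up in $\sigma(c_1)$ or in the vortex $c_1'$. The running time follows directly from the imported lemma, since the blankness check is linear.
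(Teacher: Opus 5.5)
Your proposal takes the same route as the paper. The paper also does not reprove this statement: it imports the crosscap-integration lemma of Kawarabayashi, Thomas, and Wollan, in the radial-linkage form of Gorsky, Seweryn, and Wiederrecht, and justifies blankness only by remarking that the reconciliation lemma just relocates vertices, so nothing red leaves the vortex. Your cell-by-cell check is a more explicit version of that remark: outside $\Delta'$ everything is inherited from the blank $\rho_0$, inside $\Delta'$ the non-vortex cells come from $\rho_0$ or from the red-free strip of $\mathcal{P}$, and the rest of $\sigma(c_0)$ ends up in the vortex.
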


\begin{proposition}[Kawarabayashi, Thomas, and Wollan \cite{KawarabayashiTW2021Quickly}]\label{lemma:integrate-handle}
Let $s$ and $p$ be non-negative integers.
Let $(G,\Omega)$ be a society with a blank cylindrical rendition $\rho_0$ in the disk $\Delta$ with a nest $\mathcal{C}=\{ C_1,\dots,C_{s+9}\}$ around the vortex $c_0$.
Let $X_1,X_2$ be disjoint segments of $\Omega$ such that there exist
\begin{itemize}
    \item a radial linkage $\mathcal{R}$ orthogonal to $\mathcal{C}$ starting in $X_1$, and
    \item a blank, flat, isolated handle transaction $\mathcal{P}$ of order at least $2p+4s+12$ with all endpoints in $X_2$ and disjoint from $\mathcal{R}$.
\end{itemize}
Let $\mathcal{P}_1$ and $\mathcal{P}_2$ be the two planar transactions such that $\mathcal{P}=\mathcal{P}_1\cup\mathcal{P}_2$.
Let $\Sigma^+$ be a surface, homeomorphic to the torus minus an open disk, which is obtained from $\Delta$ by adding a handle to the interior of $c_0$.

Then there exist transactions $\mathcal{P}_1'\subseteq \mathcal{P}_1$ and $\mathcal{P}_2'\subseteq\mathcal{P}_2$ of order $p$, each $\mathcal{P}_i'$ consisting of the middle $p$ paths of $\mathcal{P}_i$, $i\in[2]$, such that $\mathcal{P}'=\mathcal{P}_1'\cup\mathcal{P}_2'$ is a handle transaction, and a blank rendition $\rho_1$ of $(G,\Omega)$ in $\Sigma^+$ (around $\bigcup \mathcal{C}$) with a unique vortex $c_1$ and the following hold:
\begin{enumerate}
    \item $\mathcal{P}'$ is disjoint from $\sigma(c_1)$,

    \item the vortex society of $c_1$ in $\rho_1$ has a blank cylindrical rendition $\rho_1'$ with a nest $\mathcal{C}'=\{ C_1',\dots,C_s'\}$ around the unique vortex $c_1'$,

    \item every element of $\mathcal{R}$ has an endpoint in $V(\sigma_{\rho_1'}(c_1'))$,

    \item $\mathcal{R}$ is orthogonal to $\mathcal{C}'$ and for every $i\in[s]$ and every $R\in\mathcal{R}$, $C_i'\cap R= C_{i+8}\cap R$. Moreover,

    \item let $\mathcal{R}=\{ R_1,\dots,R_{\ell}\}$.
    For each $i\in[\ell]$ let $x_i$ be the endpoint of $R_i$ in $X_1$, and let $y_i$ be the last vertex of $R_i$ on $c_1$ when traversing along $R_i$ starting from $x_i$; if $x_1,x_2,\dots,x_{\ell}$ appear in $\Omega$ in the order listed, then $y_1,y_2,\dots,y_{\ell}$ appear on $\mathsf{boundary}(c_1)$ in the order listed.

    \item Finally, let $\Delta'$ be the open disk bounded by the trace of $C_{s+8}$ in $\rho_0$.
    Then $\rho_0$ restricted to $\Delta\setminus\Delta'$ is equal to $\rho_1$ restricted to $\Delta\setminus \Delta'$.
\end{enumerate}
Moreover, there exists an algorithm that computes this outcome in time $\mathbf{poly}(sp|\mathcal{R}|)|V(G)|$.
\end{proposition}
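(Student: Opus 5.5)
We obtain the statement by running the original proof of the handle-integration lemma of Kawarabayashi, Thomas, and Wollan (Lemma 10.5 in \cite{KawarabayashiTW2021Quickly}, in the radial-linkage form of \cite{GorskySW2025Polynomial}) and then checking that every cell and node the construction places outside the new vortices is blank. First we forget the annotation. Since $\rho_0$ is a cylindrical rendition with nest $\mathcal{C}$ of order $s+9$, and $\mathcal{P}$ is a flat, isolated handle transaction of order at least $2p+4s+12$ disjoint from the radial linkage $\mathcal{R}$, the unannotated statement applies. It yields the middle subtransactions $\mathcal{P}'_1,\mathcal{P}'_2$, a $\Sigma^+$-rendition $\rho_1$ with unique vortex $c_1$, and a cylindrical rendition $\rho'_1$ of the vortex society of $c_1$ with nest $\mathcal{C}'$. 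Properties i)--vi) hold verbatim. What remains is to show that $\rho_1$ and $\rho'_1$ are blank.

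For this we follow where each non-vortex cell and each node of $\rho_1$ and $\rho'_1$ comes from. There are three sources.
\begin{enumerate}
\item By vi), on $\Delta\setminus\Delta'$ the rendition $\rho_1$ coincides with $\rho_0$. Every cell there is a non-vortex cell of $\rho_0$ and every node there is a node of $\rho_0$, so these contain no vertex of $R$ because $\rho_0$ is blank.
\item Inside $\Delta'$, the part of the surface carrying the handle is built by the reconciliation lemma (Lemma 5.15 of \cite{KawarabayashiTW2021Quickly}). It glues the vortex-free disk renditions of the $\mathcal{P}_1$- and $\mathcal{P}_2$-strip societies to the restriction of $\rho_0$ to the annulus between $C_1$ and $C_{s+8}$. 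Each cell created there is either a non-vortex cell of $\rho_0$, which is blank, or a cell of a strip rendition. In the latter case its graph is a subgraph of the strip society of $\mathcal{P}_i$, which contains no red vertex because $\mathcal{P}$ is blank. The same holds for the nodes.
\item The cycles $C'_i$ and the non-vortex cells of $\rho'_1$ are assembled from pieces of the cycles $C_{i+8}$, pieces of paths in $\mathcal{P}_1\cup\mathcal{P}_2$ outside the middle $p$ paths, and the strip and annulus cells between them. By the two previous points all of these are blank.
\end{enumerate}
Every remaining part of $\sigma_{\rho_0}(c_0)$, which is where red vertices may live, is put by the construction into $\sigma(c_1)$, and inside $\rho'_1$ into $c'_1$. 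So it lies only in vortices, which blankness permits.

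The main obstacle is the bookkeeping in the second point, namely the cells created by the reconciliation lemma. We must make sure that it never turns a vertex of $\sigma(c_0)$ that lies outside the strip societies into a node or into a non-vortex cell. The approach is to use isolation: no edge leaves a strip society except through the boundary paths. Every bridge drawn in a strip cell therefore has all its attachments on $\bigcup\mathcal{P}$ or on the end segments, so it belongs to the strip society and is blank. Bridges attaching elsewhere are exactly those left inside the new vortex. I would verify this against the explicit form of the reconciliation lemma given in Proposition~5.5 and Corollary~5.6 of \cite{ChoiGKMW2025OddCyclePackingtreewidth}. That lemma only ever moves cells from one of the two input renditions into the output, which is what makes the check local.

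The running time is inherited unchanged, since blankness is only verified and no new computation is performed.
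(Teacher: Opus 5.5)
Your proposal is correct and takes the same route as the paper. The paper also imports the unannotated handle-integration lemma of Kawarabayashi, Thomas, and Wollan (in the radial-linkage form of Gorsky, Seweryn, and Wiederrecht). It justifies blankness only by remarking that the reconciliation lemma places into non-vortex cells and nodes just material from the non-vortex part of $\rho_0$ or from the red-free strip societies. Your source-by-source bookkeeping spells out that one-line remark in more detail.
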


Another important ingredient in the proof of the local structure theorem is the ability to build layouts via finding surface walls and $K_t$-minors within them.
For this purpose we first introduce so-called ``surface configurations''.

\medskip
Let $(G,\Omega)$ be a society with a cylindrical rendition and a nest $\mathcal{C}$ around the vortex $c_0$.
Further let $\mathcal{P}_1,\dots,\mathcal{P}_{\ell}$ be a set of transactions on $(G,\Omega)$ as well as $\mathcal{R}$ be a radial linkage such that
\begin{itemize}
    \item $V(\mathcal{P}_i)\cap V(\mathcal{P}_j)=\emptyset$ for all $i\neq j\in[\ell]$ as well as $V(\mathcal{R})\cap V(\mathcal{P}_i)=\emptyset$ for all $i\in[\ell]$,
    \item for each $i\in[\ell]$, $\mathcal{P}_i$ is orthogonal to $\mathcal{C}$ and $\mathcal{R}$ is orthogonal to $\mathcal{C}$, and
    \item there exist pairwise disjoint segments $I_1,J_1,I_2,J_2,\dots,I_{\ell},J_{\ell},R$ of $\Omega$ such that these segments appear on $\Omega$ in the order listed, for each $i\in[\ell]$, $\mathcal{P}_i$ is a $V(I_i)$-$V(J_i)$-linkage, and each path from $\mathcal{R}$ has one endpoint in $V(R)$.
\end{itemize}
We call the tuple $(G,\Omega,\mathcal{C},\mathcal{R},\mathfrak{P}=\{ \mathcal{P}_1,\dots,\mathcal{P}_{\ell}\})$ a \emph{surface configuration} of $(G,\Omega)$ and the sequence $(I_1,J_1,I_2,J_2,\dots,I_{\ell},J_{\ell},R)$ the \emph{signature} of $(G,\Omega,\mathcal{C},\mathcal{R},\mathfrak{P})$.
Finally we say that $(s,r,p_1,\dots,p_{\ell})$ is the \emph{strength} of $(G,\Omega,\mathcal{C},\mathcal{R},\mathfrak{P})$ if $|\mathcal{C}|=s$, $|\mathcal{R}|=r$ and $|\mathcal{P}_i|=p_i$ for all $i\in[\ell]$.

\begin{observation}[Gorsky, Seweryn, and Wiederrecht \cite{GorskySW2025Polynomial}]\label{obs:surface-configs-to-walls}
Let $k \geq 3$, $\ell$, $h$, and $c$ be non-negative integers with $\ell = h + c$.
Moreover, let $s \geq k$, $r \geq 4k$, $p_i \geq 4k$ for all $i \in [\ell]$.

Then, for every surface configuration $(G,\Omega,\mathcal{C},\mathcal{R},\mathfrak{P})$ of strength $(s,r,p_1,\dots,p_{\ell})$ with $h$ handle-transactions and $c$ crosscap-transactions, $G$ contains a $k$-surface-wall $W$ with $h$ handles and $k$ crosscaps as a subgraph such that the base cycles of $W$ are cycles from $\mathcal{C}$ and the vertical paths of the $k$-wall-segment of $W$ are subpaths of the paths from $\mathcal{R}$.
\end{observation}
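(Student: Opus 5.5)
The plan is to build the surface-wall $W$ by hand from the union $\bigcup\mathcal{C}\cup\bigcup\mathcal{R}\cup\bigcup_{i}\bigcup\mathcal{P}_i$. I read the conclusion as $h$ handles and $c$ crosscaps; the ``$k$ crosscaps'' in the statement looks like a typo.

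\textbf{Rows and segment order.} I will fix $k$ cycles $C'_1,\dots,C'_k$ of $\mathcal{C}$, where $C'_1$ is the innermost chosen cycle and $C'_k$ the outermost. Each path of $\mathcal{R}$ and each path of a transaction $\mathcal{P}_i$ meets every $C'_j$ orthogonally: a radial path meets it in one subpath, a transaction path in exactly two. So after truncating everything at $C'_k$, each transaction path $P$ yields two ``legs'', which are $V(C'_k)$-$V(C'_1)$-subpaths crossing $C'_k,\dots,C'_1$ in this order, together with a middle subpath inside the $C'_1$-disk joining the two legs. The cycles $C'_j$ will be the base cycles, with $C'_1$ containing the top boundary. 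The cyclic order of the segments $I_1,J_1,\dots,I_\ell,J_\ell,R$ on $\Omega$ then fixes the cyclic order in which all legs and all radial paths meet each $C'_j$. Along this order I place, in sequence, one wall segment coming from $\mathcal{R}$ and one handle- or crosscap-segment for each $\mathcal{P}_i$.

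\textbf{Building the segments.} For the wall segment I take $4k$ consecutive paths of $\mathcal{R}$ (possible since $r\ge 4k$) and use their truncations as vertical paths. For each $\mathcal{P}_i$ I take $2k$ consecutive paths (possible since $p_i\ge 4k$). Their $4k$ legs become the vertical paths of the segment, and their $2k$ middle subpaths play the role of the handle or crosscap edges between top boundary vertices on $C'_1$.
\begin{itemize}
\item If $\mathcal{P}_i$ is a crosscap transaction, its endpoints occur as $x_1,\dots,x_n,y_1,\dots,y_n$. So leg $a$ on the $I_i$-side is joined to leg $a$ on the $J_i$-side, which is exactly the pattern $\{(1,2a),(1,4k+2a)\}$ of crosscap edges.
\item If $\mathcal{P}_i$ is a handle transaction, it splits into two planar transactions $\mathcal{R}_i,\mathcal{Q}_i$ with interleaved segments. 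Taking $k$ paths from each, the planar (reversed) matching of each half reproduces the two families of handle edges $\{(1,2a),(1,6k+2-2a)\}$ and $\{(1,2a),(1,8k+2-2a)\}$.
\end{itemize}
Consecutive segments are glued along the subpaths of the $C'_j$ between them, which yields the cylindrical closure. The brick structure comes from deleting, in each row strip, alternate connecting subpaths of the vertical paths according to parity. The result is then a subdivision of the elementary surface-wall.

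\textbf{Main obstacle.} The step I expect to be hardest is checking the bookkeeping that makes this a genuine subdivision. Disjointness is given: the $\mathcal{P}_i$ are pairwise disjoint and disjoint from $\mathcal{R}$, and the middle subpaths lie inside the $C'_1$-disk, where they meet no cycle. What needs care is that the order in which the legs meet each $C'_j$ agrees with the order of their endpoints on $\Omega$, which is what makes the edge pattern come out exactly as in the elementary segments. For this I would use orthogonality and the cylindrical rendition: legs are pairwise disjoint curves crossing each cycle once in the annulus between $C'_k$ and $C'_1$, so their cyclic orders on all $C'_j$ coincide with the order on $\Omega$. Any mismatch in orientation, from reversed listing, is absorbed by the ``or reverse'' clauses in the definitions of planar and crosscap transactions.
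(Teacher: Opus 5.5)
Your reading of ``$k$ crosscaps'' as ``$c$ crosscaps'' is right, and your cyclic-order argument is fine. But the cyclic order is the easy part; the gap is elsewhere.

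\textbf{The gap.} It is the step ``their $4k$ legs become the vertical paths of the segment'' combined with the parity deletion. Orthogonality only says that a leg meets each $C'_j$ in a path. That path may be a single vertex. When it is longer, the leg may shift to either side along $C'_j$, and this can differ between legs and between rows.
\begin{enumerate}
\item If you use one leg per vertical path, single-vertex crossings are degree-$4$ vertices, and mixed shifts give bricks of the wrong shape.
\item If you delete alternate leg pieces by parity, as you propose, then every vertical path of the resulting wall zigzags over two legs. So $4k$ legs give only $2k$ vertical paths. Moreover, a middle subpath can then only be attached at a leg whose piece between $C'_1$ and $C'_2$ was deleted, i.e.\ at every second leg, not at all of them.
\end{enumerate}
Concretely, suppose all crossings are single vertices. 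Then your subgraph has only $4k^2(\ell+1)$ vertices of degree at least $3$. The elementary surface wall, built segment-wise on $(k\times 8k)$-grids, has $8k(k-1)$ vertices of degree $3$ in each of its $\ell+1$ segments, plus $4k$ top-boundary vertices per handle or crosscap segment. So for $k\ge 3$ your subgraph contains no subdivision of it.

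\textbf{The repair.} This is what the hypothesis $p_i\ge 4k$ (rather than $2k$) is for.
\begin{enumerate}
\item Take $4k$ consecutive paths of $\mathcal{P}_i$ and use all $8k$ legs as the $8k$ columns.
\item Delete alternate leg pieces by parity.
\item Keep the middle subpaths only of the paths whose legs land in even columns, and discard the others.
\end{enumerate}
For a crosscap transaction with legs $x_1,\dots,x_{4k},y_1,\dots,y_{4k}$ in this order, the even-indexed paths sit in columns $2i$ and $4k+2i$. This is exactly the crosscap pattern $\{(1,2i),(1,4k+2i)\}$.

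For a handle transaction, the reversal on the $J_i$-side flips parity. So on each planar half, the legs used on the two sides must be offset by one path; this is where the index $6n+2-2i$ in the definition comes from. Also, the second family of handle edges should read $10n+2-2i$; the $8n+2-2i$ you copied collides with the first family.

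The same count affects the wall segment. Its vertical paths must be subpaths of $\mathcal{R}$, so its $8k(k-1)$ branch vertices must sit on crossings of the base cycles with paths of $\mathcal{R}$. Hence $4k$ radial paths do not suffice and you need $8k$. As literally stated, $r\ge 4k$ looks too weak by a factor of two, so do not rely on it.

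\textbf{A smaller omission.} You use, without justification, that every transaction path enters the $C'_1$-disk; orthogonality alone would allow a path to touch $C_1$ twice from outside. It does hold. Each path of a crosscap or handle transaction has endpoints interleaving with those of some other, disjoint path of the transaction. If one of two such paths avoided $c_0$, its trace would separate the ends of the other. The other could not get across even through $c_0$, since $c_0$ lies on one side. So every path runs through $c_0$, and hence through the interior of the $C'_1$-disk.
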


We will also need the following result that allows us to extract minors from surface walls representing surfaces of sufficient genus.

\begin{proposition}[Gorsky, Seweryn, and Wiederrecht \cite{GorskySW2025Polynomial}]\label{prop:universal-surface-walls}
There exists a universal constant $c_{\ref{prop:universal-surface-walls}}$ such that for all non-negative integers $c,h,t$, every graph $H$ that embeds in a surface homeomorphic to the surface obtained from the sphere by adding $h$ handles and $c$ crosscaps is a minor of the extended $\big( c_{\ref{prop:universal-surface-walls}}g^4(|V(H)|+g)^2\big)$-surface wall $W$ with $h$ handles, $c$ crosscaps, and 0 vortices, where $g=2h+c$.
Moreover, if $H$ is a complete graph, then there exists an $H$-minor-model controlled by $W$.

In particular, $K_t$ is a minor for every extended $k$-surface-wall with $h$ handles and $c$ crosscaps where $2h+c=t^2$ and $k\in \mathbf{\Omega}(t^{12})$.
\end{proposition}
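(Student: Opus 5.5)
The statement immediately above is \zcref{prop:universal-surface-walls}, which is imported from \cite{GorskySW2025Polynomial}; I would nonetheless plan its proof in three stages. First, embed $H$ into a sufficiently fine grid-like drawing on the surface. Second, show that the extended surface wall contains, as a minor, a ``universal'' graph for that surface. Third, deduce the clique statement from a count of genus.

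\textbf{Step 1 (normal form of the embedding).} Fix an embedding of $H$ in $\Sigma=\Sigma^{(h,c)}$. Cut $\Sigma$ along a system of $g=2h+c$ simple closed curves (a canonical polygonal schema) to obtain a disk $\Delta$ whose boundary word consists of the handle pairs $aba^{-1}b^{-1}$ and the crosscap pairs $aa$. I would first perturb the embedding so that each edge of $H$ crosses each cutting curve only $\mathbf{O}(|V(H)|+g)$ times, using a standard re-routing argument on shortest systems of loops. The number of crossing points on $\partial\Delta$ is then $\mathbf{O}(g(|V(H)|+g)\cdot|E(H)|)$, and I would aim for $\mathbf{O}(g^2(|V(H)|+g))$ after bounding $|E(H)|$ via Euler's formula. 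The cut-open drawing is a planar graph $H'$ in $\Delta$ with $N$ marked boundary points that must be identified in pairs according to the schema.

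\textbf{Step 2 (routing in the surface wall).} The extended surface wall consists of a base annulus wall together with handle and crosscap segments, and in each segment the handle or crosscap edges realise exactly the boundary identifications of one schema letter. The planar part $H'$, with its $N$ boundary terminals placed in cyclic order, is a minor of a wall of order $\mathbf{O}(|V(H')|+N)$ whose perimeter carries the terminals in the prescribed order; this is the classical grid-minor theorem for planar graphs with boundary. I would place this wall inside the base wall and route the terminals to the top boundary vertices of the correct segments along the base cycles and rails. Because the segments appear in schema order, the routing is planar and needs only a number of rows proportional to $N$. The hardest step is this uniform routing of all terminals through the $\mathbf{O}(g)$ segments without congestion, which is where the factor $g^4(|V(H)|+g)^2$ comes from. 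I would handle it by giving each segment its own interval of $\mathbf{O}(N)$ columns and dedicating the rows in layers.

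\textbf{Step 3 (the clique statement).} For the ``moreover'' part, the branch sets of the resulting $K_t$-model each contain a long piece of base cycles and rails. Any separation of order less than $t$ therefore leaves some branch set entirely on the wall's majority side, which shows that the model is controlled by $W$. For the final sentence, $K_t$ embeds in any surface of Euler genus at least $t^2/6$ (Ringel--Youngs), so in particular in one with $2h+c=t^2$. Substituting $g=t^2$ and $|V(H)|=t$ into the bound $c_{\ref{prop:universal-surface-walls}}\,g^4(|V(H)|+g)^2$ gives $\mathbf{O}(t^8\cdot t^4)=\mathbf{O}(t^{12})$, as claimed.
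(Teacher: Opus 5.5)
The paper does not prove this proposition; it imports it from \cite{GorskySW2025Polynomial}. Your sketch therefore has to carry the argument on its own, and there is a genuine gap in Step~1, which is where the quantitative content sits. Step~2 needs a one-vertex system of $g$ loops whose cut-open boundary word is \emph{exactly} the word that $W$ realizes. That means a nested pairing $a_i\cdots a_i^{-1}$ for each block of handle edges and a crossing pairing $c_jc_j$ for each block of crosscap edges. The blocks must appear in the order in which the segments occur around the base wall, and each side must carry polynomially many crossing points.

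The tool you name does not produce this. A tree--cotree (or greedy shortest) system of loops does cut $\Sigma$ into a disk with each loop meeting $H$ in $\mathbf{O}(|V(H)|+g)$ points. However, its boundary word is in general not canonical. Bringing it into canonical form requires cut-and-paste moves, and each new side is a curve drawn through the current plane graph $H'$. Crossing numbers can therefore compound from move to move. Showing that canonical schemas of polynomial complexity exist is precisely the missing lemma. Results of Vegter--Yap and Lazarus--Pocchiola--Vegter--Verroust give total complexity $\mathbf{O}(g|V|)$, but these are real theorems. You would additionally need the statement for non-orientable signatures with handles and crosscaps interleaved in a prescribed order. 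You describe no re-routing that lowers crossings while keeping the system canonical. Relatedly, you call the routing of Step~2 the hard part, but it is routine once the number $N$ of crossing points is bounded.

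Two smaller issues remain.

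\textbf{Counting.} Since $|E(H)|=\mathbf{O}(|V(H)|+g)$, your per-edge bound gives $N=\mathbf{O}(g(|V(H)|+g)^2)$, not $\mathbf{O}(g^2(|V(H)|+g))$. This is harmless, because Step~2 then needs only $k=\mathbf{O}(|V(H)|+N)$, well inside $g^4(|V(H)|+g)^2$. It does show that the factor $g^4$ does not come from the routing. Also, for $g=0$ the stated bound is $0$, so the statement must be read with $g\geq 1$.

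\textbf{Control.} The control claim in Step~3 is not supported by your construction. Step~2 places the model of $H'$ in a small subwall, so its branch sets need not contain long pieces of base cycles. You should enforce this, for example by blowing up the planar model by a factor $t$ so that every branch set meets at least $t$ distinct base cycles. The extra factor is absorbed, since $t=\mathbf{O}(\sqrt{g}+1)$ whenever $K_t$ embeds in $\Sigma$. Then take $(A,B)\in\mathcal{T}_W$ of order less than $t$. Some branch set avoids $A\cap B$, and it meets a base cycle that also avoids $A\cap B$. That cycle meets the rail contained in $B\setminus A$, so it lies in $B\setminus A$, and hence so does the branch set. Since the branch sets of a $K_t$-model are pairwise adjacent, none of them can lie in $A\setminus B$.
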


\paragraph{Proof of the annotated local structure theorem.}
We start by giving estimates on some of the functions involved in the proof which themselves help to give estimates on the functions mentioned in \zcref{thm:strongest_localstructure}.
The following functions determine the order of the transactions and the order of the nests we will need to find throughout the proof.
Both functions depend on a parameter $g$ we use to keep track on the number of ``genus-increasing'' steps we have performed in order to facilitate an inductive proof.
\begin{align*}
    \mathsf{radial}(g,t,k)\coloneqq~& (g+2)(8k+8c_{\ref{prop:universal-surface-walls}}t^{12}+4+ \nicefrac{4k}{2}(t-3)(t-4)+1)\\
    \mathsf{nest}(g,t,r,k)\coloneqq~ &(g+2)\big(8k+8c_{\ref{prop:universal-surface-walls}}t^{12} + \mathsf{radial}(g-1,t,k) + 14\\
    &+ \mathsf{cost}_{\ref{thm:blanksocietyclassification}}(r,t,4(k+c_{\ref{prop:universal-surface-walls}}t^{12}+1)) + \mathsf{loss}_{\ref{thm:societyclassification}}(t) \big)\\
    & + \mathsf{nest}_{\ref{thm:blanksocietyclassification}}(r,t,4(k+c_{\ref{prop:universal-surface-walls}}t^{12}+1)) + \nicefrac{4k}{2}(t-3)(t-4) + r + 2\\
    \mathsf{transaction}(g,t,r,k)\coloneqq~ & \max\big(4k+4c_{\ref{prop:universal-surface-walls}}t^{12}+24 + 4\mathsf{nest}(g-1,t,r,k) + \\
    & \ 2\mathsf{radial}(g-1,t,k), 3r(r-1)-2\big)
\end{align*}
With these values fixed, the functions of \zcref{thm:strongest_localstructure} are as follows.
\begin{align*}
    \mathsf{apex}_{\ref{thm:strongest_localstructure}}(t,r,k) \coloneqq~& 9t^2 \cdot \mathsf{apex}^\mathsf{genus}_{\ref{thm:blanksocietyclassification}}(t) + 16t^3\\
    & + \mathsf{apex}^\mathsf{fin}_{\ref{thm:blanksocietyclassification}}(r,t,4(k+c_{\ref{prop:universal-surface-walls}}t^{12}+1),\mathsf{transaction}(9t^2,t,r,k))\\
    \mathsf{depth}_{\ref{thm:strongest_localstructure}}(t,r,k) \coloneqq~& \mathsf{depth}_{\ref{thm:blanksocietyclassification}}(r,t,4(k+c_{\ref{prop:universal-surface-walls}}t^{12}+1),\mathsf{transaction}(9t^2,t,r,k))\\
    \mathsf{mesh}_{\ref{thm:strongest_localstructure}}(t,r,k,w) \coloneqq ~& \mathsf{rfw}_{\ref{thm:FlatRedMesh}}\Big(t,\mathsf{apex}_{\ref{thm:strongest_localstructure}}(t,k)+2\mathsf{nest}(9t^2,t,r,k)+9+w+\nicefrac{1}{2}(t-3)(t-4) \\
    & \cdot (2\mathsf{depth}_{\ref{thm:blanksocietyclassification}}(r,t,4(k+c_{\ref{prop:universal-surface-walls}}t^{12}+1),\mathsf{transaction}(9t^2,t,r,k))+1)\Big)
\end{align*}
Using the bounds provided in the proof of \zcref{thm:blanksocietyclassification}, we can give the following estimates for the functions above:
We have
\[ \mathsf{radial}(g,t,k) \in \mathbf{O}((g+t+k)^{13}) \text{ and } \mathsf{nest}(g,t,r,k), \mathsf{transaction}(g,t,r,k) \in \mathbf{O}((g+t+r+k)^{1728}) . \]
This in turn allows us to determine that
\begin{align*}
    &\mathsf{apex}_{\ref{thm:strongest_localstructure}}(t,r,k) \in \mathbf{O}((t+r+k)^{2045952}), \ \mathsf{depth}_{\ref{thm:strongest_localstructure}}(t,r,k) \in \mathbf{O}((t+r+k)^{2833952}), \text{ and}\\
    &\mathsf{mesh}_{\ref{thm:strongest_localstructure}}(t,r,k,w) \in \mathbf{O}((t+r+k)^{5667844} + t^2w).
\end{align*}

\begin{proof}[Proof of \zcref{thm:strongest_localstructure}]
Let $M$ be a $\mathsf{mesh}_{\ref{thm:strongest_localstructure}}(t,r,k,w)$-mesh in $G$.
We begin by applying \zcref{thm:FlatRedMesh}.
The first outcome of this theorem leads us directly into the first outcome of the result we wish to show.
The second outcome of \zcref{thm:FlatRedMesh} similarly fits our desired second outcome exactly.
Thus we may proceed under the assumption that we find an $r'$-submesh $M'$ of $M$ and a set $Z \subseteq V(G) \setminus V(M')$ with $|Z| < 9t^2$ such that $M'$ is a homogeneous, flat mesh in $G - Z$, with
\begin{align*}
    r'  \geq~&  \mathsf{apex}_{\ref{thm:strongest_localstructure}}(t,k) + 2\mathsf{nest}(9t^2,t,r,k) + 9 + w \\
        ~&      + \nicefrac{1}{2}(t-3)(t-4)(2\mathsf{depth}_{\ref{thm:blanksocietyclassification}}(r,t,4(k+c_{\ref{prop:universal-surface-walls}}t^{12}+1),\mathsf{transaction}(9t^2,t,r,k))+1) .
\end{align*}
Thus in particular, if $M'$ turns out to be red, we have landed in the third outcome we wish to prove.
We may therefore assume that $M'$ is blank.

Let $\mathcal{C}_0'=\{ {C^0_1}',\dots,{C^0}'_{\mathsf{nest}(9t^2,t,r,k)}\}$ be the set of the $\mathsf{nest}(9t^2,t,r,k)$ vertex-disjoint cycles obtained by iteratively peeling off the perimeters ${C^0_i}'$ of the $(x-2(i-1))$-submeshes of $M'$ that are obtained by this procedure, for $i\in[\mathsf{nest}(9t^2,t,r,k)]$.
Moreover, let $M_0$ be the $\big(\mathsf{apex}_{\ref{thm:strongest_localstructure}}(t,k)+\nicefrac{1}{2}(t-3)(t-4)(2\mathsf{depth}_{\ref{thm:strongest_localstructure}}(t,4(k+c_{\ref{prop:universal-surface-walls}}t^{12}+1),\mathsf{transaction}(9t^2,t,r,k))+1)+9+ r\big)$-submesh of $M'$ that is left over after removing the cycles in $\mathcal{C}'_0$ from $M'$ and let $U_0$ be the perimeter of $M_0$.

\begin{observation}\label{obs:localstrucobs1}
Let $I\subseteq [\mathsf{nest}(9t^2,t,r,k)]$ with $|I|=k$ and let $O$ be a cylindrical $k$-mesh with base cycles ${C^0_i}'$, $i\in I$, and rails taken from subpaths of the paths of $M$.
Then the tangle $\mathcal{T}_O$ is a truncation of the tangle $\mathcal{T}_{M_0}$.
\end{observation}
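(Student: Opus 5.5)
To prove \zcref{obs:localstrucobs1} we check directly that every separation of order less than $k$ that is oriented by $\mathcal{T}_O$ is oriented the same way by $\mathcal{T}_{M_0}$. Recall that $\mathcal{T}_O$ orients a separation $(X,Y)$ of order less than $k$ towards the side containing both a full base cycle and a full rail of $O$. Also, $\mathcal{T}_{M_0}$ orients it towards the side containing a full horizontal and a full vertical path of $M_0$, and this is defined for orders below the order of $M_0$, which exceeds $k$ by the choice of parameters. So, fixing $(X,Y)\in\mathcal{T}_O$ with $|X\cap Y|<k$, it suffices to show that $Y\setminus X$ contains a horizontal and a vertical path of $M_0$.

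The key step is a connectivity argument inside $M'$. First, since $|X\cap Y|<k=|I|$ and the cycles ${C^0_i}'$, $i\in I$, are pairwise disjoint, some cycle ${C^0_i}'$ with $i\in I$ avoids $X\cap Y$. Because $(X,Y)\in\mathcal{T}_O$ and the cycles of $O$ are pairwise linked by the $k$ rails, at most $|X\cap Y|$ of them are hit, so this cycle lies entirely in $Y\setminus X$.

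Next, each cycle ${C^0_i}'$ is a perimeter obtained by peeling $M'$, so it meets every horizontal and every vertical path of $M'$. Choose, among the more than $k$ horizontal paths of $M_0$, one that avoids $X\cap Y$. Extend it to the full horizontal path of $M'$ containing it; there are at least $k$ such extensions that are disjoint, so one of them misses $X\cap Y$. This extended path meets ${C^0_i}'$ and is connected, so it lies entirely in $Y\setminus X$, and hence so does the chosen horizontal path of $M_0$. The same argument applied to the vertical paths gives a vertical path of $M_0$ in $Y\setminus X$. Therefore $(X,Y)\in\mathcal{T}_{M_0}$.

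The only point needing care is the counting: the extended paths must be chosen disjoint from $X\cap Y$ simultaneously with the cycle. This works because $M'$ has far more than $2k$ horizontal and vertical paths, so pigeonhole gives an extended path that is fully disjoint from $X\cap Y$. The rails of $O$, being subpaths of paths of $M$, play no role beyond fixing the orientation $\mathcal{T}_O$.
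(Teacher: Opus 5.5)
Your argument is correct. It is the routine connectivity argument the paper leaves implicit, since the observation is stated there without proof: a base cycle ${C^0_i}'$ of $O$ lies in $Y\setminus X$ directly by the definition of $\mathcal{T}_O$, so your detour through a cycle avoiding $X\cap Y$ is unnecessary. Then one of the more than $k$ disjoint horizontal (resp.\ vertical) paths of $M'$ carrying a path of $M_0$ avoids $X\cap Y$ and meets ${C^0_i}'$, which puts the corresponding path of $M_0$ into $Y\setminus X$.

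The only slip is your claim that ${C^0_i}'$ meets every path of $M'$. It meets only the paths of $M'$ that survive in the $(x-2(i-1))$-submesh whose perimeter it is. These include all paths of $M'$ through $M_0$, which is all you use.
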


Let $\Sigma_0$ be the sphere.
As $M'$ is flat and blank in $G$, there exists a blank $\Sigma_0$-rendition $\delta_0$ for $G$ with one vortex $c_0$ such that $M'$ is grounded in $\delta_0$ and the trace of the perimeter of $M'$ bounds a closed disk $\Delta_0' \subseteq \Sigma_0$ that is disjoint from $c_0$ and every vertex in $V(M')$ that is a node in $\delta_0$ is drawn in $\Delta_0'$.

Let $X_0$ be the set of all vertices of the perimeter $U_0$ of $M_0$ that are grounded in $\delta_0$ and let $\Delta_0''$ be the \textsl{open} disk in $\Sigma_0$ that is bounded by the trace of $U_0$ and disjoint from the vortex of $\delta_0$.
Further, let $\Delta_0$ be the closed disk $\Sigma_0 \setminus \Delta_0''$.
It follows that the $\Delta_0$-society $(G_0,\Omega_0)$ has a blank, cylindrical rendition $\rho_0$ in the disk $\Delta_0$ with the nest $\mathcal{C}_0'$ around the unique vortex $c_0$.
Indeed, we may select $\mathsf{radial}(9t^2,t,k)$ many paths from the mesh $M'$ to obtain a radial linkage $\mathcal{R}_0'$ that links vertices from distinct rows on $U_0$ to the cycle ${C^0_1}'$.

By using \zcref{prop:cozyNest}, we may find a \textsl{cozy} nest $\mathcal{C}_0=\{ C_1^0,\dots,C^0_{\mathsf{nest}(9t^2,t,r,k)}\}$ around $c_0$ in $\rho_0$ in time $\mathbf{O}(\mathsf{nest}(9t^2,t,r,k)|E(G)|^2)$.
By using \zcref{prop:radialtoorthogonal} we can also find a radial linkage $\mathcal{R}_0$ of order $\mathsf{radial}(9t^2,t,k)$ that is end-identical with $\mathcal{R}_0'$ and orthogonal to $\mathcal{C}_0$ in time $\mathbf{O}(\mathsf{radial}(9t^2,t,k)\mathsf{nest}(9t^2,t,r,k)|E(G)|)$.

\begin{observation}\label{obs:localstrucobs2}
Let $I\subseteq [\mathsf{nest}(9t^2,t,r,k)]$ with $|I|=k$ and let $O$ be a cylindrical $k$-mesh with base cycles $C^0_i$, $i\in I$, and rails taken from subpaths of the paths of $M$.
Then the tangle $\mathcal{T}_O$ is a truncation of the tangle $\mathcal{T}_{M_0}$.
\end{observation}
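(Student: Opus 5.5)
This is the second observation (\zcref{obs:localstrucobs2}): a cylindrical $k$-mesh $O$ whose base cycles are $C^0_i$, $i\in I$, and whose rails are subpaths of paths of $M$ induces a tangle $\mathcal{T}_O$ that is a truncation of $\mathcal{T}_{M_0}$. I would deduce it from \zcref{obs:localstrucobs1} together with the structure provided by \zcref{prop:cozyNest}. The point is that the cozy nest $\mathcal{C}_0$ is drawn in $\rho_0$ in the union of ${C^0_1}'$ and the outer graph of ${C^0_1}'$. So every cycle $C^0_i$ separates, in the disk, the vortex $c_0$ from $U_0$ exactly as the original peeled cycles ${C^0_i}'$ do. I would prove the truncation property directly rather than route it through \zcref{obs:localstrucobs1}.

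First I fix a separation $(A,B)$ of order less than $k$ in $G$. Since $|A\cap B|<k=|I|$ and $O$ has $k$ rails, some base cycle $C^0_i$ ($i\in I$) and some rail of $O$ avoid $A\cap B$. Their union $H$ is connected, so it lies entirely in $A\setminus B$ or entirely in $B\setminus A$.

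Next I show that the side containing $H$ is the $M_0$-majority side. For this I argue that $C^0_i$ meets many disjoint paths of $M$ running from $U_0$ outward through all the nest cycles. These are the rows and columns of $M'$ leaving $M_0$; there are $k$ or more of them, since $M_0$ has order much larger than $k$ and the radial structure comes from $M'$. Each such path is a path from $U_0$ to the outer perimeter of $M'$, and its trace must cross the trace of $C^0_i$. The reason is that $C^0_i$ is grounded and its disk contains $c_0$ while excluding $\Delta_0''$; this follows from the nest property of the cozy nest produced by \zcref{prop:cozyNest}, whose disks lie between those of ${C^0_1}'$ and the outer perimeter. Flatness of $M'$ in a vortex-free region upgrades each such trace crossing into a common vertex.

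Then, counting: fewer than $k$ of these paths meet $A\cap B$, so at least one path, together with a horizontal and a vertical path of $M_0$ avoiding $A\cap B$ (these exist because $M_0$ has order at least $k$), connects $H$ to the $M_0$-majority side. Hence $H\subseteq B\setminus A$ exactly when $(A,B)\in\mathcal{T}_{M_0}$. This gives $\mathcal{T}_O\subseteq\mathcal{T}_{M_0}$.

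The main obstacle is the crossing claim: that the new cozy cycles $C^0_i$ still intersect enough disjoint paths of $M$ connecting them to $M_0$. I would argue it topologically through the traces in the sphere rendition $\delta_0$, using that all of $M'$ outside $c_0$ is drawn in a vortex-free region, so curve crossings coincide with shared nodes.
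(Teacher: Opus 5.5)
The paper states this observation without proof, treating it as immediate from the construction. The natural justification is the one you give: by \zcref{prop:cozyNest} the cozy cycles $C^0_i$ lie in ${C^0_1}'$ together with its outer graph in $\rho_0$. So each $C^0_i$ separates $c_0$ from $U_0$ inside the vortex-free annulus, and therefore meets the paths of $M'$ that continue paths of $M_0$ outward. From there the argument is the same as for \zcref{obs:localstrucobs1}. Your proof is a correct way of making this explicit.

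One step should be tightened. In the counting step you pick a segment from $U_0$ to the perimeter of $M'$ that avoids $A\cap B$. Separately, you pick a horizontal and a vertical path of $M_0$ that avoid $A\cap B$. These pieces need not form a connected subgraph avoiding $A\cap B$. The segment is attached to $M_0$ only at its endpoint on $U_0$, and from there you still have to travel along $U_0$ or along the segment's own row of $M_0$, either of which may meet $A\cap B$.

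The fix is to count whole horizontal paths of $M'$ that contain horizontal paths of $M_0$. They are pairwise disjoint, and there are as many as the order of $M_0$, which is at least $k$. Hence one of them, say $Q$, avoids $A\cap B$. By your crossing argument $Q$ meets $C^0_i$. Also, $Q$ contains a horizontal path of $M_0$, which meets any vertical path of $M_0$ avoiding $A\cap B$. So $C^0_i$, a horizontal path of $M_0$ and a vertical path of $M_0$ all lie on the same side of $(A,B)$, which is what you need.

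The separation is taken in $G$ while the rendition is of $G-Z$. This is harmless, because every path you use lies in $G-Z$.
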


Moreover, if $\overline{G_0}$ is the graph $G - (V(G_0)\setminus V(\Omega_0))$, then $(\overline{G_0},\Omega_0)$ has a vortex-free, blank rendition derived from $\delta_0$ in the surface obtained from $\Sigma_0$ by removing an open disk.
Finally, let $\mathcal{C}^*\coloneqq \{ C^0_{\mathsf{nest}(9t^2,t,r,k)-4k-4c_{\ref{prop:universal-surface-walls}}t^{12}-3},\dots,C^0_{\mathsf{nest}(9t^2,t,r,k)}\}$ be the outermost $4(k+c_{\ref{prop:universal-surface-walls}}t^{12}+1)$ cycles of $\mathcal{C}^0$.
Then $(G_0,\Omega_0,\mathcal{C}^*,\mathcal{R}_0,\emptyset)$ is a surface configuration of $(G_0,\Omega_0)$ for the surface $\Sigma_0$ of strength $(4(k+c_{\ref{prop:universal-surface-walls}}t^{12}+1),\mathsf{radial}(9t^2,t,k))$.

\paragraph{Inductively refining a near embedding: The setup.}
Observe that, as a consequence of \zcref{obs:surface-configs-to-walls}, the tuple $\Lambda_0=(A_0,M_0,D_0,\delta_0)$ is in fact a $\Sigma_0$-landscape of detail $4(k+c_{\ref{prop:universal-surface-walls}}t^{12}+1)$, where $D_0$ is a $(4(k+c_{\ref{prop:universal-surface-walls}}t^{12}+1))$-surface-wall without handles or crosscaps whose base cycles coincide with the cycles in $\mathcal{C}^*$.

We now prove that, given the following collection of objects for some $i\in[0,t^2-1]$, we can apply \zcref{thm:blanksocietyclassification} to the society $(G_i,\Omega_i)$ to either find a small separation that splits off all red vertices, find a red $K_t$-minor controlled by $M$, find a red, flat $r$-mesh whose tangle is a truncation of the tangle of $M$, create all of the objects below for $i+1$, or through the last outcome of \zcref{thm:blanksocietyclassification}, create a blank $\Sigma_i$-layout centred at $M$.
Since we have already shown how to reach the point $i=0$, we will eventually be able to assume that this last outcome of \zcref{thm:blanksocietyclassification} comes true.

For now, let us introduce the objects we are looking for.
Let $i\in[0,9t^2-1]$ be given together with
\begin{itemize}
    \item an apex set $A_i\subseteq V(G)$ with $A_{i-1}\subseteq A_i$, $A_{-1}\coloneqq\emptyset$, and $|A_i|\leq i\cdot \mathsf{apex}^\mathsf{genus}_{\ref{thm:blanksocietyclassification}}(t)+16t^3$,
    \item a surface $\Sigma_i$ obtained from the sphere by adding a total of $i$ handles and crosscaps in some combination,
    \item a blank $\Sigma_i$-rendition $\delta_i$ with a unique vortex $c_i$,
    \item a $(4(k+c_{\ref{prop:universal-surface-walls}}t^{12}+1))$-surface wall $D_i$ with the same amount of handles and crosscaps used to obtain $\Sigma_i$ from the sphere, such that the base cycles of $D_i$ coincide with the cycles of $\mathcal{C}^*$,
    \item a society $(G_i,\Omega_i)$ such that there exists a $\delta_i$-aligned disk $\Delta_i$ whose boundary intersects $\delta_i$ exactly in $V(\Omega_i)$ and the restriction $\rho_i$ of $\delta_i$ to $\Delta_i$ is a cylindrical rendition of $(G_i,\Omega_i)$ with $c_i$ being its unique vortex,
    \item a cozy nest $\mathcal{C}_i=\{C^i_1,\dots,C^i_{\mathsf{nest}(9t^2-i,t,r,k)} \}$ of order $\mathsf{nest}(9t^2-i,t,r,k)$ in $\rho_i$ around $c_i$,
    \item a family of transactions $\mathfrak{P}_i=\{ \mathcal{P}_1,\dots,\mathcal{P}_i\}$ on $(G_0,\Omega_0)$ such that $\mathcal{P}_j$ is a handle or crosscap transaction,
    \item a radial linkage $\mathcal{R}_i$ of order $\mathsf{radial}(9t^2-i,t,k)$ orthogonal to $\mathcal{C}_i\cup\mathcal{C}^*$ whose endpoints on $(G_0,\Omega_0)$ coincide with some of the endpoints of $\mathcal{R}_0$ and which is disjoint from the paths in $\bigcup\mathfrak{P}_i$, and
    \item all objects above are chosen such that $(G_0-A_i,\Omega_0,\mathcal{C}^*,\mathcal{R}_i,\mathfrak{P}_i)$ is a $\Sigma_i$-configuration of strength $(4(k+c_{\ref{prop:universal-surface-walls}}t^{12}+1),\mathsf{radial}(9t^2-i,t,k),p_1,\dots,p_i)$ with $p_j=4k+4c_{\ref{prop:universal-surface-walls}}t^{12}$ for all $j \in [i]$.
\end{itemize}

By \zcref{obs:surface-configs-to-walls}, the last point ensures the existence of the $(4(k+c_{\ref{prop:universal-surface-walls}}t^{12}+1))$-surface-wall $D_i$.
Moreover, if we ever reach the point $i=9t^2$, \zcref{prop:universal-surface-walls} guarantees that we have found a $K_{3t}$-minor controlled by $D_i$.
This would allow us to apply \zcref{prop:redClique} to get either of the first two outcomes we desire.
Therefore, this process must terminate.

\paragraph{Inductively refining a near embedding: Construction.}
Suppose for some $i\in[0,9t^2-1]$ the list of objects as above has already been constructed.

We begin by applying \zcref{thm:blanksocietyclassification} to $(G_i,\Omega_i)$.
If the outcomes of \zcref{thm:blanksocietyclassification} \textsl{i)}, \textsl{ii)}, or the first point of the outcome \textsl{v)} are reached, we are immediately done.
For the remaining options we present a case distinction.

\textbf{Case 1:}
Let $(G',\Omega')$ be the $C^i_{\mathsf{nest}(9t^2-i,t,r,k)-\mathsf{cost}_{\ref{thm:blanksocietyclassification}}(r,t,4(k+c_{\ref{prop:universal-surface-walls}}t^{12}+1))-1}$-society in $\rho_i$.
Case 1 contains two subcases that can be treated in almost identical ways.
Those are outcome \textsl{ii)} and outcome \textsl{iii)} of \zcref{thm:blanksocietyclassification} where both outcomes yield a set $A\subseteq V(G_i)$ of size at most $\mathsf{apex}^\mathsf{genus}_{\ref{thm:blanksocietyclassification}}(t)$ and a homogeneous, isolated transaction of order $\mathsf{transaction}(9t^2-i,t,r,k)$.
Independent of whether \textsl{ii)} or \textsl{iii)} is reached, if the transaction we receive is red instead we may apply \zcref{lemma:RedTransactionGivesRedMesh} and thus reach our third desired outcome, since we have
\begin{align*}
    \mathsf{transaction}(9t^2-i,t,r,k) &\geq 3r(r-1) - 2 \text{ and} \\
    \mathsf{nest}(9t^2-i,t,r,k)-\mathsf{cost}_{\ref{thm:blanksocietyclassification}}(r,t,4(k+c_{\ref{prop:universal-surface-walls}}t^{12}+1))-\mathsf{loss}_{\ref{thm:blanksocietyclassification}}(t) &\geq r+2 .
\end{align*}
Thus we know that the transaction we find is blank and isolated.
\smallskip

In the first of the two outcomes, $\mathcal{Q}$ is a blank, isolated crosscap transaction of order $\mathsf{transaction}(9t^2-i,t,r,k)$ in $(G'-A,\Omega')$ together with a nest $\mathcal{C}'$ in $\rho_i$ of order $\mathsf{nest}(9t^2-i,t,r,k)-\mathsf{cost}_{\ref{thm:blanksocietyclassification}}(r,t,4(k+c_{\ref{prop:universal-surface-walls}}t^{12}+1))-\mathsf{loss}_{\ref{thm:blanksocietyclassification}}(t)$ around $c_i$ to which $\mathcal{Q}$ is orthogonal.
\smallskip

In the second outcome, $\mathcal{Q}$ is a blank, isolated handle transaction of order $\mathsf{transaction}(9t^2-i,t,r,k)$ in $(G'-A,\Omega')$ together with a nest $\mathcal{C}'$ in $\rho_i$ of order $\mathsf{nest}(9t^2-i,t,r,k)-\mathsf{cost}_{\ref{thm:blanksocietyclassification}}(r,t,4(k+c_{\ref{prop:universal-surface-walls}}t^{12}+1))-\mathsf{loss}_{\ref{thm:blanksocietyclassification}}(t)$ around $c_i$ to which the two planar transactions $\mathcal{Q}_1$ and $\mathcal{Q}_2$ that make up $\mathcal{Q}$ are orthogonal.
\smallskip

We can now shave off a little off of the sides of the transaction in both cases as follows.
In the first outcome, we reduce the order of $\mathcal{Q}$ by $2\mathsf{radial}(9t^2-i-1,t,k)$ by shedding off the last $2\mathsf{radial}(9t^2-i+1,t,k)$ paths of $\mathcal{Q}$.
Alternatively, in the second outcome, we reduce the order of $\mathcal{Q}$ by $2\mathsf{radial}(9t^2-i-1,t,k)$ by removing from each $\mathcal{Q}_j$, $j\in[2]$, the last $\mathsf{radial}(9t^2-i-1,t,k)$ paths.
Let $\mathcal{Q}'$, $\mathcal{Q}'_1$, and $\mathcal{Q}'_2$, be the resulting transactions.
Moreover, notice that we may select a radial linkage $\mathcal{L}$ of order $\mathsf{radial}(9t^2-i-1,t,k)$ in $(G'-A,\Omega')$ that is orthogonal to $\mathcal{C}'$.
Indeed, we may select $\mathcal{Q}'$ and $\mathcal{L}$ such that there are segments $I_1$ and $I_2$ of $\Omega'$ where $\mathcal{Q}'$ has all endpoints in $V(I_1)$, $\mathcal{L}$ has its endpoints in $V(\Omega')$ in $V(I_2)$, and $I_1$ and $I_2$ are disjoint. 
It follows that $|\mathcal{Q}'|=4k+4c_{\ref{prop:universal-surface-walls}}t^{12}+14 + 4\mathsf{nest}(9t^2-i-1,t,r,k)$ and $|\mathcal{Q}_1'|=|\mathcal{Q}_2'|=2k+2c_{\ref{prop:universal-surface-walls}}t^{12}+7 + 2\mathsf{nest}(9t^2-i-1,t,r,k)$ in case $\mathcal{Q}'$ is a handle transaction.

In the following we discuss only how to proceed in the case where $\mathcal{Q}'$ is a crosscap transaction.
The second case, namely the one where $\mathcal{Q}'$ is a handle transactions, can be handled analogously with the exception that, instead of \zcref{lemma:integrate-crosscap} one needs to apply \zcref{lemma:integrate-handle} and some of the choices we make below need to be made for the transactions $\mathcal{Q}'_i$ and the objects derived from them individually.
Upon closer inspection, the reader will see that the size of $\mathcal{Q}'$ (and in particular of $\mathcal{Q}_1'$ and $\mathcal{Q}_2'$) was chosen to allow the application of any of these two lemmas.

We assume that $\mathcal{Q}'=\{ Q_1, \ldots  ,Q_{4k+4c_{\ref{prop:universal-surface-walls}}t^{12}+14 + 4\mathsf{nest}(9t^2-i-1,t,r,k)}\}$ are indexed naturally.
Let $a\coloneqq 2\mathsf{nest}(9t^2-i-1,t,r,k)+7$ and $b\coloneqq 2\mathsf{nest}(9t^2-i-1,t,r,k)+4k+4c_{\ref{prop:universal-surface-walls}}t^{12}+8$.
Moreover, let $\mathcal{A}\coloneqq \{ Q_1,\dots,Q_{a-1}\}$, $\mathcal{B}=\{ Q_{b+1},\dots,Q_{4k+4c_{\ref{prop:universal-surface-walls}}t^{12}+14 + 4\mathsf{nest}(9t^2-i-1,t,r,k)}\}$, and $\mathcal{I}\coloneqq \{ Q_a,\dots,Q_b\}$.
Thus $|\mathcal{I}| = 4k + 4c_{\ref{prop:universal-surface-walls}}t^{12} + 8 + 2\mathsf{nest}(9t^2-i-1,t,r,k)$ and $|\mathcal{A}| = |\mathcal{B}| = 2\mathsf{nest}(9t^2-i-1,t,r,k) + 6$.

We assume the cycles in $\mathcal{C}'$ to be numbered $C_1',\dots, C_{\mathsf{nest}(9t^2-i,t,r,k)-\mathsf{cost}_{\ref{thm:blanksocietyclassification}}(r,t,4(k+c_{\ref{prop:universal-surface-walls}}t^{12}+1))-\mathsf{loss}_{\ref{thm:blanksocietyclassification}}(t)}'$ from innermost to outermost.
Let $I=[\mathsf{nest}(9t^2-i,t,r,k)-\mathsf{cost}_{\ref{thm:blanksocietyclassification}}(r,t,4(k+c_{\ref{prop:universal-surface-walls}}t^{12}+1))-\mathsf{loss}_{\ref{thm:blanksocietyclassification}}(t)-3,\mathsf{nest}(9t^2-i,t,r,k)-\mathsf{cost}_{\ref{thm:blanksocietyclassification}}(r,t,4(k+c_{\ref{prop:universal-surface-walls}}t^{12}+1))-\mathsf{loss}_{\ref{thm:blanksocietyclassification}}(t)-8k-8c_{\ref{prop:universal-surface-walls}}t^{12}-\mathsf{radial}(9t^2-i-1,t,k)-9]\subseteq [\mathsf{nest}(9t^2-i,t,r,k)-\mathsf{cost}_{\ref{thm:blanksocietyclassification}}(r,t,4(k+c_{\ref{prop:universal-surface-walls}}t^{12}+1))-\mathsf{loss}_{\ref{thm:blanksocietyclassification}}(t)]$.
Then $|I|=8k+8c_{\ref{prop:universal-surface-walls}}t^{12}+\mathsf{radial}(9t^2-i-1,t,k)+6=2|\mathcal{I}|+|\mathcal{L}|+2$.
Observe that this means, in particular,
\begin{align*}
    2|\mathcal{I}|+|\mathcal{L}| =~& 8k+8c_{\ref{prop:universal-surface-walls}}t^{12} +4 + \mathsf{radial}(9t^2-i-1,t,k) \leq \mathsf{radial}(9t^2-i,t,k) = |\mathcal{R}_i|.
\end{align*}
Let us now select $\mathcal{I}'$ to be all $V(\Omega')$-$V(C_1')$-subpaths of the paths in $\mathcal{I}$.
It follows that $\mathcal{I}'$ is a radial linkage of order $2|\mathcal{I}|$.
Moreover, $\mathcal{I}'\cup\mathcal{L}$ is a radial linkage of order $8k+8c_{\ref{prop:universal-surface-walls}}t^{12}+\mathsf{radial}(9t^2-i-1,t,k)+4=|I|-2$ which is orthogonal to $\mathcal{C}'$.
This allows us to call \zcref{prop:connected_linkages} for $\mathcal{R}_i$, $\mathcal{I}'\cup\mathcal{L}$, $\mathcal{C}'$, and $I$.
As a result we obtain, in time $\mathbf{poly}(t+k)|E(G)|$, a radial linkage $\mathcal{R}'$ of order $|I|-2$ which shares its endpoints on $V(\Omega')$ with the endpoints of $\mathcal{R}_i$ and its endpoints on $C_1'$ with $\mathcal{I}'\cup\mathcal{L}$.
Moreover, $\mathcal{R}'$ is orthogonal to $\mathcal{C}'\setminus \{ C_i ~\colon~ i\in I\}$ and within the inner graph $G''$ of $C_{\mathsf{nest}(9t^2-i,t,r,k)-\mathsf{cost}_{\ref{thm:blanksocietyclassification}}(r,t,4(k+c_{\ref{prop:universal-surface-walls}}t^{12}+1))-\mathsf{loss}_{\ref{thm:blanksocietyclassification}}(t)-8k-8c_{\ref{prop:universal-surface-walls}}t^{12}-\mathsf{radial}(9t^2-i-1,t,k)-9}'$ it is disjoint from $\mathcal{Q}'\setminus\mathcal{I}$.
Let $\mathcal{J}\subseteq \mathcal{R}'$ be all those paths that do not meet $\mathcal{I}$ within $G''$.
Then $|\mathcal{J}|=|\mathcal{R}'|-8k-8c_{\ref{prop:universal-surface-walls}}t^{12}-4=\mathsf{radial}(9t^2-i-1,t,k)$.

Next we inspect our remaining nest a bit closer.
Let $\mathcal{C}''\subseteq \mathcal{C}$ be the set of the innermost $\mathsf{nest}(9t^2-i,t,r,k)-\mathsf{cost}_{\ref{thm:societyclassification}}(t,4(k+c_{\ref{prop:universal-surface-walls}}t^{12}+1))-\mathsf{loss}_{\ref{thm:societyclassification}}(t)-8k-8c_{\ref{prop:universal-surface-walls}}t^{12}-\mathsf{radial}(9t^2-i-1,t,k)-10$ cycles of $\mathcal{C}'$.
Let $\Omega''$ be the cyclic ordering of the ground vertices of $C'_{\mathsf{nest}(9t^2-i,t,r,k)-\mathsf{cost}_{\ref{thm:societyclassification}}(t,4(k+c_{\ref{prop:universal-surface-walls}}t^{12}+1))-\mathsf{loss}_{\ref{thm:societyclassification}}(t)-8k-8c_{\ref{prop:universal-surface-walls}}t^{12}-\mathsf{radial}(9t^2-i-1,t,k)-9}$ and let $(G'',\Omega'')$ be the resulting society with cylindrical rendition $\rho''$, which is the restriction of $\rho_i$ to the $c_i$-disk $\Delta'$ of $C'_{\mathsf{nest}(9t^2-i,t,r,k)-\mathsf{cost}_{\ref{thm:societyclassification}}(t,4(k+c_{\ref{prop:universal-surface-walls}}t^{12}+1))-\mathsf{loss}_{\ref{thm:societyclassification}}(t)-8k-8c_{\ref{prop:universal-surface-walls}}t^{12}-\mathsf{radial}(9t^2-i-1,t,k)-9}$ in $\rho_i$.
Moreover, let $\mathcal{J}'$ and $\mathcal{Q}''$ be the restrictions of $\mathcal{J}$ and $\mathcal{Q}'$ to $G''$.
We have
\begin{align*}
|\mathcal{C}''|=~&\mathsf{nest}(9t^2-i,t,r,k)-\mathsf{cost}_{\ref{thm:societyclassification}}(t,4(k+c_{\ref{prop:universal-surface-walls}}t^{12}+1))-\mathsf{loss}_{\ref{thm:societyclassification}}(t)-8k-8c_{\ref{prop:universal-surface-walls}}t^{12}\\
&-\mathsf{radial}(9t^2-i-1,t,k)-10\\
=~& (9t^2-i-1)\big(8k+8c_{\ref{prop:universal-surface-walls}}t^{12}+\mathsf{radial}(9t^2-i,t,k)+24\\
&+\mathsf{cost}_{\ref{thm:societyclassification}}(t,4(k+c_{\ref{prop:universal-surface-walls}}t^{12}+1))+\mathsf{loss}_{\ref{thm:societyclassification}}(t) \big)\\
&+ \mathsf{nest}_{\ref{thm:societyclassification}}(t,4(k+c_{\ref{prop:universal-surface-walls}}t^{12}+1)) +\nicefrac{4k}{2}(t-3)(t-4)+r+16\\
\geq~& (9t^2-i-1)\big(8k+8c_{\ref{prop:universal-surface-walls}}t^{12}+\mathsf{radial}(9t^2-i-1,t,k)+24\\
&+\mathsf{cost}_{\ref{thm:societyclassification}}(t,4(k+c_{\ref{prop:universal-surface-walls}}t^{12}+1))+\mathsf{loss}_{\ref{thm:societyclassification}}(t) \big)\\
&+ \mathsf{nest}_{\ref{thm:societyclassification}}(t,4(k+c_{\ref{prop:universal-surface-walls}}t^{12}+1))+\nicefrac{4k}{2}(t-3)(t-4)+r+16\\
=~& \mathsf{nest}(9t^2-i-1,t,r,k)+14
\end{align*}
and
\begin{align*}
|\mathcal{Q}''| =~& 4k+4c_{\ref{prop:universal-surface-walls}}t^{12}+14+4\mathsf{nest}(9t^2-i-1,t,r,k)\\
=~& 2 (2k+2c_{\ref{prop:universal-surface-walls}}t^{12})+4\mathsf{nest}(9t^2-i-1,t,r,k)+14 .
\end{align*}
Notice that the calculations above suffice for the application of \zcref{lemma:integrate-crosscap}.
Indeed, we exceed the necessary numbers by more than $2\mathsf{nest}(9t^2-i-1,t,r,k)$.
If instead we were dealing with a handle transaction we would next want to apply \zcref{lemma:integrate-handle}.
Here we would have constructed two transactions, namely $\mathcal{Q}''_1$ and $\mathcal{Q}''_2$, each of order at least $2k+2c_{\ref{prop:universal-surface-walls}}t^{12}+2\mathsf{nest}(9t^2-i-1,t,r,k)+7$ and thus, together, they form a transaction $\mathcal{Q}''$ of the order above.
This explains our choice for the size of $\mathcal{Q}''$.

So we are now ready to apply \zcref{lemma:integrate-crosscap} to $(G'',\Omega'')$, $\mathcal{C}''$, $\mathcal{J}'$, and $\mathcal{Q}''$ with $p=4k+4c_{\ref{prop:universal-surface-walls}}t^{12}+2$, rendition $\rho''$ and the disk $\Delta'$.
As a result we obtain the following list of objects.
Let $\Sigma^*$ be a surface homeomorphic to the projective plane minus an open disk which is obtained from $\Delta'$ by adding a crosscap to the interior of $c_i$.
Then there exists $\mathcal{I}'\subseteq \mathcal{Q}''$ which is exactly the restriction of $\mathcal{I}$ to $G''$, and a blank rendition $\rho'''$ of $(G'',\Omega'')$ in $\Sigma^*$ with a unique vortex $c_{i+1}'$ and the following properties hold:
\begin{itemize}
    \item $\mathcal{I}'$ is disjoint from $\sigma(c_{i+1}')$,
    \item the vortex society $(G_{i+1},\Omega_{i+1})$ of $c_{i+1}'$ in $\rho'''$ has a cylindrical rendition $\rho_{i+1}$ with nest $\mathcal{C}_{i+1}=\{ C_1^{i+1},\dots,C^{i+1}_{\mathsf{nest}(9t^2-i-1,t,r,k)} \}$ around the unique vortex $c_{i+1}$,
    \item every element of $\mathcal{J}'$ has an endpoint in $V(\sigma_{\rho_{i+1}}(c_{i+1}))$, and
    \item $\mathcal{J}'$ is orthogonal to $\mathcal{C}_{i+1}$. Moreover,
    \item let $\mathcal{J}'=\{J_1,\dots, J_{\mathsf{radial}(9t^2-i-1,t,k)}\}$.
    For each $j\in[\mathsf{radial}(9t^2-i-1,t,k)]$ let $x_j$ be the endpoint of $J_j$ in $V(\Omega'')$ and let $y_j$ be the endpoint of $J_j$ on $c_{i+1}'$; then if $x_1,x_2,\dots,x_{\mathsf{radial}(9t^2-i-1,t,k)}$ appear on $\Omega''$ in the order listed, then $y_1,y_2,\dots,y_{\mathsf{radial}(9t^2-i-1,t,k)}$ appear on $N_{\rho'''}(c_{i+1}')$ in the order listed.
    \item Finally, let $\Delta''$ be the open disk bounded by the trace of the outermost cycle of $\mathcal{C}''$ in $\rho''$.
    Then $\rho''$ restricted to $\Delta'\setminus \Delta''$ is equal to $\rho'''$ restricted to $\Delta'\setminus\Delta''$.
\end{itemize}
Now let $\delta_{i+1}$ be obtained by first unifying the renditions $\rho'''$ and $\rho_{i+1}$ along the vortex society of $c_{i+1}'$, then combining the resulting rendition of $(G'',\Omega'')$ in the disk $\Delta'$ with the rendition $\rho''$ along to boundary of $\Delta''$, and then reintegrating $\rho''$ into $\delta_i$.
Moreover, let $\Sigma_{i+1}$ be obtained from $\Sigma_i$ by removing the interior of $\Delta'$ and replacing it with $\Sigma^*$.
Notice that this means that $\Sigma_{i+1}$ is obtained from $\Sigma_i$ by adding a single crosscap.
We also set $A_{i+1}\coloneqq A_i\cup A$ and obtain 
\begin{align*}
|A_{i+1}|\leq~& i\cdot\mathsf{apex}^\mathsf{genus}_{\ref{thm:blanksocietyclassification}}(t)+16t^3+\mathsf{apex}^\mathsf{genus}_{\ref{thm:blanksocietyclassification}}(t)\\
\leq ~& (i+1)\mathsf{apex}^\mathsf{genus}_{\ref{thm:blanksocietyclassification}}(t)+16t^3.
\end{align*}
So the first three points of our invariant are maintained.
\smallskip

Notice that, by construction, we have that the radial linkage $\mathcal{R}'$ can be extended onto the restriction of $\mathcal{R}_i$ to the proper outer graph of the closed curve obtained by following along the vertices on $\Omega'$.
This allows us to, firstly, extend the crosscap transaction $\mathcal{I}$ of order $4k+4c_{\ref{prop:universal-surface-walls}}t^{12}+2$ along $\mathcal{R}_i$ to obtain a crosscap transaction $\mathcal{P}_{i+1}$ on $(G_0,\Omega_0)$ whose paths are disjoint from all paths in $\mathcal{P}_j$, $j\in[i]$.
Secondly, we can extend $\mathcal{R}'$ along $\mathcal{R}_i$ to form the radial linkage $\mathcal{R}_{i+1}$ such that it is orthogonal to both $\mathcal{C}^*$ and $\mathcal{C}_{i+1}$ and satisfies
\begin{align*}
 |\mathcal{R}_{i+1}| = \mathsf{radial}(t^2-i-1,t,k),
\end{align*}
its endpoints coincide with some of the endpoints of $\mathcal{R}_0$, and it is disjoint from the paths in $\mathfrak{P}_{i+1}=\mathfrak{P}_i \cup \{ \mathcal{P}_{i+1} \}$.
Indeed, it is straightforward to see that $(G_0-A_{i+1},\Omega_0,\mathcal{C}^*,\,\mathcal{R}_{i+1},\mathfrak{P}_{i+1})$ is a $\Sigma_{i+1}$-configuration of strength $(4(k+c_{\ref{prop:universal-surface-walls}}t^{12}+1),\mathsf{radial}(t^2-i,t,k),p_1,\dots,p_{i+1})$ with $p_j=4k+4c_{\ref{prop:universal-surface-walls}}t^{12}$.
So the last three points of our invariant are also satisfied.
\smallskip

By \zcref{obs:surface-configs-to-walls} the $\Sigma_{i+1}$-configuration found in the previous paragraph yields the existence of a $(4(k+c_{\ref{prop:universal-surface-walls}}t^{12}+1))$-surface wall $D_{i+1}$ with the amount of crosscaps and handles used to obtain $\Sigma_{i+1}$ from the sphere such that the base cycles of $D_{i+1}$ coincide with the cycles of $\mathcal{C}^*$.
This, together with the society $(G_{i+1},\Omega_{i+1})$, the cylindrical rendition $\rho_{i+1}$, and the nest $\mathcal{C}_{i+1}$ around the unique vortex $c_{i+1}$ of both $\rho_{i+1}$ and $\delta_{i+1}$, satisfies the remaining three points of our invariant and thus this step is complete.
\smallskip

Observe that, in the case where $i=9t^2-1$ we have reached a situation where $K_{3t}$ embeds in $\Sigma_{i+1}$.
This means that \zcref{prop:universal-surface-walls} implies the existence of a $K_{3t}$-model controlled by $D_{i+1}$.
So in this case we would be done via \zcref{prop:redClique}.
Hence, we may assume that $i+1\in[9t^2-1]$.
\bigskip

\textbf{Case 2:}
The remaining case to be discussed is the second part of outcome \textsl{v)} of \zcref{thm:blanksocietyclassification}.
Here we are given a set $A \subseteq V(G_i)$ with $|A| \leq \mathsf{apex}^\mathsf{fin}_{\ref{thm:blanksocietyclassification}}(r,t,4(k+c_{\ref{prop:universal-surface-walls}}t^{12}+1),\mathsf{transaction}(9t^2,t,r,k))$, a rendition $\rho'$ of $(G_i - A, \Omega_i)$ in $\Delta_i$ with breadth $b \in [\nicefrac{3}{2}(t-1)(3t-4) + r(r-1)-3]$ and depth at most $\mathsf{depth}_{\ref{thm:blanksocietyclassification}}(r,t,4(k+c_{\ref{prop:universal-surface-walls}}t^{12}+1),\mathsf{transaction}(9t^2-i,t,r,k))$, and an extended $(4(k+c_{\ref{prop:universal-surface-walls}}t^{12}+1))$-surface-wall $D$ with signature $(0,0,b)$, such that $D$ is grounded in $\rho'$, the base cycles of $D$ are the cycles $C_{\mathsf{nest}(9t^2-i,t,r,k)-\mathsf{cost}_{\ref{thm:blanksocietyclassification}}(r,t,4(k+c_{\ref{prop:universal-surface-walls}}t^{12}+1))-1-k-c_{\ref{prop:universal-surface-walls}}t^{12}}^i,\dots,C_{\mathsf{nest}(9t^2-i,t,r,k)-\mathsf{cost}_{\ref{thm:blanksocietyclassification}}(r,t,4(k+c_{\ref{prop:universal-surface-walls}}t^{12}+1))-1}^i$, and there exists a bijection between the vortices $v$ of $\rho'$ and the vortex segments $S_v$ of $D$, where $v$ is the unique vortex contained in the disk $\Delta_{S_v}$ defined by the trace of the inner cycle of the nest of $S_v$ where $\Delta_{S_v}$ is chosen to avoid the trace of the simple cycle of $D$.

At this point the additional infrastructure on the order of $c_{\ref{prop:universal-surface-walls}}t^{12}$ has served its purpose and we can concentrate on just building our extended $k$-surface wall.
For this purpose, we must now combine the radial linkage $\mathcal{R}_i$ we found in our induction with the rails we have for the vortex segments in $D$.
Notice that the union of all rails of $D$ form a radial linkage $\mathcal{R}''$ of order $16b(k+c_{\ref{prop:universal-surface-walls}}t^{12})$ such that every vortex segment provides exactly $16(k+c_{\ref{prop:universal-surface-walls}}t^{12})$ of these paths.
Moreover, there are $4bk$ cycles among $C_{\mathsf{nest}(t^2-i,t,r,k)-\mathsf{cost}_{\ref{thm:blanksocietyclassification}}(r,t,4(k+c_{\ref{prop:universal-surface-walls}}t^{12}+1))-1-4bk}^i,\dots,C_{\mathsf{nest}(t^2-i,t,r,k)-\mathsf{cost}_{\ref{thm:blanksocietyclassification}}(r,t,4(k+c_{\ref{prop:universal-surface-walls}}t^{12}+1))-1}^i$.
Now let $\mathcal{R}'$ be a radial linkage of order $4bk$ formed by selecting $4k$ rails from each vortex segment of $D$.
By definition of $D$, for every $j \in [\mathsf{nest}(t^2-i,t,r,k)-\mathsf{cost}_{\ref{thm:blanksocietyclassification}}(r,t,4(k+c_{\ref{prop:universal-surface-walls}}t^{12}+1))-1-4bk, \mathsf{nest}(t^2-i,t,r,k)-\mathsf{cost}_{\ref{thm:blanksocietyclassification}}(r,t,4(k+c_{\ref{prop:universal-surface-walls}}t^{12}+1))-1]$ and every vortex $v$ of $\rho$, both the nest of $S_v$ and $v$ itself are disjoint from $C^i_j$.

Let $I = [\mathsf{nest}(t^2-i,t,r,k)-\mathsf{cost}_{\ref{thm:blanksocietyclassification}}(r,t,4(k+c_{\ref{prop:universal-surface-walls}}t^{12}+1))-1-4bk+1,\mathsf{nest}(t^2-i,t,r,k)-\mathsf{cost}_{\ref{thm:blanksocietyclassification}}(r,t,4(k+c_{\ref{prop:universal-surface-walls}}t^{12}+1))-1]$.
Then we have $|I| = 4bk+2$.
Therefore, we may call upon \zcref{prop:connected_linkages} for $\mathcal{C}'$, $I$, $\mathcal{R}'$ and $\mathcal{R}_i$ to obtain a radial linkage $\mathcal{R}_{i+1}$ of order $4bk$ whose endpoints on the outermost cycles of the nests of the $S_v$ coincide with the vertices of the rails of $S_v$ for every vortex segment $S_v$ of $\rho'$ and whose other endpoints are a subset of the endpoints of $\mathcal{R}_i$ on $V(\Omega_0)$.
Moreover, the endpoints of the paths among $\mathcal{R}_i$ that lead to $S_v$ appear consecutively on $\Omega_0$.
Hence, we may obtain an extended $k$-surface wall $D_{i+1}$ from $D_i$ by discarding some of the cycles and paths in each of the handle and crosscap segments and integrating the nests of the $S_v$ along the paths in $\mathcal{R}_{i+1}$.
We may also obtain a $\Sigma_i$-rendition $\delta_{i+1}$ of breadth at most $b$ and depth at most
\begin{align*}
    \mathsf{depth}_{\ref{thm:blanksocietyclassification}}(r,t,4(k+c_{\ref{prop:universal-surface-walls}}t^{12}+1),&~\mathsf{transaction}(t^2-i,t,r,k)) \leq \\
    & \quad \mathsf{depth}_{\ref{thm:blanksocietyclassification}}(r,t,4(k+c_{\ref{prop:universal-surface-walls}}t^{12}+1),\mathsf{transaction}(t^2,t,r,k))
\end{align*}
for $G-A_{i+1}$, where $A_{i+1\coloneqq A_i\cup A}$ with
\begin{align*}
    |A_{i+1}| \leq~& |A_i| + \mathsf{apex}^\mathsf{fin}_{\ref{thm:blanksocietyclassification}}(r,t,4(k+c_{\ref{prop:universal-surface-walls}}t^{12}+1),\mathsf{transaction}(t^2,t,r,k))\\
    \leq~& i\cdot \mathsf{apex}^\mathsf{genus}_{\ref{thm:blanksocietyclassification}}(t) + \mathsf{apex}^\mathsf{fin}_{\ref{thm:blanksocietyclassification}}(r,t,4(k+c_{\ref{prop:universal-surface-walls}}t^{12}+1),\mathsf{transaction}(t^2,t,r,k)) + 16t^3.
\end{align*}
It follows, in particular from the choice of the size of $M_0$, that $\Lambda=(A_{i+1},M_0,D_{i+1},\delta_{i+1})$ is a blank $k$-$(\mathsf{apex}_{\ref{thm:strongest_localstructure}}(t^2,t,k),\nicefrac{3}{2}(t-1)(3t-4) + r(r-1)-3,\mathsf{depth}_{\ref{thm:strongest_localstructure}}(t,k),r)$-$\Sigma_i$-layout centred at $M$ as desired.
With this, our proof is complete.
\end{proof}

\section{A polynomial grid theorem for bidimensionality}\label{sec:localtoglobal}

In this final section we demonstrate how we can use \zcref{thm:strongest_localstructure} from the previous section to obtain polynomial bounds for bidimensionality.
We begin with some necessary definitions that will facilitate our induction.

\paragraph{Highly linked sets.}
Let $\alpha \in [2/3, 1)_{\mathbb{R}}$.
Moreover, let $G$ be a graph and $X \subseteq V(G)$ be a vertex set. 
A set $S \subseteq V(G)$ is said to be an \emph{$\alpha$-balanced separator} for $X$ if for every component $C$ of $G - S$ it holds that $|V(C) \cap X| \leq \alpha|X|$. 
Let $k$ be a non-negative integer.
We say that $X$ is a \emph{$(k, \alpha)$-linked set} of $G$ if there is no $\alpha$-balanced separator of size at most $k$ for $X$ in $G$.

Given a $(3k, \alpha)$-linked set $X$ of $G$ we define $$\mathcal{T}_{X} \coloneqq \{ (A, B) \in \mathcal{S}_{k+1}(G) ~\!\colon\!~ |X \cap B| > \alpha|X| \}.$$ 
It is not hard to see that $\mathcal{T}_{S}$ is a tangle of order $k+1$ in $G$.

We need an algorithmic way to find, given a highly linked set, a large wall whose tangle is a truncation of the tangle induced by the highly linked set.
This is done in \cite{ThilikosW2024Excluding} by algorithmatising a proof of Kawarabayashi et al.\ from \cite{KawarabayashiTW2021Quickly}. 

\begin{proposition}[Thilikos and Wiederrecht \cite{ThilikosW2024Excluding} (see Theorem 4.2.)]\label{prop:AlgoGrid}
Let $k\geq 3$ be an integer and $\alpha\in [2/3,1)$.
There exist universal constants $c_1, c_2\in\mathbb{N}\setminus\{ 0\}$, and an algorithm that, given a graph $G$ and a $(c_1k^{20},\alpha)$-linked set $X\subseteq V(G)$ computes in time $2^{\mathbf{O}(k^{c_2})}|V(G)|^2|E(G)|\log(|V(G)|)$ a $k$-wall $W\subseteq G$ such that $\mathcal{T}_W$ is a truncation of $\mathcal{T}_X$.
\end{proposition}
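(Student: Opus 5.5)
Since Proposition~\ref{prop:AlgoGrid} is imported from \cite{ThilikosW2024Excluding}, the plan is to algorithmise the polynomial grid theorem of Kawarabayashi, Thomas, and Wollan \cite{KawarabayashiTW2021Quickly}. Throughout, every existence step that appeals to a separation is replaced by an explicit flow or minimum-separator computation, and the tangle $\mathcal{T}_X$ is tracked at each stage. First, fix $K = c_1k^{20}$ and use that $X$ is $(K,\alpha)$-linked. By Menger's Theorem, for any two disjoint subsets $X_1, X_2 \subseteq X$ of size at most $K/3$ there are $\min(|X_1|,|X_2|)$ disjoint $X_1$-$X_2$-paths. Otherwise a minimum separator of order at most $K$, computable by a max-flow in time $\mathbf{O}(K|E(G)|)$, would be an $\alpha$-balanced separator for $X$.

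Next, follow the combinatorial core of \cite{KawarabayashiTW2021Quickly}. Using these linkages, build two families $\mathcal{P}$ and $\mathcal{Q}$ of $\mathbf{poly}(k)$ disjoint paths, each with both endpoints in $X$. Then apply the Ramsey/bipartite-intersection argument to the auxiliary intersection structure. This yields either paths $P_1,\dots,P_k \in \mathcal{P}$ and $Q_1,\dots,Q_k \in \mathcal{Q}$ crossing in a wall-like pattern, from which a $k$-wall $W$ is routed, or a collection of at most $K$ vertices whose deletion splits $X$ in an $\alpha$-balanced way. The second outcome contradicts linkedness, which is where the exponent $20$ in $c_1k^{20}$ enters. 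Each step is either a flow computation or a search over configurations of size $\mathbf{poly}(k)$. The latter is where I would pay the $2^{\mathbf{O}(k^{c_2})}$ factor: I would simply brute-force over candidate subsets of the bounded-size path systems, and test each candidate with a linear-time linkage or flow check. The factor $|V(G)|^2|E(G)|\log|V(G)|$ then comes from repeating the separator and flow subroutines $\mathbf{O}(|V(G)|)$ times, with a logarithmic overhead from the rerouting steps.

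Finally, verify that $\mathcal{T}_W \subseteq \mathcal{T}_X$. Let $(A,B) \in \mathcal{T}_W$ have order less than $k$, so that $B \setminus A$ contains a row and a column of $W$. Every horizontal and vertical path of $W$ lies on paths of $\mathcal{P} \cup \mathcal{Q}$ with endpoints in $X$. If $|X \cap B| \le \alpha|X|$, then $A \cap B$ together with the linkage structure would give fewer than $K$ vertices separating a large portion of $X$ from $B\setminus A$. I expect this to contradict $(K,\alpha)$-linkedness, using $\alpha \geq 2/3$ and the fact that the many disjoint $X$-paths through $W$ cannot all be hit by fewer than $k$ vertices. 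Hence $B$ is also the big side for $\mathcal{T}_X$.

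The main obstacle I expect is this last compatibility step combined with the construction. The wall must be built so that it stays \emph{linked back to $X$}, with each row and column extendable to $X$ by disjoint paths. Otherwise a wall produced by the Ramsey argument could sit in a region that $X$ regards as small. My first attempt would be to build the wall only from subpaths of $\mathcal{P} \cup \mathcal{Q}$, whose endpoints lie in $X$, and to keep $\Omega(k)$ spare paths of each family as witnesses of this linkage.
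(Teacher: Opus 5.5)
The paper does not prove this statement. It imports it from Thilikos--Wiederrecht, who algorithmise an argument of Kawarabayashi--Thomas--Wollan. Your high-level plan points at the same source, but the two steps that carry all of the content are not established.

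\textbf{The combinatorial core.} You assert that a Ramsey/bipartite-intersection argument on two families of $\mathbf{poly}(k)$ $X$-paths yields either a $k$-wall or an $\alpha$-balanced separator of size at most $K=c_1k^{20}$, and that this is where the exponent $20$ enters. No such dichotomy is derived. The known arguments of this type (crossing two linkages and extracting a grid by Ramsey or pigeonhole arguments, as in Robertson--Seymour--Thomas or Diestel et al.) give only exponential bounds. A bound of the form $c_1k^{20}$ has to come from a polynomial grid theorem, i.e.\ the Chekuri--Chuzhoy path-of-sets machinery ($k^{20}$ dominates Chuzhoy's $\mathbf{O}(k^{19}\,\mathrm{polylog}\,k)$ treewidth bound). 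That machinery must be imported, not replaced by a Ramsey argument.

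Your opening Menger step is also false as stated, because $(K,\alpha)$-linkedness says nothing about small subsets of $X$. Let $G$ be a huge grid plus an isolated vertex $x$, and let $X=V(G)$. Then $X$ is $(K,\alpha)$-linked, yet $X_1=\{x\}$ and $X_2=\{y\}$ are separated by $\emptyset$, which is not $\alpha$-balanced. A separator $S$ of $X_1$ from $X_2$ contradicts linkedness only under a size condition such as $|X_1|,|X_2|>(1-\alpha)|X|+|S|$. Under that condition the component of $G-S$ containing more than $\alpha|X|$ vertices of $X$ must meet both sets.

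\textbf{The truncation $\mathcal{T}_W\subseteq\mathcal{T}_X$.} You correctly flag this as the main obstacle, and it is the real content of the proposition beyond the grid theorem, but your remedy does not work. Building $W$ from subpaths of $\mathcal{P}\cup\mathcal{Q}$ whose ends lie in $X$ ties $W$ to only $\mathbf{poly}(k)$ vertices of $X$, while $|X|>K$ may be arbitrarily large. Here is a counterexample.
\begin{itemize}
\item Take a huge grid $\Gamma_1$ with $V(\Gamma_1)\subseteq X$, attached through fewer than $k$ vertices to a second grid $\Gamma_2$ that contains $\mathbf{poly}(k)$ further vertices of $X$.
\item Then $X$ is still $(K,\alpha)$-linked, and a wall routed inside $\Gamma_2$ along paths with both ends in $X\cap V(\Gamma_2)$ satisfies your construction.
\item Yet the separation cutting off $\Gamma_2$ puts $W$ on the big side for $\mathcal{T}_W$ and on the small side for $\mathcal{T}_X$.
\end{itemize}
Nothing here contradicts linkedness, which only forbids small separators leaving every component with at most $\alpha|X|$ vertices of $X$.

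What is needed is a wall robustly linked to a majority of $X$. You could get this from the well-linkedness preserved by the Chekuri--Chuzhoy construction. Alternatively, you could repeatedly use a witnessing separation to push a large wall onto the $X$-big side while controlling the loss of linkedness. Your proposal supplies neither.

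Your running-time accounting ($\mathbf{O}(|V(G)|)$ repetitions, logarithmic rerouting overhead, brute force over path configurations) is likewise not derived from any algorithm you describe.
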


Additionally, we need to be able to find balanced separators efficiently.

\begin{proposition}[Reed \cite{Reed1992Finding}]\label{prop:FindSep}
    There exists an algorithm that takes as input an integer $k$, a graph $G$, and a set $X \subseteq V(G)$ of size at most $3k+1$, and finds, in time $2^{\mathbf{O}(k)}m$, either a $\nicefrac{2}{3}$-balanced separator of size at most $k$ for $X$ or correctly determines that $X$ is $(k,\nicefrac{2}{3})$-linked in $G$.
\end{proposition}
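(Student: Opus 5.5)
The proof plan is the classical brute force over how a hypothetical separator splits $X$, with each guess checked by a bounded number of augmenting-path (Menger) computations. Since $|X| \leq 3k+1$, there are at most $3^{|X|} = 2^{\mathbf{O}(k)}$ ordered partitions $(Y, W, Z)$ of $X$. For each such partition with $|W| \leq k$, $|Y| \leq \nicefrac{2}{3}|X|$ and $|Z| \leq \nicefrac{2}{3}|X|$, I will work in $G - W$. There I compute a minimum vertex set $S'$ separating $Y$ from $Z$, where $S'$ may contain vertices of $Y \cup Z$ themselves. This is done by adding a super-source attached to $Y$ and a super-sink attached to $Z$ and running at most $k - |W| + 1$ rounds of augmenting paths in the vertex-split auxiliary digraph. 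Each round costs $\mathbf{O}(m)$, so one guess costs $\mathbf{O}(km)$.

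\textbf{Soundness.} Suppose a guess succeeds, i.e.\ $|S'| \leq k - |W|$, and set $S \coloneqq W \cup S'$, so $|S| \leq k$. Every component $C$ of $G - S$ meets $X$ only in $Y$ or only in $Z$. Indeed, $W \subseteq S$, and a component meeting both $Y$ and $Z$ would contain a $Y$–$Z$ path in $G - W$ avoiding $S'$. Hence $|V(C) \cap X| \leq \max(|Y|, |Z|) \leq \nicefrac{2}{3}|X|$, so $S$ is a $\nicefrac{2}{3}$-balanced separator of size at most $k$, and we output it.

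\textbf{Completeness.} Suppose a $\nicefrac{2}{3}$-balanced separator $S$ with $|S| \leq k$ exists. Every component of $G - S$ has at most $\nicefrac{2}{3}|X|$ vertices of $X$. I will distribute the components into two groups $\mathcal{A}$ and $\mathcal{B}$, each containing at most $\nicefrac{2}{3}|X|$ vertices of $X$, as follows.
\begin{itemize}
\item If some component has more than $\nicefrac{1}{3}|X|$ vertices of $X$, it forms $\mathcal{A}$ alone and the rest form $\mathcal{B}$.
\item Otherwise, add components to $\mathcal{A}$ one by one until $\mathcal{A}$ first exceeds $\nicefrac{1}{3}|X|$. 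At that moment $\mathcal{A}$ holds at most $\nicefrac{2}{3}|X|$, so the remainder $\mathcal{B}$ holds less than $\nicefrac{2}{3}|X|$. If the threshold is never exceeded, both groups are trivially small.
\end{itemize}
Now set $W = X \cap S$, $Y = X \cap V(\bigcup \mathcal{A})$ and $Z = X \cap V(\bigcup \mathcal{B})$. Then $S \setminus W$ separates $Y$ from $Z$ in $G - W$ and has size at most $k - |W|$. So the flow computation for this guess succeeds, and the algorithm finds some separator by the soundness argument.

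Consequently, if every guess fails, no such separator exists, and $X$ is correctly reported as $(k, \nicefrac{2}{3})$-linked. The total running time is $2^{\mathbf{O}(k)} \cdot \mathbf{O}(km) = 2^{\mathbf{O}(k)} m$.

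The only delicate point is the treatment of vertices of $X$ lying inside the separator. Guessing $W$ explicitly and deleting it first is what makes the component-count bound in the soundness step go through; the remaining steps are routine.
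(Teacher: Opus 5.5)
Your proposal is correct. It is the classical argument behind Reed's result, which the paper only cites without proof: enumerate the at most $3^{|X|} = 2^{\mathbf{O}(k)}$ partitions $(Y,W,Z)$ of $X$, and for each one compute a minimum $Y$--$Z$ vertex separator in $G - W$ using at most $k-|W|+1$ augmenting-path rounds. Your grouping of components into two sides, each containing at most $\nicefrac{2}{3}|X|$ vertices of $X$, correctly bridges the component-based definition of a balanced separator and this two-sided split. The only cosmetic caveat is that an augmentation round costs $\mathbf{O}(|V(G)|+m)$ in general, so the stated $\mathbf{O}(m)$ per round relies on the usual convention, or on noting that each search only explores components of $G-W$ that meet $Y$.
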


We require one last ingredient related to the vortices of $\Sigma$-renditions.
The definition of depth for societies we utilized so far was particularly well suited with the way we refined societies in our proof.
However, here we need a decomposition based analogue of this definition that will permit to inductively extend a vortex of bounded depth further into the tree-decomposition via adhesions depending on the depth of the vortex itself.

\paragraph{Linear decompositions of vortices.}
Let $(G,\Omega)$ be a society.
A \emph{linear decomposition} of $(G,\Omega)$ is a labelling $v_1,v_2,\dots,v_n$ of $V(\Omega)$ such that $v_1,v_2,\dots,v_n$ appear in $\Omega$ in the order listed, together with sets $(X_1,X_2,\dots,X_n)$ such that
\begin{enumerate}
    \item $X_i\subseteq V(G)$ and $v_i\in X_i$ for all $i\in[n]$,
    \item $\bigcup_{i\in[n]}X_i=V(G)$ and for every $uv\in E(G)$ there exists $i\in[n]$ such that $u,v\in X_i$, and
    \item for every $x\in V(G)$ the set $\{ i\in[n] ~\!\colon\!~ x\in X_i \}$ forms an interval in $[n]$.
\end{enumerate}
The \emph{adhesion} of a linear decomposition is $\max \{ |X_i\cap X_{i+1}| ~\!\colon\!~ i\in[n-1] \}.$
The \emph{width} of a linear decomposition is $\max \{ |X_i| ~\!\colon\!~ i\in[n] \}.$

It easily follows that every society with a linear decomposition of adhesion at most $k$ has depth at most $2k.$
What interests us here is the reverse statement which was shown by Robertson and Seymour in \cite{RobertsonS1990Graphb}.

\begin{proposition}[\cite{RobertsonS1990Graphb}]\label{prop:DepthToLinearDec} Let $k$ be a non-negative integer and $(G, \Omega)$ be a society of depth at most $k.$
Then $(G, \Omega)$ has a linear decomposition of adhesion at most $k.$
\end{proposition}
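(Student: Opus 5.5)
The plan is to build the bags from a nested family of minimum-order separations, one for each ``cut position'' along $\Omega$.

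\textbf{Step 1: small cuts.} Fix a labelling $v_1,\dots,v_n$ of $V(\Omega)$ in the order of $\Omega$. For every $i\in[n-1]$ put $A_i\coloneqq\{v_1,\dots,v_i\}$ and $B_i\coloneqq\{v_{i+1},\dots,v_n\}$; these are complementary segments of $\Omega$. Consider a maximum family of pairwise vertex-disjoint $A_i$-$B_i$-paths in $G$. By definition an $A_i$-$B_i$-path is internally disjoint from $A_i\cup B_i=V(\Omega)$, so it is a $V(\Omega)$-path. Such a family is therefore a transaction in $(G,\Omega)$, and so it has at most $k$ members. By Menger's Theorem there is then a separation $(C,D)$ of $G$ with $A_i\subseteq C$, $B_i\subseteq D$ and $|C\cap D|\le k$. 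Here separators are allowed to contain vertices of $V(\Omega)$.

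\textbf{Step 2: uncrossing.} For each $i$, among all separations $(C,D)$ with $A_i\subseteq C$ and $B_i\subseteq D$ of minimum order $\lambda_i\le k$, choose $(C_i,D_i)$ with $C_i$ inclusion-minimal. Such a minimal choice is unique by submodularity. I then claim $C_i\subseteq C_{i+1}$. Consider $(C_i\cap C_{i+1},\,D_i\cup D_{i+1})$ and $(C_i\cup C_{i+1},\,D_i\cap D_{i+1})$. The first still separates $A_i$ from $B_i$, and the second separates $A_{i+1}$ from $B_{i+1}$, because $A_i\subseteq A_{i+1}$ and $B_{i+1}\subseteq B_i$. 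Submodularity of $|C\cap D|$ then gives two facts. The second separation has order $\lambda_{i+1}$, by minimality of $\lambda_i$ and $\lambda_{i+1}$. The first has order exactly $\lambda_i$. Minimality of $C_i$ then forces $C_i\cap C_{i+1}=C_i$. Symmetrically, $D_{i+1}\subseteq D_i$. This uncrossing argument is the step that needs the most care. One must check that a separation $(C,D)$ means $C\cup D=V(G)$ with no edges between $C\setminus D$ and $D\setminus C$, and that the unions and intersections above really are separations of the required type.

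\textbf{Step 3: the bags.} Set $C_0=D_n\coloneqq\emptyset$, $C_n\coloneqq V(G)$ and $D_0\coloneqq V(G)$, and define $X_i\coloneqq C_i\cap D_{i-1}$ for $i\in[n]$. Each property of a linear decomposition is then checked as follows.
\begin{itemize}
\item $v_i\in X_i$, because $v_i\in A_i\subseteq C_i$ and $v_i\in B_{i-1}\subseteq D_{i-1}$.
\item Every vertex lies in some bag: take the least $i$ with $x\in C_i$. Then $x\notin C_{i-1}$, so $x\in D_{i-1}$, and hence $x\in X_i$.
\item Every edge $uv$ lies in some bag: take the least $i$ with $\{u,v\}\cap C_i\neq\emptyset$. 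If both ends lie in $C_i$, both lie outside $C_{i-1}$, hence in $D_{i-1}$, and so $u,v\in X_i$. Otherwise one end, say $u$, lies in $C_i\setminus D_i$ while $v\in D_i\setminus C_i$. This is impossible for a separation, unless $u\in C_i\cap D_i$, in which case both $u,v\in D_i$. In that case $u\in X_{i+1}$, and repeating the argument with index $i+1$ places both ends in a common bag.
\item Interval property: $\{i : x\in C_i\}$ is an up-set and $\{i : x\in D_{i-1}\}$ is a down-set, by the nesting from Step 2. Their intersection is an interval.
\item Adhesion: $X_i\cap X_{i+1}\subseteq C_i\cap D_i$, which has size at most $k$.
\end{itemize}
This gives a linear decomposition of adhesion at most $k$, as required.
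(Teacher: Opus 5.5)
This is correct, and it is the standard Menger-plus-uncrossing proof of the fact; the paper gives no proof of its own and simply cites Robertson and Seymour. The one step stated too quickly is $D_{i+1}\subseteq D_i$, which is not literally symmetric to choosing $C_i$ inclusion-minimal, but it follows directly from what you have already shown. Since $C_i\subseteq C_{i+1}$, the pair $(C_{i+1},D_i\cap D_{i+1})$ is a separation of order $\lambda_{i+1}$ whose separator lies inside $C_{i+1}\cap D_{i+1}$, which forces $C_{i+1}\cap D_{i+1}\subseteq D_i$; and $D_{i+1}\setminus C_{i+1}\subseteq V(G)\setminus C_i\subseteq D_i$.
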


Finally, we define the \emph{width} of a $\Sigma$-rendition of a graph $G$ as the minimum non-negative integer $w$ such that every vortex society of $\rho$ admits a linear decomposition of width at most $w.$

\paragraph{Local to global.}
We are now ready for the proof of \zcref{thm:BidimensionalityIntro}.
In fact we prove a stronger statement and \zcref{thm:BidimensionalityIntro} will follow as a corollary.
The statement reads as follows.

\begin{theorem}\label{thm:Bidimensionality}
There exist functions $\mathsf{link}_{\ref{thm:Bidimensionality}}$, $\mathsf{apex}_{\ref{thm:Bidimensionality}}$, $\mathsf{breadth}_{\ref{thm:Bidimensionality}}$, $\mathsf{width}_{\ref{thm:Bidimensionality}}\colon\mathbb{N} \to \mathbb{N}$ such that for every integer $k \geq 4$ and every annotated graph $(G, R)$, one of the following holds:
\begin{enumerate}
    \item $(G, R)$ contains $\mathsf{R}_{k}$ as a red minor, or
    \item $(G, R)$ has a tree-decomposition $(T, \beta)$ of adhesion at most $\mathsf{link}_{\ref{thm:Bidimensionality}}(k)$ and a (possibly empty) subset $L \subseteq V(T)$ of leaves of $T$ such that, for all $t \in V(T)$ one the following holds:
    \begin{itemize}
        \item $t \in L$ is a leaf with unique neighbour $d$ such that $\beta(t) \cap R \subseteq \beta(d)$, or
        \item if $(G_{t}, R_{t})$ is the annotated torso of $(G, R)$ at $t$, then there exists a set $A \subseteq V(G_t)$ of size at most $\mathsf{apex}_{\ref{thm:Bidimensionality}}(k)$ and a surface $\Sigma$ of Euler-genus less than $9 k^{4}$, such that $G_t - A$ has a $\Sigma$-rendition of breadth at most $\mathbf{O}(k^{4})$ and width at most $\mathsf{width}_{\ref{thm:Bidimensionality}}(k)$ such that $\widetilde{c} = V(\sigma(c))$ for all non-vortex cells $c\in C(\rho)$, and for every vertex $u \in R_{t} \setminus A$ there exists some vortex $v \in C(\rho)$ such that $u \in V(\sigma(c)) \setminus \tilde{v}$.
    \end{itemize}
\end{enumerate}
Moreover,
\begin{align*}
\mathsf{link}_{\ref{thm:Bidimensionality}}(k) \ &\in \ \mathbf{O}(k^{226713760}),\\ \mathsf{apex}_{\ref{thm:Bidimensionality}}(k), \mathsf{width}_{\ref{thm:Bidimensionality}}(k) \ &\in \ \mathbf{O}(k^{5667904})\text{, and}\\
\mathsf{breadth}_{\ref{thm:Bidimensionality}}(k) \ &\in \ \mathbf{O}(k^{4}).
\end{align*}
There also exists an algorithm that finds one of these two outcomes in time $2^{k^{\mathbf{O(1)}}} \cdot |E(G)|^{3} |V(G)| \log(|V(G)|).$
\end{theorem}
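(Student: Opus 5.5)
We follow the standard Robertson--Seymour local-to-global scheme and prove a stronger rooted statement by induction on $|V(G)|$. Fix $k \geq 4$ and set $t \coloneqq 3k^{2}$: a red $K_{t}$-minor contains a red $\mathsf{R}_{k}$-minor, since the $(k\times k)$-grid embeds as a minor of $K_{k^2}$ with each branch set containing a red vertex. Set $r \coloneqq 3k-1$, so that any red, flat $r$-mesh yields $\mathsf{R}_{k}$ by \zcref{lemma:RedMeshToRedGrid}. Choose $w$, and then $\theta \coloneqq c_{1}\mathsf{mesh}_{\ref{thm:strongest_localstructure}}(t,r,k,w)^{20}$ via \zcref{prop:AlgoGrid}; this $\theta$ will be $\mathsf{link}_{\ref{thm:Bidimensionality}}(k)$ up to a constant factor. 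The inductive claim is: for every annotated graph $(G,R)$ and every set $X\subseteq V(G)$ with $|X|\leq 3\theta+1$, either $(G,R)$ contains $\mathsf{R}_{k}$ as a red minor, or there is a decomposition as in the second outcome whose root bag contains $X$.

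For the induction step we first test with \zcref{prop:FindSep} whether $X$ has a $\nicefrac{2}{3}$-balanced separator $S$ of size at most $\theta$. If it does, we make $S\cup X$ the root bag. For each component $C$ of $G-S$ we recurse on $G[V(C)\cup S]$ with the set $(X\cap V(C))\cup S$, which has size at most $\nicefrac{2}{3}(3\theta+1)+\theta\leq 3\theta+1$. We then need the torso at the root to be of the required form. We arrange this by taking, as in the classical argument, $X' \coloneqq X\cup S$ padded to size exactly $3\theta+1$, which makes the root bag tiny so that it is entirely an apex set; then $\mathsf{apex}\geq 4\theta+2$, which is why $\mathsf{link}$ and $\mathsf{apex}$ share the same scale.

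If instead $X$ is $(\theta,\nicefrac{2}{3})$-linked, \zcref{prop:AlgoGrid} gives a large wall, hence a mesh $M$ with $\mathcal{T}_{M}$ a truncation of $\mathcal{T}_{X}$. We apply \zcref{thm:strongest_localstructure} to $M$; its outcomes ii) and iii) give $\mathsf{R}_{k}$ directly. In outcome i) we have a separation $(P,Q)\in\mathcal{T}_{M}$ of order less than $t$ with $(Q\setminus P)\cap R=\emptyset$. We take the root bag to be $X\cup(P\cap Q)$ plus the small side, which becomes an apex set of the torso. The big side $Q$ is red-free outside $P\cap Q$ and becomes a leaf in $L$. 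Since $|X\cap (Q\setminus P)|$ is large, we must instead recurse on $G[P]$, and we need to verify that the new boundary set is small enough; this is handled by adding $P\cap Q$ to $X$ and noting that the separation is oriented by the tangle of $X$.

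Outcome iv) is the main case. Here we have an apex set $A$, a blank $\Sigma$-rendition $\rho$ of $G-A$ of breadth $\mathbf{O}(k^{4})$ with vortices admitting linear decompositions (by \zcref{prop:DepthToLinearDec}), and genus less than $9t^{2}$. We form the root bag from $A$, all nodes of $\rho$, the vortex vertices, and $X$ projected in. Each non-vortex cell $c$ whose $\sigma(c)$ has interior vertices is split off as a child along the at most three nodes of $c$ together with $A$. These children carry no red vertices other than those in $A$ and the boundary, since $\rho$ is blank, and so they are leaves in $L$ or are recursed on. Each vortex is expanded along its linear decomposition into a path of bags of adhesion at most the depth plus $|A|$, and red vertices land in vortex interiors as required. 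The main obstacle is exactly this step. We must guarantee that the parts hanging off cells are smaller than $G$ and attach to a set of size at most $3\theta+1$. We must also make $X$ consistent with the near embedding, since $X$ may lie partly in flaps, and we handle this by using the fact that $\mathcal{T}_{M}$ truncates $\mathcal{T}_{X}$ to push $X$ into the big side. Finally, we must keep the genus bound stated as $9k^{4}$, which requires $t = \Theta(k^{2})$, matching our choice above.
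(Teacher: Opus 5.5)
Your overall architecture matches the paper's. You use a rooted strengthening in which a set $X$ of size $3\theta+1$ must lie in the root bag, Reed's separator test, and \zcref{prop:AlgoGrid} followed by the local structure theorem in the linked case. You also use the truncation $\mathcal{T}_M\subseteq\mathcal{T}_X$ to keep the boundary sets of recursive calls of size at most $3\theta+1$. Your treatment of outcome i) is a fine variant of the paper's component-wise recursion: root bag $X\cup(P\cap Q)$, the red-free side $Q$ as a leaf in $L$, and recursion on $G[P]$ with $(X\cap P)\cup(P\cap Q)$. Hanging each non-vortex flap $G[V(\sigma(c))\cup A]$ directly as a leaf in $L$ is also legitimate, because $\rho$ is blank.

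The genuine gap is the vortices in outcome iv). The theorem requires the vortices of the root torso to have linear decompositions of bounded \emph{width}, but \zcref{prop:DepthToLinearDec} only bounds the \emph{adhesion*}, and the bags $Y^i_v$ can be arbitrarily large.
\begin{itemize}
\item Keeping ``the vortex vertices'' in the root bag therefore leaves the width unbounded.
\item Turning the linear decomposition into a path of nodes of $T$ below the root breaks the tree-decomposition axioms. The boundary vertex $v_i\in\tilde v$ lies in the root bag and in $Y^i_v$, but in general not in $Y^1_v,\dots,Y^{i-1}_v$.
\item These bags contain red vertices and are not known to be near-embeddable, so they can neither be leaves in $L$ nor terminate the recursion.
\end{itemize}
The missing step, which is what the paper does, has two parts. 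First, put only $N(\rho)\cup A\cup X\cup B$ into the root bag, where $B$ is the union of all adhesion sets $Y^i_v\cap Y^{i+1}_v$. Second, attach each $G[Y^i_v\cup A]$ as a \emph{separate} child of the root, and recurse on it with boundary set $A\cup(Y^{i-1}_v\cap Y^i_v)\cup(Y^i_v\cap Y^{i+1}_v)\cup\{v_i\}\cup(X\cap Y^i_v)$. Let $S^i_v$ be the two adhesion sets of $Y^i_v$ together with $v_i$. The separation $(Y^i_v\cup A,\ V(G)\setminus(Y^i_v\setminus S^i_v))$ then has order at most $|A|+2d+1$, which is below the order of the centre mesh by the definition of a layout. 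So it lies in $\mathcal{T}_M\subseteq\mathcal{T}_X$, giving $|X\cap Y^i_v|\le\lfloor\frac{2}{3}|X|\rfloor$. This makes the recursive call legal and bounds the root torso's vortex bags by $2d+1+|X\cap Y^i_v|$.

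There are three smaller corrections.
\begin{itemize}
\item With $t=3k^2$, the local structure theorem only gives genus less than $9t^2=81k^4$. Take $t=k^2$ instead; this suffices by your own remark that a red $K_{k^2}$-minor contains $\mathsf{R}_k$.
\item The tangle cannot ``push $X$ into the big side''. It only guarantees that no single flap or vortex bag holds more than $\frac{2}{3}|X|$ of $X$. Vertices of $X$ inside cells must simply be added to the apex set of the root torso, which your choice of $\mathsf{apex}$ on the scale of $\theta$ allows. The paper's inductive statement likewise budgets $3\mathsf{link}+\mathsf{apex}+1$ apices, so your observation that $\mathsf{apex}$ ends up on the scale of $\mathsf{link}$ matches what the paper's proof actually delivers.
\item In outcome i), drop ``plus the small side, which becomes an apex set''. $P$ may be arbitrarily large, and only $X\cup(P\cap Q)$ belongs in the root bag.
\end{itemize}
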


We actually prove an even stronger version that will facilitate our induction.
This is common for the local-to-global step.

\begin{theorem}\label{thm:BidimensionalityInduction}
There exist functions $\mathsf{link}_{\ref{thm:BidimensionalityInduction}}$, $\mathsf{apex}_{\ref{thm:BidimensionalityInduction}}$, $\mathsf{breadth}_{\ref{thm:BidimensionalityInduction}}$, $\mathsf{width}_{\ref{thm:BidimensionalityInduction}} \colon \mathbb{N} \to \mathbb{N}$ such that for every integer $k \geq 4$ and every annotated graph $(G, R)$, and every set $X \subseteq V(G)$ with $|X|\leq 3\mathsf{link}_{\ref{thm:BidimensionalityInduction}}(k) + 1$ one of the following holds:
\begin{enumerate}
    \item $(G, R)$ contains $\mathsf{R}_{k}$ as a red minor, or
    \item $(G, R)$ has a rooted tree-decomposition $(T, r, \beta)$ of adhesion at most $3\mathsf{link}_{\ref{thm:BidimensionalityInduction}}(k) + \mathsf{apex}_{\ref{thm:BidimensionalityInduction}}(k) + \mathsf{width}_{\ref{thm:BidimensionalityInduction}}(k) + 3$ and a (possibly empty) subset $L \subseteq V(T)$ of leaves of $T$ such that, $X \subseteq \beta(r)$ and for all $t \in V(T)$ one the following holds:
    \begin{itemize}
        \item $t \in L$ is a leaf with a unique neighbour $d$ such that $\beta(t) \cap R \subseteq \beta(d)$, or
        \item if $(G_t, R_t)$ is the annotated torso of $(G, R)$ at $t$, then there exists a set $A \subseteq V(G_t)$ of size at most $3\mathsf{link}_{\ref{thm:BidimensionalityInduction}}(k) + \mathsf{apex}_{\ref{thm:BidimensionalityInduction}}(k) + 1$ and a surface $\Sigma$ of Euler-genus less than $9k^4$ such that
        \begin{enumerate}
            \item $G_t - A$ has a $\Sigma$-rendition of breadth at most $\mathsf{breadth}_{\ref{thm:BidimensionalityInduction}}(k)$ and width at most $2\mathsf{link}_{\ref{thm:BidimensionalityInduction}}(k) + \mathsf{width}_{\ref{thm:BidimensionalityInduction}}(k) + 1$ such that $\tilde{c} = V(\sigma(c))$ for all non-vortex cells $c\in C(\rho)$, and
            for every vertex $u \in R_{t} \setminus A$, there exists some vortex $v \in C(\rho)$ such that $u \in V(\sigma(c)) \setminus \tilde{v}$, and
            \item if $d$ is a child of $t$ with $\beta(d) \cap \beta(t) \subseteq A \cup \widetilde{c}$ for some non-vortex cell $c\in C(\rho)$, then $d\in L$.
        \end{enumerate}
    \end{itemize}
\end{enumerate}
Moreover,
\begin{align*}
\mathsf{link}_{\ref{thm:BidimensionalityInduction}}(k) \ &\in \ \mathbf{O}(k^{226713760}),\\ \mathsf{apex}_{\ref{thm:BidimensionalityInduction}}(k), \mathsf{width}_{\ref{thm:BidimensionalityInduction}}(k) \ &\in \ \mathbf{O}(k^{5667904})\text{, and}\\
\mathsf{breadth}_{\ref{thm:BidimensionalityInduction}}(k) \ &\in \ \mathbf{O}(k^{4}).
\end{align*}
There also exists an algorithm that finds one of these two outcomes in time $2^{k^{\mathbf{O}(1)}} \cdot |E(G)|^{3} |V(G)| \log(|V(G)|).$
\end{theorem}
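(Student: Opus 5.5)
We prove \zcref{thm:BidimensionalityInduction} by induction on $|V(G)\setminus X|$, following the classical Robertson--Seymour local-to-global scheme. Set $\ell \coloneqq \mathsf{link}_{\ref{thm:BidimensionalityInduction}}(k)$ large enough that an $(\ell,\nicefrac{2}{3})$-linked set yields, via \zcref{prop:AlgoGrid}, a mesh of order $\mathsf{mesh}_{\ref{thm:strongest_localstructure}}(t,r,k',w)$. Here we choose $t=k^2$, $r=3k-1$, a suitable $k'$, and $w \geq$ the order needed to absorb $X$, so that $\ell \in \mathbf{O}(k^{20\cdot 5667844\cdot 2})$. The largest-$t$ choice is justified because a red $K_{k^2}$-minor contains $\mathsf{R}_k$ as a red minor.

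First, we may assume $|X| = 3\ell+1$. If it is smaller and $V(G)\neq X$, add vertices to $X$. If $V(G)=X$, a single bag suffices, since then $\beta(r)=V(G)$ and the torso has at most $3\ell+1$ vertices, which can all go into $A$.

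\textbf{Case 1: $X$ is not $(\ell,\nicefrac{2}{3})$-linked.} We apply \zcref{prop:FindSep} to obtain a balanced separator $S$ with $|S|\leq \ell$. For every component $C$ of $G-S$ we recurse on $G_C \coloneqq G[V(C)\cup S]$ with $X_C \coloneqq (X\cap V(C))\cup S$. Since $|X_C|\leq \nicefrac{2}{3}(3\ell+1)+\ell \leq 3\ell+1$, the size condition is preserved, and $|V(G_C)\setminus X_C|$ is strictly smaller, so induction applies. We then root at a bag $X\cup S$ of size at most $4\ell+1$ with the children's decompositions attached. The root torso is handled by putting everything into $A$, which costs at most $3\ell+\ell+1$ apices, within the stated budget. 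A child whose subgraph has no red vertex outside $S$ becomes a leaf in $L$.

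\textbf{Case 2: $X$ is $(\ell,\nicefrac{2}{3})$-linked.} We find a mesh $M$ with $\mathcal{T}_M$ a truncation of $\mathcal{T}_X$ and apply \zcref{thm:strongest_localstructure}. A red $K_t$ outcome or a red flat $r$-mesh outcome gives $\mathsf{R}_k$, using \zcref{lemma:RedMeshToRedGrid}. Outcome i) gives a separation $(Y,Z)$ of order less than $t$ with the big side $Z$ free of red vertices. We then take the root bag $X\cup(Y\cap Z)$, recurse on $G[Y]$ with $X'=(X\cap Y)\cup(Y\cap Z)$, which is small because $X$ mostly lies on $Z$, and attach $G[Z]$ as a leaf in $L$.

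The main outcome is a blank layout $\Lambda=(A_0,M',D,\delta)$. Here we build the root bag $\beta(r)$ from $X\cup A_0$, all nodes of $\delta$, and all vortex vertices. Using \zcref{prop:DepthToLinearDec}, each vortex receives a linear decomposition of adhesion at most its depth. Every component of $G-\beta(r)$ then attaches either to $A_0$ plus a non-vortex cell, where it is a flap of size at most $3$ plus $A_0$, or to $A_0$ plus part of a vortex bag. Each such piece is recursed on with $X'$ equal to its attachment set, and we must show $|X'|\leq 3\ell+1$. This uses the standard tangle argument: $X$ is linked to the majority side, so every piece carries a small portion of $X$.

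To obtain the rendition of the torso at $r$, we add the attachment cliques of the pieces. Flaps on non-vortex cells stay inside their cells, which gives property (b) since such children go into $L$ precisely when their subtree is red-free after projection. For pieces attached inside vortices, we enlarge the vortex bags accordingly, which explains the width $2\ell+\mathsf{width}+1$. Red vertices of the torso lie either in $A$ or in vortex interiors, because $\delta$ is blank, and red markings from children appear only on adhesion vertices, which lie in vortices or in $A$, or in flaps.

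The main obstacle is this last bookkeeping step. We must ensure that children attached to a non-vortex cell whose subgraph contains red vertices cannot occur. This follows from blankness of $\delta$, since any red vertex inside a flap would lie in $\sigma(c)$ for a non-vortex cell. We also need the adhesion bound $3\ell+\mathsf{apex}+\mathsf{width}+3$ to be respected. Finally, the genus bound follows from $9t^2=9k^4$, and the breadth from $\nicefrac{3}{2}(t-1)(3t-4)+r(r-1)\in\mathbf{O}(k^4)$. For the running time, each linkedness check by \zcref{prop:FindSep} costs $2^{\mathbf{O}(\ell)}m$, which dominates, and there are at most $|V(G)|$ recursion steps.
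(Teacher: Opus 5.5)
Your overall architecture matches the paper's proof. You induct with $|X|=3\ell+1$ and use Reed's algorithm (\zcref{prop:FindSep}) to split into the balanced-separator case and the linked case. In the linked case you apply \zcref{prop:AlgoGrid} and then \zcref{thm:strongest_localstructure} with $t=k^2$, $r=3k-1$. The red clique and red flat mesh outcomes give $\mathsf{R}_k$, the red-free big side becomes a leaf of $L$, and the blank layout is cut into pieces that are recursed on, with $\mathcal{T}_{M'}\subseteq\mathcal{T}_W\subseteq\mathcal{T}_X$ controlling how much of $X$ each piece gets. The exponents also match.

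\textbf{The step that fails as written is the treatment of the vortices in the blank-layout case.} The layout (and \zcref{prop:DepthToLinearDec}) only bounds the \emph{adhesion} $d$ of a linear decomposition $(Y^i_v)_i$ of each vortex $v$, not its width.
\begin{itemize}
\item If, as you write, $\beta(r)$ contains all vortex vertices, the vortices of the root torso have unbounded width. No ``enlargement of the vortex bags'' can repair this.
\item If instead $\beta(r)$ contains only the nodes of $\delta$, a single component of $G-\beta(r)$ can run through a whole vortex and attach to unboundedly many vertices of $\widetilde{v}$. Then $|X'|\le 3\ell+1$ fails.
\end{itemize}
The paper avoids both problems as follows. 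It sets $\beta(r)=N(\delta)\cup A_0\cup X\cup B$, where $B$ is the union of all adhesion sets $Y^i_v\cap Y^{i+1}_v$. It then recurses separately on each $G[Y^i_v\cup A_0]$ with $X^i_v=A_0\cup(X\cap Y^i_v)\cup S^i_v$. Here $S^i_v$ consists of the (at most two) adhesion sets of $Y^i_v$ together with its vertex in $\widetilde{v}$.

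The key bound comes from the separation $(Y^i_v\cup A_0,\ V(G)\setminus(Y^i_v\setminus S^i_v))$. Its order is at most $|A_0|+2d+1<w$, so it lies in $\mathcal{T}_{M'}\subseteq\mathcal{T}_X$, which gives $|X\cap Y^i_v|\le 2\ell$. This one bound does double duty:
\begin{itemize}
\item it yields $|X^i_v|\le 3\ell+1$, so the recursion applies;
\item it shows that $(\beta(r)\cap Y^i_v)_i$ is a linear decomposition of the torso's vortex of width at most $2d+1+2\ell$.
\end{itemize}
This is where the width bound $2\mathsf{link}+\mathsf{width}+1$ with $\mathsf{width}=2d$ comes from. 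The $2\ell$ term is the part of $X$ that can sit in a single vortex bag, not an enlargement caused by attaching pieces.

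There are two smaller points.
\begin{itemize}
\item In Case~1 you claim that $|X\cup S|\le 4\ell+1$ fits within the apex budget $3\ell+\mathsf{apex}+1$. This is false, since $\ell\ge c_1\mathsf{mesh}^{20}$ is much larger than $\mathsf{apex}$. The paper's proof makes the same unjustified assertion, so this is a defect in the constants of the statement rather than in your strategy; allowing $4\mathsf{link}+\mathsf{apex}+1$ apices cures it.
\item $|V(G)\setminus X|$ need not strictly drop for a piece, but $|V(G)|$ does, so induct lexicographically on $(|V(G)|,|V(G)\setminus X|)$.
\end{itemize}
Your use of blankness to put the flap children over non-vortex cells into $L$ is the right way to secure condition (b). For this to work, each such flap must be a single leaf bag rather than being recursed into. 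The paper recurses into these pieces and never adds them to $L$.
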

\begin{proof} We begin with defining the functions involved.
Let $c_{1}$ be the constant from \zcref{prop:AlgoGrid}.
Then
\begin{align*}
\mathsf{apex}_{\ref{thm:BidimensionalityInduction}}(k) &\coloneqq \mathsf{apex}_{\ref{thm:strongest_localstructure}}(3k - 1, k^{2}, 4),\\
\mathsf{width}_{\ref{thm:BidimensionalityInduction}}(k) &\coloneqq 2\mathsf{depth}_{\ref{thm:strongest_localstructure}}(3k - 1, k^{2}, 4),\\
\mathsf{breadth}_{\ref{thm:BidimensionalityInduction}}(k) &\coloneqq \nicefrac{3}{2}(k^{2}(3k^{2} - 1) - 6k + 10)\text{, and}\\
\mathsf{link}_{\ref{thm:BidimensionalityInduction}}(k) &\coloneqq c_{1} \mathsf{mesh}_{\ref{thm:strongest_localstructure}}(3k - 1, k^{2}, 4, 3)^{20} + k^{2} - 1.
\end{align*}

Let us assume that $(G, R)$ does not contain $\mathsf{R}_{k}$ as a red minor.
We proceed by induction on $|V(G) \setminus X|$ and start by discussing two principal cases.

\textbf{Principal case 1:} Assume that $|V(G)| < 3 \mathsf{link}_{\ref{thm:BidimensionalityInduction}}(k) + 1.$
Then, we can define $T$ to consist of a single vertex $r$ and set $\beta(r) \coloneqq V(G)$ which trivially satisfies the second outcome of the theorem.
Therefore we may assume that $|V(G)| \geq 3 \mathsf{link}_{\ref{thm:BidimensionalityInduction}}(k) + 1.$

\textbf{Principal case 2:} Assume that $|X| < 3 \mathsf{link}_{\ref{thm:BidimensionalityInduction}}(k) + 1.$
In this case we can choose any vertex $u \in V(G) \setminus X$ and set $X' \coloneqq X \cup \{ u \},$ thereby achieving $|V(G) \setminus X'| < |V(G) \setminus X|.$
Then, we conclude by induction.
Therefore, we may also assume that $|X| = 3\mathsf{link}_{\ref{thm:BidimensionalityInduction}}(k) + 1.$

We now call \zcref{prop:FindSep} with $X$ and $s = \mathsf{link}_{\ref{thm:BidimensionalityInduction}}(k)$ which has two possible outcomes.
Either $X$ is $(s, \nicefrac{2}{3})$-linked, or there is a $\nicefrac{2}{3}$-balanced separator of size at most $s$ for $X.$
We treat these two cases separately.

\textbf{Case 1:} There exists a $\nicefrac{2}{3}$-balanced separator $S$ of size at most $s$ for $X.$

In this case, for each component $H$ of $G - S,$ we consider the graph $H' \coloneqq G[V(H) \cup S]$ together with the set $X_{H} \coloneqq S \cup (V(H) \cap X).$
Notice that, since $S$ is a $\nicefrac{2}{3}$-balanced separator for $X$ of size at most $s,$ we have that $$|X_{H}| \leq s + \bigg\lfloor \frac{2}{3} (3s + 1) \bigg\rfloor \leq 3s < 3s + 1.$$
Hence, $H'$ and $X_{H}$ satisfy the properties of one of the two principal cases.
In either case, we obtain a rooted tree-decomposition $(T_{H}, r_{H}, \beta_{H})$ together with a set of leaves $L_{H} \subseteq V(T_{H})$ for $H'$ with $X_{H} \subseteq \beta_{H}(r_{H}).$
We define $(T, r, \beta)$ as follows.
The tree $T$ is the tree obtained from the disjoint union of all $T_{H}$ by introducing a new vertex $r$ adjacent to all $r_{H}.$
The bags of $T$ are set to be $\beta(t) \coloneqq \beta_{H}(t)$ in case $t \in V(H)$ for some component $H$ of $G - S$ and otherwise we have $t = r$ and we set $\beta(r) \coloneqq X \cup S.$
Moreover, we set $L$ to be the union of all $L_{H}.$
Notice that $(T, r, \beta)$ is indeed a rooted tree-decomposition as required and, in particular, $|\beta(r)| \leq 4s + 1$ and so $G_{r}$ satisfies the second bullet point of the second outcome of our claim.

\textbf{Case 2:} The set $X$ is $(s, \nicefrac{2}{3})$-linked.

Since $s \geq c_{1} \mathsf{mesh}_{\ref{thm:strongest_localstructure}}(3k - 1, k^{2}, 2, 3)^{20},$ we can apply \zcref{prop:AlgoGrid} to $X$ and obtain a $\mathsf{mesh}_{\ref{thm:strongest_localstructure}}(3k - 1, k^{2}, 4, 3)$-wall $W$ such that $\mathcal{T}_{W} \subseteq \mathcal{T}_{X}.$
We may now call upon \zcref{thm:strongest_localstructure}.
There are $4$ possible outcomes:
\begin{enumerate}
    \item there exists a separation $(I, J) \in \mathcal{T}_W$ of order at most $k^{2} - 1$ such that $(J \setminus I) \cap R = \emptyset,$
    \item $(G, R)$ has a red $K_{k^{2}}$-minor controlled by $W,$
    \item $(G, R)$ has a red, flat $(3k - 1)$-mesh $M$ such that $\mathcal{T}_{M} \subseteq \mathcal{T}_{W},$ or
    \item $(G, R)$ has a blank $4$-$(\mathsf{apex}_{\ref{thm:strongest_localstructure}}(3k - 1, k^{2}, 4), \mathsf{breadth}_{\ref{thm:strongest_localstructure}}(3k - 1, k^{2}), \mathsf{depth}_{\ref{thm:strongest_localstructure}}(3k - 1, k^{2}, 4), 3)$-$\Sigma$-layout $\Lambda$ centred at $W$ and the surface $\Sigma$ has genus less than $9k^4.$
\end{enumerate}
We further distinguish subcases based on the outcome above.

\textbf{Case 2.i:} There exists a separation $(I, J) \in \mathcal{T}_W$ of order at most $k^{2} - 1$ such that $(J \setminus I) \cap R = \emptyset.$

Let $S = I \cap J$ with $|S| \leq k^{2} - 1 \leq s.$
By definition of tangles, there exists a unique component $H_{0}$ of $G - S$ such that $(V(G) \setminus V(H_{0}), V(H_{0}) \cup S) \in \mathcal{T}_{W}.$
Call $H_{0}$ the \emph{$\mathcal{T}_{W}$-big component} of $G - S.$
In fact, it must be that $V(H_{0}) \subseteq J \setminus I,$ as otherwise we contradict the definition of a tangle.
Therefore $V(H_{0}) \cap R = \emptyset.$

Now, let $H$ be any component of $G - S$ different than $H_{0}$ and let $X_{H} \coloneqq S \cup (V(H) \cap X).$
Since $H \neq H_{0},$ $H_{0}$ is the $\mathcal{T}_{W}$-big component of $G - S,$ and $\mathcal{T}_{W} \subseteq \mathcal{T}_{X},$ we have that $$|X_{H}| \leq s + \bigg\lfloor \frac{2}{3} (3s + 1) \bigg\rfloor \leq 3s < 3s + 1.$$
Hence, for each component $H$ of $G - S$ other than $H_{0},$ $H$ together with $X_{H}$ falls into one of the two principal cases.
Therefore, for each such $H,$ we obtain a rooted tree-decomposition $(T_{H}, r_{H}, \beta_{H})$ and a set $L_{H}$ of leaves of $T_{H}$ with $X_{H} \subseteq \beta_{H}(r_{H})$ that satisfies the second outcome of our claim.

We define the desired rooted tree-decomposition $(T, r, \beta)$ with the set $L$ of leaves as follows.
Let $T$ be the tree obtained from the disjoint union of the $T_{H}$ for all components $H \neq H_{0}$ of $G - S$ by introducing a new isolated vertex $d$ and then a new vertex $r$ adjacent to $d$ and all $r_{H}.$
We set $\beta(d) \coloneqq V(H_{0}) \cup X \cup S,$ $\beta(r) \coloneqq X \cup S,$ and $\beta(t) \coloneqq \beta_{H}(t)$ for all $t \in V(T)$ such that there is a component $H \neq H_{0}$ of $G - S$ with $t \in V(T_{H}).$
Finally, we set $L$ to be the union of all $L_{H}$ and the set $\{ d \}.$
It follows that $(T, r, \beta)$ together with $L$ satisfies all necessary conditions.

\textbf{Case 2.ii:} $(G, R)$ contains a red $K_{k^{2}}$-minor controlled by $W.$

In this case we immediately conclude with a $\mathsf{R}_{k}$-minor in $(G, R),$ as a red $K_{k^{2}}$ contains any annotated graph on $k^{2}$ vertices.

\textbf{Case 2.iii:} $(G, R)$ has a red, flat $(3k - 1)$-mesh $M$ such that $\mathcal{T}_{M} \subseteq \mathcal{T}_{W}.$

Here we conclude by applying \zcref{lemma:RedMeshToRedGrid} to $M$ which gives us the desired $\mathsf{R}_{k}$-minor in $(G, R).$

\textbf{Case 2.iv:} There exists a blank $4$-$(\mathsf{apex}_{\ref{thm:strongest_localstructure}}(3k - 1, k^{2}, 4), \mathsf{breadth}_{\ref{thm:strongest_localstructure}}(3k - 1, k^{2}), \mathsf{depth}_{\ref{thm:strongest_localstructure}}(3k - 1, k^{2}, 4), 3)$-$\Sigma$-layout $\Lambda$ of $(G, R)$ centred at $W$ and the surface $\Sigma$ has genus less than $9k^4.$
Also let $A$ be the apex set of $\Lambda.$

By definition of layouts, there exists a set $A \subseteq V(G)$ with $|A| \leq \alpha \coloneqq \mathsf{apex}_{\ref{thm:strongest_localstructure}}(3k - 1, k^{2}, 4)$ and a submesh $M \subseteq W$ such that there exists a blank $\Sigma$-landscape $(A, M, D, \rho)$ of detail $4,$ breadth $b \coloneqq \nicefrac{3}{2}(k^{2} - 1)(3k^{2} - 4) + (3k - 1)(3k - 2) - 3 = \mathsf{breadth}_{\ref{thm:BidimensionalityInduction}}(k),$ and depth $d \coloneqq \mathsf{depth}_{\ref{thm:strongest_localstructure}}(3k - 1, k^{2}, 4), 3)$ for $G,$ where every vortex of $\rho$ has a linear decomposition of adhesion at most $d,$ and $M$ is a $w$-mesh with $s \geq w \geq \alpha + b(2d + 1) + 6 + 3.$

First, by definition of layouts, $M$ is grounded in $\rho.$
This means that for every non-vortex cell $c \in C(\rho),$ no entire horizontal or vertical path of $M$ can be contained in $\sigma(c).$
Observe that the tuple $(V(\sigma(c)) \cup A, V(G) \setminus (\sigma(c) \setminus \tilde{c}))$ is a separation of $G$ of order at most $\alpha + 3 < w,$ and therefore by the previous observation, we have that $(\sigma(c) \cup A, V(G) \setminus (\sigma(c) \setminus \tilde{c})) \in \mathcal{T}_{M}.$
Since $\mathcal{T}_{M} \subseteq \mathcal{T}_{W} \subseteq \mathcal{T}_{X},$  this implies that $|V(\sigma(c)) \cap X| \leq \lfloor \nicefrac{2}{3} (3s + 1) \rfloor$ for all non-vortex cells $c \in C(\rho).$

Moreover, for every vortex cell $v \in C(\rho)$ and every bag $Y^{i}_{c}$ of the linear decomposition of $v$ of adhesion at most $d,$ by definition there is a set $S^{i}_{v}$ of size at most $2d + 1,$ composed of the at most two adhesion sets of $Y^{i}_{v}$ and the vertex of $Y^{i}_{v}$ in $\tilde{v}.$
We may also observe that the tuple $(Y^{i}_{v} \cup A, V(G) \setminus (Y^{i}_{v} \setminus S^{i}_{v}))$ defines a separation of $G$ of order at most $\alpha + 2d + 1 < w,$ and therefore once more we have that $(Y^{i}_{v} \cup A, V(G) \setminus (Y^{i}_{v} \setminus S^{i}_{v})) \in \mathcal{T}_{M}.$
Since $\mathcal{T}_{M} \subseteq \mathcal{T}_{W} \subseteq \mathcal{T}_{X},$  this implies that $|V(\sigma(v)) \cap X| \leq \lfloor \nicefrac{2}{3} (3s + 1) \rfloor$ for all vortex cells $v \in C(\rho)$ and for all $i \in [x_{v}]$ with $x_{v}$ being the number of bags in the linear decomposition of $v.$

Now, for every non-vortex cell $c \in C(\rho)$ let $H_{c} \coloneqq G[A \cup V(\sigma(c))]$ and $X_{c} \coloneqq A \cup \tilde{c} \cup (X \cap V(\sigma(c))).$

For every vortex cell $c \in C(\rho)$ and every $i \in [x_{c}],$ with $x_{c}$ being the number of bags in the linear decomposition of $c,$ let $H^{i}_{c} \coloneqq G[Y^{i}_{c} \cup A]$ and $X^{i}_{c}$ be the union of $A,$ $X \cap V(\sigma(c)),$ the (at most two) adhesion sets of $Y^{i}_{c},$ and the vertex of $Y^{i}_{c}$ in $\tilde{c}.$
It follows that $$|X_{c}| \leq \alpha + 3 + \bigg\lfloor \frac{2}{3} (3s + 1) \bigg\rfloor \leq \alpha + 3 + 2s \leq 3s < 3s + 1,$$
for all non-vortex cells $c \in C(\rho),$ and $$|X^{i}_{v}| \leq \alpha + 1 + 2d + \bigg\lfloor \frac{2}{3} (3s + 1) \bigg\rfloor \leq s + 2s \leq 3s < 3s + 1,$$
for all vortex cells $v \in C(\rho)$ and all $i \in [x_{v}].$

Now each $H_{c}$ together with its respective $X_{c},$ as well as, each $H^{i}_{v}$ together with its respective $X^{i}_{v},$ falls into one of the principal cases.
Therefore, we obtain a rooted tree-decomposition $(T_{c}, r_{c}, \beta_{c})$ and a set $L_{c}$ for each non-vortex cell $c,$ as well as a rooted tree-decomposition $(T^{i}_{v}, r^{i}_{v}, \beta^{i}_{v})$ with a set $L^{i}_{v}$ for each $i \in [x_{v}]$ for each vortex cell $v.$

It remains to combine them into a rooted tree-decomposition for the entire graph.
Let $T$ be the tree obtained from the disjoint union of all $T_{c}$ and $T^{i}_{v}$ by adding a new vertex $r$ adjacent to all $r_{c}$ and $r^{i}_{v}.$
Set $\beta(r) \coloneqq N(\rho) \cup A \cup X \cup B,$ where $B$ denotes the union of all adhesion sets of the linear decompositions of all vortices $v \in C(\rho).$
For all $t \in V(T)$ such that there is a non-vortex cell $c \in C(\rho)$ with $t \in V(T_{c})$ or a vortex cell $v \in C(\rho)$ and some $i \in [x_{v}]$ such that $t \in V(T^{i}_{v})$ we set $\beta(t) \coloneqq \beta_{c}(t)$ and $\beta(t) \coloneqq \beta^{i}_{v}(t)$ respectively.
Also, let $L$ be defined as the union of all sets $L_{c}$ and $L^{i}_{v}.$

As for the width of $\rho$ it follows that $$|\beta(r) \cap Y^{i}_{v}| \leq 2d + 1 + \bigg\lfloor \frac{2}{3} (3s + 1) \bigg\rfloor \leq 2d + 1 + 2s,$$
for every vortex cell $v$ and every $i \in [x_{v}],$ as advertised.
Finally since the layout we started with is blank, $(T, r, \beta)$ is as desired.
This concludes our proof.
\end{proof}

\section{Excluding an apex graph}\label{sec:ApexExclusion}
We now turn to discussing our version of the structure theorem for apex-minor-free-graphs with polynomial bounds.
As mentioned in the introduction, we will not provide detailed proofs for the statements we prove in this section, as all methods we present here have received exhaustive exposition already over the course of this manuscript.
Instead, we will give rough proof sketches and the intermittent statements needed to reach the goal of \zcref{thm:ApexMinorIntro}.

First, recall that an \emph{apex graph} is a graph $H$ such that there exists a vertex $v \in V(H)$ whose removal turns $H$ planar.
To illustrate the type of adjustment we need to make to prove the structure theorem for apex-minor-free-graphs, we first sketch a proof of a variant of the flat wall theorem.
We note that from this point on, we make no attempts at optimising our bounds beyond wanting to guarantee that they are polynomial.

\begin{theorem}\label{thm:ApexFlatMesh}
    There exists a function $\mathsf{afw}_{\ref{thm:ApexFlatMesh}} \colon \mathbb{N}^{2} \to \mathbb{N}$ such that for all integers $t \geq 5$ and $r \geq 2$ the following holds.

    Let $H$ be an apex graph with $t$ vertices and let $G$ be a graph with an $\mathsf{afw}_{\ref{thm:ApexFlatMesh}}(t, r)$-mesh $M \subseteq G$.
    Then there exists either
    \begin{itemize}
        \item an $H$-minor-model $\mu$ in $G$ such that $\mathcal{T}_{\mu}$ is a truncation of $\mathcal{T}_{M}$, or
        \item an $r$-submesh $M' \subseteq M$ such that $M'$ is a flat mesh in $G$.
    \end{itemize}
    Furthermore, $\mathsf{afw}_{\ref{thm:ApexFlatMesh}}(t, r) \in \mathbf{poly}(t + r)$ and there exists an algorithm that either find $\mu$ or $M'$ and $\rho$ as above in time $\mathbf{poly}(t + r) \cdot |E(G)|$.
\end{theorem}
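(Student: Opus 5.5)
The plan is to follow the proof sketch in Step 1 of the overview: first apply the flat mesh theorem, then treat the neighbourhood of the apex set as the red set, and finally use the red-grid machinery to either find $H$ or show that no apex vertex is needed. Write $H - v$ for the planar part of $H$. Since $H-v$ has $t-1$ vertices, it is a minor of the $(c t \times ct)$-grid for some constant $c$ (standard planar grid embedding with polynomial blow-up). Fix $k \coloneqq ct$ and take $r' \coloneqq \max(r+2, \ldots)$ suitably large.

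First apply \zcref{prop:FlatMesh} to $M$ with clique size $t$ and target order $n' = (r''+2)^2$, where $r''$ is chosen below. If we obtain a $K_t$-minor-model $\mu$ controlled by $M$, it contains an $H$-minor-model with the same tangle property, and we are done. Otherwise we obtain an $(r''+2)^2$-submesh $M''$ and a set $Z$ with $|Z|\le t^2-t$ such that $M''$ is flat in $G-Z$, witnessed by a sphere-rendition $\rho$ with one vortex $c_0$.

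Next, declare $R \coloneqq N_G(Z)\setminus Z$ and apply \zcref{lemma:flatMeshToHomegeneousMesh} in $G-Z$ to obtain a homogeneous $r''$-submesh. The choice is $r'' \coloneqq \max(r,\, 3k|Z|)$.

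If the submesh is blank, then its interior, away from the vortex $c_0'$, contains no neighbour of $Z$. Hence $Z$ attaches only to the vortex part, and the rendition restricted to $G$ itself (putting $Z$ into the vortex cell) still witnesses flatness of an $r$-submesh $M'$ in $G$. This gives the second outcome. The point needing care is that adding $Z$ to $\sigma(c_0')$ keeps $c_0'$ a legitimate cell. It does, because the blank rendition places every neighbour of $Z$ inside $c_0'$ or among its nodes.

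If the submesh is red, \zcref{lemma:RedMeshToRedGrid} gives a red $(|Z|k\times|Z|k)$-grid model, i.e. a $(3|Z|k-1)$-mesh that is red. Partition it into $|Z|^2 \cdot$ blocks. Assign each brick to one $z\in Z$ adjacent to a red vertex in it. By pigeonhole, one $z$ receives a $(k\times k)$ sub-pattern of bricks; more carefully, redo the argument of \zcref{lemma:MiddleRedMeshToFullyRed} on a subfamily of bricks all red for the same $z$. This yields a $(k\times k)$-grid model in $G-Z$ with every branch set adjacent to $z$. Contracting the branch sets and $z$ then gives $K_1 + \text{grid}_k$, which contains $H$ as a minor because $v$ may be mapped to $z$ and $H-v$ to the grid. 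Control of the tangle is inherited, since the grid lives in a submesh of $M$ and $\mathcal{T}_{M'}$ truncates $\mathcal{T}_M$.

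The main obstacle is the pigeonhole step. A red mesh where each brick is red for \emph{some} apex must be converted into a mesh that is red for a \emph{single} apex while keeping a grid structure. I would first colour each brick of the $(3|Z|k)$-mesh by one apex and find a colour class containing bricks arranged in $k$ rows and $k$ columns. If that fails, I would instead use a "middle row" argument: take $|Z|k^2$ bricks in one row, select $k^2$ of the same colour, and apply \zcref{lemma:MiddleRedMeshToFullyRed} with $k$ in place of $r$, which only needs one row of red bricks. This forces $r''\in\mathbf{O}(t^2 k^2)$ and keeps $\mathsf{afw}$ polynomial. The running time is dominated by \zcref{prop:FlatMesh} and is therefore $\mathbf{poly}(t+r)|E(G)|$.
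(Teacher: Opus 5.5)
Your proposal is correct and is essentially the paper's sketch: apply \zcref{prop:FlatMesh} for $K_t$, colour $N_G(Z)$ red, and use \zcref{lemma:flatMeshToHomegeneousMesh}; if the mesh is blank, absorb $Z$ into the vortex; otherwise pigeonhole to a single $z\in Z$ seeing many bricks (the paper takes mesh order $(t^2-t)(t-1)^4$ for this) and build a grid model all of whose branch sets see $z$, which gives $H$ via $v\mapsto z$. Drop your two-dimensional pigeonhole attempt, since a monochromatic $k\times k$ pattern of bricks cannot be guaranteed with polynomial bounds. In the one-row version, say explicitly that you delete the vertical paths between the chosen same-coloured bricks to make them consecutive, because \zcref{lemma:MiddleRedMeshToFullyRed} needs every brick of the middle row red for the same $z$.
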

\begin{proof}[Proof Sketch.]
    We start by applying \zcref{prop:FlatMesh} whilst searching for a $K_t$-minor model.
    Of course, if we find said $K_t$-minor model, we have also found an $H$-minor model.
    Thus we must instead find a submesh $M'$ of $M$ and a set $Z \subseteq V(G) \setminus V(M')$ such that $M'$ is a flat mesh in $G' \coloneqq G - Z$, where $|Z| \leq t^2 - t$.
    Let $k \coloneqq t^2 - t$.
    We now colour the neighbourhoods of the vertices in $Z$ red, which gives us an annotated version $(G',R)$ of $G'$.
    
    After applying \zcref{lemma:flatMeshToHomegeneousMesh}, we either find a blank mesh in $(G',R)$, which thus has no neighbours in $Z$ in $G$ and is therefore flat in $G$, or we find a red mesh $M''$.
    If we chose the desired mesh size to be $\max(r,k (t-1)^4)$, we can then find a grid minor in $G$ in which at least $(t-1)^4$ vertices are seen by the same vertex $v \in Z$.
    This allows us to now fairly aggressively assemble any planar graph on $t-1$ vertices and thus construct $H$ using $v$.
\end{proof}

This leads us to the version of the society classification theorem we need, the proof of which does require some more explanation.

\begin{lemma}\label{thm:apexsocietyclassification}
    There exist polynomial functions $\mathsf{loss}_{\ref{thm:apexsocietyclassification}} \colon \mathbb{N} \rightarrow \mathbb{N}$, $\mathsf{nest}_{\ref{thm:apexsocietyclassification}}, \mathsf{cost}_{\ref{thm:apexsocietyclassification}} \colon \mathbb{N}^2 \rightarrow \mathbb{N}$, and $ \mathsf{apex}_{\ref{thm:apexsocietyclassification}}$,  $\mathsf{depth}_{\ref{thm:apexsocietyclassification}} \colon \mathbb{N}^3\to\mathbb{N}$, such that for all integers $t, k, p \geq 1$ the following holds.

    Let $s \geq \mathsf{nest}_{\ref{thm:apexsocietyclassification}}(t, k)$ be an integer and let $H$ be an apex graph on $t$ vertices.
    Let $(G,\Omega)$ be a society and $\rho$ be a cylindrical rendition of $(G, \Omega)$ in a disk $\Delta$ with a cozy nest $\mathcal{C} = \{ C_1, \ldots , C_s \}$ around the vortex $c_{0}$ and a radial linkage $\mathcal{R}$ for $\mathcal{C}$ of order $s$ that is orthogonal to $\mathcal{C}$.
    Further, let $M$ be the $s$-cylindrical mesh contained in $\bigcup (\mathcal{C} \cup \mathcal{R})$, and $(G', \Omega')$ be the $C_{s - \mathsf{cost}_{\ref{thm:apexsocietyclassification}}(t,k)}$-society in $\rho.$
    
    Then $G'$ contains a set $A \subseteq V(G')$ such that one of the following exists:
    \begin{enumerate}
        \item An $H$-minor-model in $G$ controlled by $M$.

        \item A flat, isolated crosscap transaction $\mathcal{P}$ of order $p$ in $(G',\Omega')$ and a nest $\mathcal{C}'$ in $\rho$ of order $s - (\mathsf{loss}_{\ref{thm:apexsocietyclassification}}(t)+\mathsf{cost}_{\ref{thm:apexsocietyclassification}}(t,k))$ around $c_0$ to which $\mathcal{P}$ is orthogonal.

        \item A flat, isolated handle transaction $\mathcal{P}$ of order $p$ in $(G',\Omega')$ and a nest $\mathcal{C}'$ in $\rho$ of order $s - (\mathsf{loss}_{\ref{thm:apexsocietyclassification}}(t)+\mathsf{cost}_{\ref{thm:apexsocietyclassification}}(t,k))$ around $c_0$ to which $\mathcal{P}$ is orthogonal.

        \item A rendition $\rho'$ of $(G - A, \Omega)$ in $\Delta$ with breadth $b \in [\nicefrac{1}{2}(t-3)(t-4)-1]$ and depth at most $\mathsf{depth}_{\ref{thm:apexsocietyclassification}}(t,k,p)$, $|A| \leq \mathsf{apex}_{\ref{thm:apexsocietyclassification}}(t,k,p)$, and an extended $k$-surface-wall $D$ with signature $(0,0,b)$, such that $D$ is grounded in $\rho'$, the base cycles of $D$ are the cycles $C_{s-\mathsf{cost}_{\ref{thm:apexsocietyclassification}}(t,k)-1-k},\dots,C_{s-\mathsf{cost}_{\ref{thm:societyclassification}}(t,k)-1}$, and there exists a bijection between the vortices $v$ of $\rho'$ and the vortex segments $S_v$ of $D$, where $v$ is the unique vortex contained in the disk $\Delta_{S_v}$ defined by the trace of the inner cycle of the nest of $S_v$, and $\Delta_{S_v}$ is chosen to avoid the trace of the simple cycle of $D$.
        Furthermore, for each $a \in A$ and any $u \in N_G(a) \setminus A$ there exists a vortex $v \in C(\rho')$ such that $u \in V(\sigma_{\rho'}(v)) \setminus N(\rho')$.
    \end{enumerate}
    
    In particular, the set $A$, the $K_t$-minor-model, the transaction $\mathcal{P}$, the rendition $\rho'$, and the extended surface-wall $D$ can each be found in time $\mathbf{poly}( t + s + p + k ) \cdot |E(G)|^3$.
\end{lemma}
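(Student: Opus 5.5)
The plan mirrors the proof of \zcref{thm:blanksocietyclassification}, with the red set chosen dynamically as the neighbourhood of the apex set, exactly as in the sketch of \zcref{thm:ApexFlatMesh}.

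\textbf{Step 1: apply the classification.} First apply \zcref{thm:societyclassification} to $(G,\Omega)$, $\rho$, $\mathcal{C}$ and $\mathcal{R}$. We search for a clique $K_{t}$ and take the transaction order $p^{*}=\mathbf{poly}(t,p)$ and the surface-wall order $k^{*}=\mathbf{poly}(t,k)$. A $K_t$-model controlled by $M$ already contains $H$, so outcome i) is immediate.

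\textbf{Step 2: crosscap and handle outcomes.} Here we obtain an apex set $A_0$ with $|A_0|\leq\mathsf{apex}^{\mathsf{genus}}_{\ref{thm:societyclassification}}(t)$, together with a flat isolated transaction in $(G'-A_0,\Omega')$. The statement asks for the transaction in $(G',\Omega')$ itself, so we must get rid of $A_0$. Colour $R\coloneqq N_G(A_0)\setminus A_0$ red and apply \zcref{lemma:homoTransactions} with $q=|A_0|\cdot(3r_0(r_0-1)-2)$, where $r_0=\mathbf{poly}(t)$.
\begin{itemize}
\item If the blank outcome occurs, the strip society has no neighbours of $A_0$. Hence the subtransaction is flat and isolated in $G'$ without deleting any vertex.
\item If the red outcome occurs, pigeonhole over the strips finds one $a\in A_0$ adjacent to a large red subfamily. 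Feed this into \zcref{lemma:RedTransactionGivesRedMesh} with the nest $\mathcal{C}'$, which yields a large mesh, controlled by $M$, whose bricks all see $a$. Then \zcref{lemma:RedMeshToRedGrid} gives a grid minor of order $\mathbf{poly}(t)$ in which every branch set meets $N(a)$. Any planar graph on $t-1$ vertices is a minor of such a grid with branch sets meeting $N(a)$, and adding $a$ builds an $H$-model controlled by $M$.
\end{itemize}

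\textbf{Step 3: the bounded-breadth outcome.} Here there is again an apex set $A$, and we set $R\coloneqq N_G(A)\setminus A$. Run the remainder of the proof of \zcref{thm:blanksocietyclassification} verbatim, with this $R$ playing the role of the annotation: homogenise a subwall of the wall segment of $D$, then apply \zcref{lemma:nestTreeToSurfaceWall} (which relies on \zcref{lemma:finalNestTree}), and handle many red vortex segments via \zcref{lemma:SurfaceWallToRedMesh}. Every time a red $r_0$-mesh appears, the pigeonhole-plus-apex argument of Step 2 converts it into an $H$-minor. For this to work, $r_0$ must be $\geq |A|\cdot\mathbf{poly}(t)$. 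Otherwise we get a blank rendition with all neighbours of $A$ inside vortex interiors, which is exactly the extra condition in outcome iv).

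\textbf{Main obstacle: breadth and circularity of parameters.} The breadth bound required is still $\nicefrac{1}{2}(t-3)(t-4)-1$. Yet \zcref{lemma:nestTreeToSurfaceWall} may add up to roughly $r_0^2$ extra vortices (red ones without original vortices), and $r_0$ depends on $|A|$. I would try two things.
\begin{itemize}
\item Absorb red-only vortex disks into neighbouring vortices, or merge them, at the cost of depth. Alternatively, accept the larger breadth and note that the statement's breadth interval is presumably meant loosely, so $\mathbf{poly}$ bounds suffice.
\item Break the circularity $r_0\ll|A|$ by noting that $|A|$ is bounded by $\mathsf{apex}^{\mathsf{fin}}_{\ref{thm:societyclassification}}(t,k^{*},p^{*})$, where $k^{*}$ must be polynomial in $r_0$. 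Avoid this loop by choosing the red mesh size from the fixed bound on $|A_0|$ only through the clique-search parameter, i.e. requiring $r_0\geq t\cdot|A|$ only after $A$ is fixed and shrinking the wall as needed.
\end{itemize}
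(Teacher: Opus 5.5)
\textbf{Where your plan agrees with the paper.} Steps 1 and 2 are the paper's sketch. You colour $N(A_0)$ red and homogenise with \zcref{lemma:homoTransactions}. A blank subtransaction is then flat and isolated in $G'$ itself, and a red one becomes an $H$-model via \zcref{lemma:RedTransactionGivesRedMesh} and a single-apex pigeonhole. No parameter loop arises there, because $|A_0|\le\mathsf{apex}^{\mathsf{genus}}_{\ref{thm:societyclassification}}(t)$ does not depend on $p^*$. You are also right about breadth. The sketch itself only ends with ``polynomially bounded breadth'', not the interval in the statement, so merging red-only vortices is not needed.

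\textbf{The gap in Step 3.} Suppose the red mesh comes, as you propose, from a subwall of the wall segment of $D$ with \zcref{lemma:nestTreeToSurfaceWall} applied to it. Then its order is at most the wall order $k^*$ you fed into \zcref{thm:societyclassification}. The only available bound on $|A|$ is $\mathsf{apex}^{\mathsf{fin}}_{\ref{thm:societyclassification}}(t,k^*,p^*)\in\mathbf{O}((t+k^*+p^*)^{37})$, a polynomial of much higher degree in $k^*$. So you cannot guarantee $r_0\ge|A|\cdot\mathbf{poly}(t)$. Your repair does not work: $r_0$ fixes $k^*$ before the classification produces $A$, and shrinking the wall only shrinks the red mesh.

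\textbf{The missing idea.} $\mathsf{apex}^{\mathsf{fin}}_{\ref{thm:societyclassification}}$ does not depend on the nest order $s$. The sketch uses $\rho'$ only as a certificate that inside $C_1$ there is a rendition of bounded breadth and depth agreeing with $\rho$ on the outer graph of $C_1$. The nest-tree machinery (\zcref{lemma:finalNestTree}, \zcref{lemma:nestTreeToSurfaceWall}, \zcref{lemma:SurfaceWallToRedMesh}) is then run in the original nest $\mathcal{C}$ with radial linkage $\mathcal{R}$. Require $s$ to be polynomially large in $\mathsf{apex}^{\mathsf{fin}}_{\ref{thm:societyclassification}}(t,k,p^*)$; in effect the nest bound must then depend on $p$. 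With that, $r_0=|A|\cdot\mathbf{poly}(t)$ is affordable and there is no circularity. The price is breadth of order roughly $r_0^2$, which is still polynomial.

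\textbf{An unchecked hypothesis.} A ``verbatim'' rerun needs the hypotheses of those lemmas: a blank rendition and an $R$-consistent nest. So you must know where $N(A)$ lies. The sketch gets this from the fact that the apices lie in $c_0$, so after sacrificing $C_1$ every red vertex is drawn inside the inner cycle. Step 2 needs the same information, because \zcref{lemma:RedTransactionGivesRedMesh} assumes a blank cylindrical rendition.
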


\begin{proof}[Proof Sketch.]
    We begin by simply applying \zcref{thm:societyclassification}.
    Clearly, the first outcome will find us the desired $H$-minor model.
    
    The crosscap transaction of the second and the handle transaction of the third outcome can be used quite similarly to what we presented previously.
    First, we take the associated apex set and colour the neighbours of these apices red.
    Then we simply apply \zcref{lemma:homoTransactions} and if this yields a blank transaction, respectively two blank transactions in the third outcome, we have arrived at the second or third outcome of the apex society classification variant we want to prove.
    Therefore we must consider the case in which we find a red transaction.
    But this simply allows us to apply \zcref{lemma:RedTransactionGivesRedMesh} and thus we are done.

    What remains is therefore somehow to transform the fourth outcome of \zcref{thm:societyclassification} into the fourth outcome of the theorem we are trying to prove.
    As before, we take the apex set $A$ that comes with this step and colour all of their neighbours red.
    Note that in the resulting annotated graph no red vertices lies outside the inner graph of $C_1$ with respect to $\rho$, as all apices are found within $c_0$.
    Thus, after possibly sacrificing $C_1$, we know that all red vertices lie in the original vortex $c_0$.

    We then follow the proof strategy presented in \zcref{sec:ProofOfSociety} closely.
    The blank rendition $\rho'$ yielded by the fourth outcome of \zcref{thm:societyclassification} is essentially only used to confirm that we have a rendition of our graph in the disk with a bounded number of vortices of bounded depth that agrees with $\rho$ on the embedding on the outer graph of $C_1$ in $\rho$.
    From there we follow the proof exactly, including the use of nest trees.
    Of course, if this ever yields a red mesh, we can proceed as in the sketch of \zcref{thm:ApexFlatMesh} to find the $H$-minor model we desire.
    Otherwise we find a rendition $\rho'$ of $(G-A,\Omega)$ with polynomially bounded breadth $b$ and polynomially bounded depth and an extended $k$-surface-wall $D$ with signature $(0, 0, b)$, such that $D$ is grounded in $\rho'$, and there exists a bijection between the vertices $v$ of $\rho'$ and the vortex segments $S_{v}$ of $D$, where $v$ is the unique vortex contained in $\Delta_{S_{v}}$ defined by the trace of the inner cycle of the nest of $S_{v}$, and $\Delta_{S_{v}}$ is chosen to avoid the trace of the simple cycle of $D$.

    Note that in the above statement we did not make a typo, the rendition we find does in fact use the \textsl{original apex set from our application of \zcref{thm:societyclassification}}.
    In particular, the proof of the fourth outcome of \zcref{thm:blanksocietyclassification} does not yield \textsl{any additional vertices on top of $A$}.
    Thus, we can now guarantee the last part of the fourth outcome of the statement we are trying to prove here, namely that all neighbours of $A$ live in the interior of the vortices of $\rho'$.
\end{proof}

To now derive \zcref{thm:ApexMinorIntro} from \zcref{thm:ApexFlatMesh} and \zcref{thm:apexsocietyclassification}, one can simply follow the proof we gave for \zcref{thm:strongest_localstructure}.
For the sake of completeness, we state here the technical version of \zcref{thm:ApexMinorIntro} which can be proven using the methods we outline.
To be able to do this, we present slightly altered versions of landscapes and layouts.

Let $k, w \geq 4$ be integers, let $G$ be a graph, and let $\Sigma$ be a surface of Euler-genus $g$.
Let $h$, $c$, and $b$ be non-negative integers where $g=2h+c$ and $c \neq 0$ if and only if $\Sigma$ is non-orientable.
Moreover, let $D \subseteq G$ be a $k$-surface-wall with signature $(h,c,b)$, let $W \subseteq G-A$ be a $w$-mesh in $G$, and let $\mathcal{T}_D$ and $\mathcal{T}_W$ be the tangles they respectively define.
Finally, let $A \subseteq V(G) \setminus V(D)$.

The tuple $\Lambda = (A,W,D,\rho)$ is called an \emph{apex $\Sigma$-landscape} of \emph{detail $k$} if
\begin{description}
    \item[L1~~] $\rho$ is a $\Sigma$-rendition of $(G - A, R \setminus A),$
    \item[L2~~] $D$ and $W$ are grounded in $\rho,$
    \item[L3~~] $W$ is flat in $\rho,$
    \item[L4~~] the disk bounded by the trace of the simple cycle of $D$ in $\rho$ avoids the traces of the other base cycles of $D,$
    \item[L5~~] the tangle $\mathcal{T}_D$ is a truncation of the tangle $\mathcal{T}_W,$
    \item[L6~~] if $C$ is a cycle from the nest of some vortex-segment of $D,$ then the trace of $C$ is a contractible closed curve in $\Sigma,$
    \item[L7~~] $\rho$ has exactly $b$ vortices and there exists a bijection between the vortices $v$ of $\rho$ and the vortex segments $S_v$ of $D$ such that $v$ is the unique vortex of $\rho$ that is contained in the $v$-disk $\Delta_{C_1}$ of the inner cycle of $S_v$, where $\Delta_{C_1}$ avoids the trace of the simple cycle of $D,$
    \item[L8~~] for every vortex $v$ of $\rho$, the society induced by the outer cycle from the nest of the corresponding vortex segment has a cross, and
    \item[L9~~] for every $a \in A$ and every $u \in N_G(A) \setminus A$ there exists a vortex $v \in C(\rho)$ such that $u \in V(\sigma_{\rho}(v)) \setminus N(\rho)$.
\end{description}
The integer $b$ is called the \emph{breadth} of $\Lambda$ and the \emph{depth} of $\Lambda$ is the depth of $\rho$.
We say that $\Lambda$ is \emph{centred} at the mesh $W$.

Let $k \geq 4$, $l$, $d$, $b$, $r$, and $a$ be non-negative integers and $\Sigma$ be a surface.
We say that a graph $G$ with a mesh $M$ has an \emph{apex $k$-$(a,b,d,r)$-$\Sigma$-layout} $\Lambda$ \emph{centred at $M$} if there exists a set $A \subseteq V(G)$ of size at most $a$ and a submesh $M'\subseteq M$ such that there exists an apex $\Sigma$-landscape $(A,M',D,\rho)$ of detail $k$, breadth $b$, and depth $d$ for $G$ where every vortex of $\rho$ has a linear decomposition of adhesion at most $d$, and $M'$ is a $w$-mesh with $w \geq a+b(2d+1)+6+r$.

\begin{theorem}\label{thm:apexlocalstructure}
There exist polynomial functions 
$\mathsf{breadth}_{\ref{thm:apexlocalstructure}},\mathsf{genus}_{\ref{thm:apexlocalstructure}} \colon \mathbb{N} \to \mathbb{N}$, $\mathsf{apex}_{\ref{thm:apexlocalstructure}},\mathsf{depth}_{\ref{thm:apexlocalstructure}} \colon \mathbb{N}^2 \to \mathbb{N}$ and $\mathsf{mesh}_{\ref{thm:apexlocalstructure}} \colon \mathbb{N}^3 \to \mathbb{N}$ such that for all integers $t \geq 1$, $k \geq 4$, and $r \geq 3$, every graph $G$, every apex graph $H$ on $t$ vertices, and every $\mathsf{mesh}_{\ref{thm:apexlocalstructure}}(t, k, r)$-mesh $M \subseteq G$ one of the following holds.
\begin{enumerate}
    \item $G$ has an $H$-minor model controlled by $M$, or
    \item $G$ has an apex $k$-$(\mathsf{apex}_{\ref{thm:apexlocalstructure}}(t, k), \mathsf{breadth}_{\ref{thm:apexlocalstructure}}(t), \mathsf{depth}_{\ref{thm:apexlocalstructure}}(t, k), r)$-$\Sigma$-layout $\Lambda$ centred at $M$ and the surface $\Sigma$ has genus less than $\mathsf{genus}_{\ref{thm:apexlocalstructure}}(t)$.
\end{enumerate}
There also exists an algorithm that, given $t$, $k$, $r$, a graph $G$, and a mesh $M$ as above as input finds one of these outcomes in time $\mathbf{poly}(t+k+r)|E(G)|^3$.
\end{theorem}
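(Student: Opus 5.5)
We plan to mirror the proof of \zcref{thm:strongest_localstructure} almost verbatim, replacing \zcref{thm:FlatRedMesh} by \zcref{thm:ApexFlatMesh} and \zcref{thm:blanksocietyclassification} by \zcref{thm:apexsocietyclassification}. The role of the red set $R$ is now played, at every stage, by the union of the neighbourhoods of the apex vertices accumulated so far; property L9 of apex landscapes is the analogue of blankness. The parameter $t$ (order of $H$) replaces the clique order, since an apex graph on $t$ vertices is a minor of $K_t$. The genus bound comes from \zcref{prop:universal-surface-walls}: once the surface wall has $t^2$ handles/crosscaps we get a $K_t$ model controlled by the wall, hence an $H$-model. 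This gives $\mathsf{genus}_{\ref{thm:apexlocalstructure}}(t) \in \mathbf{O}(t^2)$ and $\mathsf{breadth}_{\ref{thm:apexlocalstructure}}(t)\in\mathbf{O}(t^2)$ (plus the red-mesh threshold, now a polynomial in $t$).

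Concretely, we first apply \zcref{thm:ApexFlatMesh} to $M$ with $r'$ chosen as in the definition of $\mathsf{mesh}_{\ref{thm:strongest_localstructure}}$ (all functions polynomial). Either we get a controlled $H$-model or a flat $r'$-submesh $M'$ in $G$ itself, with no apices. Peeling off cycles of $M'$ gives a cozy nest $\mathcal{C}_0$ and orthogonal radial linkage $\mathcal{R}_0$ via \zcref{prop:cozyNest} and \zcref{prop:radialtoorthogonal}, and the trivial surface configuration. We then run the induction on $i\in[0,t^2-1]$ with the same invariant list, except that $A_i$ is an apex set whose neighbours all lie in the interior of the unique vortex $c_i$. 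In each round we apply \zcref{thm:apexsocietyclassification} to $(G_i,\Omega_i)$. In outcomes ii)/iii) the transaction is flat and isolated in $G'$ and, because the apices of that step live inside $c_i$ and the proof sketch already cleaned transactions of apex neighbours, we can feed it to \zcref{lemma:integrate-crosscap} or \zcref{lemma:integrate-handle}. The "blank" hypothesis there is read with respect to $R=N(A_{i+1})$, and the reconciliation argument preserves it. We then extend along $\mathcal{R}_i$ via \zcref{prop:connected_linkages} exactly as before, so that $A_{i+1}$'s neighbours are confined to the new vortex $c_{i+1}$. In outcome iv) we integrate the vortex segments into the surface wall exactly as in Case 2 of that proof, obtaining $D_{i+1}$ and $\delta_{i+1}$. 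L9 follows from the last clause of outcome iv) for the newly added apices and from the invariant for the older ones: their neighbours lay inside $c_i$, which is now re-rendered, and every vertex of $c_i$ not on a node is in a vortex of $\rho'$, or on the embedded part. The embedded-part possibility is the point to check.

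The main obstacle is exactly this bookkeeping of old apex neighbourhoods when $c_i$ is refined. When a crosscap/handle is added, or when the vortex is split by \zcref{thm:apexsocietyclassification} iv), vertices that were interior to $c_i$ can become nodes or land in non-vortex cells, breaking L9 for $A_i$. To handle this, at each step we colour $N(A_i)$ red before invoking the classification and use the red-aware machinery of \zcref{sec:societyclassification}: \zcref{lemma:homoTransactions}, \zcref{lemma:RedTransactionGivesRedMesh}, and the nest-tree refinement. This guarantees that added handles/crosscaps are blank and that the final vortices contain all red vertices in their interior. Whenever a red mesh of order $\mathbf{poly}(t)$ arises, pigeonhole over the at most $|A_i|$ apices yields a large grid whose bricks all touch one apex $a$, and as in the sketch of \zcref{thm:ApexFlatMesh} we build $H$ using $a$ as the apex vertex. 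Controlledness is inherited from the tangle truncations. The apex, depth and mesh functions are compositions of polynomials, so they remain polynomial, and the running time matches that of \zcref{thm:strongest_localstructure}.
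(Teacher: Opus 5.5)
Your overall route is the paper's: substitute \zcref{thm:ApexFlatMesh} for \zcref{thm:FlatRedMesh} and \zcref{thm:apexsocietyclassification} for \zcref{thm:blanksocietyclassification} in the inductive proof of \zcref{thm:strongest_localstructure}. The paper adds that the local structure proof of \cite{GorskySW2025Polynomial} is an even closer template. However, the step you single out as the main obstacle never arises, and the remedy you propose for it is not supported by the stated lemmas.

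The paper's sketch rests on the observation that apices enter the construction exactly once, only when \zcref{thm:apexsocietyclassification} reaches its fourth outcome and the proof concludes. The two earlier ingredients produce no apices:
\begin{itemize}
\item \zcref{thm:ApexFlatMesh} returns a mesh that is flat in $G$ itself.
\item Outcomes ii) and iii) of \zcref{thm:apexsocietyclassification} return transactions that are flat and isolated in $(G',\Omega')$, with nothing deleted. The internal apex set is used only to colour its neighbourhood red. Once \zcref{lemma:homoTransactions} yields a blank transaction, no strip meets that neighbourhood, so the set is simply discarded.
\end{itemize}
Hence every genus-increasing round has $A_{i+1}=A_i$, so $A_i=\emptyset$ throughout the induction. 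The only apex set is the one from outcome iv), at which point the induction stops, and L9 is literally the final clause of that outcome.

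Consequently there are no ``old'' apex neighbourhoods to track when $c_i$ is re-rendered, and your red set $N(A_i)$ is always empty. With $R=\emptyset$, the blank versions of \zcref{lemma:integrate-crosscap} and \zcref{lemma:integrate-handle} you invoke coincide with the versions from \cite{GorskySW2025Polynomial}, which is what the paper prescribes.

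Read at face value, your remedy would actually be a problem. \zcref{thm:apexsocietyclassification} takes no annotated input, so ``colour $N(A_i)$ red before invoking the classification'' would require proving a new hybrid of \zcref{thm:blanksocietyclassification} and \zcref{thm:apexsocietyclassification}. Likewise, putting a genus step's internal apex set into $A_{i+1}$ would require the input rendition of \zcref{lemma:integrate-crosscap} to already be blank with respect to that set's neighbourhood, and nothing stated provides this.

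A minor point on breadth: a red-mesh threshold obtained by pigeonholing over apices must grow with their number. That number is the size of the apex set from outcome iv), which depends on $k$ and $p$. So your parenthetical does not by itself give a breadth bound depending on $t$ alone.
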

\begin{proof}[Proof Sketch.]
    As noted above, the proof needed to derive this theorem simply replaces the initial application of \zcref{thm:FlatRedMesh} in the proof of \zcref{thm:strongest_localstructure} by \zcref{thm:ApexFlatMesh} and the applications of \zcref{thm:blanksocietyclassification} by \zcref{thm:apexsocietyclassification}.
    Though it would be more accurate to say that we do the appropriate replacements in the proof of the Local Structure Theorem in \cite{GorskySW2025Polynomial}, as that proof is even closer to what we need now than the proof of \zcref{thm:strongest_localstructure} we present.
    As side notes, all other statements we used to prove \zcref{thm:strongest_localstructure} still work for this setting, with the notable exception of \zcref{lemma:integrate-crosscap} and \zcref{lemma:integrate-handle}, for which the versions of these statements found in \cite{GorskySW2025Polynomial} should be used.
    Following this proof it is particularly noticeable that the apices in the second outcome of the statement only appear at a single point in the proof, which is when \zcref{thm:apexsocietyclassification} reaches its fourth outcome and the proof concludes.
\end{proof}

We note that reasonable estimates for the functions $\mathsf{genus}_{\ref{thm:apexlocalstructure}}$ and $\mathsf{breadth}_{\ref{thm:apexlocalstructure}}$ are considerably more modest than what can be expected for the other functions involved in the statement of \zcref{thm:apexlocalstructure}.
Further, it is clear that explicit bounds for the three theorems we present in this section can be derived from our methods.


\newpage
\phantomsection
\addcontentsline{toc}{section}{References}
\bibliographystyle{alphaurl}
\bibliography{literature}

\end{document}